    \pgfmathsetlength{\pgf@xb}{\pgfkeysvalueof{/pgf/outer xsep}}%
    \pgfmathsetlength{\pgf@yb}{\pgfkeysvalueof{/pgf/outer ysep}}%
\colorlet{symbols}{black}      
\colorlet{testcolor}{green!60!black}
\colorlet{darkblue}{blue!60!black}
\def\symbol#1{\textcolor{symbols}{#1}}
\def\1{\mathbf{\symbol{1}}}
\def\drawx{\draw[-,solid] (-3pt,-3pt) -- (3pt,3pt);\draw[-,solid] (-3pt,3pt) -- (3pt,-3pt);}
\tikzset{
	root/.style={circle,fill=testcolor,inner sep=0pt, minimum size=2mm},
	dot/.style={circle,fill=black,inner sep=0pt, minimum size=0.7mm},
	circ/.style={circle,draw=black,inner sep=0pt, minimum size=1mm},
	var/.style={circle,fill=black!10,draw=black,inner sep=0pt, minimum size=2mm},
	dotred/.style={circle,fill=black!50,inner sep=0pt, minimum size=2mm},
	generic/.style={semithick,shorten >=1pt,shorten <=1pt},
	oddfunc/.style={generic, dotted},
	dist/.style={ultra thick,draw=testcolor,shorten >=1pt,shorten <=1pt},
	testfcn/.style={ultra thick,testcolor,shorten >=1pt,shorten <=1pt,<-},
	testfunction/.style={ultra thick,testcolor,shorten >=1pt,shorten <=1pt},
	testfcnx/.style={ultra thick,testcolor,shorten >=1pt,shorten <=1pt,<-,
		postaction={decorate,decoration={markings,mark=at position 0.6 with {\drawx}}}},
	kprime/.style={semithick,shorten >=1pt,shorten <=1pt,densely dashed,->},
	kprimex/.style={semithick,shorten >=1pt,shorten <=1pt,densely dashed,->,
		postaction={decorate,decoration={markings,mark=at position 0.4 with {\drawx}}}},
	kernel/.style={semithick,shorten >=1pt,shorten <=1pt,->,draw=black},
	multx/.style={shorten >=1pt,shorten <=1pt,
		postaction={decorate,decoration={markings,mark=at position 0.5 with {\drawx}}}},
	kernelx/.style={semithick,shorten >=1pt,shorten <=1pt,->,
		postaction={decorate,decoration={markings,mark=at position 0.4 with {\drawx}}}},
	kernel1/.style={->,semithick,shorten >=1pt,shorten <=1pt,postaction={decorate,decoration={markings,mark=at position 0.45 with {\draw[-] (0,-0.1) -- (0,0.1);}}}},
	kernel2/.style={->,semithick,shorten >=1pt,shorten <=1pt,postaction={decorate,decoration={markings,mark=at position 0.45 with {\draw[-] (0.05,-0.1) -- (0.05,0.1);\draw[-] (-0.05,-0.1) -- (-0.05,0.1);}}}},
	kernelBig/.style={semithick,shorten >=1pt,shorten <=1pt,decorate, decoration={zigzag,amplitude=1.5pt,segment length = 3pt,pre length=2pt,post length=2pt}},
	rho/.style={dotted,semithick,shorten >=1pt,shorten <=1pt},
	renorm/.style={shape=circle,fill=white,inner sep=1pt},
	labl/.style={shape=rectangle,fill=white,inner sep=1pt},
cumu2n/.style={inner sep=3pt},
cumu2/.style={draw=red!80,fill=red!40},
cumu2b/.style={draw=blue!80,fill=blue!40},
cumu2nv/.style={inner sep=3pt},
cumu2v/.style={draw=red!80,fill=white,very thick},
cumu3/.style={regular polygon, regular polygon sides=3,draw=red!80,rounded corners=3pt,fill=red!40,minimum size=5mm},
cumu4/.style={regular polygon, regular polygon sides=4,draw=red!80,rounded corners=3pt,fill=red!40,minimum size=7mm},
cumu5/.style={regular polygon, regular polygon sides=5,draw=red!80,rounded corners=3pt,fill=red!40,minimum size=7mm},
	xi/.style={circle,fill=symbols!10,draw=symbols,inner sep=0pt,minimum size=1.2mm},
	xix/.style={crosscircle,fill=symbols!10,draw=symbols,inner sep=0pt,minimum size=1.2mm},
	xib/.style={circle,fill=symbols!10,draw=symbols,inner sep=0pt,minimum size=1.6mm},
	xibx/.style={crosscircle,fill=symbols!10,draw=symbols,inner sep=0pt,minimum size=1.6mm},
	not/.style={circle,fill=symbols,draw=symbols,inner sep=0pt,minimum size=0.5mm},
	>=stealth,
	}
\def\DeclareSymbol#1#2#3{\expandafter\gdef\csname MH@symb@#1\endcsname{\tikz[baseline=#2,scale=0.12,draw=symbols]{#3}}\expandafter\gdef\csname MH@symb@#1s\endcsname{\scalebox{0.7}{\tikz[baseline=#2,scale=0.12,draw=symbols]{#3}}}}
\def\<#1>{\csname MH@symb@#1\endcsname}
\newtheorem{example}[lemma]{Example}
\newtheorem{assumption}[lemma]{Assumption}
\def\s{\mathfrak{s}}
\def\KK{\mathfrak{K}}
\let\eref\eqref
\def\DeltaM{\Delta^{\!M}}
\def\hDeltaM{\hat \Delta^{\!M}}
\def\MM{\mathscr{M}}
\def\TT{\mathscr{T}}
\def\DD{\mathscr{D}}
\def\${|\!|\!|}
\def\Wick#1{\,\colon\!\! #1 \colon}
\def\E{\mathbf{E}}
\def\T{\mathbf{T}}
\def\Cum{\mathbf{E}_c}
\def\powerset{\mathscr{P}}
\def\emptyset{\mathop{\centernot\ocircle}}
\begin{document}

\title{A central limit theorem for the KPZ equation}
\author{Martin Hairer$^1$ and Hao Shen$^2$}
\institute{University of Warwick, UK, \email{M.Hairer@Warwick.ac.uk}
\and University of Warwick, UK, \email{pkushenhao@gmail.com}}

\maketitle

\begin{abstract}
We consider the KPZ equation in one space dimension 
driven by a stationary centred space-time random field,
which is sufficiently integrable and mixing, but not necessarily Gaussian.
We show that, in the weakly asymmetric regime, the solution to this equation
considered at a suitable large scale and in a suitable reference frame 
converges to the Hopf-Cole solution 
to the KPZ equation driven by space-time Gaussian white noise.
While the limiting process depends only on the integrated variance of 
the driving field, the diverging constants appearing in the definition of
the reference frame also depend on higher order moments. 
\end{abstract}

\keywords{KPZ equation, central limit theorem, Wiener chaos, cumulants}

\setcounter{tocdepth}{2}
\tableofcontents

\section{Introduction}

Over the past fifteen years or so, there has been a great
deal of interest in reaching a mathematical understanding of the ``KPZ fixed point'',
a conjectured self-similar stochastic process describing 
the large-scale behaviour of one-dimensional interface growth models \cite{KPZOrig}.
While substantial progress has been achieved for microscopic models presenting some form of
``integrable structure'' (see 
\cite{MR1682248,MR1887169,MR2628936,MR2784327,MR2796514,MR3152785} and references therein for some of the major
milestones), the underlying mechanisms leading to the universality of the KPZ fixed point
in general are still very poorly understood.

A related question is that of the universality of the ``crossover regime'' in
weakly asymmetric interface fluctuation models. Recall that the large-scale
behaviour of perfectly symmetric models
is expected to be governed by the Edwards-Wilkinson model \cite{EW82}, i.e.\ the
additive stochastic heat equation. Weakly asymmetric models are thus expected to exhibit a 
crossover regime where they transition from a behaviour described by the
Edwards-Wilkinson model to one described by the KPZ fixed point.
It is conjectured that this behaviour is also universal and is described
by the KPZ equation \cite{KPZOrig}, a nonlinear stochastic PDE formally given by
\begin{equ} [e:KPZ]
\partial_t h=\partial_x^2 h + \lambda \,(\partial_x h)^2 +\xi \;.
\end{equ}
Here, $\xi$ denotes space-time Gaussian white noise, $\lambda\in\R$, and the spatial variable
$x$ takes values in the circle $S^1$.
One then recovers the Edwards-Wilkinson model for $\lambda = 0$, while 
the KPZ fixed point corresponds to $\lambda = \infty$ (modulo rescaling).

Mathematically rigorous evidence for this was first 
given in \cite{MR1462228}, where the authors show that the
height function of the weakly asymmetric simple exclusion process converges to \eqref{e:KPZ}
(interpreted in a suitable sense). More recently, this result was extended
to several other discrete models in \cite{AKQ,MR3176353,2013arXiv1302,2015arXiv1505}.
One difficulty arising is that it is not clear \textit{a priori} what \eqref{e:KPZ}
even means: solutions are not differentiable, so that $(\partial_x h)^2$ does not exist
in the classical sense. The trick used in \cite{MR1462228} to circumvent this is that 
the authors consider the so-called ``Hopf-Cole solution'' which, \textit{by definition}, is
given by $h = \lambda^{-1}\log(Z)$, where $Z$ solves
$dZ = \d_x^2 Z\,dt + \lambda Z\,dW$ in the It\^o sense, see \cite{DPZ}.

A more direct interpretation of \eqref{e:KPZ} was recently given in \cite{KPZ}
using the theory of rough paths, where it
was also shown that this interpretation generalises the Hopf-Cole solution.
(In the sense that the solution theory relies on more input data than just a realisation of
the driving noise $\xi$, but there is a canonical choice for this additional data for which the
solution then coincides with the Hopf-Cole solution.)
A somewhat different approach leading to the same result 
is given by the theory of regularity structures developed in \cite{Regularity} (see also \cite{IntroRegularity} for an introduction to the theory). Its application to the KPZ
equation is explained for example in \cite{FrizHairer}, but was already implicit in
\cite{Regularity,WongZakai}.

In \cite{KPZJeremy}, the authors exploit this theory to show that
a rather large class of continuous weakly asymmetric interface fluctuation models 
is indeed described by the KPZ equation in the crossover regime. 
These models are of the type
\begin{equ}
\d_t h = \d_x^2 h + \sqrt \eps P(\d_x h) + \zeta\;,
\end{equ}
for $\zeta$ a smooth space-time Gaussian field with finite range correlations
and $P$ an even polynomial.
In that article, the Gaussianity assumption on the noise 
is used in an essential way to show convergence of an associated ``model" (in the sense of the 
theory of regularity structures) to the limiting model used in \cite{KPZ,FrizHairer}.

The purpose of the present article is to remove the Gaussianity assumption
of \cite{KPZJeremy}.
Since we focus on dealing with the non-Gaussian character of our model,
we only consider the case of quadratic nonlinearities, but we expect that a result analogous to
that of \cite{KPZJeremy} also holds in our context.
We assume that the driving noise is given by 
a stationary and centred space-time random field $\zeta$
with suitable regularity, integrability, and mixing conditions given below.
It is important to note that we do \textit{not} assume that $\zeta$
is white in time, so martingale-based techniques as used for example in \cite{MR1462228} 
and more recently in \cite{MourratWeber} do not apply here.
We also remark that although we only consider the KPZ equation in this article,
most of the methods such as the non-Gaussian Wick calculus, non-Gaussian  diagrammatic techniques etc. developed here should also apply to other equations such as the dynamical $\Phi^4$ equation, the generalised parabolic Anderson model (gPAM) and so on.

Given a non-Gaussian random field $\zeta$ as above, we consider the equation
\begin{equ}
\partial_t h=\partial_x^2 h + \sqrt{\eps} \lambda \,(\partial_x h)^2 +\zeta \;.
\end{equ}
Rescaling it by setting $ \tilde h_\eps(t,x)=\eps^{1\over 2} h \big( \eps^{-2}t,\eps^{-1}x \big)$ yields 
\begin{equ} [e:rescaledKPZ]
\partial_t  \tilde h_\eps=\partial_x^2  \tilde h_\eps +\lambda (\partial_x \tilde h_\eps)^2 
	+\tilde \zeta_\eps \;,
\end{equ}
where $\tilde\zeta_\eps = \eps^{-3/2} \zeta(\eps^{-2}t,\eps^{-1}x)$.
Our main result, Theorem~\ref{theo:main} below, then states that there exists a suitable choice
of inertial reference frame such that, in this frame, 
$\tilde h_\eps$ converges as $\eps \to 0$ to the Hopf-Cole solution 
of the KPZ equation.
One interesting fact is 
that the ``vertical speed'' $C_\eps$ of this reference frame
differs by a term of order $\eps^{-1/2}$ from what it would be if $\zeta$ were replaced by a centred Gaussian
field with the same covariance. This difference depends to leading order 
on the third cumulants of $\zeta$ and to order $1$ on its fourth cumulants.
The limiting solution however depends only on the covariance of $\zeta$, which is why we
call our result a ``Central Limit Theorem''.
We will also see that unless the covariance of $\zeta$ is invariant 
under spatial inversion $x\mapsto -x$, the frame in which convergence takes place
also has a ``horizontal speed'' of order one.
(This is also the case when the driving noise is Gaussian, but it didn't arise in
\cite{KPZJeremy,WongZakai} since symmetry under spatial inversion was assumed there.)


Our assumptions on the space-time random field $\zeta$ are twofold. First, we
need translation invariance and sufficient regularity / integrability.

\begin{assumption} \label{assump-0}
The random field $\zeta$ is stationary, centred and Lipschitz continuous. Furthermore,
$\E \left( |\zeta(z)|^p + |\nabla \zeta(z)|^p \right) < \infty$ for all $p>0$,
and $\zeta$ is normalised so that $\E(\zeta(0)\zeta(z))$ integrates to $1$.
\end{assumption}

Our second assumption is that there is enough independence.
In order to formulate it, given $K \subset \R^2$, we denote by $\CF_K$
the $\sigma$-algebra generated by the point evaluations $\{\zeta(z)\,:\, z \in K\}$.

\begin{assumption} \label{assump-mix}
For any two compact sets $K_1,K_2 \subset \R^2$
such that $\inf_{z_i\in K_i} |z_1-z_2| \geq 1$, $\CF_{K_1}$ and $\CF_{K_2}$
are independent.
\end{assumption}

The strict independency condition in Assumption~\ref{assump-mix}  is only here for simplicity and
may be relaxed. We expect both 
our results and our technique of proof to still hold when the dependency 
decays sufficiently fast (e.g.\ exponentially or polynomially with a sufficiently high power). 
Also, in Assumption~\ref{assump-0}, the condition that all moments of $\zeta$ 
are finite can be relaxed: our proof only requires us to have finite $p$th moments for some
sufficiently large but fixed $p$.

One typical example of a random field $\zeta$ satisfying these assumptions is as follows.
Let $\mu$ be a finite positive measure on $\CC_0^1(\R^2)$ which is supported on the set of functions
with support in the centred ball of radius $1/2$, and such that
$\int \|\phi\|_{\CC^1}^p\,\mu(d\phi) < \infty$ for every $p$. Denote by 
$\hat \mu$ a realisation of the Poisson point process  on
$\CC_0^1(\R^2) \times \R^2$ with intensity $\mu \otimes dz$ and set
\begin{equ}
\zeta(z) = \int \phi(z-z')\, \hat \mu(d\phi\otimes dz') - \int \int \phi(z)\,dz \mu(d\phi)\;.
\end{equ}
Then it is immediate that $\zeta$ satisfies Assumptions~\ref{assump-0} and \ref{assump-mix}.
Furthermore, nonlinear combinations $F(\zeta_1,\ldots,\zeta_k)$
of fields of this type still satisfy these assumptions provided
that $F$ grows at most polynomially at infinity.

Given this setting, we state the main result of this article as the following theorem. 
Note that since we consider periodic boundary conditions, we need to replace the field  $\zeta$ by
 a suitable ``periodisation", which is formulated in Assumption~\ref{ass:approxField} below. For simplicity of notation we still denote by $\tilde\zeta_\eps$ the noise
 obtained from rescaling the ``periodised" random field.

\begin{theorem} \label{theo:main}
Let $\zeta$ be a random field satisfying Assumptions~\ref{assump-0} and \ref{assump-mix}. 
Let $h^{(\eps)}_0$ be a sequence of smooth functions on $S^1$ that converge in 
$\CC^\beta$ as $\eps\to 0$ to a limit $h_0 \in \CC^\beta$ with $\beta\in(0,1)$.
Let $ \tilde h_\eps$ be the solution to \eref{e:rescaledKPZ}
on $S^1$ with initial condition $h_0^{(\eps)}$ and with $\tilde\zeta_\eps$ as in \eqref{e:def-zeta-eps} below 
satisfying Assumption~\ref{ass:approxField}. 
Then there exist velocities $v_h$ and $v_v^{(\eps)}$ 
such that, for every $T>0$, the family of random functions
$\tilde h_\eps (t, x - v_h t)-v_v^{(\eps)} t$ converges in law as $\eps \to 0$ to 
the Hopf-Cole solution of the KPZ equation \eref{e:KPZ} 
with initial data $h_0$ in the space $\CC^\eta([0,T]\times S^1)$,
for any $\eta\in(0,{1\over 2}\wedge \beta)$.

Furthermore, one has
\begin{equ}  [e:vxvy]
v_v^{(\eps)} = \lambda \eps^{-1} C_0 + 2 \lambda^2 \eps^{-1/2} C_1 +  \lambda^3 c\;,\quad 
v_h = 4\lambda^2 \hat c\;,
\end{equ}
where $C_0$ and $\hat c$
only depend on the second moment of 
the random field $\zeta$,
while $C_1$ depends on its third 
moment and $c$ depends on the second and fourth moments. 
If $\E(\zeta(0,0)\zeta(t,x))$ is even as a function of $x$, then $\hat c=0$.
\end{theorem}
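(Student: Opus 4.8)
The plan is to lift \eref{e:rescaledKPZ} into the regularity structure built for the KPZ equation and to reduce Theorem~\ref{theo:main} to a convergence statement for the associated renormalised model. After the rescaling, $\tilde\zeta_\eps$ is a smooth stationary field whose covariance is an approximate identity at parabolic scale $\eps$ — this uses the normalisation in Assumption~\ref{assump-0} — so $\tilde\zeta_\eps$ behaves like a space--time white noise mollified at scale $\eps$. Building the canonical model $Z_\eps$ from $\tilde\zeta_\eps$ and solving the abstract fixed point problem, one represents the solution as $\tilde h_\eps=\mathcal{R}^\eps U_\eps$; since $Z_\eps$ does not converge, one renormalises it by an element $M_\eps$ of the renormalisation group, chosen to subtract the (genuinely non-Gaussian) expectations of the finitely many basis vectors of negative homogeneity that require it. By the general theory, the renormalised reconstruction then solves the KPZ equation with a finite number of extra counterterms; organised by powers of $\lambda$, these amount to a constant counterterm and one proportional to $\partial_x$. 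Reading off their coefficients as $v_v^{(\eps)}$ and $v_h$ of \eref{e:vxvy}, and using stationarity of $\tilde\zeta_\eps$ to absorb the induced shift of the noise, one identifies $\tilde h_\eps(t,x-v_h t)-v_v^{(\eps)}t$ in law with the reconstruction of $U_\eps$ under the renormalised model. The convergence in $\CC^\eta$ asserted in the theorem then follows from the continuity of the abstract solution map together with the reconstruction theorem, \emph{provided} the renormalised model converges in law to the canonical model $\hat Z$ for space--time white noise, i.e.\ the model underlying the Hopf--Cole solution.

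\emph{Convergence of the renormalised model.}
This is the analytic core, and it is here that the absence of Gaussianity bites. For each relevant basis vector $\tau$ one needs bounds on moments $\E|(\hat\Pi^\eps_z\tau)(\varphi^\delta_z)|^{2p}$, and on the analogous moments of the difference from the limiting model, that are uniform in $\eps$ and carry the correct power of $\lambda$; a Kolmogorov-type criterion for models then yields tightness and identifies the limit. With $\tilde\zeta_\eps$ non-Gaussian, the Wiener-chaos decomposition and Nelson's hypercontractivity are not available; instead one expands such moments as sums over \emph{partitions of the noise instances into cumulant clusters}, the Gaussian pairings being the special case in which every cluster has cardinality two. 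The decisive input is that, by Assumption~\ref{assump-mix} and the rescaling, the $k$-point cumulant of $\tilde\zeta_\eps$ is supported in a set of parabolic diameter $\lesssim\eps$ and, once integrated, is smaller by a fixed positive power of $\eps$ for each cluster of cardinality three or more. Hence only the all-pairs diagrams survive as $\eps\to0$, and these reproduce exactly the diagrams defining $\hat Z$, while the ``spurious'' sub-diagrams generated by larger clusters either vanish in the limit or are precisely those subtracted by $M_\eps$. Setting up this enlarged combinatorics — a non-Gaussian Wick calculus together with the attendant diagrammatic and power-counting bounds, robust enough to work from finite-range independence alone — is the bulk of the paper.

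\emph{Structure of the velocities.}
Each counterterm constant is, after the cumulant expansion, an explicit integral of products of heat-kernel derivatives against cumulants of $\tilde\zeta_\eps$, from which one reads off the $\eps\to0$ asymptotics. A basis vector built from $m$ copies of $\Xi$ carries a factor $\lambda^{m-1}$ and, since $\zeta$ is centred, has non-vanishing expectation only when contracted by clusters whose sizes sum to $m$; the most efficient contraction, by a single $m$-point cluster, yields a contribution of order $\eps^{\,-1+(m-2)/2}$, i.e.\ the order drops by a half-power for each extra copy of $\Xi$. This is the source of \eref{e:vxvy}: the symbol $\mathcal{I}'(\Xi)\,\mathcal{I}'(\Xi)$ ($m=2$) gives $C_0$ at order $\eps^{-1}$, involving only the two-point cumulant and hence only the covariance; the symbol $\mathcal{I}'(\Xi)\,\mathcal{I}'\bigl(\mathcal{I}'(\Xi)^2\bigr)$ ($m=3$, with a combinatorial factor $2$ from the two copies of $\partial_x h$ in the nonlinearity) gives $C_1$ at order $\eps^{-1/2}$ and requires the three-point cumulant, i.e.\ the third moment; and the order-$\lambda^3$ symbols ($m=4$) give the finite constant $c$, which collects a fourth-moment contribution through a single four-cluster together with the subleading part of the covariance renormalisation (a two-pair contraction), hence the stated dependence on second and fourth moments. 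The horizontal speed $v_h=4\lambda^2\hat c$ comes instead from an order-$\lambda^2$ symbol whose renormalisation produces a counterterm proportional to $\partial_x h$; its constant is a reflection-odd integral of kernels contracted against a single two-point cumulant, so it depends only on the covariance and vanishes as soon as $\E(\zeta(0,0)\zeta(t,x))$ is even in $x$. Since the limiting model $\hat Z$ is determined by its covariance alone, the limiting solution depends on $\zeta$ only through its second moment — the sense in which the statement is a central limit theorem.

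\emph{Main obstacle.}
The weight of the proof lies in the model convergence step under the sole structural hypothesis of finite-range independence. One must (i) control the combinatorics of nested cumulant clusters so that the uniform stochastic bounds do not suffer factorial blow-up; (ii) verify that $M_\eps$, built from the true non-Gaussian expectations rather than from Wick products, still acts triangularly on the structure and generates only the claimed finite list of counterterms in the equation (including the appearance of the $\partial_x h$ term in the non-reflection-symmetric case); and (iii) push the asymptotic expansion of the renormalisation integrals far enough to isolate the \emph{finite} constant $c$, and not merely the divergent parts $C_0,C_1$ of the velocities. The scale-$\eps$ localisation of all cumulants of order three or more, provided by Assumption~\ref{assump-mix}, is the one structural input that keeps all three points under control.
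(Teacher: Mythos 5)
Your proposal follows the same overall route as the paper: reduce the theorem to (a) convergence of a renormalised model and (b) the explicit form of the renormalised equation, handle the non-Gaussianity by replacing Wiener chaos and hypercontractivity with a cumulant (``diagram formula'') expansion in which every cluster of size at least three is localised at parabolic scale $\eps$ and therefore contributes a positive power of $\eps$, and read off $v_h$ and $v_v^{(\eps)}$ from the counterterms. Your power counting $\eps^{-1+(m-2)/2}$ for a single $m$-cluster and your attribution of $C_0$, $C_1$, $c$, $\hat c$ to the second, third, fourth cumulants agree with the paper. Three sub-steps are handled differently or glossed over. First, for identifying the limit you argue that only the all-pairs diagrams survive and ``reproduce exactly the diagrams defining $\hat Z$''; convergence of moments of the model components does not by itself yield convergence in law (these objects live in mixed, higher-order chaoses where moment-determinacy is delicate), and the paper instead runs a diagonal argument (Theorem~\ref{theo:diagonal}): it interpolates through doubly-mollified models $\hat Z_{\eps,\bar\eps}$, proves a CLT for the noise itself (Proposition~\ref{prop:conv}, where a Carleman argument is used only at the level of $\zeta_\eps$), and transfers this to the models via a Wasserstein coupling and the continuity of $\zeta\mapsto \hat Z$ on smoothed inputs. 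Second, the shift $v_h$ enters the definition of $\zeta_\eps$ and hence of the very cumulants defining $\hat c^{(\eps)}$, so $\hat c^{(\eps)}$ is only defined implicitly by a fixed-point equation; your phrase about ``absorbing the induced shift by stationarity'' skates over this self-consistency, which the paper resolves in Lemma~\ref{lem:const-correct} by a contraction argument for small $\eps$ (and this is also the source of the $O(1)$ correction to $C_0^{(\eps)}$ feeding into $\lambda^3 c$). Third, the abstract fixed point only gives convergence up to a random time; the extension to every deterministic $[0,T]$ uses the a priori global existence and H\"older regularity of Hopf--Cole solutions, which your sketch omits. None of these is a fatal gap in strategy, but the first two are genuine pieces of the argument rather than routine details.
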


Here $\CC^\alpha$ is the H\"{o}lder space of functions or distributions with regularity $\alpha$ as 
in \cite{Regularity}. (This coincides with the parabolic Besov space 
$B^\alpha_{\infty,\infty,\mathrm{loc}}$.)
The subscripts $h$ and $v$ in our notation only indicate that
$v_h$ and $v_v^{(\eps)}$ are the ``horizontal" and ``vertical" speeds respectively as 
mentioned after \eqref{e:rescaledKPZ}.
The choice of the constants appearing in Theorem~\ref{theo:main} will be given in Section~\ref{sec:values}; in particular $\hat c$ will be defined in Lemma~\ref{lem:const-correct}.

\begin{remark}
The constant $C_0$ is the one given by the ``na\"\i ve'' Wick ordering and can be explicitly expressed  as 
\begin{equ} [e:def-C0]
C_0=\int_{\R^4} P'(z)P'(\bar z) \,\kappa_2(z-\bar z) \,dz\, d\bar z\;,
\end{equ}
 where $P$ denotes the heat kernel, $P'$ is its space derivative,
 and $\kappa_2(z-\bar z) = \E (\zeta(z)\zeta(\bar z))$.
(For comparison we should identify $\kappa_2$ here with $\rho * \rho$ where $\rho$ is the mollifier function in \cite{KPZJeremy}.)
\end{remark}

\begin{remark}
At first sight, the result may appear somewhat trivial since, by the classical central limit
theorem, $\tilde\zeta_\eps \to \xi$ weakly so that it may not be surprising that solutions to
\eqref{e:rescaledKPZ} converge to solutions to \eqref{e:KPZ}. 
The problem of course is that the solution map to \eqref{e:KPZ}
is not continuous in any of the usual topologies in 
which this weak convergence takes place. This is apparent in the fact that, in order to obtain the right
limit, we need to perform
a non-trivial change of reference frame with divergent velocities that depend on higher
order cumulants of $\zeta$. 
\end{remark}

\begin{proof}[of Theorem~\ref{theo:main}]
Let $ h_\eps(t,x) \eqdef \tilde h_\eps (t, x - v_h t)-v_v^{(\eps)} t$. Then it is immediate to check that  $h_\eps$
satisfies the equation
\begin{equ} [e:renorm-equa]
\partial_t  h_\eps=\partial_x^2   h_\eps +\lambda (\partial_x   h_\eps)^2 
	-v_h \,\partial_x  h_\eps  - v_v^{(\eps)} + \zeta_\eps \;,
\end{equ}
where  $\zeta_\eps$ is defined in \eqref{e:def-zeta-eps} below.
To prove the convergence of $ h_\eps$ as stated in the theorem, we apply the theory of regularity structures.
The theory is briefly reviewed in Section~\ref{sec:framework},
where the regularity structure $\TT$ suitable for the study of equation \eref{e:KPZ}
is defined.
The proof of Theorem~\ref{theo:main} then follows in the following way.

We consider the (local) solution map to the abstract fixed point problem
for functions $H$ taking values in a suitable regularity structure 
(see \eqref{e:abs-KPZ} below)
\begin{equ}[e:absFP]
H = \CP \one_{t>0} \bigl(\lambda (\DD H)^2 + \Xi\bigr) + Ph_0\;,
\end{equ}
where $\CP$ is a suitable operator representing space-time convolution with the heat kernel and
$\Xi$ is an element in the regularity structure representing the noise term. 
It is then known from \cite{KPZ,Regularity,FrizHairer,WongZakai,KPZJeremy} that
there exists a natural model $\hat Z = (\hat \Pi,\hat \Gamma)$ for $\TT$ 
such that if $H$ solves \eqref{e:absFP} for $\hat Z$, then $\CR H$ coincides
with the Hopf-Cole solution to the KPZ equation. Furthermore, the model in question is obtained
as a limit of renormalised lifts of smooth Gaussian approximations to space-time white noise.

As a next step, we define a family of renormalised models $\hat Z_\eps$ (see Section~\ref{sec:renorm}) 
and show that they obey the following two crucial properties:
\begin{claim}
\item By appropriately choosing the values of the renormalisation constants,
we can ensure that $\hat Z_\eps$ converges in distribution as $\eps \to 0$
to $\hat Z$. This is the content of Theorem~\ref{theo:diagonal} below.
\item By appropriately choosing $v_h$ and $v_v^{(\eps)}$, we can ensure that 
if $H_\eps$ solves \eqref{e:absFP} for $\hat Z_\eps$, then $\CR H_\eps$ coincides
with the solution $h_\eps$ to \eqref{e:renorm-equa}.
See \eref{e:renom-eq}, combined with the fact that the correct choice of renormalisation constants
for the first step is given by \eref{e:choiceell}. In particular $v_h$ and $v_v^{(\eps)}$
indeed have the forms claimed in \eref{e:vxvy}.
\end{claim}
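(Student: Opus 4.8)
The plan is to deduce Theorem~\ref{theo:main} from the two claims by a now-standard continuity-and-localisation argument, and then to establish the claims themselves, the second being comparatively soft and the first carrying essentially all of the new difficulty. Granting the claims, the first says that the renormalised models $\hat Z_\eps$ converge in distribution, in the space of admissible models for $\TT$, to the limiting model $\hat Z$. The local solution map of the abstract fixed point problem \eref{e:absFP} is jointly locally Lipschitz in the initial condition and the model, and the reconstruction operator $\CR$ is likewise continuous (both facts belong to the general theory recalled in Section~\ref{sec:framework}); since $h_0^{(\eps)}\to h_0$ in $\CC^\beta$, it follows that $\CR H_\eps$ converges in distribution, in $\CC^\eta$ on $[0,\tau]\times S^1$ for $\tau$ below the explosion time of the limit, to $\CR H$. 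By the identification recalled in the excerpt, $H$ solving \eref{e:absFP} for $\hat Z$ gives $\CR H = h$, the Hopf-Cole solution, which is global, so a standard stopping-time argument using that $h$ is almost surely bounded on $[0,T]\times S^1$ removes the cutoff. The second claim then identifies $\CR H_\eps$ with $(t,x)\mapsto \tilde h_\eps(t,x-v_h t)-v_v^{(\eps)}t$ and supplies the explicit formulas \eref{e:vxvy}, finishing the proof.

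It remains to prove the two claims. For the first one (convergence of the renormalised models, proved below as Theorem~\ref{theo:diagonal}), I would take $\hat Z_\eps$ to be the canonical lift to $\TT$ of the smooth field $\tilde\zeta_\eps$ composed with a renormalisation map $M_\eps$ in the renormalisation group of $\TT$; since $\TT$ has only finitely many basis vectors of strictly negative homogeneity, $M_\eps$ is determined by finitely many real constants, which are at our disposal. By the general convergence criterion for models it suffices to prove, uniformly in $\eps$, bounds of the shape $\E|(\hat\Pi_\eps\tau)(\psi^\delta_z)|^2 \lesssim \delta^{2|\tau|+\kappa}$ for each such basis vector $\tau$ and each rescaled, recentred test function $\psi^\delta_z$, together with the analogous bound showing $\hat\Pi_\eps - \hat\Pi \to 0$. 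In the Gaussian case these follow from Wick's theorem; here I would instead expand the relevant moments into sums over pairings \emph{and} higher cumulants of $\tilde\zeta_\eps$, organised through a non-Gaussian Wick calculus and a diagrammatic formalism parallel to the Gaussian one. The crucial structural input is Assumption~\ref{assump-mix}: after rescaling, the $k$th cumulant of $\tilde\zeta_\eps$ is supported within distance $O(\eps)$ of the diagonal, so each higher-order cumulant occurring in a diagram comes with extra powers of $\eps$. A power-counting argument over these labelled graphs then shows that only the pairings survive in the limit --- which is how one recovers the Gaussian limit model $\hat Z$ --- and that the surviving non-Gaussian contributions are constant rather than genuinely random, hence can and must be absorbed into $M_\eps$. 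Tracking the scaling shows that a third-cumulant insertion produces the divergence of order $\eps^{-1/2}$ encoded in $C_1$ and a fourth-cumulant insertion an $O(1)$ contribution feeding into $c$, while $C_0$ is the usual Gaussian Wick constant.

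For the second claim (identification of the renormalised equation), I would invoke the general description of the PDE associated with a model of the form $(\text{canonical lift})\circ M_\eps$. For the KPZ regularity structure this computation is classical: the component of $M_\eps$ subtracting a constant $C$ from the basis vector representing $(\DD\I(\Xi))^2$ produces a counterterm $-\lambda C$ on the right-hand side; the components acting on the other negative-homogeneity basis vectors produce, between them, a further constant counterterm and a term proportional to $\partial_x h_\eps$; and renormalising $\Xi$ itself leaves no trace in the equation. Matching the total constant counterterm with $-v_v^{(\eps)}$ and the coefficient of $\partial_x h_\eps$ with $-v_h$ in \eref{e:renorm-equa}, and substituting the renormalisation constants fixed in the first claim --- see \eref{e:choiceell} and \eref{e:renom-eq} --- yields exactly the forms of $v_v^{(\eps)}$ and $v_h$ in \eref{e:vxvy}. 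Finally, the basis vector responsible for the $\partial_x h_\eps$ term is odd under the spatial reflection $x\mapsto -x$, and its constant is built from an odd spatial moment of the covariance of $\zeta$; hence it vanishes, so that $\hat c=0$, precisely when $\E(\zeta(0,0)\zeta(t,x))$ is even in $x$.

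The hard part, where I expect essentially all of the work to concentrate, is the first claim: building a workable non-Gaussian cumulant/diagrammatic calculus, proving the uniform-in-$\eps$ stochastic bounds with the correct homogeneities together with the convergence $\hat\Pi_\eps\to\hat\Pi$, and --- most delicately --- extracting the exact finite limits of the divergent constants $C_0$, $C_1$ and $c$, verifying their claimed dependence on the second, third and fourth moments of $\zeta$ while checking that every remaining non-Gaussian contribution vanishes in the limit.
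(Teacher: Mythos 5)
Your overall architecture is the right one, and your treatment of the second claim (reading off the renormalised equation from the action of $M_\eps$, matching the constant counterterm with $-v_v^{(\eps)}$ and the $\partial_x h_\eps$ coefficient with $-v_h$, and the parity argument for $\hat c$) coincides with the paper's, up to the detail that $\hat c^{(\eps)}$ must be determined \emph{implicitly}, since $v_h$ enters the definition of $\zeta_\eps$ itself and hence the covariance $\kappa_2^{(\eps)}$ out of which $\hat c^{(\eps)}$ is built; this is handled by a fixed point argument in Lemma~\ref{lem:const-correct}, not by a one-shot formula.

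For the first claim, however, there are two genuine gaps. First, the criterion you state is a second-moment bound $\E|(\hat\Pi_\eps\tau)(\psi^\delta_z)|^2\lesssim\delta^{2|\tau|+\kappa}$. In the Gaussian setting this suffices by equivalence of moments on a fixed Wiener chaos, but here that equivalence is precisely what is lost: one must prove the bound for \emph{all} moments $p$, uniformly in $\eps$, and this is where the bulk of the work lies (Theorem~\ref{thm:tightness} together with the machinery of Sections~\ref{sec:tightness}--\ref{sec:bounds}, in particular the ``collapse'' Lemma~\ref{lem:collapse} and the $\eps$-allocation rules needed to cure the fake divergences created by contracting more than two noise insertions at a point). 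Second, and more seriously, the ``analogous bound showing $\hat\Pi_\eps-\hat\Pi\to0$'' is not well posed as stated: $\hat\Pi^{(\eps)}$ is a functional of $\zeta_\eps$ while $\hat\Pi$ is a functional of the Gaussian white noise $\xi$, and there is no canonical coupling under which one could compute moments of the difference; nor does the observation that ``only pairings survive'' in the moment expansion by itself identify the limit in distribution in the space $\MM$. The paper resolves this by a diagonal argument (Theorem~\ref{theo:diagonal}): one interpolates through the doubly mollified models $\hat Z_{\eps,\bar\eps}$ and $\hat Z_{0,\bar\eps}$, uses the second bound of Theorem~\ref{thm:tightness} to control $\hat Z_\eps-\hat Z_{\eps,\bar\eps}$ uniformly in $\eps$, invokes the CLT for $\zeta_\eps$ (Proposition~\ref{prop:conv}) together with a Wasserstein coupling so that $\zeta_{\eps,\bar\eps}\to\zeta_{0,\bar\eps}$ in $L^p(\CC^1)$ and the (Lipschitz on smooth inputs) map $\zeta\mapsto\hat Z$ transports this to the models, and finally uses the known Gaussian convergence $\hat Z_{0,\bar\eps}\to\hat Z$; the order of limits ($\eps\to0$ first, then $\bar\eps\to0$) is essential. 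Some version of this coupling step must be supplied for your argument to close.
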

As a consequence of \cite[Thm~7.8]{Regularity}, these two properties immediately yield the existence of a 
random time $\tau$ such that $h_\eps$, when restricted to the interval $[0,\tau]$,
converges in law to the solution to the Hopf-Cole solution to the KPZ equation in 
$\CC^\eta([0,T]\times S^1)$,
for every $\eta\in(0,{1\over 2}\wedge \beta)$, see also the discussion at the end of 
Section~\ref{sec:framework}.

The extension of the convergence result to any finite deterministic time interval
follows from \cite[Prop.~7.11]{Regularity} and the a priori knowledge 
that Hopf-Cole solutions to the KPZ equation are
global and $\alpha$-H\"older continuous for every $\alpha < 1/2$.
\end{proof}

\subsection*{Structure of the article}

The article is organised as follows.
We start with the basic setup of our problem in Section~\ref{sec:Setting},
which includes some more discussions about the assumptions on the noise,
followed by 
a brief  introduction to the theory of regularity structures as the framework for our proof.
In Section~\ref{sec:renorm}, we then define a collection of renormalisation
constants which, through the action of the renormalisation group, determines
a family of renormalised models in the sense of \cite{Regularity}. 
In Section~\ref{sec:tightness}, we  show some key 
technical results (Proposition~\ref{cor:Wick-field}, Lemma~\ref{lem:collapse}, Proposition~\ref{prop:main}),
then obtain bounds on 
arbitrary moments of these renormalised models,
uniformly in the small parameter $\eps$.
This is the most technical step: because our models are built from 
a non-Gaussian random field, their moments are not automatically bounded
by their variance, unlike in the Gaussian case where equivalence of moments holds.
The proof of these moment bounds depends on some general technical tools developed in Section~\ref{sec:bounds}. 
The main result of that section is Corollary~\ref{cor:ulti-bound}, which provides conditions that can be straightforwardly checked and yield the desired moment estimates.
In the last section, Section~\ref{sec:identify},
we show Theorem~\ref{theo:diagonal}
which identifies the limiting model as the ``KPZ model", and therefore implies that
in a suitable reference frame the limiting solution does indeed coincide with 
the Hopf-Cole solution to the KPZ equation driven by Gaussian white noise.

\subsection*{Acknowledgements}

{\small
We are indebted to Hendrik Weber and Jeremy Quastel for several interesting discussions
on this topic.
MH gratefully acknowledges financial support from the Philip Leverhulme Trust and the 
European Research Council. We also thank Henri Elad-Altman and the two anonymous referees for their very careful proofreading.
}

\section{Framework} \label{sec:Setting}

\subsection{Assumption on noise} \label{sec:periodisation}

We assume that $\zeta$ satisfies Assumption~\ref{assump-0}
and \ref{assump-mix}.
 Let then $\{\zeta^{(\eps)}\}_{\eps \in(0,1]}$
be a family of random fields with the following properties.

\begin{assumption}\label{ass:approxField}
The fields $\zeta^{(\eps)}$ are stationary, almost surely periodic in 
space with period $1/\eps$,
and satisfy $\sup_{\eps \in (0,1]}\E |\zeta^{(\eps)}(0)|^p < \infty$ for every $p \ge 1$.

Furthermore, for any two sets $K_1$, $K_2 \subset \R^2$ that are periodic in space with period $1/\eps$
and such that $\inf_{z_i \in K_i}|z_1 - z_2| \ge 1$, $\CF_{K_1}^{(\eps)}$ and $\CF_{K_2}^{(\eps)}$ are independent,
where $\CF_K^{(\eps)}$ is the $\sigma$-algebra generated by $\{\zeta^{(\eps)}(z): z \in K\}$.

Finally, for every $\eps > 0$, there is a coupling of
$\zeta$ and $\zeta^{(\eps)}$ such that, for every $T > 0$ and every $\delta > 0$,
\begin{equ}[e:wanted]
\sup_{|t| \le T\eps^{-2}}\sup_{|x| \le (1 - \delta)/(2\eps)} \lim_{\eps \to 0} \eps^{-3} \E |\zeta(t,x) - \zeta^{(\eps)}(t,x)|^2 = 0\;.
\end{equ}
\end{assumption}

\begin{remark}
We could have replaced \eqref{e:wanted} by a slightly weaker condition, but 
the one given here has the advantage of being easy to state. It is also easy to verify in the 
examples we have in mind, see Example~\ref{ex:Poisson} below.
\end{remark}

We then set 
\begin{equ} [e:def-zeta-eps]
\tilde \zeta_\eps(t,x) = \eps^{-3/2}\zeta^{(\eps)}(t/\eps^2, x/\eps) \;,\qquad 
\zeta_\eps(t,x) = \eps^{-3/2}\zeta^{(\eps)}(t/\eps^2, (x-v_h t)/\eps) \;,
\end{equ}
where $v_h$ will be specified in Subsection~\ref{sec:values} (see Lemma~\ref{lem:const-correct} and recall from \eqref{e:vxvy} that $v_h = 4\lambda^2 \hat c$).
Note that both $\zeta_\eps$ and $\tilde \zeta_\eps$ are periodic in space with period $1$.

\begin{example}\label{ex:Poisson}
To show that Assumption~\ref{ass:approxField} is not unreasonable, consider the following example.
Let $\mu$ be a Poisson point process on $\R^2 \times [0,1]$ with uniform intensity measure,
let $\phi(t,x,a)$ be a smooth compactly supported function (say with support in the ball 
of radius $1$), and set
\begin{equ}[e:defzeta]
\zeta(t,x) = \int_{\R^2 \times [0,1]} \phi(t-s, x-y, a) \mu(ds,dy,da)\;.
\end{equ}
Let $\mu^{(\eps)}$ be the periodic extension to $\R^2 \times [0,1]$ of 
a Poisson point process on $\R \times [-1/(2\eps),1/(2\eps)] \times [0,1]$ 
with uniform intensity measure and let $\zeta^{(\eps)}$ be as 
in \eqref{e:defzeta}, with $\mu$ replaced by $\mu^{(\eps)}$.
Then it is immediate to verify that \eref{e:wanted} is satisfied, since 
the natural coupling between $\zeta$ and $\zeta^{(\eps)}$ is such that
$\zeta = \zeta^{(\eps)}$ on $\R \times [K-1/(2\eps), 1/(2\eps)-K]$, for some fixed $K$.
\end{example}

\subsection{Regularity structures setup} \label{sec:framework}

In order to prove the convergence result stated in Theorem~\ref{theo:main},
we will make use of the theory of regularity structures developed in \cite{Regularity}. 
Before delving into the technical details,
 we give a short summary of the main concepts of the theory, as applied to the problem at hand, in this subsection.
One can find a more concise exposition of the theory in the lecture notes \cite{IntroRegularity}.

Recall that a {\it regularity structure} consists of two pieces of data. First, a graded vector 
space $\CT=\bigoplus_{\alpha\in A}\CT_\alpha$, where the index set $A$,
called the set of homogeneities, is a subset of $\R$
which is locally finite and bounded from below. In our case, each $\CT_\alpha$ is furthermore
finite-dimensional. Second, 
a ``structure group'' $\CG$ of continuous linear transformations of $\CT$ such that,
for every $\Gamma\in \CG$,  every $\alpha\in A$, and every $\tau\in\CT_\alpha$, one has
\begin{equ} [e:strugroup]
\Gamma \tau -\tau \in  \CT_{<\alpha}
\qquad
\mbox{with} \;\;\;
\CT_{<\alpha} \eqdef \bigoplus_{\beta<\alpha} \CT_\beta \;.
\end{equ}
A canonical example is the space
of polynomials in $d$
indeterminates, with index set $A=\N$, $\CT_n$ consisting of homogeneous 
polynomials of $\s$-scaled degree $n$, and $\CG$ consisting of translations.
We recall here a scaling $\s \in\N^d$ is simply a vector $(\s_1,\ldots,\s_d)$ of 
positive relatively prime integers, and the $\s$-scaled degree of 
a monomial $X^k \eqdef \prod_{i=1}^d X_i^{k_i}$,
where $k=(k_1,\ldots,k_d)$ is a multi-index,
is defined as $|k|\eqdef \sum_{i=1}^d \s_i k_i$.

In general, \cite[Sec.~8]{IntroRegularity} gives a recipe
to build a regularity structure from any given family of (subcritical in an appropriate sense) 
semi-linear stochastic PDEs. In our case, space and time are both one dimensional so
$d=2$, and the natural scaling to choose is 
the parabolic scaling $\s=(2,1)$, and thus the scaling dimension of space-time is 
\[
|\s|=3 
\]
which is fixed throughout the article.
This scaling defines a distance
$\left\Vert x-y\right\Vert _{\s}$ on $\mathbf{R}^{2}$ by
\begin{equ}
  \left\Vert x \right\Vert_{\s}^4 \eqdef |x_0|^2 + |x_1|^4  \;,
\end{equ}
where $x_0$ and $x_1$ are time and space coordinates respectively.
To simplify the notation, in the sequel we will often just write $|x|=\|x\|_\s$
for a space-time point $x$.

The regularity structure relevant for the analysis
of \eref{e:KPZ} is then built in the following way.
We write $\CU$ for a collection of  formal expressions
that will be useful to describe the solution $h$,
 $\CU'$ for a collection of formal expressions useful to describe its spatial distributional derivative 
$\partial_x h$, and $\CV$ for a collection of formal expressions
 useful to describe the right hand side of the KPZ equation \eref{e:KPZ}. 
We decree that $\CU$ and $\CU'$ contain at least the polynomials
of two indeterminates $X_0,X_1$, denoting the time and space directions
respectively.
We then introduce three additional symbols, $\Xi$, $\CI$ and $\CI'$,
where $\Xi$ will be interpreted as an abstract representation of the 
driving noise $\zeta$ (or rather $\zeta_\eps$), and $\CI$ and $\CI'$ will be interpreted as
 the operation of convolving with a truncation of the heat kernel  and its spatial derivative respectively.
In view of the structure of the equation \eref{e:KPZ}, to have a regularity
structure that is rich enough to describe the fixed point problem
we should also decree that
\begin{equs} [e:buildUUV]
\tau,\bar\tau \in \CU' &\Rightarrow \tau\bar\tau\in\CV\;, \\
\tau\in\CV & \Rightarrow \CI(\tau)\in\CU, \quad \CI'(\tau)\in \CU' \;.
\end{equs}
The sets  $\CU$, $\CU'$ and $\CV$ are then defined as the smallest 
collection of formal expressions such that $\Xi\in\CV$, $X^k\in\CU$,
$X^k\in\CU'$, and \eref{e:buildUUV} holds.
Following \cite{Regularity},
we furthermore decree that $\tau\bar\tau=\bar\tau \tau$ and $\CI(X^k)=\CI'(X^k)=0$.
We then define 
\begin{equ}
\CW\eqdef \CU\cup\CU'\cup\CV \;.
\end{equ}

For each formal expression $\tau$, the homogeneity $|\tau| \in\R$
 is assigned in the following way.
First of all, for a multi-index $k=(k_0,k_1)$, we set $|X^k| =|k|=2k_0 + k_1$,
which is consistent with the parabolic scaling $\s$ chosen above.
We then set 
\begin{equ}
|\Xi| = -{3\over 2}-\bar\kappa
\end{equ}
where $\bar\kappa>0$ is a fixed small number,
and we define the homogeneity for every formal expression by
\begin{equ}
|\tau\bar\tau| = |\tau|+|\bar\tau|\;,
\qquad
|\CI(\tau)| =|\tau|+2 \;,
\qquad
|\CI'(\tau)| =|\tau|+1 \;.
\end{equ}
The linear space $\CT$ is then defined as the linear span
of $\CW$,
and $\CT_\alpha$ 
is the subspace spanned by $\{\tau\,:\,|\tau|= \alpha\}$.
By a simple power-counting argument, one finds
that as long as $\bar\kappa<1/2$, 
the sets $\{\tau\in\CW:|\tau|<\gamma\}$ are finite for every $\gamma \in \R$,
reflecting the fact that the KPZ equation \eref{e:KPZ}
in one space dimension is subcritical.

As in \cite{KPZ},
we use a graphical shorthand for the formal expressions in $\CW$.
We use dots to represent the expression $\Xi$,
and lines to represent the operator $\CI'$.
Joining of formal expressions by their roots
denotes their product. For example,
one has $\<Kxi> = \CI'(\Xi)$.
Writing $\Psi=\CI'(\Xi)$ as a shorthand, one also has
$\<2>=\Psi^2$ and $\<21>=\Psi\CI'(\Psi^2)$, etc. 
With this notation, the formal expressions in $\CW$ with negative 
homogeneities other than $\Xi$ are given by
\begin{equs} [e:list-symbols]
|\<2>|=-1-2 \bar\kappa \;, \qquad  &
|\<21>| =-{1\over 2}-3 \bar\kappa \;,\qquad
|\<Kxi>| =-{1\over 2}- \bar\kappa \;,\\
|\<4>|  =|\<211>|= & -4\bar\kappa \;, \qquad
 |\<21a>| = |\<K2>|=-2\bar\kappa \;,
\end{equs}
provided that $\bar\kappa  >0$ is sufficiently small. 
We will denote by $\CW_-$ the above formal expressions with negative 
homogeneities.
In fact, we will never need to consider the full space $\CT$,
but it will be sufficient to consider the subspace generated by all elements of homogeneity less
than some  large enough number $\sigma$. In practice, it actually turns out to be 
sufficient to choose any $\sigma > {3\over 2} +\bar \kappa$.

The last ingredient of the regularity structure, namely the structure group $\CG$,
can also be described explicitly. However, the precise description of $\CG$ 
does not matter in the present paper, so we refrain from giving it.
The interested reader is referred to \cite[Sec.~8]{Regularity} 
as well as \cite[Sec.~3]{KPZJeremy}
for the more general case in which the nonlinearity of the equation
also contains powers of higher order. The regularity structure $(\CT,\CG)$ 
defined here is the same as in \cite[Chapter~15]{FrizHairer}.

Given the regularity structure, a crucial concept is that of a {\it model} for it, 
which associates to each formal expression in the regularity structure
a ``concrete" function or distribution on $\R^d$ (which will be the space-time $\R^2$ in our case). It consists of a pair $(\Pi,\Gamma)$
of functions
\begin{equs}[2]
\Pi \colon \R^d &\to \CL(\CT,\CS') \quad & \quad \Gamma \colon \R^d\times \R^d  &\to \CG \\
x &\mapsto \Pi_x & (x,y) &\mapsto \Gamma_{xy} 
\end{equs}
where $\CL(\CT,\CS') $ is the space of linear maps from $\CT$
to the space of distributions (over space-time  $\R^2$ in our case),
such that $\Gamma_{xx}$ is the identity and
\begin{equ}
\Pi_y= \Pi_x\circ\Gamma_{xy} \;, 
\qquad
\Gamma_{xy}\Gamma_{yz} 
=\Gamma_{xz}\;,
\end{equ}
 for every $x,y,z\in\R^d$.
Furthermore, for every $\gamma>0$, 
we require that
\begin{equ} [e:analy-bnd]
|(\Pi_x \tau) (\varphi^\lambda_x)| \lesssim \lambda^{|\tau|} \;,
\qquad
\|\Gamma_{xy} \tau\|_m \lesssim |x-y|^{|\tau|-m}\;,
\end{equ}
where ``$\lesssim $" stands for ``less or equal" up to a constant which is
uniform over $x,y$ in any compact set of $\R^d$, and over all $\lambda\in(0,1]$,
all smooth test functions $\varphi \in \CB$,
all $m<|\tau|$,
and  all homogeneous elements $\tau\in\CT_{<\gamma}$ with $\|\tau\| \le 1$.   
Here, 
 $\|\cdot\|_m$ denotes the norm of the component in $\CT_m$ (which could be any norm since $\CT_m$ is finite dimensional), 
\begin{equ}[e:defBB]
	\CB = \{\varphi: \|\varphi\|_{\CC^2}\le 1,\mbox{and $\varphi$ is supported in the unit ball} \}\;,
\end{equ}
and $\varphi^\lambda_x (z)$ denotes the following recentred and rescaled
\footnote{We note that the notation $\lambda$ which stands for the rescaling parameter
here has also been used in the equation \eqref{e:KPZ} as the coefficient of the nonlinearity.
This should not cause any confusion because it will be clear what $\lambda$ means
from the context.}
 version of $\phi$
\begin{equ}
\varphi^\lambda_x (z)\eqdef \lambda^{-3} \varphi\Big( {z_0 -x_0 \over \lambda^2}, {z_1 -x_1 \over \lambda}\Big) \;,
\end{equ}
where the scaling exponents are chosen according to our scaling $\s=(2,1)$.
The reason why $\CC^2$ appears in \eqref{e:defBB} is that this is the smallest integer $r$ such that 
$r + \min_{\tau \in \CW}|\tau| > 0$.

Besides these requirements, we also impose that our models are {\it admissible}.
To define this notion, as in \cite{Regularity}, we first fix a 
kernel $K:\R^2\to\R$ such that $\mathop{\textrm{supp}}K\subset \{|(t,x)|\le 1, t> 0\}$, $K(t,-x)=K(t,x)$ and $K$ coincides with the heat kernel in $|(t,x)|<1/2$. We also require that for every polynomial $Q$ on $\R^2$ of parabolic degree $2$ (i.e. $Q(t,x)=a_1 t+a_2 x^2+a_3 x+ a_4$ for $a_i\in\R$)
one has $\int_{\R^2} K(t,x)Q(t,x) \,dx\,dt =0$.

Given this truncated heat kernel $K$, the set $\MM$ of \textit{admissible models} 
consists of those models such that, for every multiindex $k$,
\begin{equ}  \label{e:admissible1}
\bigl(\Pi_z X^k\bigr)(\bar z) = (\bar z- z)^k\;,
\end{equ}
and such that
\begin{equs} 
\bigl(\Pi_z \CI \tau\bigr)(\bar z)
	 =   \int_{\R^2} \! & K(\bar z - x)    \bigl(\Pi_{z} \tau\bigr)(d x) \label{e:admissible3}\\
	  & - \sum_{|k| < |\CI\tau|} {(\bar z - z)^k \over k!} 
	\int_{\R^2} \! D^k K(z - x)\bigl(\Pi_{z} \tau\bigr)(d x) \;.
\end{equs}
We refer to \cite[Sec.~5]{Regularity} for a discussion on the meaning of these expressions in general.
As an example to illustrate \eref{e:admissible3}, for $\CI\tau = \<KKxi>$
with $|\<KKxi>| = {1\over 2}-\bar\kappa$, one has
\begin{equ} 
\bigl(\Pi_z \<KKxi>)(\bar z)
	 =  \int_{\R^2}  \big(K(\bar z - x) - K(z - x) \big) \bigl(\Pi_{z} \<Kxi>  \bigr)(d x) \;.
\end{equ}
When analysing  the objects $\<21a>$ and $\<211>$ in Section~\ref{sec:tightness}
we will see how the increment of $K$ appearing in this expression is exploited.

\begin{remark}
It actually turns out that if $\Pi $ 
satisfies the {\em first} analytical bound in \eref{e:analy-bnd},
 and
the structure group $\CG$ acts on $\CT$ in a way that is compatible 
with the definition of the admissible model (which is true in our case), 
 then the second analytical bound in \eref{e:analy-bnd}
is {\em automatically} satisfied. This is a consequence of \cite[Thm.~5.14]{Regularity}.\end{remark}

Given a continuous space-time function $\zeta_\eps$, there is a canonical way of building an admissible model
$ (\Pi^{(\eps)}, \Gamma^{(\eps)})$,
as in  \cite[Sec.~8]{Regularity}.
 First, we set $\Pi^{(\eps)}_z \Xi = \zeta_\eps$ independently
of $z$, and we define it on $X^k$ as in \eref{e:admissible1}. 
Then, we define $\Pi^{(\eps)}_z$ recursively by \eref{e:admissible3}, as well as the identity
\begin{equ}[e:canonical]
\bigl(\Pi_z^{(\eps)} \tau \bar \tau\bigr)(\bar z) = \bigl(\Pi_z^{(\eps)} \tau\bigr)(\bar z)
\bigl(\Pi_z^{(\eps)} \bar \tau\bigr)(\bar z)\;.
\end{equ}
Note that this is only guaranteed to make sense if $\zeta_\eps$ is a sufficiently regular function.
 It was shown in
\cite[Prop.~8.27]{Regularity} that 
this does indeed define an admissible model 
for every continuous function $\zeta_\eps$, and we will call 
this admissible model  the canonical lift of $\zeta_\eps$.
However, we emphasize that, unlike in the case of rough paths \cite{MR2257130}, 
not every admissible model is obtained in this way, 
or even as a limit of such models.
In particular, while the renormalisation procedure used in 
Section~\ref{sec:renorm} preserves the admissibility of models, it does not preserve the property of being a limit
of canonical lifts of regular functions.

It was then shown in \cite{Regularity} how to
associate an {\it abstract fixed point problem} to the equation \eref{e:KPZ}
 in a certain space $\CD^{\gamma,\eta}$ of $\CT_{<\gamma}$-valued functions.  
This space is the analogue in this context of a H\"{o}lder space of order $\gamma$,
with the exponent $\eta$ allowing for a possible singular behavior at $t=0$.
In our particular case, the abstract fixed point problem can be formulated in
$\CD^{\gamma,\beta}$ for $\gamma>{3\over 2}+\bar\kappa$,
and for $\beta > 0$ 
 such that  the initial data $h_0\in\CC^\beta$.
One of the main results in \cite{Regularity} is the {\it reconstruction theorem}
which states that for every $U \in \CD^{\gamma,\beta}$ with $\gamma > 0$, there
exists a unique distribution $\CR U$ such that,
near every point $z$, $\CR U$ ``looks like'' $\Pi_z U(z)$ up to an error of order $\gamma$.
The operator $U \mapsto \CR U$ is called the reconstruction operator associated to the
model $(\Pi,\Gamma)$.


The idea to formulate this abstract fixed point is to define multiplication, differentiation, 
and integration against the heat kernel on elements in $\CD^{\gamma,\eta}$, so one can write
\begin{equ} [e:abs-KPZ]
H=\CP \one_{t>0}\big(\lambda (\DD H)^2 + \Xi \big)  +  Ph_0
\end{equ}
where $Ph_0$ is the heat kernel acting on the initial data $h_0$ 
and can be  interpreted
as an element of $\CD^{\gamma,\beta}$ by \cite[Lemma~7.5]{Regularity},
 $\mathscr D$ is given by $\mathscr D \CI(\tau) = \CI'(\tau)$, 
 and the product is simply given by pointwise multiplication in $\CT$.
The symbol $\one_{t>0}$ represents the (scalar) indicator function of the set $\{(t,x)\,:\, t>0\}$
and the linear operator $\CP$ represents space-time convolution by the heat kernel
in the space $\CD^{\gamma,\beta}$, in the sense that 
\begin{equ}
\CR\CP H =P *\CR H\;,
\end{equ}
for every $H \in \CD^{\gamma,\eta}$ for $\gamma > 0$ and $\eta > -2$.
The explicit expression for the operator $\CP$ does not matter for our purpose but
can be found in \cite[Sec.~5]{Regularity}, 
let us just mention that $\CP H$ and $ \CI H$  only differ by elements taking values in the 
linear span of the $X^k$ and that it satisfies a Schauder estimate.

Theorem~\cite[Theorem~7.8]{Regularity} shows that, for every admissible model, there exists 
a unique $T>0$ such that the above fixed point problem  has
a unique solution in 
$\CD^{\gamma,\beta} ([0,T]\times S^1)$. If the model is given by the canonical lift
$(\Pi^{(\eps)},\Gamma^{(\eps)})$ of a continuous function $\zeta_\eps$, 
then $\CR H$ coincides with the classical solution to \eref{e:KPZ}. 

Unfortunately, these canonical lifts do not converge to a limit in $\mathscr M$ as $\eps \to 0$.
However, we will show that one can build a natural finite-dimensional family of continuous transformations  
$\hat M_\eps$ of $\mathscr M$ such that
the ``renormalised models" $(\hat\Pi^{(\eps)},\hat \Gamma^{(\eps)}) \eqdef \hat M_\eps(\Pi^{(\eps)},\Gamma^{(\eps)})$ 
do converge to an admissible limiting model.
These transformations are parametrised by elements
of a {\it renormalisation group} $\mathfrak R$ associated to our regularity structure $(\CT,\CG)$.
Since the precise definition of $\mathfrak R$ requires a bit more understanding
of the algebraic properties of our regularity structure and is not quite relevant 
to the present article, we simply refer to \cite[Section~8]{Regularity}.

Once we obtain the convergence of renormalised models,
the function $h=\CR H$ with $\CR$ the reconstruction map
associated with the limiting model is then the limiting solution 
stated in Theorem~\ref{theo:main}.
The H\"older  regularity of the solution $h$
is given by the minimum of the regularity of $Ph_0$ and the  lowest homogeneity of elements of $\CU$  beside the Taylor polynomials which 
is $\CI(\Xi)$ with homogeneity $1/2-\bar \kappa$
where $\bar \kappa>0$ is a sufficiently small fixed number,
so \cite[Prop.~3.28]{Regularity} yields  
$h\in \CC^\eta ([0,T]\times S^1)$,
with $\eta\in({1\over 4},{1\over 2}\wedge \beta)$.

\section{Renormalisation} \label{sec:renorm}

We now give an explicit description of the renormalisation maps $\hat M_\eps$ described
above. These are parametrised by linear maps $M_\eps \colon \CT \to \CT$
belonging to the ``renormalisation group''
$\mathfrak R$ which was introduced in this context in \cite[Sec.~8.3]{Regularity}.
As a matter of fact, we only need to consider a certain $5$-parameter subgroup of $\mathfrak{R}$.
This subgroup consists of elements $M\in\mathfrak{R}$ of the form
$M=\exp(-\sum_{i=1}^{5} \ell_i L_i)$ where the $\ell_i$
are real-valued constants and
 the generators $L_i \colon \CT \to \CT$ are determined by the following contraction rules:
\begin{equ}[e:defLi]
L_1 \colon \<2> \mapsto \one\;,
\quad
L_2 \colon \<21a> \mapsto \one\;,
\quad
L_3 \colon \<21> \mapsto \one\;,
\quad 
L_4 \colon \<211> \mapsto \one\;,
\quad 
L_5 \colon \<4> \mapsto \one\;.
\end{equ}
For $L_i$ with $i \neq 2$, these rules are extended to the whole regularity structure 
by setting $L_i \tau = 0$ if $\tau$ is not the element given above.
For $L_2$ however, the above contraction rule
should be understood in the sense that for an arbitrary formal expression $\tau$,
$L_2\tau $ equals the sum of all expressions obtained by performing
a substitution of the type $\<21a> \mapsto \one$. For instance, one has
\begin{equ}
L_2 \<21> = 2 \; \<Kxi>\; \;, \qquad
L_2 \<211>=2\; \<21a> + \<K2> \;.
\end{equ}
Given an admissible model $(\Pi,\Gamma)$ and an element $M$ of the above type for some
choice of constants $\ell_i$,
we can then build a new model $(\hat \Pi,\hat \Gamma) = \hat M (\Pi,\Gamma)$ by postulating 
that the identity
\begin{equ}[e:defRenorm]
\hat \Pi_x \tau = \Pi_x M\tau\;,
\end{equ}
holds for every $\tau \in \CT$. This is sufficient to determine
$(\hat \Pi,\hat \Gamma)$ since in our case $\hat \Gamma$ is uniquely determined
by $\hat \Pi$ and the knowledge that the new model is again admissible,
see \cite{Regularity}. 

\begin{remark}
The fact that
$M\in\mathfrak{R}$ can be checked essentially in the same way as in \cite{KPZJeremy}, with the simplification that since we are only interested in a quadratic nonlinearity, the symbol $\CE$ never appears.
The only minor difference is the appearance of the generators $L_2$
and $L_3$ here. In fact, using the notations of \cite[Sec.~8.3]{Regularity}, 
one can verify that in our case one has the identities
$\hDeltaM \CJ_k(\tau) = \CJ_k(M \tau) \otimes \one$
and $\DeltaM \tau= M\tau \otimes \one$. The second identity, combined with \cite[Eq.~8.34]{Regularity}, implies 
\eqref{e:defRenorm}, while the combination of both identities and the fact that
$M$ is upper triangular imply that the ``upper triangular condition" 
in \cite[Def.~8.41]{Regularity} holds. 
\end{remark}

Let now $(\Pi,\Gamma)$ be the model obtained by the canonical lift of an arbitrary 
smooth function $\zeta$,
and let $(\hat \Pi,\hat \Gamma) = \hat M (\Pi,\Gamma)$ be as above, 
again with $M=\exp(-\sum_{i=1}^{5} \ell_i L_i)$. 
It is then straightforward to verify that if 
$\CR^M$ is the reconstruction operator associated to $(\hat \Pi,\hat \Gamma)$ 
and $H$ solves the corresponding abstract fixed point problem \eqref{e:abs-KPZ}, then
$h=\CR^M H$ satisfies 
the renormalised equation
\begin{equ} [e:renom-eq]
\partial_t  h=\partial_x^2  h + \lambda (\partial_x  h)^2  
	- 4 \lambda^2 \ell_2 \,\partial_x h +\zeta 
	-(\lambda \ell_1 +2\lambda^2 \ell_3+ 4 \lambda^3 \ell_4+\lambda^3\ell_5)  + 4\lambda^3 \ell_2^2\;.
\end{equ}
This can be shown in the same way as \cite[Prop.~15.12]{FrizHairer},
except that in our case the term $2\lambda^2\<21>$ appearing in the 
expression of $(\partial H)^2 +\Xi$
produces a constant $2\lambda^2 \ell_3$ when being acted upon 
by $L_3$ in the definition of $M$. Note that the last term $4\lambda^3 \ell_2^2$ comes from the action of $\frac12 (\ell_2 L_2)^2$ in the exponential that defines $M$ on the term $4\lambda^3 \<211>$.

\subsection{Joint cumulants}

Before giving the definition of the actual values for the $\ell_i$ relevant to our analysis, 
we review the definition and basic properties of joint cumulants. 
Given a collection of random variables $\CX = \{X_\alpha\}_{\alpha \in A}$ for some index set $A$,
and a subset $B \subset A$, we write $X_B \subset \CX$ and $X^B$ as shorthands for
\begin{equ}
X_B = \{X_\alpha\,:\, \alpha \in B\} \;,\qquad X^B = \prod_{\alpha\in B}X_\alpha\;.
\end{equ}
Given a finite set $B$, we furthermore write $\CP(B)$ for the collection of all partitions of $B$,
i.e.\ all sets $\pi \subset \powerset(B)$ (the power set of $B$) such that $\bigcup \pi = B$
and such that any two distinct elements of $\pi$ are disjoint. 

\begin{definition} \label{def:cumu}
Let $\CX$ be a collection of random variables as above 
with finite moments of all orders.
For any finite set 
$B \subset A$, we define the cumulant 
$\Cum(X_B)$ inductively over $|B|$ by
\begin{equ} [e:mome2cumu]
	\E \big( X^B\big)
	= \sum_{\pi \in \CP(B)} \prod_{\bar B\in\pi} \Cum \big(X_{\bar B}\big)\;.
\end{equ}
\end{definition}

The expression \eqref{e:mome2cumu} does indeed determine the cumulants
uniquely by induction over $|B|$. This is  because the right hand 
side only involves $\Cum(X_B)$, which is what we want to define,
as well as $\Cum(X_{\bar B})$ for some $\bar B$ with $|\bar B| < |B|$, which is already defined 
by the inductive hypothesis. 
 If all the random variables are centred and jointly Gaussian, then 
 it follows from Wick's theorem that $\Cum(X_B)$ always vanishes unless $|B|=2$.
Henceforth, we will use the notation $\kappa_n$ for the $n$th joint 
cumulant function  of the field $\zeta^{(\eps)}$: 
\begin{equ} [e:def-kappa]
\kappa_n (z_1,\ldots,z_n) \eqdef \Cum\bigl(\{\zeta^{(\eps)}(z_1),\ldots, \zeta^{(\eps)}(z_n)\}\bigr) \;.
\end{equ}
Note that $\kappa_1 = 0$ since $\zeta^{(\eps)}$ is assumed to be centred and $\kappa_2$ is its
covariance function.

\begin{remark}
A reader who is expertised in cumulants may know that one can also define cumulants 
for a collection of random variables in which some variables may appear more than once.
In order to keep notations simple, in Definition~\ref{def:cumu} we only considered subsets $B$ in the 
above definition, so that each random variable $X_\alpha$ is only allowed
to appear at most once in the collection $X_B$. This is sufficient for the purpose
of this article. As a matter of fact, one can easily reduce oneself to
this case by considering the augmented collection $\bar \CX = \{\bar X_{\bar \alpha}\}_{\bar \alpha \in \bar A}$
with $\bar A = A \times \N$ and random variables $\bar X_{\bar \alpha}$ 
such that $\bar X_{(\alpha,k)} = X_\alpha$ almost surely.
\end{remark}

\begin{remark}
There is a slight abuse of notation here since $\kappa_n$ does of course depend on
$\eps$ in general, but this dependence is very weak.
\end{remark}

We refer to \cite{PeccatiTaqqu} and \cite[Sec.~13.5]{MR887102} for 
the properties of joint cumulants; see also the recent article \cite{LukkarinenMarcozzi}.
A  property that will be useful in our problem is that the cumulant is zero when some of the variables are independent from the others. We formulate this in terms of our random field $\zeta^{(\eps)}$.
Given a collection $\{z_i\}_{i=1}^p$ of space-time points, we define $z_i\sim z_j$  if $|z_i-z_j| \le 1$,
and extend this into an equivalence relation on $\{z_i\}_{i=1}^p$. 

\begin{lemma} \label{lem:cumu-close}
The cumulants have the property 
that if $\kappa_p(z_1,\ldots,z_p) \neq 0$, then $z_1,\ldots,z_p$ all belong to the same equivalence class. \qed
\end{lemma}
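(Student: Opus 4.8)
The plan is to prove the statement by reducing it to the classical fact that joint cumulants vanish whenever the collection of random variables splits into two mutually independent sub-collections, and then combining this with the mixing hypothesis (Assumption~\ref{assump-mix}, or rather its periodised analogue in Assumption~\ref{ass:approxField}). First I would recall the \emph{independence-splitting property} of cumulants: if $B = B_1 \sqcup B_2$ with $B_1, B_2$ both nonempty, and the families $X_{B_1}$ and $X_{B_2}$ are independent, then $\Cum(X_B) = 0$. This is standard and follows directly from Definition~\ref{def:cumu}; one argues by induction on $|B|$, using that $\E(X^B) = \E(X^{B_1})\E(X^{B_2})$ together with the fact that every partition of $B$ either refines the decomposition $\{B_1, B_2\}$ or has some block meeting both $B_1$ and $B_2$, and in the latter case the inductive hypothesis kills the corresponding cumulant factor. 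The bookkeeping identity is exactly the one used to establish the multiplicativity of the exponential generating function of cumulants.

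Next I would use this to handle the case at hand. Suppose $z_1, \dots, z_p$ do \emph{not} all lie in a single equivalence class of the relation $\sim$ (generated by $z_i \sim z_j$ whenever $|z_i - z_j| \le 1$). Then the index set $\{1, \dots, p\}$ partitions into at least two equivalence classes; pick one class, let $B_1$ be its index set and $B_2$ the complement, so $B_1, B_2$ are both nonempty. By construction, $\mathrm{dist}(\{z_i : i \in B_1\}, \{z_j : j \in B_2\}) > 1$: indeed, if some $z_i$ with $i \in B_1$ and some $z_j$ with $j \in B_2$ satisfied $|z_i - z_j| \le 1$, they would be in the same class, contradicting the choice. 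Setting $K_1$ and $K_2$ to be the (finite, hence compact) point sets $\{z_i : i \in B_1\}$ and $\{z_j : j \in B_2\}$ — or if one wants closed sets literally satisfying $\inf|z_1 - z_2| \ge 1$, slightly shrunk closed balls around them — Assumption~\ref{assump-mix} (in the periodic form of Assumption~\ref{ass:approxField}) gives that $\CF_{K_1}$ and $\CF_{K_2}$ are independent, hence $\{\zeta^{(\eps)}(z_i) : i \in B_1\}$ and $\{\zeta^{(\eps)}(z_j) : j \in B_2\}$ are independent collections of random variables. The splitting property then yields $\kappa_p(z_1, \dots, z_p) = \Cum(\{\zeta^{(\eps)}(z_1), \dots, \zeta^{(\eps)}(z_p)\}) = 0$, which is the contrapositive of the claim.

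The only genuinely delicate point is matching the hypothesis of Assumption~\ref{assump-mix}, which requires $\inf_{z_i \in K_i}|z_1 - z_2| \ge 1$ with closed/compact sets, against the strict inequality $|z_i - z_j| > 1$ that we get from points lying in distinct equivalence classes of a relation defined by $\le 1$. Since the distances involved form a finite set, strict inequality $>1$ over finitely many pairs implies $\ge 1 + \delta$ for some $\delta > 0$, so one can simply take $K_1, K_2$ to be the finite point sets themselves (finite sets are compact) and the hypothesis is satisfied with room to spare; no thickening is actually needed. I expect the main (minor) obstacle to be stating the splitting property of cumulants cleanly — it is the crux of the argument and deserves a one-line justification — whereas the geometric part is essentially immediate once one unwinds the definition of $\sim$. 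Note also that the argument is entirely combinatorial/probabilistic and uses nothing about the regularity structure; it will be applied later to control which diagrams contribute to moment bounds.
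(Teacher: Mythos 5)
Your proof is correct. The paper states this lemma with the \qed attached directly to the statement, i.e.\ it offers no proof and treats the fact as standard; your argument --- the vanishing of joint cumulants under a splitting of the index set into two mutually independent sub-collections (proved by induction on $|B|$ from \eqref{e:mome2cumu}), combined with the observation that any two distinct equivalence classes of $\sim$ are separated by pairwise distances strictly greater than $1$, so that the finite (hence compact) point sets satisfy the hypothesis of Assumption~\ref{assump-mix} --- is exactly the standard justification the paper is implicitly invoking, including the correct handling of the $\le 1$ versus $\ge 1$ boundary.
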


%

\subsection{Values of the renormalisation constants} \label{sec:values}

We now have all the ingredients in place to determine the relevant values of the
renormalisation constants $\ell_i$.
We denote by $\kappa_p^{(\eps)}$ the $p$-th cumulant function of $\zeta_\eps$,
which is the properly rescaled cumulant function of $\zeta^{(\eps)}$ (rescaling according to \eref{e:def-zeta-eps} with a shift by $v_h$).
By Assumption~\ref{ass:approxField}, and the rescaling \eref{e:def-zeta-eps},
one has 
\[
|\kappa_p^{(\eps)}| \lesssim \eps^{-p|\s|/2} \;.
\]
By Lemma~\ref{lem:cumu-close}, one also has
$\kappa_p^{(\eps)} = 0$ unless all of its arguments are located within a parabolic ball of radius $p\eps$ (in fact, given a constant $v$, one can show that 
for $\eps>0$ sufficiently small, if $|(t,x)|>2\eps$, then $|(t,x-vt)|>\eps$).


To define the renormalisation constants,
we introduce some graphical notations which represent our integrations.
In our graphs, a dot $\tikz [baseline=-3] \node[dot] {};$ represents an integration variable,
and the special vertex $\tikz  [baseline=-3] \node[root] {};$ represents the origin $0$.
Each arrow $\tikz [baseline=-3] \draw[kernel] (0,0) to (1,0);$ represents 
the kernel $K'(y-x)$ with $x$ and $y$ being the starting and end points
of the arrow respectively.
A red polygon with $p$ dots inside, for instance 
\begin{tikzpicture}[baseline=-4]
\node		(mid)  	at (0,0) {};
	\node[cumu4]	(mid-cumu) 	at (mid) {};
\node[dot] at (mid.north west) {};
\node[dot] at (mid.south west) {};
\node[dot] at (mid.north east) {};
\node[dot] at (mid.south east) {};
\end{tikzpicture}
with $p=4$, 
represents the cumulant function $\kappa_p^{(\eps)}(z_1,\ldots,z_p)$, with $z_i$ given by the
$p$ integration variables represented by the $p$ dots. As an example,
\begin{equs}
\begin{tikzpicture}  [baseline=10]
\node[root]	(root) 	at (0,0) {};
\node[dot]		(left)  	at (-0.7,.8) {};
\node		(right)  	at (0.7,.8) {};		
	\node[cumu3]	(right-cumu) 	at (0.7, .85) {};
\draw[kernel] (left) to  (root);
\draw[kernel]   (right.east) node[dot]   {}  to (root) ;
\draw[kernel]  (right.north) node[dot]  {}  [bend right=50] to (left);
\draw[kernel]   (right.west) node[dot]  {} to (left);
\end{tikzpicture} 
&= \int_{\R^8} K'(-y) \,K'(-x_3)\,K'(y-x_1)\, K'(y-x_2) \\ 
&\qquad \times \kappa_3^{(\eps)}(x_1,x_2,x_3) \,dx_1\,dx_2\,dx_3\,dy \;.
\end{equs}

We then define a collection of renormalisation constants as follows. Here, we use the notations
$Q^{(\eps)} \approx \eps^{-\alpha} +\sigma$ (where $\sigma\in\{0,1\}$) as a shorthand for the statement
``there exist constants $Q_1$, $Q_2$ such that $\lim_{\eps \to 0} |Q^{(\eps)}-Q_1 \eps^{-\alpha} - Q_2\sigma| = 0$'' (and call $\sigma$ the $O(1)$ correction in the case $\sigma=1$),
and $Q^{(\eps)} \approx c\log \eps$ as a shorthand for
``there exists a constant $Q$ such that $\lim_{\eps \to 0} |Q^{(\eps)}-c\log \eps -Q| = 0$''. Define
\begin{equ}
C_0^{(\eps)} = \;
\begin{tikzpicture}  [baseline=10] 
\node[root]	(root) 	at (0,0) {};
\node[cumu2n]	(a)  		at (0,1) {};	
\draw[cumu2] (a) ellipse (8pt and 4pt);
\draw[kernel] (a.east) node[dot]   {} to [bend left = 60] (root);
\draw[kernel] (a.west) node[dot]   {} to [bend right = 60] (root);
\end{tikzpicture}
\; \approx \eps^{-1} +1 \;,
\qquad
C_1^{(\eps)} = \;
\begin{tikzpicture}  [baseline=10]
\node[root]	(root) 	at (0,0) {};
\node[dot]		(left)  	at (-0.7,.8) {};
\node		(right)  	at (0.7,.8) {};		
	\node[cumu3]	(right-cumu) 	at (0.7, .85) {};
\draw[kernel] (left) to  (root);
\draw[kernel]   (right.east) node[dot]   {}  to (root) ;
\draw[kernel]  (right.north) node[dot]  {}  [bend right=50] to (left);
\draw[kernel]   (right.west) node[dot]  {} to (left);
\end{tikzpicture} 
\; \approx \eps^{-{1\over 2}}  \;,
\end{equ}
where the $O(1)$ correction in $C_0^{(\eps)}$ is zero if $\kappa_2$
is even in the space variable (see Lemma~\ref{lem:const-correct} below),
and 
$C_2^{(\eps)} = 2C_{2,1}^{(\eps)} +C_{2,2}^{(\eps)} $
where
\begin{equ}
C_{2,1}^{(\eps)} = \;
\begin{tikzpicture}  [baseline=20]
\node[root]	(root) 	at (0,0) {};
\node[dot]		(mid)  	at (0,1) {};
\node[dot]		(top)  	at (0,2) {};
\node[cumu2n]	(left)  		at (-.8,1) {};	
	\draw[cumu2] (left) ellipse (4pt and 8pt);
\node[cumu2n]	(right)  		at (.8,1.5) {};	
	\draw[cumu2] (right) ellipse (4pt and 8pt);
\draw[kernel] (mid) to  (root);
\draw[kernel] (top) to  (mid);
\draw[kernel] (left.south) node[dot]   {} to  (root);
\draw[kernel] (left.north) node[dot]   {} to  (top);
\draw[kernel] (right.north) node[dot]   {} to  (top);
\draw[kernel] (right.south) node[dot]   {} to  (mid);
\end{tikzpicture}
\;-\; \frac{(\hat c^{(\eps)})^2}{2}
\; \approx -{\log \eps \over 32\pi \sqrt 3} 
\;,
\qquad
C_{2,2}^{(\eps)} = \;
\begin{tikzpicture}  [baseline=20]
\node[root]	(root) 	at (0,0) {};
\node[dot]		(mid)  	at (0,1) {};
\node[dot]		(top)  	at (0,2) {};
\node		(right)  	at (1,1.15) {};
	\node[cumu4]	(right-cumu) 	at (right) {};
\draw[->,kernel] (mid) to  (root);
\draw[->,kernel] (top) to  (mid);
\draw[->,kernel] (right.south west) node[dot] {} to  (mid);
\draw[->,kernel] (right.south east) node[dot] {} to  (root);
\draw[->,kernel] (right.north west) node[dot] {} to  (top);
\draw[->,kernel,bend right=40] (right.north east) node[dot] {} to  (top);
\end{tikzpicture}
\quad \approx 1
\;,
\end{equ}
and $C_3^{(\eps)} = 2C_{3,1}^{(\eps)} +C_{3,2}^{(\eps)} $
where
\begin{equ}
C_{3,1}^{(\eps)} = \;
\begin{tikzpicture}  [baseline=20]
\node[root]	(root) 	at (0,0) {};
\node[cumu2n]	(mid)  		at (0,.8) {};	
	\draw[cumu2] (mid) ellipse (8pt and 4pt);
\node[cumu2n]	(top)  		at (0,1.6) {};	
	\draw[cumu2] (top) ellipse (8pt and 4pt);
\node[dot]		(left)  	at (-1,.8) {};
\node[dot]		(right)  	at (1,.8) {};
\draw[kernel] (left) to  (root);
\draw[kernel] (right) to  (root);
\draw[kernel] (mid.west) node[dot]   {} to  (left);
\draw[kernel] (mid.east) node[dot]   {} to  (right);
\draw[kernel] (top.west) node[dot]   {} to  (left);
\draw[kernel] (top.east) node[dot]   {} to  (right);
\end{tikzpicture}
\; \approx {\log \eps \over 8\pi \sqrt 3}
\;,
\qquad
C_{3,2}^{(\eps)} = \;
\begin{tikzpicture}  [baseline=20]
\node[root]	(root) 	at (0,0) {};
\node		(mid)  	at (0,1.5) {};
	\node[cumu4]	(mid-cumu) 	at (mid) {};
\node[dot]		(left)  	at (-1,.8) {};
\node[dot]		(right)  	at (1,.8) {};
\draw[->,kernel] (left) to  (root);
\draw[->,kernel] (right) to  (root);
\draw[->,kernel,bend left=30] (mid.south west) node[dot] {} to  (left);
\draw[->,kernel,bend right=30] (mid.south east) node[dot] {} to  (right);
\draw[->,kernel,bend right=30] (mid.north west) node[dot] {} to  (left);
\draw[->,kernel,bend left=30] (mid.north east) node[dot] {} to  (right);
\end{tikzpicture}
\quad \approx 1
\;.
\end{equ}
When $v_h $ is zero,
our constants $C_0^{(\eps)}$, $C_{2,1}^{(\eps)}$, $C_{3,1}^{(\eps)}$
are the same as the renormalisation constants
$C_0^{(\eps)}$, $C_{2}^{(\eps)}$, $C_{3}^{(\eps)}$
 in \cite{KPZJeremy} where their divergence rates were obtained
(just identify $\kappa_2^{(\eps)}$ with the mollifier convolving with itself
 in \cite{KPZJeremy}).
However, since $v_h \neq 0$ in general,  
\begin{equ} [e:def-kappa-hat]
\kappa_2^{(\eps)} (t,x) \neq \hat\kappa^{(\eps)}(t,x) \eqdef \eps^{-3} \kappa_2(t/\eps^2,x/\eps) \;,
\end{equ}
so there may be lower order corrections to the asymptotic behaviors
of the renormalisation constants obtained in \cite{KPZJeremy}.
We have the following lemma.

\begin{lemma} \label{lem:const-correct}
For every sufficiently small $\eps>0$, there exists a choice of the constant $\hat c^{(\eps)}$
such that
\begin{equ} [e:equ-chat]
\hat c^{(\eps)} = 
\begin{tikzpicture}  [baseline=10]
\node[root]	(root) 	at (0,0) {};
\node[dot]		(left)  	at (0,1.2) {};
\node[cumu2n]	(right)  		at (.8,.6) {};	
	\draw[cumu2] (right) ellipse (4pt and 8pt);	
\draw[kernel] (left) to  (root);
\draw[kernel]   (right.south)  node[dot] {}  to (root) ;
\draw[kernel]   (right.north)  node[dot] {} to (left);
\end{tikzpicture} 
\end{equ}
and such that
as $\eps\to 0$, $\hat c^{(\eps)}$ converges to a finite limit $\hat c$.
Furthermore, as $\eps\to 0$ the right hand side of \eref{e:equ-chat}
with $\hat\kappa^{(\eps)}$ (which is defined in \eqref{e:def-kappa-hat}) in place of $\kappa_2^{(\eps)}$
also
converges to $\hat c$. 
Finally,
with this family of constants $\hat c^{(\eps)}$, one has
$C_0^{(\eps)} \approx \eps^{-1} +1 $ and
the $O(1)$ correction  is zero if $\kappa_2$
is even in the space variable.
\end{lemma}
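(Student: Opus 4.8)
The right-hand side of \eref{e:equ-chat} depends on $\hat c^{(\eps)}$, since $\kappa_2^{(\eps)}$ is the rescaled covariance of $\zeta^{(\eps)}$ with the spatial shift $z_1\mapsto z_1-v_h z_0$, $v_h=4\lambda^2\hat c^{(\eps)}$, built in; cf.\ \eref{e:def-zeta-eps}. Writing $\Phi^{(\eps)}(v)$ for the integral in \eref{e:equ-chat} computed with shift parameter $v$, the first part of the lemma amounts to solving $\hat c^{(\eps)}=\Phi^{(\eps)}(4\lambda^2\hat c^{(\eps)})$. The plan is to observe that the shift only acts on $\kappa_2^{(\eps)}$ on the time scale $|z_0|\lesssim\eps^2$ on which the latter is supported, hence perturbs it at relative order $v\eps$; a power-counting estimate of the kind established in Section~\ref{sec:bounds} then gives that $\Phi^{(\eps)}$ is Lipschitz in $v$ with constant $O(\eps^a)$ for some $a>0$. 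For $\eps$ small, $v\mapsto\Phi^{(\eps)}(4\lambda^2 v)$ is thus a contraction on a fixed bounded interval, which produces the required $\hat c^{(\eps)}$; the same estimate shows that $\lim_{\eps\to0}\hat c^{(\eps)}$ is unaffected by the choice of $v$, so it suffices to analyse $\Phi^{(\eps)}(0)$.

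\textbf{Convergence of $\hat c^{(\eps)}$ (the crux).} Unfolding the graphical notation,
\begin{equ}
\hat c^{(\eps)} = \int_{\R^6} K'(-y)\,K'(-x_1)\,K'(y-x_2)\,\kappa_2^{(\eps)}(x_1-x_2)\,dy\,dx_1\,dx_2\;.
\end{equ}
Power-counting (three kernels of homogeneity $-2$, a cumulant of effective homogeneity $-|\s|$, and nine integration dimensions) assigns this the borderline homogeneity $0$: as $\eps\to0$ the integrand concentrates near $y=x_1=x_2=0$, where all three copies of $K'$ are simultaneously singular, and a dyadic decomposition there would produce a logarithm whose coefficient is the integral of $K'(-\hat y)K'(-\hat x)K'(\hat y-\hat x)$ over the parabolic unit sphere. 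The point — and this is where the assumed symmetry $K(t,-x)=K(t,x)$ enters — is that $K'=\partial_x K$ is odd in $x$, so under the spatial reflection of all three integration variables each kernel changes sign and this coefficient vanishes. Equivalently, the reflection shows $\hat c^{(\eps)}$ only feels the spatially-odd part of $\kappa_2^{(\eps)}$, which has vanishing mass and whose leading non-zero moments are $O(\eps)$; Taylor-expanding $K'(y-x_2)$ about $x_2=x_1$ and pairing these moments against the decorated (now only logarithmically-divergent) integrals, the criteria of Corollary~\ref{cor:ulti-bound} give convergence of the finitely-many resulting terms and of the remainder, with limit $\hat c$ a finite linear combination of spatially-odd moments of $\kappa_2$ against universal constants. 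In particular $\hat c=0$ whenever $\kappa_2$ is even in space, which is also the last assertion of Theorem~\ref{theo:main}.

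\textbf{The remaining claims.} Passing from $\kappa_2^{(\eps)}$ to $\hat\kappa^{(\eps)}$ in \eref{e:equ-chat} removes the periodisation — which by Assumption~\ref{ass:approxField} alters the cumulant only on a region of size $\gg\eps$, hence negligibly on its support — and the $v_h$-shift, which there is $O(\eps^2)=o(\eps)$; by the Lipschitz estimate of the first step the limit is unchanged, so this variant also converges to $\hat c$. For $C_0^{(\eps)}=\int_{\R^4}K'(-x_1)K'(-x_2)\,\kappa_2^{(\eps)}(x_1-x_2)\,dx_1\,dx_2=\int g(z)\,\kappa_2^{(\eps)}(z)\,dz$ with $g(z)=\int K'(-x)K'(z-x)\,dx$, the kernel $g$ is homogeneous of degree $-1$ at the origin and smooth away from it; splitting off its leading homogeneous part produces the $\eps^{-1}$ term and the regular remainder a finite limit, so $C_0^{(\eps)}\approx\eps^{-1}+O(1)$. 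The $O(1)$ correction consists of a $\kappa_2$-independent piece (the regular part of $g$ at the origin — the one already appearing, and vanishing, in the symmetric analysis of \cite{KPZJeremy}) plus a piece proportional to $v_h$ times a spatially-odd integral against $\kappa_2$; when $\kappa_2$ is even in space the first part of the lemma gives $\hat c=0$, hence $v_h=0$, so both pieces vanish.

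\textbf{Where the difficulty lies.} I would expect the only genuinely delicate step to be the borderline power-counting for $\hat c^{(\eps)}$: the integral is, by brute force, merely a logarithm away from divergence, and its convergence rests entirely on the reflection symmetry of $K'$. Turning this into a proof means controlling not only the overall degree but also the spurious sub-divergences (integrating out $x_2$, or $y$, alone is superficially divergent and is regularised only by the $\eps$-support of $\kappa_2^{(\eps)}$), which is exactly what Corollary~\ref{cor:ulti-bound} is built to do; once that machinery is in place the rest is largely bookkeeping.
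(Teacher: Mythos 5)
Your overall architecture is the right one and matches the paper's: you correctly read \eref{e:equ-chat} as a fixed point equation in $\hat c^{(\eps)}$, and the contraction argument (Lipschitz constant $O(\eps^a)$ for the map $v\mapsto\Phi^{(\eps)}(4\lambda^2v)$, hence a unique solution whose limit is that of $\Phi^{(\eps)}(0)$) is exactly the paper's proof via the bound $|F_\eps'|\lesssim\eps$. You have also put your finger on the one genuinely delicate point: the integral has overall parabolic degree $0$, and the would-be logarithm is killed by the oddness of $K'$ in the space variable. The treatment of $C_0^{(\eps)}$ and of the replacement $\kappa_2^{(\eps)}\to\hat\kappa^{(\eps)}$ is also in line with the paper.

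However, the way you propose to convert the symmetry cancellation into an actual convergence proof does not work as written. Your scheme is: restrict to the spatially odd part of $\kappa_2^{(\eps)}$, Taylor-expand $K'(y-x_2)$ about $x_2=x_1$, and pair the $O(\eps)$ moments of $\kappa^{(\eps)}_{2,\mathrm{odd}}$ against ``now only logarithmically divergent'' kernel integrals. Two things go wrong. First, the power counting is off: after subtracting the zeroth-order term, the first-order Taylor coefficient $\int K'(-y)K'(-x_1)\,\partial K'(y-x_1)\,dy\,dx_1$ has degree $-1$, not $0$; it diverges like an inverse length, and multiplying a genuinely divergent integral by an $O(\eps)$ moment is an ill-defined $\infty\times 0$ — the compensation only happens if the $\eps$-scale support of $\kappa_2^{(\eps)}$ is kept inside the integral, i.e.\ if one does \emph{not} expand globally. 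Second, the resulting characterisation of the limit as ``a finite linear combination of spatially-odd moments of $\kappa_2$'' is false: $\hat c=\lim_\eps\int\Theta(\eps^2 s,\eps u)\,\kappa_2(s,u)\,ds\,du$, and near the origin $\Theta$ has a bounded but direction-dependent (parabolically scale-invariant, spatially odd) profile, so the limit is an integral of $\kappa_2$ against a non-polynomial kernel and cannot be reduced to finitely many moments. The paper's route avoids both issues: Lemma~\ref{lem:bddfcn} first proves that the fully iterated kernel $\Theta(z)=\int K'(x)K'(x-y)K'(y-z)\,dx\,dy$ is \emph{uniformly bounded} (by splitting into the three regions $|x|<2|z|/3$, $|x-z|<2|z|/3$ and the complement, and using that $\int K'(x)K_2(x)\,dx$ vanishes over spatially symmetric domains); once $\Theta$ is bounded, $\hat c^{(\eps)}=F_\eps(\hat c^{(\eps)})$ is just the pairing of a bounded kernel with the approximate identity $\hat\kappa^{(\eps)}$, and boundedness of $F_\eps$ together with $|F_\eps'|\lesssim\eps$ gives both existence of the solution and convergence to $F_\star=\lim_\eps F_\eps(0)$. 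You should replace your Taylor-expansion step by an argument of this type (or by a genuine multiscale decomposition that keeps the $\eps$-support of the cumulant in play at the innermost scale); the reflection symmetry you identified is indeed the engine, but it has to be applied to the kernel integrals over symmetric annuli rather than to moments of $\kappa_2$.
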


We remark that \eref{e:equ-chat} is indeed an {\it equation} for $\hat c^{(\eps)} $
since on the right hand side $\kappa_2^{(\eps)}$ depends on $v_h = 4\lambda^2\hat c^{(\eps)}$ (see \eqref{e:vxvy} and \eqref{e:def-zeta-eps}).
The proof of this lemma relies crucially on the following result.

\begin{lemma}\label{lem:bddfcn}
The function $\Theta \colon \R^2  \to \R$ given by
\begin{equ}
\Theta(z) \eqdef
\int K'(x)K'(x-y)K'(y-z) \,dx\,dy
\end{equ}
is bounded.
\end{lemma}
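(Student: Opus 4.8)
The goal is to show that $\Theta(z) = \int K'(x) K'(x-y) K'(y-z)\, dx\, dy$ is a bounded function of $z \in \R^2$. The function $K$ is a compactly supported modification of the parabolic heat kernel, so $K'$ (the spatial derivative) satisfies the standard kernel bound $|K'(x)| \lesssim |x|^{-(|\s| - 1)} = |x|^{-2}$ with respect to the parabolic scaling, is supported in the parabolic unit ball, and is smooth away from the origin (with derivatives scaling accordingly). I would phrase everything in terms of the abstract theory of kernels of prescribed regularity and the classical convolution estimate for such kernels from \cite[Sec.~10]{Regularity} (or the analogous statement in \cite{IntroRegularity}): if $Q_1$ has order $-\beta_1$ and $Q_2$ has order $-\beta_2$, both compactly supported and sufficiently regular, then $\int Q_1(x) Q_2(x - \cdot)\, dx$ is a kernel of order $-(\beta_1 + \beta_2 - |\s|)$ provided $\beta_1 + \beta_2 > |\s|$, while if $\beta_1 + \beta_2 < |\s|$ the convolution is a bounded (indeed continuous) function, and if $\beta_1 + \beta_2 = |\s|$ one picks up a logarithm.

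First I would compute the order of the inner convolution. Here each factor $K'$ has order $-2$ (i.e.\ $\beta = 2$), and $|\s| = 3$. Convolving $K'$ with $K'$ gives $2 + 2 = 4 > 3$, so $\int K'(x) K'(x - w)\, dx =: (K' * K')(w)$ is again a kernel, of order $-(4 - 3) = -1$, compactly supported, with the requisite regularity. Now $\Theta(z) = \int (K'*K')(w)\, K'(w - z)\, dw$ is the convolution of a kernel of order $-1$ with a kernel of order $-2$; the sum of the $\beta$'s is $1 + 2 = 3 = |\s|$, which is exactly the borderline case. Borderline convolutions of kernels are bounded up to a logarithmic loss — so naively this only gives $|\Theta(z)| \lesssim 1 + |\log|z||$, which is \emph{not} quite the claimed statement. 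This borderline blow-up is the main obstacle, and resolving it is where the actual content lies.

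To remove the logarithm I would exploit cancellation: $\Theta$ is not just any borderline convolution, because $K'$ is the \emph{derivative} of a kernel with extra moment-annihilation properties (recall $K$ integrates to zero against parabolic polynomials of degree $2$, and $K(t,-x) = K(t,x)$ so $K'$ is odd in $x$, hence integrates to zero). The cleanest route is to write $\Theta(z) = \int G(w)\, K'(w-z)\, dw$ with $G = K' * K'$ and, when $z$ is near the origin (the only place trouble can arise — for $|z| \gtrsim 1$ the supports force $|\Theta(z)| \lesssim 1$ trivially), split the integral into the region $|w| \le 2|z|$ and $|w| > 2|z|$. On the inner region one uses $|G(w)| \lesssim |w|^{-1}$ and $\int K'(w-z)\, dw$ over a ball is controlled using that $K'$ has order $-2 < |\s|$, so it is locally integrable and its integral over a ball of radius $R$ is $O(R)$; combined with $|G(w)| \lesssim |w|^{-1}$ this contributes $O(1)$. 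On the outer region $|w| > 2|z|$ one writes $K'(w-z) = K'(w) + (K'(w-z) - K'(w))$. The first piece gives $\int_{|w|>2|z|} G(w) K'(w)\, dw$, and here the crucial point is that $\int_{|w| > 0} G(w) K'(w)\, dw$ is \emph{absolutely convergent near the origin only marginally}, so instead one uses the moment cancellation of $G$ or $K'$: since $K'$ is odd in the space variable and $G$ is even in the space variable (being $K'*K'$ with $K'$ odd), the product $G\cdot K'$ is odd in space, so its integral over any symmetric region vanishes — this kills the leading logarithmically-divergent contribution outright. The remainder $K'(w-z) - K'(w)$ gains a factor $|z|/|w|$ by the mean value theorem and the derivative bound $|\nabla K'(w)| \lesssim |w|^{-3}$, turning the integrand into something of size $|w|^{-1} \cdot |z| |w|^{-3} = |z| |w|^{-4}$, which integrates over $|w| > 2|z|$ (in dimension $|\s| = 3$) to $O(|z| \cdot |z|^{-1}) = O(1)$. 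Summing the pieces gives $|\Theta(z)| \lesssim 1$ uniformly.

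In practice I would try to package this as a direct citation: the statement ``$\Theta$ is bounded'' should follow from \cite[Lemma~10.14]{Regularity} (the convolution-of-kernels lemma) applied twice, \emph{provided} one checks that the renormalisation/recentering hypotheses there let one treat the borderline case with the stated oddness, exactly as is done for the symbol $\<21a>$ elsewhere in the theory of regularity structures for KPZ. So the skeleton of the write-up is: (1) reduce to $|z| \le 1$; (2) identify $G = K'*K'$ as a kernel of order $-1$ with the correct parity; (3) apply the borderline convolution bound, and (4) upgrade from $\log$ to $O(1)$ using that $G \cdot K'$ is odd in the spatial variable together with a one-term Taylor expansion on the far region. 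The one genuinely delicate point — the main obstacle — is step (4): making the parity/cancellation argument rigorous at the borderline, since one cannot simply integrate $|G(w)||K'(w)|$ near $w=0$. Everything else is bookkeeping with the standard kernel estimates.
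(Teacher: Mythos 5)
Your argument is correct and is essentially the paper's proof: both reduce to small $|z|$, isolate the two singularities, and on the far region combine the spatial oddness of the product of $K'$ with the (spatially even) kernel $K_2=K'*K'$ over a reflection-symmetric domain — which kills the borderline logarithm — with a first-order gradient/Taylor bound of size $|z|\,|x|^{-4}$ for the recentering error. The only cosmetic differences are that the paper Taylor-expands $K_2$ rather than $K'$ and works with three explicit domains, whereas your single inner region $\{|w|\le 2|z|\}$ contains both singularities and therefore needs one further (standard) sub-splitting that you leave implicit.
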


\begin{proof}
Since $K'$ is an odd function in the space variable,
the function $K_2(x) \eqdef \int K'(x-y)K'(y) \,dy$ is even in the space variable.
We decompose the integral $\Theta(z)=\int K'(x)K_2(x-z)\,dx$
as integrals over three domains (similarly with the proof of \cite[Lemma~10.14]{Regularity}).
The first domain is $\Omega_1\eqdef \{|x| < 2|z|/3\}$. On this domain,
one has $|K_2 (x-z)| \lesssim |x-z|^{-1} \lesssim |z|^{-1}$,
and the integration of $|K'(x)|$ over this domain is bounded by $|z|$,
therefore the integration over $\Omega_1$ is bounded.
The second domain is $\Omega_2\eqdef\{|x-z| < 2|z|/3\}$, and the proof is analogous.

On the last domain $\Omega_3 = \R^2 \setminus (\Omega_1\cup \Omega_2)$,
if we replace $K_2(x-z)$ by $K_2(x)$, we have 
$|\int_{\Omega_3} K'(x)K_2(x) \,dx| \lesssim 1$. Indeed, since the integrand
is odd in the space variable and $\Omega_1$ is a  domain
that is symmetric under the space reflection, one has 
$\int_{\R^2\setminus \Omega_1} K'(x)K_2(x) \,dx =0$;
furthermore, one has $\int_{\Omega_2} |K'(x)K_2(x)| \,dx \lesssim 1$, so the claim follows. The difference
$|K_2(x-z) -K_2(x)|$ is bounded by $|z| |x|^{-2}$ using the gradient theorem,
so the error caused by the replacement
is bounded by $|z|\int_{\Omega_3} |K'(x)|  |x|^{-2} \,dx \lesssim |z||z|^{-1}= 1$.
Collecting all the bounds above completes the proof for 
the boundedness of $\Theta(z)$.
%
\end{proof}

\begin{proof}[of Lemma~\ref{lem:const-correct}]
We can rewrite \eref{e:equ-chat} as
\begin{equ}[e:rewrite]
\hat c^{(\eps)} = \int \Theta(t,x) \, \hat \kappa^{(\eps)}(t,x- 4\lambda^2 \hat c^{(\eps)} t) \,dt\,dx \eqdef F_\eps(\hat c^{(\eps)})\;.
\end{equ}
It follows immediately from Lemma~\ref{lem:bddfcn} and the properties of $\hat\kappa^{(\eps)}$ 
that the function $F_\eps$ is bounded, uniformly 
in $\eps$ and in $\hat c^{(\eps)}$.

By Assumptions~\ref{assump-0} and \ref{assump-mix}, for sufficiently small $\eps>0$, 
$\d_x \hat \kappa^{(\eps)}$ is bounded by $C\eps^{-4} \one_\eps$ for some constant $C$, 
where $\one_\eps$ is the characteristic function for
the parabolic ball of radius $\eps$. Combining this with the fact that $|t| \lesssim \eps^2$ for
$(t,x)$ in the support of $\hat\kappa^{(\eps)}$, we conclude that 
$F_\eps'$ is bounded by $C\eps$ for some $C$, again uniformly in $\eps < 1$ 
and in $\hat c^{(\eps)}$.
It follows that, provided that $\eps$ is sufficiently small, so that $|F_\eps'| < 1/2$,
 \eqref{e:rewrite} has indeed a unique solution (because the function 
 $x-F_\eps(x)$ is monotonically increasing in $x$ with derivative bounded below by a strictly positive number), and that this solution
converges to 
\begin{equ}
F_\star = \lim_{\eps \to 0} F_\eps(0)\;,
\end{equ}
as $\eps \to 0$.
In fact, the existence of this limit $F_\star$ is straightforward to show.
The right hand side of \eref{e:rewrite} can be rewritten as 
$F_\eps(0)$ plus an error which is bounded by 
$|\int_0^{\hat c^{(\eps)}} F'_\eps(x) d x| \lesssim \eps |\hat c^{(\eps)}|$.
Since $F_\eps$ is bounded uniformly in $\eps$, so is $\hat c^{(\eps)}$;
therefore this error vanishes, and taking limits on both sides of  \eref{e:rewrite}
shows that $\hat c^{(\eps)}$ converges to $F_\star$.

Regarding the statement for $C_0^{(\eps)}$,
by the scaling argument in \cite{KPZJeremy},
if we define $\hat C_0^{(\eps)}$ to be the same as $C_0^{(\eps)}$
except that $\kappa_2^{(\eps)}$ is replaced by $\hat\kappa^{(\eps)}$,
then $\hat C_0^{(\eps)} \approx \eps^{-1}$. Note that
\begin{equ} [e:O1correction]
C_0^{(\eps)} - \hat C_0^{(\eps)}
=  \int_0^{4\lambda^2 \hat c^{(\eps)} t} \!\!\! \int_{\R^4}
K'(x) K'(x-z) \, \partial_x \hat\kappa^{(\eps)}(t,x-\theta)\, d\theta \,dxdz
\end{equ}
and that $|\partial_x\hat\kappa^{(\eps)}| \lesssim \eps^{-4}\one_\eps$,
$|t|\lesssim \eps^{2}$ on the support of $\hat\kappa^{(\eps)} $,
and $\hat c^{(\eps)}$ is bounded,
$C_0^{(\eps)} - \hat C_0^{(\eps)}$
converges to a finite limit.
Therefore there exists a constant $Q$ such that 
\begin{equ}
\lim_{\eps\to 0} |C_0^{(\eps)}-C_0\eps^{-1} -Q|
=0 \;.
\end{equ}
If $\kappa_2$ is even in the space variable, then since $K'$ is odd
in the space variable one has $F_\eps(0)=0$,
so $\lim_{\eps\to 0} \hat c^{(\eps)} =0$
and the order $1$ correction $Q$ vanishes,
thus completing the proof.
\end{proof}

\begin{remark}
In fact, the right hand side of \eref{e:O1correction}
divided by $\lambda^2$ converges to a limit independent
of $\lambda$, which can be shown in an analogous way
as the fact that $\lim_{\eps\to 0}F_\eps(\theta)$
does not depend on $\theta$.
Since $C_0^{(\eps)}$ appears in the renormalised
equation \eref{e:renom-eq} in the form $\lambda C_0^{(\eps)}$ (see \eref{e:choiceell} below), it turns out that
the $O(1)$ correction to $C_0^{(\eps)}$  contributes to the term $\lambda^3 c$
in \eref{e:vxvy}.
\end{remark}

Regarding the other renormalisation constants, when $v_h=0$, it was also shown in \cite[Theorem~6.5]{KPZJeremy} that
there exists a constant $\bar c\in\R$  depending on both $\kappa_2$
 and the choice of the cutoff kernel $K$
such that $\lim_{\eps\to 0} (4C_{2,1}^{(\eps)} + C_{3,1}^{(\eps)}) =\bar c$.
The prefactors in front of the logarithmically divergent constants were
obtained in \cite{KPZ}.
Following  similar arguments as in \cite{WongZakai} and \cite{KPZJeremy},
one has  $C_1^{(\eps)} \approx \eps^{-1/2}$, with the limiting prefactor
$C_1$ equal to an integral which can be represented by the same graph
as the one for $C_1^{(\eps)}$, except that each arrow is understood as $P'$ and the red triangle is interpreted as $\kappa_3$ (i.e. setting $\eps=1$).
It is easy to see that this integral is finite since $\kappa_3$ is 
continuous and $P'$ is integrable,
in particular $C_1$ does not depend on the choice of $K$. 
In a similar way $C_{2,2}^{(\eps)}$ and $C_{3,2}^{(\eps)}$ converge to finite constants 
$C_{2,1}$, $C_{3,2}\in\R$
depending on $\kappa_4$, but independent of the choice of the kernel $K$.
These are all consequences of the scaling property of the heat kernel
and the fact that $K$ converges to heat kernel in suitable spaces
under parabolic scaling.
Following analogous arguments
as in the proof of Lemma~\ref{lem:const-correct},
one can show that  the next order corrections to these constants 
due to the nontrivial shift $v_h$
are $O(\eps^{1/2})$
and thus all vanish in the limit.


The specification of these renormalisation constants 
(in particular with $\hat c^{(\eps)}$ given by Lemma~\ref{lem:const-correct})
determines an element $M_\eps\in\mathfrak{R}$ 
as in \eqref{e:defLi} by setting
\begin{equ}[e:choiceell]
\ell_1 = C_0^{(\eps)}\;,\quad
\ell_2 = \hat c^{(\eps)} \;,\quad
\ell_3 = C_1^{(\eps)}\;,\quad
\ell_4 = C_2^{(\eps)}\;,\quad
\ell_5 = C_3^{(\eps)}\;.
\end{equ}
Given $\eps > 0$ and a realisation of $\zeta_\eps$, this yields a renormalised model 
$(\hat \Pi^{(\eps)}, \hat \Gamma^{(\eps)})$ by acting on
 the canonical model $( \Pi^{(\eps)},  \Gamma^{(\eps)})$ lifted from $\zeta_\eps$
by $M_\eps$.


\begin{remark}
One may wonder whether the constants $C_{2,2}^{(\eps)}$ and $C_{3,2}^{(\eps)}$ which involve the fourth cumulants 
{\it have to} be subtracted from 
the objects $\<4>$ and $\<211>$. Note that although their homogeneities are zero, they  
converge to finite constants rather than diverge logarithmically.
To obtain some limit, one does of course not have to subtract them, but
we do subtract them here in order to obtain a limit which is independent (in law) of
the choice of process $\zeta$.
\end{remark}

\section{A priori bounds on the renormalised models}  \label{sec:tightness}

In the previous section we have defined the renormalised models 
$\hat Z_\eps = (\hat \Pi^{(\eps)}, \hat \Gamma^{(\eps)})$. The goal of 
this section is to obtain
uniform bounds on the moments of this family of renormalised models. 
Actually, we will obtain a bit more, namely we obtain bounds also
on a smoothened version of this model
which will be useful in Section~\ref{sec:identify}
when we show convergence of the models $\hat Z_\eps$ to a limiting Gaussian model. 
More precisely, we define a second family of
renormalised random models $\hat Z_{\eps,\bar\eps} = (\hat \Pi^{(\eps,\bar\eps)}, \hat \Gamma^{(\eps,\bar\eps)})$
in the same way as $\hat Z_\eps$, except that 
the field  $\zeta_\eps$  used to build $\hat Z_\eps$ is replaced by
$ \zeta_{\eps,\bar\eps} \eqdef \zeta_\eps*\rho_{\bar\eps}$,
where $\rho$ is a compactly supported  smooth function on $\R^2$ integrating to $1$ that is 
even in the space variable, and
$\rho_{\bar\eps} = \bar\eps^{-3} \rho (\bar\eps^{-2} t, \bar\eps^{-1} x)$.

The renormalisation constants used to build the models
$\hat Z_{\eps,\bar\eps}$ are still defined as in Section~\ref{sec:renorm},
except that they are now dependent on cumulants of $\zeta_{\eps,\bar\eps}$,
so we denote them by $\hat c^{(\eps,\bar\eps)}$ 
and $C_i^{(\eps,\bar\eps)}$ with $i=0,\ldots,3$.
Note that while $\hat c^{(\eps)}$ was obtained implicitly by 
solving the equation \eqref{e:equ-chat}, we now consider $\zeta_\eps$ (and therefore $v_h$) 
as given and view \eqref{e:equ-chat} as the definition of $\hat c^{(\eps)}$ with 
given right hand side.

For instance, $C_{2,1}^{(\eps,\bar \eps)}$ and $\hat c^{(\eps,\bar \eps)}$ are given by
\begin{equ}
C_{2,1}^{(\eps,\bar\eps)} \eqdef \;
\begin{tikzpicture}  [baseline=20]
\node[root]	(root) 	at (0,0) {};
\node[dot]		(mid)  	at (0,1) {};
\node[dot]		(top)  	at (0,2) {};
\node[cumu2n]	(left)  		at (-.8,1) {};	
	\draw[cumu2b] (left) ellipse (4pt and 8pt);
\node[cumu2n]	(right)  		at (.8,1.5) {};	
	\draw[cumu2b] (right) ellipse (4pt and 8pt);
\draw[kernel] (mid) to  (root);
\draw[kernel] (top) to  (mid);
\draw[kernel] (left.south) node[dot]   {} to  (root);
\draw[kernel] (left.north) node[dot]   {} to  (top);
\draw[kernel] (right.north) node[dot]   {} to  (top);
\draw[kernel] (right.south) node[dot]   {} to  (mid);
\end{tikzpicture}
=
\begin{tikzpicture}  [baseline=20]
\node[root]	(root) 	at (0,0) {};
\node[dot]		(mid)  	at (0,1) {};
\node[dot]		(top)  	at (0,2) {};
\node[cumu2n]	(left)  		at (-.8,1) {};	
	\draw[cumu2] (left) ellipse (4pt and 8pt);
\node[cumu2n]	(right)  		at (.8,1.5) {};	
	\draw[cumu2] (right) ellipse (4pt and 8pt);
\draw[kernel] (mid) to  (root);
\draw[kernel] (top) to  (mid);
\draw[kernel,dashed] (left.south) node[dot]   {} to  (root);
\draw[kernel,dashed] (left.north) node[dot]   {} to  (top);
\draw[kernel,dashed] (right.north) node[dot]   {} to  (top);
\draw[kernel,dashed] (right.south) node[dot]   {} to  (mid);
\end{tikzpicture}
\qquad
\hat c^{(\eps,\bar\eps)} \eqdef
\begin{tikzpicture}  [baseline=10]
\node[root]	(root) 	at (0,0) {};
\node[dot]		(left)  	at (0,1.2) {};
\node[cumu2n]	(right)  		at (.8,.6) {};	
	\draw[cumu2b] (right) ellipse (4pt and 8pt);	
\draw[kernel] (left) to  (root);
\draw[kernel]   (right.south)  node[dot] {}  to (root) ;
\draw[kernel]   (right.north)  node[dot] {} to (left);
\end{tikzpicture} 
=
\begin{tikzpicture}  [baseline=10]
\node[root]	(root) 	at (0,0) {};
\node[dot]		(left)  	at (0,1.2) {};
\node[cumu2n]	(right)  		at (.8,.6) {};	
	\draw[cumu2] (right) ellipse (4pt and 8pt);	
\draw[kernel] (left) to  (root);
\draw[kernel,dashed]   (right.south)  node[dot] {}  to (root) ;
\draw[kernel,dashed]   (right.north)  node[dot] {} to (left);
\end{tikzpicture} 
\end{equ}
respectively, where the dashed arrows represent $K'_{\bar\eps}$, with $K_{\bar\eps} = K*\rho_{\bar\eps}$,
and \begin{tikzpicture}[baseline=-3.2]
\node		(mid)  	at (0,0) {};
\draw[cumu2b] (mid) ellipse (8pt and 4pt);
\node[dot] at (mid.west) {};
\node[dot] at (mid.east) {};
\end{tikzpicture} represents the covariance of $\zeta_{\eps,\bar \eps}$.
(This covariance depends on $v_h = \hat c^{(\eps)}$, but not
on $\hat c^{(\eps,\bar\eps)}$.)
It is straightforward to verify that the kernel $K_{\bar\eps}$
approximates the kernel $K$ in the sense that
\begin{equ}[e:approxKernel]
| K(z)-K_{\bar\eps}(z) | \lesssim \bar\eps^\eta |z|^{-1-\eta}\;,\qquad
| K'(z)-K'_{\bar\eps}(z) | \lesssim \bar\eps^\eta |z|^{-2-\eta}\;,
\end{equ}
for some sufficiently small $\eta>0$.
Our main result is the following a priori bound on  $\hat Z^{(\eps)}$ and 
on its difference with $\hat Z^{(\eps,\bar\eps)}$.

\begin{theorem} 
\label{thm:tightness}
Let $(\hat \Pi^{(\eps)}, \hat \Gamma^{(\eps)})$ and $ (\hat \Pi^{(\eps,\bar\eps)}, \hat \Gamma^{(\eps,\bar\eps)})$ be as above. 
There exist $\kappa,\eta>0$   such that 
for every $\tau\in\{\Xi,\<2>,\<21>,\<21a>,\<4>,\<211>\}$
 and every   $p>0$,
one has
\begin{equ}  [e:tightness]
\E | (\hat \Pi^{(\eps)}_x \tau) (\varphi_x^\lambda) |^p  
	\lesssim \lambda^{p (|\tau| + \eta)} \;,
\qquad
\E | (\hat\Pi^{(\eps)}_x \tau  -  \hat\Pi^{(\eps,\bar\eps)}_x \tau ) (\varphi_x^\lambda) |^p  
	\lesssim
	\bar\eps^\kappa
	 \lambda^{p (|\tau| + \eta)} 
\end{equ}
uniformly in all $\eps,\bar\eps\in(0,1]$, all $\lambda\in(0,1]$, all test functions $\varphi\in\CB$, and all $x\in\R^2$.
\end{theorem}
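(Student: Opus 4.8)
The plan is to reduce each of the two bounds in \eqref{e:tightness} to a finite list of \emph{labelled-graph} estimates and then verify these symbol by symbol. Since $(\hat\Pi^{(\eps)},\hat\Gamma^{(\eps)})$ is obtained from the canonical lift of $\zeta_\eps$ by the renormalisation map $M_\eps$ of \eqref{e:defRenorm}, for each $\tau\in\{\Xi,\<2>,\<21>,\<21a>,\<4>,\<211>\}$ the quantity $(\hat\Pi^{(\eps)}_x\tau)(\varphi^\lambda_x)$ is, by \eqref{e:admissible3} and \eqref{e:canonical}, a fixed polynomial of bounded degree in the point values of $\zeta_\eps$, integrated against products of $K'$ and of increments $K(\bar z-\cdot)-K(z-\cdot)$, from which the renormalisation constants $C^{(\eps)}_i$, $\hat c^{(\eps)}$ and the square $\propto(\hat c^{(\eps)})^2$ are subtracted. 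The first step is to write this expression out explicitly in each of the six cases: for $\Xi$ it is just $\zeta_\eps(\varphi^\lambda_x)$; for $\<2>$ it is the Wick-type square of $\CI'(\Xi)$ minus $C_0^{(\eps)}$; and so on.

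Next, to estimate $\E|(\hat\Pi^{(\eps)}_x\tau)(\varphi^\lambda_x)|^p$ I would replace the $p$th power by a product of $p$ independent copies and expand the expectation using the moments-to-cumulants formula of Definition~\ref{def:cumu}. Each resulting term is an integral over many space-time variables of a product of $K'$-kernels and $K$-increments times a product of cumulant functions $\kappa^{(\eps)}_{n_j}$, and I would encode it as a labelled graph in the spirit of \cite[Sec.~10]{Regularity}: integration vertices, the distinguished vertices $0$ and $x$, $K'$-edges, and \emph{cumulant hyperedges} carrying the $\kappa^{(\eps)}_{n_j}$. The structural inputs are Lemma~\ref{lem:cumu-close} (each $\kappa^{(\eps)}_n$ is supported where all its arguments lie in a single parabolic ball of radius $\lesssim\eps$), together with the collapse bound Lemma~\ref{lem:collapse} and Proposition~\ref{cor:Wick-field}: these let one contract each hyperedge to a single vertex at the cost of an explicit power of $\eps$, reducing the whole family of graphs to kernel-only graphs whose convergence is controlled, via the key estimate Proposition~\ref{prop:main}, by the abstract power-counting criterion Corollary~\ref{cor:ulti-bound} proved in Section~\ref{sec:bounds}. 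The point is that a cumulant block of size $n\ge3$ localises all of its $n$ arguments simultaneously and so costs fewer powers of $\eps^{-1}$ than the collection of independent covariances appearing in the Gaussian theory; hence the non-Gaussian graphs never dominate, and the criterion of Section~\ref{sec:bounds} is built to track exactly this bookkeeping, uniformly in $\eps$ including the borderline regime $\lambda\lesssim\eps$ where $\zeta_\eps$ is essentially constant on the support of $\varphi^\lambda_x$ and one uses instead the finiteness of $\E|\zeta_\eps(0)|^p$ and the support constraint.

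With this machinery the remaining work is the symbol-by-symbol verification that the graphs produced by each $\tau$ meet the hypotheses of Corollary~\ref{cor:ulti-bound} with the homogeneity $|\tau|+\eta$ on the right of \eqref{e:tightness}, the small $\eta>0$ being available because the buffer $\bar\kappa$ is already included in $|\tau|$. For $\Xi$ and $\<2>$ this follows from $\int\kappa_2=1$ and the subtraction $C_0^{(\eps)}$. For $\<21>$, $\<21a>$, $\<4>$ and $\<211>$ one exploits the increment $K(\bar z-\cdot)-K(z-\cdot)$ from \eqref{e:admissible3} and the positive-renormalisation subtractions built into $M_\eps$ just as in the Gaussian case; the genuinely new feature is the appearance of cumulant blocks of sizes $3$ and $4$, i.e.\ graphs involving $\kappa_3^{(\eps)}$ and $\kappa_4^{(\eps)}$, which are absent in the Gaussian theory. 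These are less singular, and for the marginal ones the subtraction of $C_1^{(\eps)}$ (a $\kappa_3$ graph, divergent at leading order), of $C_2^{(\eps)}=2C_{2,1}^{(\eps)}+C_{2,2}^{(\eps)}$ and $C_3^{(\eps)}=2C_{3,1}^{(\eps)}+C_{3,2}^{(\eps)}$, and of the square $\propto(\hat c^{(\eps)})^2$ cancels the borderline divergence.

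For the difference bound I would run the identical expansion for $\hat\Pi^{(\eps)}_x\tau-\hat\Pi^{(\eps,\bar\eps)}_x\tau$ and telescope over the kernels and over the cumulants: each difference graph is a sum of graphs in which exactly one $K'$, or one $K$-increment, is replaced by $K'-K'_{\bar\eps}$ (resp.\ the corresponding increment of $K-K_{\bar\eps}$) while the remaining kernels are either $K'$ or $K'_{\bar\eps}$, and in which the cumulant hyperedges are expanded into differences of cumulants of $\zeta_\eps$ and $\zeta_{\eps,\bar\eps}$. By \eqref{e:approxKernel} each such replacement gains a factor $\bar\eps^\eta$ at the cost of one extra power $|z|^{-\eta}$ in the kernel bound; choosing $\eta$ small keeps every inequality in Corollary~\ref{cor:ulti-bound} strict, so the same graph estimates yield the claimed $\bar\eps^\kappa\lambda^{p(|\tau|+\eta)}$. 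The main obstacle throughout is that, $\zeta$ being non-Gaussian, there is no equivalence of moments, so controlling the $p$th moment forces one to handle arbitrarily high joint cumulants; the real content, where the bulk of Sections~\ref{sec:tightness} and \ref{sec:bounds} goes, is to show that the $\eps$-divergent prefactors produced by large cumulant blocks are always absorbed by the extra integrations and kernel smoothings accompanying them, uniformly in $\eps$.
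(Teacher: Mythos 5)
Your proposal is correct and follows essentially the same route as the paper: explicit Wick/cumulant expansion of each renormalised object, the diagram formula (Corollary~\ref{cor:Wick-field}) for the $p$th moment, the collapse bound Lemma~\ref{lem:collapse} to trade each cumulant block for a single vertex and a positive power of $\eps$, power-counting via Proposition~\ref{prop:main}/Corollary~\ref{cor:ulti-bound}, hand-treatment of the positive and negative renormalisations, and a telescoping in $K'-K'_{\bar\eps}$ for the difference bound. The only small simplification you miss is that for the difference bound one need never compare cumulants of $\zeta_\eps$ and $\zeta_{\eps,\bar\eps}$, since $K'*\zeta_{\eps,\bar\eps}=K'_{\bar\eps}*\zeta_\eps$ lets one keep the field fixed and move all the mollification onto the kernels (with $\tau=\Xi$ handled separately by a mollifier estimate in $\CC^\alpha$).
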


\begin{remark}
The first bound of \eref{e:tightness} actually implies tightness for the family of renormalised
models $\hat Z_\eps =(\hat \Pi^{(\eps)}, \hat \Gamma^{(\eps)})$.
The proof for this Kolmogorov type tightness criterion 
is similar to that of \cite[Theorem 10.7]{Regularity}.
Since we do not really need to know tightness here, we will not elaborate on this point.
\end{remark}

Note that although $\<Kxi>$ and $\<K2>$ both have negative homogeneities, 
they are of the form $\CI(\tau)$, so that the corresponding bounds follow 
from the extension theorem \cite[Theorem~5.14]{Regularity}, combined with the analogous bounds on $\tau$.
The remainder of this section is devoted to the proof of \eref{e:tightness}
for each $\tau\in\{\<2>,\<21>,\<21a>,\<4>,\<211>\}$.
In fact we will mainly focus on the first bound of \eref{e:tightness},
and the second bound will follow analogously.
We are going to use extensively the notion of Wick product for a collection of (not necessarily Gaussian) random variables which is defined as follows.

\begin{definition} \label{def:Wick-product}
Given a collection $\CX = \{X_\alpha\}_{\alpha \in \CA}$ of random variables as before, 
the Wick product 
$\Wick{X_A}$ for $A \subset \CA$ is defined recursively by setting $\E \Wick{X_{\emptyset}} = 1$ 
and postulating that
\begin{equ} [e:defWick]
X^A  = 
\sum_{B \subset A}  \Wick{X_B}  \sum_{\pi \in \CP(A \setminus B)} 
	  \prod_{\bar B \in \pi}
		 \Cum \big(X_{\bar B}\big)    \;.
\end{equ}
We will also sometimes write $\Wick{\prod_{i\in B} X_i} \eqdef \Wick{X_B}$,
keeping in mind that  $\Wick{\prod -}$ is really {\it one single} notation
rather than composition of two consecutive operations.
\end{definition}

Note that by \eref{e:mome2cumu}
we could also write 
$X^A  =\sum_{B \subset A}  \Wick{X_B} \E(X^{A\setminus B})$,
but \eref{e:defWick} is actually the identity
frequently being used in the paper.
Again, \eqref{e:defWick} is sufficient to define $\Wick{X_A}$
by recursion over the size of $A$. Indeed, the term with 
$B=A$ is precisely the quantity we want to define, and 
all other terms only involve Wick products of strictly smaller sets of random variables.
By the definition we can easily see that as soon as $A \neq \emptyset$, one
always has $\E \Wick{X_A}=0$.
Note also that if we take expectations on both sides of \eref{e:defWick},
then all the terms with $B \neq \emptyset$ vanish
and we obtain exactly the identity \eref{e:mome2cumu}.
For example, given centred random variables $X_i$, we have 
$\Wick{X_i}=X_i$, $\Wick{X_1 X_2}=X_1 X_2-\E(X_1 X_2)$,
\begin{equ} 
\Wick{X_1 X_2X_3}
	=X_1 X_2X_3
		-\sum_{i\neq j\neq k} X_i \,\E\big(X_j X_k\big)
		-\E(X_1 X_2X_3) \;.
\end{equ}

We refer to \cite{AvramTaqqu1987}, \cite[Appendix~B]{AvramTaqqu2005}
or \cite{LukkarinenMarcozzi}
for the properties of non-Gaussian Wick products.
The following result (Lemma~\ref{lem:WickExp}) is often called ``diagram formula" in the literature. 
As far as we know, the original proof of this particular statement 
was given by \cite{Surgailis1983} and \cite{GiraitisSurgailis1986}, but 
similar statements were known long before that.
Before stating the lemma, we define an important type of partition.

\begin{definition} \label{def:partition_M}
Let $M\times P$ be the Cartesian product of two sets $M$ and $P$, and $D\subset M\times P$.
We say that $\pi \in \CP_M(D)$ if $\pi \in \CP(D)$ and 
for every $B\in \pi$, there exist $(i,k),(i',k')\in B$ such that $k\neq k'$ (in particular $|B|>1$).
\end{definition}
In plain words, the requirement says that  for each $B\in\pi$,
the elements in $B$ can not all have the same ``$P$-index".
In most cases, we are interested in the situation $D=M\times P$.

\begin{lemma} \label{lem:WickExp}
Let $M\eqdef \{i: 1\le i\le m\}$ and $P\eqdef \{k: 1\le k\le p\}$.
Let $X_{(i,k)}$ where $1\le i\le m$, $1\le k\le p$ be centred random variables 
with bounded moments of all orders. One has
\begin{equ} [e:WickExp]
\E \Big( \prod_{k=1}^p \Wick{\prod_{i=1}^m X_{(i,k)}}  \Big)
	= 	 \sum_{\pi \in\CP_M(M\times P)}  \prod_{B\in\pi} \Cum \big( X_B \big) \;.
\end{equ}
\end{lemma}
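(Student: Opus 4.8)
The plan is to prove the diagram formula \eqref{e:WickExp} by induction on $p$, the number of Wick-product factors, using the recursive definition \eqref{e:defWick} of the Wick product together with the moment-to-cumulant relation \eqref{e:mome2cumu}. For the base case $p=1$, the claim reads $\E\Wick{\prod_{i=1}^m X_{(i,1)}} = \sum_{\pi\in\CP_M(M\times\{1\})}\prod_{B\in\pi}\Cum(X_B)$; but for $p=1$ every block $B$ would have to contain two elements with distinct $P$-index, which is impossible, so $\CP_M(M\times\{1\}) = \emptyset$ unless $m=0$, and both sides vanish for $A\neq\emptyset$ by the remark after Definition~\ref{def:Wick-product}. (The case $m=0$ gives $1=1$.) So the base case is immediate.

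For the inductive step, I would isolate the last factor and expand it via \eqref{e:defWick}: writing $A_p = M\times\{p\}$ and $X^{A_p} = \prod_{i=1}^m X_{(i,p)}$, one has
\begin{equ}
X^{A_p} = \sum_{B\subset A_p}\Wick{X_B}\sum_{\sigma\in\CP(A_p\setminus B)}\prod_{\bar B\in\sigma}\Cum(X_{\bar B})\;,
\end{equ}
hence, solving for the $B=A_p$ term,
\begin{equ}
\Wick{X_{A_p}} = X^{A_p} - \sum_{B\subsetneq A_p}\Wick{X_B}\sum_{\sigma\in\CP(A_p\setminus B)}\prod_{\bar B\in\sigma}\Cum(X_{\bar B})\;.
\end{equ}
Multiplying by $Y \eqdef \prod_{k=1}^{p-1}\Wick{\prod_{i=1}^m X_{(i,k)}}$ and taking expectations, I get $\E\big(Y\,X^{A_p}\big)$ minus correction terms. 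The first term $\E(Y\,X^{A_p})$ I would handle by applying \eqref{e:defWick} in reverse to convert the plain product $Y\,X^{A_p}$ into a sum over Wick products of subsets, use that $\E\Wick{X_C}=0$ for $C\neq\emptyset$, and then recognize the surviving combinatorics. The cleanest route, though, is probably to expand $Y$ itself fully into a sum over partitions using \eqref{e:mome2cumu} applied block-by-block inside each Wick factor — or rather to run the induction so that $\E(Y\cdot(\text{something}))$ is controlled by the inductive hypothesis applied to the $p-1$ factors together with whichever extra variables get attached.

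I would organize the bookkeeping as follows: think of \eqref{e:mome2cumu}/\eqref{e:defWick} as saying that a plain product $\prod_{(i,k)\in M\times P}X_{(i,k)}$ expands over \emph{all} partitions $\pi\in\CP(M\times P)$ of the product of cumulants; the Wick ordering within each $P$-column exactly cancels those partitions that have a block lying entirely inside a single column. Concretely, $\prod_k\Wick{\prod_i X_{(i,k)}}$ equals the sum over $\pi\in\CP(M\times P)$ of $\prod_{\bar B\in\pi}\Cum(X_{\bar B})$ restricted to partitions \emph{none of whose blocks is contained in a single column}, \emph{plus} lower-order Wick-product terms coming from the blocks that do straddle columns being "left outside". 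Taking the expectation kills all terms containing a nontrivial leftover Wick factor, leaving exactly $\sum_{\pi\in\CP_M(M\times P)}\prod_{\bar B\in\pi}\Cum(X_{\bar B})$, which is the right-hand side. I expect the main obstacle to be the combinatorial bookkeeping in this step: making precise the claim that multiplying out the columns and re-collecting into Wick products produces exactly the column-crossing partitions as the "constant" (expectation-surviving) part, with everything else carrying a genuine Wick factor. This is where one must be careful that each partition of $M\times P$ is counted once and with the correct sign/coefficient; a clean way to do it is a double induction (on $p$ and, inside, on $m$ or on $|A_p|$) or to invoke Möbius inversion on the partition lattice, exploiting that Wick products and cumulants are the two sides of a Möbius-type inversion. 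I would cite \cite{Surgailis1983, GiraitisSurgailis1986} for the statement and present the induction as above, deferring the purely combinatorial lemma (plain products expand over all partitions, Wick-ordered columns remove the within-column ones) to a short standalone argument.
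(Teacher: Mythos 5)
The paper offers no proof of this lemma at all: it is presented as the classical ``diagram formula'' and attributed to \cite{Surgailis1983,GiraitisSurgailis1986}, so any actual argument is by construction a different route from the paper's. Your outline follows the standard one, and its skeleton is right: the only probabilistic input is $\E\Wick{X_B}=0$ for $B\neq\emptyset$, and everything reduces to the purely combinatorial product formula
\begin{equ}
\prod_{k=1}^p \Wick{X_{A_k}} = \sum_{B\subset A}\Wick{X_B}\sum_{\sigma\in\CP^*(A\setminus B)}\,\prod_{\bar B\in\sigma}\Cum\big(X_{\bar B}\big)\;,
\end{equ}
with $A_k=M\times\{k\}$, $A=\bigcup_k A_k$, and $\CP^*$ denoting the partitions none of whose blocks lies inside a single column; taking expectations then gives \eqref{e:WickExp} at once. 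You state this identity correctly and check the right degenerate cases, but you never prove it, and it is the entire content of the lemma.

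Two specific weak points if you try to complete the argument. First, the induction on $p$ alone does not close: after solving \eqref{e:defWick} for $\Wick{X_{A_p}}$ and multiplying by $Y$, the correction terms are of the form $\E(Y\Wick{X_B})$ with $B\subsetneq A_p$, which is not an instance of the statement for $p-1$ columns of equal size $m$. Your ``double induction'' remark points in the right direction, but it must be implemented by strengthening the statement to arbitrary disjoint blocks $A_1,\dots,A_p$ of possibly different cardinalities (with $\CP_M$ replaced by partitions having no block inside a single $A_k$); then induction on $p$ and on $|A_p|$ does close. Second, handling $\E(YX^{A_p})$ by ``applying \eqref{e:defWick} in reverse'' does not typecheck: \eqref{e:defWick} expands a plain product of the variables, whereas $Y$ is itself a product of Wick factors, so rewriting $YX^{A_p}$ as a combination of Wick products \emph{is} the multiplication formula you are trying to establish. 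The non-circular implementations are the two you name only in passing --- a direct induction on the total number of variables for the product of two Wick monomials, or M\"obius inversion on the partition lattice --- and one of them has to be written out. Since the paper itself defers exactly this step to the references, your proposal matches the paper's level of rigour, but it is an outline with the key combinatorial lemma asserted rather than a proof.
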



Note that if the Wick products were replaced by usual products (say in the case $m=1$),
one would recover the definition of cumulants \eref{e:mome2cumu}.
In the case of centred Gaussians, only the partitions $\pi$ with $|B|=2$ for each $B\in \pi$ is allowed since joint cumulants of higher order vanish, and the statement recovers the fact that \eref{e:WickExp} can be computed by summing over all the ways of pairwise contracting $|M\times P|$ variables excluding ``self-contractions" (i.e. factors of the form $\Cum (X_{(i,k)},X_{(i',k)})$).

As an immediate consequence of Lemma~\ref{lem:WickExp} one has
\begin{corollary} \label{cor:Wick-field}
For every $p\ge 1$ and $m\ge 1$, one has
\begin{equ} [e:Wick-field]
\E \Big( \prod_{k=1}^p \Wick{ \zeta_\eps(x_k^{(1)}) \cdots \zeta_\eps(x_k^{(m)})}  \Big)
	= 	 \sum_{\pi \in \CP_M(M\times P)}  \prod_{B\in\pi} 
	\kappa^{(\eps)}_{|B|} \Big(\{x^{(i)}_k:(i,k)\in B\} \Big)
\end{equ}
where $M\eqdef \{i: 1\le i\le m\}$ and $P\eqdef \{k: 1\le k\le p\}$.
\end{corollary}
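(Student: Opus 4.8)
The plan is to read off Corollary~\ref{cor:Wick-field} as the specialisation of Lemma~\ref{lem:WickExp} to the concrete family of random variables coming from the noise field. Concretely, with $M=\{1,\dots,m\}$ and $P=\{1,\dots,p\}$ as in the statement, I would set $X_{(i,k)}\eqdef\zeta_\eps(x_k^{(i)})$ for each $(i,k)\in M\times P$. Since $\zeta_\eps$ is centred (it is obtained from the centred field $\zeta^{(\eps)}$ via the rescaling \eqref{e:def-zeta-eps}) and has finite moments of all orders by Assumption~\ref{ass:approxField}, the collection $\{X_{(i,k)}\}$ satisfies the hypotheses of Lemma~\ref{lem:WickExp}, which therefore yields
\begin{equ}
\E \Big( \prod_{k=1}^p \Wick{\prod_{i=1}^m X_{(i,k)}} \Big)
	= \sum_{\pi\in\CP_M(M\times P)}\prod_{B\in\pi}\Cum\big(X_B\big)\;.
\end{equ}

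It then remains to rewrite both sides in the notation of \eqref{e:Wick-field}. For the left-hand side, by definition $\Wick{\prod_{i=1}^m X_{(i,k)}} = \Wick{\zeta_\eps(x_k^{(1)})\cdots\zeta_\eps(x_k^{(m)})}$, so it is literally the left-hand side of \eqref{e:Wick-field}. For the right-hand side, each block $B\subset M\times P$ contributes $\Cum(X_B)=\Cum(\{\zeta_\eps(x_k^{(i)}):(i,k)\in B\})$, which by the definition of $\kappa^{(\eps)}$ as the joint cumulant function of $\zeta_\eps$ (cf.\ \eqref{e:def-kappa} and its rescaled version used in Section~\ref{sec:values}) is exactly $\kappa^{(\eps)}_{|B|}\big(\{x_k^{(i)}:(i,k)\in B\}\big)$. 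Substituting these two identities into the displayed equation gives \eqref{e:Wick-field}.

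The only point that is not a pure relabelling — and hence the closest thing to an obstacle here — is that the arguments $x_k^{(i)}$ are allowed to coincide, so a priori $\{X_{(i,k)}\}$ is not a collection of pairwise distinct random variables, which is the setting in which Definitions~\ref{def:cumu} and \ref{def:Wick-product} and Lemma~\ref{lem:WickExp} are phrased. I would dispose of this exactly as explained in the remark following Definition~\ref{def:cumu}: pass to the augmented collection $\bar\CX=\{\bar X_{(i,k)}\}_{(i,k)\in M\times P}$ of formally distinct random variables with $\bar X_{(i,k)}=\zeta_\eps(x_k^{(i)})$ almost surely, apply Lemma~\ref{lem:WickExp} to $\bar\CX$, and observe that both $\Cum(\bar X_B)$ and the Wick products $\Wick{\prod_{i}\bar X_{(i,k)}}$ depend only on the joint law of the underlying field evaluations and therefore coincide almost surely with their $\zeta_\eps$-counterparts. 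With this reduction, the corollary follows with no further work.
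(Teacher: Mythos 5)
Your proof is correct and matches the paper's approach exactly: the paper states Corollary~\ref{cor:Wick-field} as an immediate consequence of Lemma~\ref{lem:WickExp} obtained by setting $X_{(i,k)}=\zeta_\eps(x_k^{(i)})$ and identifying $\Cum(X_B)$ with $\kappa^{(\eps)}_{|B|}$, which is precisely what you do. Your extra remark about possibly coinciding arguments, handled via the augmented collection described after Definition~\ref{def:cumu}, is a sensible (if unstated in the paper) precaution and does not alter the argument.
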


From now on we define a further simplified notation. Given a family of space-time points $\{x_\alpha : \alpha\in B\}$ parametrised by the index set $B$, we write
\begin{equ} 
\kappa^{(\eps)}_{|B|} (B) \eqdef
	\kappa^{(\eps)}_{|B|} \big(\{x_\alpha:\alpha\in B\} \big) \;.
\end{equ}
Here, $\alpha$ can be multi-indices, for instance $x_\alpha $ would be 
$x_k^{(i)}$ in the situation of \eref{e:Wick-field}.

According to Lemma~\ref{lem:cumu-close}, some factors $\kappa^{(\eps)}_{|B|}$  in \eref{e:Wick-field} may actually vanish.
In fact, a partition $\pi$ encodes the information that  the space-time points 
that are arguments of the function  $\kappa^{(\eps)}_{|B|}$ for each element $B\in\pi$
must be ``close to each other" in order to have non-zero contribution to \eref{e:Wick-field}.

\subsection{First object and some generalities}

We now have all the ingredients in place for the proof of Theorem~\ref{thm:tightness}.
For the moment, we only show how to obtain the first bound of \eqref{e:tightness}; we include
a short discussion on how to obtain the second bound at the end of this section. 
We start with the case $\tau = \Xi$, for which where we need to show that
$ \lambda^{|\s|p/2} \E |\zeta_\eps(\varphi^\lambda_0)|^p \lesssim 1$ 
uniformly in $\eps,\lambda>0$ for all $p>1$.
As a consequence of Corollary~\ref{cor:Wick-field} with $m=1$, one has
\begin{equ}
\E |\zeta_\eps(\varphi^\lambda_0)|^p
= \sum_{\pi \in\CP(\{1,\ldots,p\})} \prod_{B\in\pi} 
	\Big( \int \kappa_{|B|}^{(\eps)}(B) \prod_{i\in B} \varphi^\lambda_0 (x_i)
	\prod_{i\in B} dx_i \Big) \;.
\end{equ}
We bound the integral in the bracket for each $B\in\pi$, $|B|>1$. 
If $\eps>\lambda$, we bound 
$|\kappa_{|B|}^{(\eps)}| \lesssim \eps^{-|B||\s|/2} \lesssim \lambda^{-|B||\s|/2} $,
and the fact that the integral of the test functions $\varphi^\lambda_0$ are uniformly bounded.
If on the other hand $\eps\le \lambda$, then we use the fact that the integrand vanishes unless all
points $x_i$ are within a ball of radius $C\eps$ for some fixed $C>0$, and this ball is centred around
a point in the ball of radius $\lambda$ around the origin. The volume of this region is bounded by
$C\eps^{(|B|-1)|\s|}\lambda^{|\s|}$. 
Since the integrand on the other hand is bounded by $C \eps^{-|B||\s|/2}  \lambda^{-|B||\s|}$,
it follows that the integral is bounded by 
 $C\eps^{(|B|/2-1)|\s|}\lambda^{(1-|B|)|\s|}
\lesssim \lambda^{-|B||\s|/2}$. Since $\sum_{B\in \pi} |B|=p$, the desired bound follows.

We now turn to the simplest non-trivial object, namely $\tau=\<2>$. Using Definition~\ref{def:Wick-product} 
to rewrite the product $\zeta_\eps(x^{(1)}) \zeta_\eps(x^{(2)}) $
as $\Wick{\zeta_\eps(x^{(1)}) \zeta_\eps(x^{(2)}) } 
	+ \kappa^{(\eps)}_2(x^{(1)},x^{(2)})$, 
together with 
the definition of $C_0^{(\eps)}$, one has 
\begin{equ} [e:1stObj]
(\hat\Pi_{0}^{(\eps)} \<2>) (\varphi^\lambda_0)
	=
	\int \varphi^\lambda_0(z)\, K'( z-x^{(1)}) K'(z-x^{(2)}) 
		 \Wick{\zeta_\eps(x^{(1)}) \zeta_\eps(x^{(2)})} \,dx^{(1)} dx^{(2)} dz \;.
\end{equ}
The term containing $ \kappa^{(\eps)}_2$ is precisely canceled by $C_0^{(\eps)}$.
The $p$-th moment for any even number $p$ of the above quantity 
can be written as 
\begin{equ}
	\int \prod_{k=1}^p \Big(  
		\varphi^\lambda_0(z_k)\, K'( z_k-x^{(1)}_k) K'(z_k-x^{(2)}_k)  \Big) 
	 \E \prod_{k=1}^p \Big(
		\Wick{ \zeta_\eps(x^{(1)}_k) \zeta_\eps(x^{(2)}_k) } \Big)
	 \,dx\, dz \;,
\end{equ}
where the integration is over all $x_k^{(i)}$ and all $z_k$
with $i\in\{1,2\}$ and $1\le k\le p$.
Applying Corollary~\ref{cor:Wick-field}, one has the bound
\begin{equs}
\E &|(\hat\Pi_{0}^{(\eps)} \<2>) (\varphi^\lambda_0)|^p \label{e:bnd2-1}\\
	&\lesssim \sum_\pi
	\int \prod_{k=1}^p \Big| 
		\varphi^\lambda_0(z_k)\, K'( z_k-x^{(1)}_k) K'(z_k-x^{(2)}_k)  \Big|
	 \prod_{B\in\pi} |\kappa^{(\eps)}_{|B|}(B)|
	 \,dx\,dz\;, 
\end{equs}
where $\pi$ runs over partitions in $\CP_{\{1,2\}}(\{1,2\}\times \{1,\ldots,p\})$. 

As illustrations, the left picture below shows the situation for $p=2$ 
and $\pi $ being the particular partition consisting of only one element 
which is the whole set of cardinality $4$.
The right picture below
shows the situation for $p=4$ with a {\it prohibited} partition $\pi$ (namely, $\pi \in \CP(\{1,2\}\times \{1,\ldots,p\})$ but $\pi\notin \CP_{\{1,2\}}(\{1,2\}\times \{1,\ldots,p\})$),
i.e.\ one that does \textit{not} appear in the sum \eqref{e:bnd2-1}.
Graphically, the fact that the partition is prohibited is reflected in the presence 
of a subgraph identical to the one representing the diverging constant $C_0^{(\eps)}$. 
\begin{equ}[e:pictures]
\begin{tikzpicture} 
\node[root]	(root) 	at (0,0) {};
\node		(mid)  	at (0,1.5) {};
	\node[cumu4]	(mid-cumu) 	at (mid) {};
\node[dot]		(left)  	at (-1,1) {};
\node[dot]		(right)  	at (1,1) {};
\draw[testfcn] (left) to  (root);
\draw[testfcn] (right) to  (root);
\draw[->,kernel,bend left=30] (mid.south west) node[dot] {} to  (left);
\draw[->,kernel,bend right=30] (mid.south east) node[dot] {} to  (right);
\draw[->,kernel,bend right=30] (mid.north west) node[dot] {} to  (left);
\draw[->,kernel,bend left=30] (mid.north east) node[dot] {} to  (right);
\end{tikzpicture}
\qquad\qquad
\begin{tikzpicture} 
\node at (-.8, .6) (11) {};  
	\node[cumu3,rotate=180] at (11) {};
\node at (.8, .6) (12) {};
	\node[cumu3,rotate=180] at (12) {};
\node at (0,-.3) [dot] (mid) {}; 
\node at (-1,-.3) [dot] (left) {}; 
\node at (1,-.3) [dot] (right) {};
\node at (2,-.3) [dot] (rightmost) {};
\node[cumu2n]	(a)  at (2, .6) {};	\draw[cumu2] (a) ellipse (8pt and 4pt);
\draw[kernel] (a.east) node[dot]   {} to [bend left = 60] (rightmost);
\draw[kernel] (a.west) node[dot]   {} to [bend right = 60] (rightmost);
\draw[kernel] (11.south) node[dot] {} to (mid); 
\draw[kernel] (11.west) node[dot] {} to (left); 
\draw[kernel] (11.east) node[dot] {} to (right); 
\draw[kernel] (12.south) node[dot] {} to (mid); 
\draw[kernel] (12.west) node[dot] {} to (left); 
\draw[kernel] (12.east) node[dot] {} to (right); 
\node at (.5,-1) [root] (root) {};
\draw[testfcn] (left) to (root);
\draw[testfcn] (mid) to (root);
\draw[testfcn] (right) to (root);
\draw[testfcn] (rightmost) to (root);
\end{tikzpicture}
\end{equ}
Here, the interpretation of these graphs is as in Section~\ref{sec:renorm}, but with 
the green arrow $\tikz [baseline=-3] \draw[testfcn] (1,0) to (0,0);$ 
representing the test function $\varphi_0^\lambda$.

The right hand side of \eref{e:bnd2-1} involves an integration of functions 
of the following general form; 
we formulate it in a general way in order to deal with other objects $\tau\in\CW_-$ as well. 
We are given a finite set of space-time points
$B=\{x_1,\ldots,x_n\}$, and a cumulant function 
$\kappa^{(\eps)}_{|B|}(B) = \kappa^{(\eps)}_n (x_1,\ldots,x_n)$.
By observing the right hand side of \eref{e:bnd2-1}, 
 one realises that for every $1\le i\le n$, 
 there is a factor $|K'(y_i - x_i)|$ for some space-time point $y_i$
 in the integrand,
which is bounded by $|y_i - x_i |^\alpha$  for some $\alpha$.
   Here, it is allowed that some of the points $y_1,\ldots,y_n$ may coincide. 
In the case of \eref{e:bnd2-1} above, $y_i$ would be one of the points $z_k$
and $\alpha=-2$.
We then have the following result for integrating such a product of cumulants and kernels (see \ref{sec:framework} for definition of a scaling $\s$ of $\R^d$). 

\begin{lemma}  \label{lem:collapse}
Given a scaling $\s$ of $\R^d$, $n$ points $y_1,\ldots,y_n \in\R^d$, and real numbers $\alpha_1,\ldots,\alpha_n$, assuming  $-|\s|<\alpha_i<0$ for all $1\le i\le n$, one has
\begin{equs} 
\int
\prod_{i=1}^n |y_i-x_i|^{\alpha_i}
|\kappa^{(\eps)}_n (x_1, & \ldots,  x_n)|
  \,dx_1\cdots dx_n \label{e:reduce-JM}\\
&\lesssim \eps^{(n/2-1)|\s|}
\int_{\R^d}
 	\prod_{i=1}^n  
	\big(| y_i-x |+\eps\big)^{\alpha_i}
\, dx \;,
\end{equs}
uniformly in $y_1,\ldots,y_n \in\R^d$ and $\eps>0$.
Here, the integral on the left hand side is over $n$ variables,
and the integral on the right hand side is over only one  variable.
\end{lemma}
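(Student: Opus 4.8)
The plan is to exploit the two structural facts about $\kappa_n^{(\eps)}$ that have already been recorded: the pointwise bound $|\kappa_n^{(\eps)}|\lesssim\eps^{-n|\s|/2}$, and the support property deduced from Lemma~\ref{lem:cumu-close}, namely that $\kappa_n^{(\eps)}(x_1,\dots,x_n)=0$ unless all the $x_i$ lie in a common parabolic ball of radius $\lesssim\eps$. Taken together these mean that, as far as the estimate is concerned, $\kappa_n^{(\eps)}$ may be replaced by $\eps^{-n|\s|/2}$ times the indicator of the set $\{x_i\in B_\s(x_1,C\eps):\,i=2,\dots,n\}$ (here and below $B_\s$ is the parabolic ball and $C$ a constant depending only on $n$). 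Integrating out any $n-1$ of the variables then produces a volume factor $\eps^{(n-1)|\s|}$, which against $\eps^{-n|\s|/2}$ gives exactly the prefactor $\eps^{(n/2-1)|\s|}$ on the right of \eref{e:reduce-JM}, with a single free variable left over.

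Concretely I would single out $x\eqdef x_1$ as a pivot. Bounding $|\kappa_n^{(\eps)}|$ as above and using Tonelli, the left-hand side of \eref{e:reduce-JM} is at most
\begin{equ}
\eps^{-n|\s|/2}\int_{\R^d}|y_1-x|^{\alpha_1}\prod_{i=2}^n\Bigl(\int_{B_\s(x,C\eps)}|y_i-x_i|^{\alpha_i}\,dx_i\Bigr)dx\;.
\end{equ}
The key auxiliary estimate is that for every $\beta\in(-|\s|,0)$ one has
\begin{equ}
\int_{B_\s(x,C\eps)}|y-z|^{\beta}\,dz\;\lesssim\;\eps^{|\s|}\,(|y-x|+\eps)^{\beta}\;,
\end{equ}
uniformly in $x,y\in\R^d$ and $\eps\in(0,1]$.

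This estimate is proved by splitting into two cases. If $|y-x|>2C\eps$, then $|y-z|\sim|y-x|$ for every $z\in B_\s(x,C\eps)$, so the integral is $\lesssim\eps^{|\s|}|y-x|^{\beta}\sim\eps^{|\s|}(|y-x|+\eps)^{\beta}$. If $|y-x|\le 2C\eps$, then $B_\s(x,C\eps)\subset B_\s(y,3C\eps)$ and, by the change of variables $z\mapsto y+rz$, $\int_{B_\s(y,r)}|y-z|^{\beta}\,dz\lesssim r^{|\s|+\beta}$ --- this is the one place where the hypothesis $\beta>-|\s|$ is used, to make the singularity integrable --- which for $r\sim\eps$ gives $\lesssim\eps^{|\s|+\beta}\sim\eps^{|\s|}(|y-x|+\eps)^{\beta}$, since $|y-x|+\eps\sim\eps$ on this range. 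Applying this with $\beta=\alpha_i$ for $i=2,\dots,n$ collapses the $n-1$ inner integrals and yields
\begin{equ}
\int\prod_{i=1}^n|y_i-x_i|^{\alpha_i}\,|\kappa^{(\eps)}_n(x_1,\dots,x_n)|\,dx_1\cdots dx_n\;\lesssim\;\eps^{(n/2-1)|\s|}\int_{\R^d}|y_1-x|^{\alpha_1}\prod_{i=2}^n(|y_i-x|+\eps)^{\alpha_i}\,dx\;.
\end{equ}

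It remains to upgrade the one leftover singular factor $|y_1-x|^{\alpha_1}$ to the regularised $(|y_1-x|+\eps)^{\alpha_1}$ at the cost of a constant, which gives exactly \eref{e:reduce-JM}. Split the remaining $x$-integral according to whether $|y_1-x|>\eps$ or $|y_1-x|\le\eps$. On the first set the two integrands are comparable because $|y_1-x|\le|y_1-x|+\eps\le 2|y_1-x|$. On the second set the triangle inequality gives $|y_i-x|+\eps\sim|y_i-y_1|+\eps$ for every $i\ge2$, so those factors can be pulled out of the integral, and one is left with $\int_{\{|y_1-x|\le\eps\}}|y_1-x|^{\alpha_1}\,dx\lesssim\eps^{|\s|+\alpha_1}$ (again using $\alpha_1>-|\s|$), which matches up to a constant the corresponding contribution $\eps^{\alpha_1}\int_{\{|y_1-x|\le\eps\}}\prod_{i\ge2}(|y_i-x|+\eps)^{\alpha_i}\,dx\sim\eps^{|\s|+\alpha_1}\prod_{i\ge2}(|y_i-y_1|+\eps)^{\alpha_i}$ to the right-hand side of \eref{e:reduce-JM}. (If $\sum_i\alpha_i\ge-|\s|$ that right-hand side is infinite and there is nothing to prove.) All implicit constants depend only on $n$, $\s$ and the $\alpha_i$, never on the $y_i$ or on $\eps$. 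The only genuinely delicate point is the auxiliary ball estimate, and in particular keeping the two-case split and the $(n-1)$-fold collapse uniform in $y_1,\dots,y_n$ and $\eps$; the rest is routine bookkeeping with the parabolic scaling.
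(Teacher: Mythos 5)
Your proof is correct, but it takes a different route from the paper's, and the difference is worth noting. The paper first refines the crude bound $|\kappa_n^{(\eps)}|\lesssim\eps^{-n|\s|/2}$ into a sum over spanning trees of products of factors $\eps^{-|\s|}\one_\eps(x_i-x_j)$, and then — crucially — introduces a \emph{new} dummy integration variable $x$ by splitting one tree edge in two (their inequality \eref{e:funstep}), so that every one of the $n$ original variables can be integrated out against $\eps^{-|\s|}\one_{C\eps}(x-\cdot)$ and every factor comes out regularised as $(|y_i-x|+\eps)^{\alpha_i}$ in one pass. You instead use the cruder (but sufficient) ``all points in a common ball of radius $C\eps$'' form of the support property and pivot on $x=x_1$; this is exactly the ``more natural'' choice the authors discuss in the remark following their proof, and as they point out it leaves the pivot's factor as the unregularised $|y_1-x|^{\alpha_1}$. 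Your extra step — splitting the remaining $x$-integral at $|y_1-x|=\eps$ and checking that the contribution of the small region is dominated by the corresponding piece of the right-hand side (using $\alpha_1>-|\s|$ and the comparability $(|y_i-x|+\eps)\sim(|y_i-y_1|+\eps)$ there) — correctly repairs this, so the two proofs end at the same estimate. What the paper's dummy-variable device buys is that no such patch is needed and all $n$ factors are symmetric from the start; what your version buys is that it avoids spanning trees entirely and only uses the one auxiliary ball estimate $\int_{B_\s(x,C\eps)}|y-z|^{\beta}\,dz\lesssim\eps^{|\s|}(|y-x|+\eps)^{\beta}$, which you prove correctly by the standard two-case split (and which is essentially the same computation as the paper's final displayed inequality). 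The only implicit ingredient you should flag is that the parabolic ``norm'' only satisfies a quasi-triangle inequality, but since all your comparisons are up to multiplicative constants this is harmless.
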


\begin{remark}
Lemma~\ref{lem:collapse} is one of the main technical results in this article and will be frequently applied.
The intuitive ``graphical'' meaning of this lemma is that for all practical purposes, we
can ``collapse'' the $p$ points in an expression of the type
\begin{tikzpicture}[baseline=-4]
\node		(mid)  	at (0,0) {};
	\node[cumu4]	(mid-cumu) 	at (mid) {};
\node[dot] at (mid.north west) {};
\node[dot] at (mid.south west) {};
\node[dot] at (mid.north east) {};
\node[dot] at (mid.south east) {};
\end{tikzpicture}
(here $p=4$) into one single point, and this operation then generates a 
factor $\eps^{(p/2-1)|\s|}$. It is because this exponent is positive as soon
as $p > 2$ that our bounds behave ``as if'' everything was Gaussian.
\end{remark}

\begin{remark}\label{rem:homogen}
If we consider $\kappa_n$ to have homogeneity $-n|\s|/2$, $|y-x|^\alpha$ to have
homogeneity $\alpha$, $\eps$ to have homogeneity $1$, and each integration 
variable to have homogeneity $|\s|$,
then the total homogeneity is identical on both sides of \eqref{e:reduce-JM}.
(It is equal to $n|\s|/2 + \sum\alpha_i$ in both cases.)
\end{remark}

\begin{proof}
First of all, 
we define $\one_a (x) = 1$ if $|x|<a$ and vanishes if $|x|\ge a$.
Recall then that one has the bound 
\begin{equ}  [e:bndones]
|\kappa^{(\eps)}_n (x_1,\ldots,x_n)|
 \lesssim \eps^{(n/2-1)|\s|} \sum_T \prod_{\{x_i,x_j\}\in\CE(T)}
\eps^{-|\s|} \one_\eps(x_i -x_j)\;,
\end{equ}
uniformly in $\eps>0$,
where $T$ runs over all spanning trees of the set $\{x_1,\ldots,x_n\}$,
and $\CE(T)$ denotes the edge set of $T$.
Indeed, for each $T$, since $|\CE(T)|=n-1$, the total power of $\eps$ on the right hand side is $(n/2-1)|\s| -(n-1)|\s| = -n|\s|/2$ which is consistent with the naive bound $|\kappa^{(\eps)}_n| \lesssim \eps^{-n|\s|/2}$.
By Lemma~\ref{lem:cumu-close}, $\kappa^{(\eps)}_n=0$ unless there exists a connected graph over $\{x_1,\ldots ,x_n\}$ such that $|x_i - x_j|<\eps$ whenever
 $\{x_i,x_j\}$ is an edge of the graph. The bound \eref{e:bndones} is then obtained by taking a spanning tree of this graph.

Let $T$ be a fixed tree in the above sum and $e=\{x_i,x_j\}$ be an arbitrary  fixed edge of $T$.
There exists a constant $c>1$ such that 
\begin{equ} [e:funstep]
\eps^{-|\s|} \one_\eps(x_i -x_j) \lesssim 
	\int_{\R^d} \eps^{-|\s|} \one_{c\eps}(x_i - x) \cdot \eps^{-|\s|} \one_{c\eps}(x-x_j) \,dx \;.
\end{equ}
uniformly in $\eps>0$.
This is obviously true for $\eps=1$, and the case for general $\eps>0$ follows by rescaling.
We can therefore add a new ``dummy variable'' $x$ representing the integration variable in
\eqref{e:funstep}, which suggests to define a new tree $T'$ by $T' = T\cup\{x\}$ and 
to replace the edge $e$ by the two edges $\{x_i,x\}$ and $\{x,x_j\}$. In other words, one
sets $\CE(T')=(\CE(T)\backslash \{e\}) \cup \{\{x_i,x\},\{x,x_j\}\}$.

Writing now $\one_\eps(e) \eqdef \one_\eps(x_i,x_j) $ as a shorthand 
if $e=\{x_i,x_j\}$, we note that 
$\prod_{e\in\CE(T')} \eps^{-|\s|} \one_{c\eps}(e)$
 is the indicator function of the event ``every edge of $T'$ has length at most $c\eps$''. Since the graph $T'$ has finite diameter, this 
 implies that every vertex of $T'$ is at distance at most $C\eps$ from the fixed vertex $x$, for
 some fixed $C>0$, so that
\begin{equs}
\prod_{e\in\CE(T)}
 \eps^{-|\s|} \one_\eps(e) 
& \le
\int_{\R^d} \prod_{e\in\CE(T')} \eps^{-|\s|} \one_{c\eps}(e) \,dx
\le
\int_{\R^d} \prod_{\bar x\in T}   \eps^{-|\s|}  \one_{C\eps} (x-\bar x)  \,dx\;,
\end{equs}
where the first inequality is given by \eref{e:funstep}.

It now remains to substitute this back into \eqref{e:bndones} and into the left hand side of \eqref{e:reduce-JM}.
Integrating over all the variables $x_i \in T' \setminus \{x\}$, the claim follows from the bound
\begin{equ}
\int_{\R^d}
 | y_i-\bar x |^{\alpha_i} \,
 \eps^{-|\s|} \one_{C\eps}(x-\bar x) \,d\bar x
 \lesssim  (| y_i-x | +\eps)^{\alpha_i}\;,
\end{equ}
which can easily be shown by considering separately the two cases $| y_i-x | \le 2C\eps$
and $| y_i-x | \ge 2C\eps$.
\end{proof}

\begin{remark}
In the above proof, if one chose $x$ to be a fixed point in $T$, which would seem more natural 
in principle than the first step \eref{e:funstep},
the resulting bound would have one of the factors on the right hand side of \eref{e:reduce-JM} equal to $| y_i-x |^{\alpha_i}$ (i.e. without the additional $\eps$ appearing in the statement).
These occurrences of $\eps$ in our bound will however be useful in the sequel, as
it will allow us to use the brutal bound $(| y_i-x |+\eps)^{\alpha_i} \le \eps^{\alpha_i}$.
\end{remark}

We introduce the following notation: for any space-time point $x$ and $\eps\ge 0$, let
\begin{equ}
|x|_\eps \eqdef |x|+\eps \;,
\end{equ}
where $|x|$ is as above the norm of $x$ with respect to the parabolic scaling.

We can now apply Lemma~\ref{lem:collapse} to each $\pi$ and each $B\in \pi$ of the right hand side of \eref{e:bnd2-1}, so that the entire right hand side of \eref{e:bnd2-1} is bounded by
\begin{equ} [e:bnd2-2]
\sum_\pi
	\int \Big( \prod_{k=1}^p 
		\varphi^\lambda_0(z_k) \Big)
	\prod_{B\in\pi}
		\Big( \eps^{(|B|/2-1)|\s|}
		\prod_{(i,k)\in B} 
				|z_{k}-x_B|_\eps^{-2}
	 \Big) 
	 \prod_{B\in\pi}dx_B \prod_{k=1}^p dz_k
\end{equ}
where $\pi \in \CP_{\{1,2\}}(\{1,2\}\times \{1,\ldots,p\})$ as above,
and for each $B\in\pi$ we have an integration variable $x_B \in \R^2$.
For example, in the case of the left picture in \eref{e:pictures}, we have
$\pi=\{B\}$ with $B=\{(1,1),(1,2),(2,1),(2,2)\}$,
and the corresponding term in \eqref{e:bnd2-2} can be depicted graphically as
\begin{center}
\begin{tikzpicture}  
\node[root]	(root) 	at (0,0) {};
\node[dot]		(mid)  	at (0,1.5) {};
	\node at (0,1.8) {$\eps^3$};
\node[dot]		(left)  	at (-1,1) {};
\node[dot]		(right)  	at (1,1) {};
\draw[testfcn] (left) to  (root);
\draw[testfcn] (right) to  (root);
\draw[dashed,bend left=30] (mid) to  (left);
\draw[dashed,bend right=30] (mid) to  (right);
\draw[dashed,bend right=30] (mid) to  (left);
\draw[dashed,bend left=30] (mid) to  (right);
\end{tikzpicture}
\end{center}
with dashed lines representing the function $x \mapsto |x|_\eps^{-2}$.

\begin{remark}
Bounds of the form \eref{e:bnd2-2} are crucial to our analysis.
As already noted in Remark~\ref{rem:homogen},  although the number of integration 
variables appearing in \eref{e:bnd2-2} differs for each $\pi$,
a simple power counting shows  that the ``homogeneity" of the summand is 
always the same, namely
\begin{equ}
\sum_{B\in\pi} \Big( (|B|/2-1)|\s| -2|B| \Big)+\sum_{B \in\pi} |\s| 
= -\sum_{B\in\pi} |B|/2 =-p \;.
\end{equ}
In a certain sense, the powers of $\eps$ thus guarantee that the entire quantity 
is still of the correct ``homogeneity".
\end{remark}

In the rest of this subsection we explain some generalities 
for bounding expressions of the type \eref{e:bnd2-2},
and in the next several subsections we will follow the same routine
for the other objects $\tau\in\CW_-$.
Our idea is to apply the general tools developed in Section~\ref{sec:bounds},
which are summarised in Proposition~\ref{prop:main} below.
To apply this proposition to obtain our desired bound, we will associate to $\<2>$
a ``partial graph" $H$ (in general, several such graphs for other elements $\tau\in\CW_-$)  as defined in Definition~\ref{def:H-type} below.
Then, we represent the multi-integral for each partition $\pi$
in the expressions of the type \eref{e:bnd2-2},
which result from application of Corollary~\ref{cor:Wick-field} and Lemma~\ref{lem:collapse},
 a graph  $\CV$ obtained by
``Wick contracting" $p$ copies of $H$, in the sense of Definition~\ref{def:Big-graph}
below.
The set of all allowed partitions in \eref{e:bnd2-2} is then in one-to-one correspondence
with the set of possible ``Wick contractions".
Given an integer $p>1$, a rescaled test function $\varphi^\lambda_0$,
and a partial graph $H$, one then has a number 
$\Lambda^p_{\varphi^\lambda_0} (\CV)$ for every such graph $\CV$
which equals the value of the multi-integral,
and the sum of them over all Wick contractions then yields a number
$\Lambda^p_{\varphi^\lambda_0} (H)$
which provides a bound for the moment \eref{e:tightness}.
In a nutshell, the ``routine" we will always follow is that
\begin{equ} [e:routine]
 \E |(\hat \Pi^{(\eps)}_x \tau) (\varphi_x^\lambda) |^p 
 \stackrel{\eref{e:Wick-field} \eref{e:reduce-JM}}{\lesssim}
 \Lambda^p_{\varphi^\lambda_0} (H)
 \stackrel{{\rm Prop}\;\ref{prop:main}}{\lesssim}
 \lambda^{\bar\alpha} \;.
\end{equ}
In general (see the next subsections), the quantity in the middle may be 
replaced by a sum over several graphs,
and before applying \eref{e:reduce-JM} one may have to deal with the renormalisations.
Now we start to define the above mentioned terminologies precisely.

\begin{definition} \label{def:H-type}
A partial  graph $H$ is a connected graph with each edge $e$ labelled by a real number $m_e$, which satisfies the following requirements.
There exists  a unique distinguished vertex $0\in H$, 
as well as a unique distinguished edge $e=\{0,v_\star\}$ attached to $0$  with label $m_e=0$. (Other edges may also connect to $0$ but they are not called distinguished edges.)
We define 
$H_0 \eqdef H\setminus \{0\}$ and $H_\star \eqdef \{0,v_\star\}$.
The set $H_0$ can be decomposed as a union of two disjoint subsets of vertices
$H_0 = H_{ex} \cup H_{in}$, 
 such that
$\deg(v)=1$ for every $v\in H_{ex}$ and 
$\deg(v)\ge 2$ for every $v\in H_{in} = H_0\setminus H_{ex}$.
We call a vertex in $H_{ex}$ an ``external vertex" and the only edge attached to it an ``external edge". 
We call a vertex in $H_{in}$ an ``internal vertex", and the edges which are not distinguished or external are called ``internal edges". 
We require that $v_\star \in H_{in}$. 
\end{definition}

When we draw a partial  graph, we use a special dot  $\tikz \node [root] (0,0) {};$ to represent the distinguished vertex $0$, 
a black dot $\tikz [baseline=-0.1cm] \node [dot] (0,0) {};$ to represent  a generic vertex which is not the $0$,
a special thick line $\tikz [baseline=-0.1cm] \draw[testfunction] (1,0) to  (0,0);$
to represent the distinguished edge $\{0,v_\star\}$. The labels $m_e$ are often drawn on the edges $e$, such as $\tikz \draw (0,0) to node[labl] {\scriptsize 2+}  (1,0); $, understood as $m_e=2+\delta$ for sufficiently small $\delta>0$.
As some examples of partial  graphs, see for instance \eqref{e:def1stH}, \eqref{e:def2ndH} or \eqref{e:defH1iv1ev}.

\begin{definition} \label{def:Big-graph}
Suppose that we are given a partial  graph $H$ and an integer $p>1$, 
and for each $1\le i\le p$ let $H^{(i)}$ be a copy of the graph $H$.
We say that 
a graph $\CV$ is obtained from $H$ by {\it Wick contracting} $p$ copies of $H$, if there exists
an equivalence relation $\sim$ over the disjoint union
of the $p$ copies $\cup_{i=1}^p H^{(i)}$, 
such that  each equivalence class  $B$ is one of the following three forms:
\begin{itemize}
\item $B=\{0^{(1)},\ldots,0^{(p)}\}$ where $0^{(i)}$ is the distinguished vertex of $H^{(i)}$; or
\item $B=\{v\}$ where $v\in \cup_{i=1}^p H_{in}^{(i)} $; or
\item $B=\{v_1,\ldots,v_{|B|}\} \subset \cup_{i=1}^p H_{ex}^{(i)}$ with $|B|>1$, 
with requirement that there exist at least two elements  $v,v'\in B$ such that $v\in H_{ex}^{(i)}$, $v'\in H_{ex}^{(i')}$ and $i\neq i'$;
\end{itemize}
and such that  $\CV = \cup_{i=1}^p H^{(i)} / \sim$ (namely, we identify all the vertices in each equivalence class as one vertex). 
The set of edges $\CE'(\CV)$ of $\CV$ is the union of all the sets of edges 
of the $p$ copies of $H$,
and each edge $e\in\CE'(\CV)$ of $\CV$ naturally inherits a label $m_e$ from $H$.
We still call the equivalence class consisting of the distinguished vertices $0$, and call an equivalence class consisting of an internal vertex (resp. some external vertices) an in-vertex (resp. an ex-vertex), 
and call an edge $e\in\CE'(\CV)$ an in-edge (resp. ex-edge) if $e$ is  an internal (resp. external) edge of $H$.
We say that an in-vertex belongs to a certain copy $H^{(i)}$ if the only element in this equivalence class is an internal vertex of $H^{(i)}$.
We write $\CV = \CV_{ex} \cup \CV_{in} \cup \{0\}$
where $\CV_{ex}$ is the set of ex-vertices 
and $\CV_{in}$ is the set of in-vertices. 
Let $\CV_0 = \CV\setminus \{0\}$ and $\CV_\star=\{0,v_\star^{(1)},\ldots,v_\star^{(p)}\}$.
\end{definition}

\begin{remark}
Note that given $(H,p)$ there could be many possible graphs $\CV$ constructed as above.
The graph $(\CV,\CE')$ may have multiple edges between two vertices.
In the sequel, $\deg(v)$ for $v\in\CV$ stands for the degree of $v$
counting multiple edges.
\end{remark}

\begin{remark} \label{rem:in-ex-edge}
The graphical notation for a graph $\CV$ in Definition~\ref{def:Big-graph}
will be the same as that for a partial  graph $H$.
We remark that although we use the same solid line to 
represent all edges of  $\CV$,  the functions associated with them will turn out
to be slightly different: an in-edge will be associated with
a singular function $|x|^{-m_e}$ while an ex-edge will be associated with
a mollified function $|x|_\eps^{-m_e}$. We will not distinguish 
them by complicating the graphical notation since it is always
clear whether an edge is an in-edge or ex-edge.
\end{remark}

Given a graph $\CV$ as in Definition~\ref{def:Big-graph}
and a test function $\varphi$,
we can associate a number to it, as alluded above:
\begin{equ} [e:LambdaCV]
\Lambda^p_{\varphi} (\CV)
\eqdef
\int 
\prod_{i=1}^p \varphi(x_{v_\star^{(i)}})
\prod_{v\in \CV_{ex}} \eps^{(\deg(v)/2-1)|\s|} \!\!\!\!
\prod_{e=\{v,\bar v\}\in \CE'(\CV)} \!\!\!\!\! |x_v - x_{\bar v}|_{\eps(e)}^{-m_e}
\prod_{v\in \CV_0}dx_v \;,
\end{equ}
where the norm $|\cdot|_{\eps(e)} = |\cdot|_{\eps}$
if $e$ is an ex-edge and $|\cdot|_{\eps(e)} = |\cdot|$ if $e$ is an in-edge.
Now by observing the expressions \eref{e:bnd2-2} and \eref{e:LambdaCV}
one realises that  we can define a partial graph 
$H$ with one internal vertex $u$ and two external vertices $v_1,v_2$:
\begin{equ} [e:def1stH]
H\eqdef  \;\;
\begin{tikzpicture}   [baseline=-0.1cm]
\node[root]	(root) 	at (0,0) {};
\node[dot]		(mid)  	at (1,0) {};
	\node at (1,.3)  {$u$};
\node[dot]		(a1)  	at (2,.7) {};
	\node at (2.3, .7)  {$v_1$};
\node[dot]		(a2)  	at (2,-.7) {};
	\node at (2.3, -.7)  {$v_2$};
\draw[testfunction] (mid) to node[labl] {\scriptsize 0}  (root);
\draw (mid) to node[labl] {\scriptsize 2+}  (a1);
\draw (mid) to node[labl] {\scriptsize 2+} (a2);
\end{tikzpicture}
\end{equ}
so that for every $\pi$ in \eref{e:bnd2-2}
the integral is equal to $\Lambda^p_{\varphi_0^\lambda} (\CV)$
for  a graph $\CV$
obtained from Wick contracting $p$ copies of $H$. 
Indeed, every in-vertex (resp. ex-vertex) of $\CV$ corresponds to an integration variable $z_k$ (resp. $x_B$) in \eref{e:bnd2-2}.
Define
\begin{equ}  [e:LambdaH]
\Lambda^p_{\varphi^\lambda_0} (H)
\eqdef
\sum_{\CV} \Lambda_{\varphi^\lambda_0}^p (\CV)
\end{equ}
where the sum is over all graphs $\CV$ obtained from Wick 
contracting $p$ copies of $H$.
Then by Definition~\ref{def:partition_M} (for the allowed partitions) and Definition~\ref{def:Big-graph} (for the allowed Wick contractions),
\eref{e:bnd2-2} is equal to $ \Lambda^p_{\varphi^\lambda_0} (H)$.

Proposition~\ref{prop:main} below will then yield the desired bound on 
$\Lambda^p_{\varphi^\lambda_0} (H)$. Before stating the proposition we need the following definitions.

\begin{definition} \label{def:set-edges}
Given a graph $(\CV,\CE)$ we define for any subgraph $\bar \CV \subset \CV$ the following subsets of $\CE$:
\begin{equ}
\CE_0(\bar \CV) = \{e  \in \CE\,:\, e\cap \bar \CV = e\}\;,\qquad
\CE(\bar \CV) = \{e  \in \CE\,:\, e\cap \bar \CV \neq \emptyset\}\;.
\end{equ}
\end{definition}

\begin{definition} \label{def:be-KPZ1}
Given $\bar H \subset H_0$
such that  $ \bar H_{ex} \eqdef \bar H\cap H_{ex}  \neq \emptyset$, 
define number $c_e(\bar H)$ for  edges $e$ of $H$ as follows.
If $| \bar  H_{ex}| =1$, then $c_e(\bar H) =0$ for all $e$.
If $| \bar  H_{ex}| =2$, then, assuming that $\bar  H_{ex} =\{v,\bar v\}$,
there are two cases. The first case is that $v$ and $\bar v$ are connected with the same vertex of $H$, and we define $c_e(\bar H) =3/4$ for every $e\in\CE(\bar H_{ex})$;
the second case is that $v$ and $\bar v$ are connected with different vertices of $H$, and we define $c_e(\bar H) =1/2$ for every $e\in\CE(\bar H_{ex})$.
Finally, if $| \bar H_{ex}| >2$, then 
\begin{equ} [e:be-KPZ1]
c_e(\bar H) = {3\over 2}  -{3\over |\bar  H_{ex}| +1} \;,
\end{equ}
for every $e\in\CE(\bar H_{ex})$. Given any $\bar H \subset H$ such that $0\in\bar H$, we define $c_e(\bar H) =3/2$ for every $e\in\CE(\bar H_{ex})$.
For any subset $\bar H\subset H$ we set $c_e(\bar H) =0$ for every $e\notin\CE(\bar H_{ex})$. 
\end{definition}

\begin{proposition} \label{prop:main}
Given a partial  graph $H$,
suppose that the following conditions hold.
\begin{itemize}
\item[(1)]
For every subset 
$\bar H \subset H$ with $|\bar H| \ge 2$ and $|\bar H_{in}| \ge 1$ one has 
\begin{equ} [e:cond-A]
\sum_{e \in \CE_0(\bar H)} (m_e - c_{e}(\bar H)) < 3 \,\bigl(|\bar H_{in}| - \one_{\bar H \subset H_{in}} \bigr) \;.
\end{equ}
\item[(2)]
For every non-empty $\bar H\subset H\setminus H_\star$ one has 
\begin{equ}[e:cond-B]
\sum_{e \in \CE(\bar H)} m_e 
	> 3\, \Big( |\bar H \cap H_{in}| +{1\over 2} |\bar H \cap H_{ex}| \Big)  \;.
\end{equ}
\end{itemize}
Then, for every $p>1$, one has
\begin{equ} [e:main-bound]
\sup_{\varphi\in\CB} \Lambda^p_{\varphi_0^\lambda}(H) 
	\lesssim \lambda^{\bar\alpha \,p}\;,
\end{equ}
where 
\begin{equ}
\bar\alpha = 3\, \Big( |H_{ex}|/2 + |H_{in} \setminus H_\star| \Big)
		- \sum_{e \in \CE(H)} m_e  \;.
\end{equ}
\end{proposition}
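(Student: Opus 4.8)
The plan is to derive Proposition~\ref{prop:main} from the general multiscale power-counting estimate of Section~\ref{sec:bounds} (Corollary~\ref{cor:ulti-bound}, itself established by a dyadic decomposition of the singular kernels together with a Hepp-sector power-counting argument), after reducing to a single Wick contraction and extracting the $\lambda$-scaling. First, for fixed $p$ and $H$ there are only finitely many graphs $\CV$ obtained by Wick contracting $p$ copies of $H$ — their number is bounded by a combinatorial constant depending only on $p$ and $|H|$ — so by \eref{e:LambdaH} it suffices to prove $\sup_{\varphi\in\CB}\Lambda^p_{\varphi^\lambda_0}(\CV)\lesssim\lambda^{\bar\alpha p}$ for each fixed $\CV$ and then sum. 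Next, rescaling all integration variables in \eref{e:LambdaCV} parabolically by $\lambda$ and putting $\delta=\eps/\lambda$ — using that $\varphi^\lambda_0$ is supported in the ball of radius $\lambda$ with $\|\varphi^\lambda_0\|_\infty\lesssim\lambda^{-3}$, that an in-edge kernel scales as $\lambda^{-m_e}|y_v-y_{\bar v}|^{-m_e}$ while an ex-edge kernel becomes $\lambda^{-m_e}|y_v-y_{\bar v}|_\delta^{-m_e}$, and that $\eps^{(\deg v/2-1)|\s|}=\lambda^{(\deg v/2-1)|\s|}\delta^{(\deg v/2-1)|\s|}$ — a bookkeeping of $\lambda$-powers (using $|\CE'(\CV)|=p|\CE(H)|$, $\sum_{v\in\CV_{ex}}\deg v=p|H_{ex}|$, $|\CV_{in}|=p|H_{in}|$ and $|H_{in}\setminus H_\star|=|H_{in}|-1$) shows that exactly the factor $\lambda^{\bar\alpha p}$ is pulled out, the residual integral being precisely of the type handled by Corollary~\ref{cor:ulti-bound} with regularisation scale $\delta$. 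The proposition is thereby reduced to the $\delta$-uniform boundedness of that integral, equivalently to the uniformity in $\eps$ of the original estimate.

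The heart of the argument is to verify the hypotheses of Corollary~\ref{cor:ulti-bound}, which are power-counting inequalities required on \emph{every} subgraph: an ``interior'' condition controlling the collapse of a cluster of vertices to a point, and an ``exterior'' condition controlling integrability at large distances. The step I expect to be the main obstacle is the treatment of the positive $\eps$-powers $\eps^{(\deg v/2-1)|\s|}$ sitting at the ex-vertices: these must be absorbed into the kernels of the incident ex-edges via the elementary bound $\eps^a|x|_\eps^{-m}\le|x|_\eps^{a-m}$, valid for $a\ge0$, which lowers the effective exponent of such an edge $e$ by some amount $c_e\ge0$. The delicacy is that the interior condition is demanded simultaneously for all subgraphs, so a single redistribution of the available $\eps$-budget must be fixed once and for all and shown admissible on \emph{each} subgraph; the numbers $c_e(\bar H)$ of Definition~\ref{def:be-KPZ1} are exactly this universal choice, and the bulk of the work is the case analysis ($|\bar H_{ex}|=1$; $=2$, with the two subcases according to whether the two external vertices attach to a common internal vertex; $>2$; and $0\in\bar H$) together with the verification that the choice stays consistent under Wick contraction, when external vertices from different copies of $H$ merge into a single ex-vertex whose enlarged $\eps$-budget is spread over correspondingly more edges.

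Granting this, I would translate the two displayed conditions into the hypotheses of Corollary~\ref{cor:ulti-bound} along the correspondence ``subgraph $\bar\CV\subset\CV$ $\leftrightarrow$ the family of subgraphs $\bar H\subset H$ it induces on the copies'': condition~(1) becomes exactly the interior power-counting inequality on $\bar\CV$, with the right-hand side $3(|\bar H_{in}|-\one_{\bar H\subset H_{in}})$ counting the integration variables freed when the cluster collapses (one fewer when the cluster is entirely internal, since then the cluster's overall location remains to be integrated) and the $-c_e(\bar H)$ recording the $\eps$-powers already spent; condition~(2), applied to subsets of $H\setminus H_\star$, becomes the exterior integrability condition, the weight $\tfrac12$ on $\bar H\cap H_{ex}$ reflecting that each external vertex of $H$ carries a single edge of exponent $>3/2$. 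The root cluster — where the $p$ copies of $v_\star$ all lie in the $\lambda$-support of the test functions and the distinguished edges $H_\star$ carry label $0$ — is covered by the $\bar H\ni0$ clause of Definition~\ref{def:be-KPZ1}. Corollary~\ref{cor:ulti-bound} then delivers the $\delta$-uniform bound on the residual integral; undoing the rescaling and summing over the finitely many $\CV$ gives \eref{e:main-bound}.
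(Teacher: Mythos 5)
Your proposal is correct and follows essentially the same route as the paper: the paper's proof is exactly the reduction to finitely many Wick contractions $\CV$, the invocation of Corollary~\ref{cor:ulti-bound} (whose hypotheses are conditions (1) and (2) via Propositions~\ref{prop:Cond-A} and \ref{prop:Cond-B}), and the observation (Remark~\ref{rem:be-consistent}) that the explicit $c_e(\bar H)$ of Definition~\ref{def:be-KPZ1} arise from an admissible $\eps$-allocation rule — all of which you identify. Your explicit parabolic rescaling to extract $\lambda^{\bar\alpha p}$ is a correct but redundant elaboration, since Corollary~\ref{cor:ulti-bound} already delivers that power directly.
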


\begin{proof} 
This follows from Corollary~\ref{cor:ulti-bound} below, together 
with Remark~\ref{rem:be-consistent} that the number $c_e(\bar H)$  
in Definition~\ref{def:be-KPZ1} yields an admissible $\eps$-allocation rule in the sense
of Definition~\ref{def:eps-alloc},
and the fact that $\Lambda^p_{\varphi^\lambda_0} (H) \lesssim \Lambda^p_{\varphi^\lambda_0} (\CV)$ with proportionality constant depending only on the cardinality of $H$ and on $p$ (see \eref{e:LambdaH}).
\end{proof}

\begin{remark}
By Remark~\ref{rem:origin}, if the partial graph $H$ is such that $\deg(0)=1$, namely the distinguished edge  $e_\star=\{0,v_\star\}$ with label $m_{e_\star}=0$ is the only edge  connected to $0$, then one does not need to check \eref{e:cond-A}
for the subsets $\bar H$ such that $0\in\bar H$.
\end{remark}


\begin{lemma} \label{lem:check-1st}
The conditions of 
Proposition~\ref{prop:main}
are satisfied for $H$ defined in \eref{e:def1stH}. 
\end{lemma}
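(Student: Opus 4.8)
The plan is to verify, one by one, the two inequalities \eqref{e:cond-A} and \eqref{e:cond-B} of Proposition~\ref{prop:main} for the specific graph $H$ in \eqref{e:def1stH}; this is a purely finite check since $H$ has only four vertices. First I would fix notation: $H$ has vertex set $\{0,u,v_1,v_2\}$ with internal vertices $H_{in}=\{u\}$, external vertices $H_{ex}=\{v_1,v_2\}$, distinguished vertex $v_\star=u$ (so $H_\star=\{0,u\}$ and $H\setminus H_\star=\{v_1,v_2\}$), and edges $e_\star=\{0,u\}$, $e_1=\{u,v_1\}$, $e_2=\{u,v_2\}$ with labels $m_{e_\star}=0$ and $m_{e_1}=m_{e_2}=2+\delta$ for a small $\delta>0$. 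Since $\deg(0)=1$, the remark following the proof of Proposition~\ref{prop:main} allows me to ignore every subset $\bar H$ with $0\in\bar H$ when checking \eqref{e:cond-A}.

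For condition (1) the hypothesis $|\bar H_{in}|\ge 1$ forces $u\in\bar H$, so, discarding subsets through $0$, the only subsets to examine are $\bar H\in\{\{u,v_1\},\{u,v_2\},\{u,v_1,v_2\}\}$. In the first two cases $|\bar H_{ex}|=1$, hence $c_e(\bar H)=0$ for all $e$ by Definition~\ref{def:be-KPZ1}, $\CE_0(\bar H)=\{e_1\}$ (resp.\ $\{e_2\}$), and \eqref{e:cond-A} reduces to $2+\delta<3$. In the third case $\bar H_{ex}=\{v_1,v_2\}$ with $v_1,v_2$ both attached to the single vertex $u$, so Definition~\ref{def:be-KPZ1} gives $c_{e_1}(\bar H)=c_{e_2}(\bar H)=3/4$, $\CE_0(\bar H)=\{e_1,e_2\}$, and \eqref{e:cond-A} becomes $2\,(2+\delta-3/4)=5/2+2\delta<3$. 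Both hold for $\delta$ small; note that $\one_{\bar H\subset H_{in}}=0$ throughout since $H_{in}=\{u\}$, so the right-hand side of \eqref{e:cond-A} equals $3$ in all three cases. For condition (2) one runs over the non-empty subsets of $H\setminus H_\star=\{v_1,v_2\}$, i.e.\ $\bar H\in\{\{v_1\},\{v_2\},\{v_1,v_2\}\}$: for a singleton $\{v_i\}$ one has $\CE(\bar H)=\{e_i\}$, $|\bar H\cap H_{in}|=0$, $|\bar H\cap H_{ex}|=1$, so \eqref{e:cond-B} reads $2+\delta>3/2$; for $\{v_1,v_2\}$ one has $\CE(\bar H)=\{e_1,e_2\}$ and the inequality reads $4+2\delta>3$. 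Both are immediate, which establishes all the hypotheses of Proposition~\ref{prop:main}.

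I do not expect any genuine obstacle here: the lemma is a finite bookkeeping exercise, and the only place where care is needed is applying the definition of $c_e(\bar H)$ correctly — in particular distinguishing the ``same vertex'' case ($c_e=3/4$) from the ``different vertices'' case ($c_e=1/2$) when $|\bar H_{ex}|=2$. To close the loop with \eqref{e:tightness}, I would finally record that for this $H$ one has $\bar\alpha = 3\,\bigl(|H_{ex}|/2 + |H_{in}\setminus H_\star|\bigr) - \sum_{e\in\CE(H)} m_e = 3 - (4+2\delta) = -1-2\delta$, so that choosing $\delta<\bar\kappa$ yields, via Proposition~\ref{prop:main}, $\sup_{\varphi\in\CB}\Lambda^p_{\varphi_0^\lambda}(H)\lesssim\lambda^{p(|\<2>|+\eta)}$ with $\eta=2(\bar\kappa-\delta)>0$, which is exactly the first bound of \eqref{e:tightness} for $\tau=\<2>$ along the routine \eqref{e:routine}.
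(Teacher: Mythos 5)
Your verification is correct and follows essentially the same route as the paper's own proof: you use $\deg(0)=1$ to discard subsets containing the origin, check \eqref{e:cond-A} for $\{u,v_1\}$, $\{u,v_2\}$, $\{u,v_1,v_2\}$ (getting $2+\delta<3$ and $5/2+2\delta<3$ via the ``same vertex'' value $c_e=3/4$), and check \eqref{e:cond-B} on the subsets of $\{v_1,v_2\}$, exactly as the paper does. The additional computation of $\bar\alpha=-1-2\delta$ and the link back to \eqref{e:tightness} is also consistent with the paper's subsequent remark.
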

\begin{proof}
%
Since $\deg(0)=1$, we only check \eref{e:cond-A} for $\bar H \subset H_0$.
If $\bar H=\{u,v_1\}$, then since $c_{e}(\bar H) = 0$ condition \eref{e:cond-A} reads 
$2+\delta <3$
where  $\delta>0$ is sufficiently small.
The case $\bar H=\{u,v_2\}$ follows in the same way by symmetry.

Still considering condition \eref{e:cond-A}, 
we now look at subgraph $\bar H=\{u,v_1,v_2\}$.
On the left hand side of \eref{e:cond-A},  the set $\CE_0(\bar H)$ consists of two edges. 
Since $ \bar H\cap H_{ex} $ consists of two points which are connected with
the same point (i.e. $u$) in $H$, by Definition~\ref{def:be-KPZ1},
$c_{e}(\bar H) =3/4$ for each $e\in\CE_0(\bar H)$. So condition \eref{e:cond-A}
reads $5/2 +2\delta <3$ for sufficiently small $\delta>0$.

Regarding condition \eref{e:cond-B}, 
if $\bar H=\{v_1\}$ then condition \eref{e:cond-B}
reads $2+\delta > {3\over 2}$,
and the case $\bar H=\{v_2\}$ follows in the same way.
If $\bar H=\{v_1,v_2\}$, then condition \eref{e:cond-B}
reads $4+2\delta > 3$. So \eref{e:cond-B} is verified.
\end{proof}

Proposition~\ref{prop:main}
then yields the desired bound (with $\bar\alpha=-1-\kappa$ for some arbitrarily small $\kappa>0$) on $ \Lambda^p_{\varphi_0^\lambda}(H)$, 
and therefore Theorem~\ref{thm:tightness} is proved for the case $\tau=\<2>$.

\begin{remark}
While for the simplest object $\<2>$ we have directly applied Lemma~\ref{lem:collapse} to the bound \eref{e:bnd2-1}, for the more complicated objects, we will often have to deal with renormalisations {\it before} applying the absolute values as in \eref{e:bnd2-1}.
\end{remark}

\subsection{Bounds on the other objects} \label{sec:bound-other}

\subsubsection*{Bounds on \<21>}

With the generalities discussed in the previous subsection we now proceed to consider the object $\tau=\<21>$.
Recall that we have defined some graphical notations from Section~\ref{sec:renorm} to represent integrations of kernels. Besides these graphical notations, we will need another type of special vertices $\tikz \node [var] {};$ in our graphs. Each instance of $\tikz \node [var] {};$ stands for an integration variable $x$, as well as a factor
$\zeta_\eps(x)$.
Furthermore, if more than one such vertex appear, then the corresponding product of 
$\zeta_\eps$ is always a Wick product 
 $\Wick{ \zeta_\eps(x_1) \cdots \zeta_\eps(x_n)}$, where the $x_i$ are the integration variables represented by 
{\it all} of the special vertices $\tikz \node [var] {};$ appearing in the graph.
This is consistent with the graphical notation used in \cite{WongZakai,KPZJeremy}: our graphs
represent integrals of product of kernels and a (non-Gaussian) Wick product
of the random fields. In the particular case where $\zeta_\eps$ is a Gaussian random field,
this yields an element of the $n$th homogeneous Wiener chaos as in  \cite{WongZakai,KPZJeremy}, and
the two notations do coincide.

With these notations, 
by Definition~\ref{def:Wick-product} and 
the definition of $C_1^{(\eps)}$, combined with the fact that $K$ annihilates constants,
we have the identity
\begin{equ}
(\hat\Pi_{0}^{(\eps)} \<21>) (\varphi^\lambda_0)
=
\begin{tikzpicture}[baseline=0.3cm]
	\node at (0,-.5)  [root] (root) {};
	\node at (-.8,1)  [dot] (left) {};
	\node at (-.8,0)  [dot] (mid) {};
	\node at (0,1.2) [var] (topvar1) {};
	\node at (0,.8) [var] (topvar2) {};
	\node at (0,0) [var] (rightvar) {};
	
	\draw[testfcn] (mid) to  (root);
	
	\draw[kernel] (left) to (mid);
	\draw[kernel] (topvar1) to (left); \draw[kernel] (topvar2) to (left);
	\draw[kernel] (rightvar) to (mid); 
\end{tikzpicture}
\;+\;2\; \left(
\begin{tikzpicture}[baseline=0.3cm]
	\node at (0,-.5)  [root] (root) {};
	\node at (-.8,1)  [dot] (left) {};
	\node at (-.8,0)  [dot] (mid) {};
	\node at (0,1.2) [var] (topvar) {};
	\node at (0,.5) [cumu2n] (rightvar) {};
		\draw[cumu2] (rightvar) ellipse (4pt and 8pt);
	
	\draw[testfcn] (mid) to  (root);
	
	\draw[kernel] (left) to (mid);
	\draw[kernel] (topvar) to (left); 
	\draw[kernel] (rightvar.south) node[dot] {}  to (mid); 
	\draw[kernel] (rightvar.north) node[dot] {}  to (left); 
\end{tikzpicture}
\;-\;\hat c^{(\eps)}\;
\begin{tikzpicture}[baseline=0.3cm]
	\node at (0,-.2)  [root] (root) {};
	\node at (0,.9) [var] (topvar) {};
	\node at (0,.3)  [dot] (mid) {};
	\draw[kernel] (topvar) to (mid); 
	\draw[testfcn] (mid) to  (root);
\end{tikzpicture} \right)
=: \CI_1 + 2\CI_2\;.
\end{equ}
For instance, 
\begin{equ}
\CI_1 = \int  \varphi^\lambda_0(z)\,	 K'( w-x^{(1)}) K'(w-x^{(2)}) K'(z-x^{(3)}) K'( z-w) \Wick{\prod_{i=1}^3 \zeta_\eps(x^{(i)}) } dxdwdz
\end{equ}
where $x=(x^{(1)},x^{(2)},x^{(3)})$. 
Note that in principle one would expect the appearance of a  term 
involving $\kappa_3^{(\eps)}$ in the expression for $\hat\Pi_{0}^{(\eps)} \<21>$, 
but this term is precisely cancelled by $C_1^{(\eps)}$.


We bound the $p$-th moments of $\CI_1$ and $\CI_2$ separately. 
For the moment $\E|\CI_1|^p$ of the first integral $ \CI_1 $, we can apply Corollary~\ref{cor:Wick-field}
and Lemma~\ref{lem:collapse}.
We are then again in the situation that there is a natural  partial graph 
$H$ with two internal vertices and three external vertices:
\begin{equ} [e:def2ndH]
H\eqdef  \;\;
\begin{tikzpicture}   [baseline=-0.1cm]
\node[root]	(root) 	at (0,0) {};
\node[dot]		(u)  	at (1,0) {};
\node[dot]		(w)  	at (2.5,0) {};
\node[dot]		(a1)  	at (1,1) {};
\node[dot]		(a2)  	at (2,1) {};
\node[dot]		(a3)  	at (3,1) {};
\draw[testfunction] (u) to node[labl] {\scriptsize 0}  (root);
\draw (u) to node[labl] {\scriptsize 2+}  (w);
\draw (u) to node[labl] {\scriptsize 2+}  (a1);
\draw (w) to node[labl] {\scriptsize 2+} (a2);
\draw (w) to node[labl] {\scriptsize 2+} (a3);
\end{tikzpicture}
\end{equ}
so that $\E|\CI_1|^p \lesssim \Lambda^p_{\varphi_0^\lambda}(H)$.
As in the case $\tau=\<2>$, it is straightforward to verify 
 the conditions of 
 Proposition~\ref{prop:main}.
We only check the condition \eref{e:cond-A} 
with $\bar H$ consisting of all the vertices of $H$ except $0$ (so $|\bar H|=5$).
Since $|\bar H\cap H_{ex}| =3$, by \eref{e:be-KPZ1} 
in Definition~\ref{def:be-KPZ1} we have 
$c_{e}(\bar H) =3/4$ for each of the three external edges,
so  \eref{e:cond-A} holds since
the left hand side is  $8 + 4\delta -{3\over 4}\cdot 3$ while the right hand side
is equal to $6$.
We remark that the fact that the labels in the graph are actually $2+\delta$ for some {\it strictly positive} $\delta$
 guarantees the strict inequality in the condition \eref{e:cond-B}.
We therefore have the desired bound on
the moments of $ \CI_1 $ by Proposition~\ref{prop:main} 
(with $\bar\alpha=-1/2 -\kappa$ for some arbitrarily small $\kappa>0$).
 
Regarding the second integral $ \CI_2 $, 
we define a function
\begin{equ} [e:defQ]
 Q_\eps(w)\eqdef \int K'(w-x) K'(-y) K'(-w)\,\kappa_2^{(\eps)}(x,y) \,dxdy 
 	- \hat c^{(\eps)} \delta(w)\;.
\end{equ}
We use the notation $\tikz[baseline=-3] \draw[kernelBig] (0,0) to (1,0);$
for the function $Q_\eps$. One then has
\begin{equ} [e:diff-c]
\CI_2 \;= \; 
\begin{tikzpicture}[baseline=0.3cm]
	\node at (0,-.5)  [root] (root) {};
	\node at (-.8,1)  [dot] (left) {};
	\node at (-.8,0)  [dot] (mid) {};
	\node at (0,1.2) [var] (topvar) {};
	\draw[testfcn] (mid) to  (root);
	\draw[kernelBig] (left) to (mid);
	\draw[kernel] (topvar) to (left); 
\end{tikzpicture}
\end{equ}

By \cite[Lemma~10.16]{KPZ} one has
\begin{equ} [e:convolve]
\Big| \int 
	K'(w-x ) Q_\eps(z-w)   \,dw \Big|
\lesssim 
	|z-x|^{-2}\;,
\end{equ}
uniformly over $\eps\in (0,1]$.
Note that the renormalised distribution $\mathscr R Q_\eps$
appearing in \cite[Lemma~10.16]{KPZ} 
is precisely the same as $Q_\eps$, because $Q_\eps$ integrates to $0$ by the choice of $\hat c^{(\eps)}$ in Lemma~\ref{lem:const-correct}.

As an immediate consequence of \eref{e:convolve},
one has for all even $p$
\begin{equ} [e:pmom-I2]
\E |\CI_2|^p \lesssim
 \int \prod_{i=1}^p 
 	\Big(
	\phi_0^\lambda(z_i) \,
	|z_i-x_i|^{-2} \Big)
	\Big| \E\Big(\prod_{i=1}^p \zeta_\eps(x_i)\Big) \Big| \, dx\,dz 
\end{equ}
where the integration is over $x_1,\ldots,x_p$ and $z_1,\ldots,z_p$.
Again, after 
applying Corollary~\ref{cor:Wick-field} and Lemma~\ref{lem:collapse}, we can define a partial graph
$H$ with one internal vertex and one external vertex as
\begin{equ}  [e:defH1iv1ev]
H\eqdef  \;\;
\begin{tikzpicture}   [baseline=-0.1cm]
\node[root]	(root) 	at (0,0) {};
\node[dot]		(mid)  	at (1,0) {};
\node[dot]		(a1)  	at (2.5, 0) {};
\draw[testfunction] (mid) to node[labl] {\scriptsize 0}  (root);
\draw (mid) to node[labl] {\scriptsize 2+}  (a1);
\end{tikzpicture}
\end{equ}
so that \eref{e:pmom-I2} is bounded by $\Lambda^p_{\varphi_0^\lambda}(H)$.
It is then very straightforward to check that 
the conditions of Proposition~\ref{prop:main}  
are all satisfied.
Therefore we have the desired bound on
the moments of $ \CI_2 $.

This completes the proof of Theorem~\ref{thm:tightness} for the case $\tau=\<21>$.
Note that in the bound of $\E |\CI_2|^p$, 
we actually integrated out {\it some of} the integration variables
before applying the first step in the ``routine" \eref{e:routine} 
and representing the rest of the multi-integral by graphs obtained by Wick contracting
an $H$. This procedure will be useful sometimes for other objects.

\subsubsection*{Bounds on \<21a>}

To study the case $\tau=\<21a>$,  we introduce an additional graphical notation
 $\tikz \draw[kernel1]  (0,0) to (1.2,0);$, which is the same as in \cite{WongZakai}
 and \cite{KPZJeremy}.
This barred arrow  represents $K'( z-w)-K'(-w)$ where
$w$ and $z$ are the coordinates of the starting and end point respectively.
With this notation at hand, we have 
\begin{equ} [e:graphs-21a]
(\hat\Pi_{0}^{(\eps)} \<21a>) (\varphi^\lambda_0)
= \;
\begin{tikzpicture}[baseline=0.2cm]
	\node at (0,-.8)  [root] (root) {};
	\node at (-.8,0)  [dot] (left) {};
	\node at (0,0)  [dot] (right) {};
	\node at (-.8,1) [var] (leftvar) {};
	\node at (0,1) [var] (rightvar) {};
	
	\draw[testfcn] (right) to  (root);
	
	\draw[kernel1] (left) to (right);
	\draw[kernel] (leftvar) to (left);
	\draw[kernel] (rightvar) to (right); 
\end{tikzpicture}
\;+\;
\begin{tikzpicture}[baseline=0.2cm]
	\node at (0,-.8)  [root] (root) {};
	\node at (-.8,0)  [dot] (left) {};
	\node at (0,0)  [dot] (right) {};
	\node[cumu2n]	(top)  at (-0.4,1) {};	
	\draw[cumu2] (top) ellipse (8pt and 4pt);
	
	\draw[testfcn] (right) to  (root);
	
	\draw[kernel1] (left) to (right);
	\draw[kernel] (top.west) node[dot] {}  to (left);
	\draw[kernel] (top.east) node[dot] {}  to (right); 
\end{tikzpicture}
\;-\;\hat c^{(\eps)}\;
\begin{tikzpicture}[scale=0.35,baseline=-0.2cm]
	\node at (0,-1)  [root] (root) {};
	\node at (0,1)  [dot] (only) {};
	\draw[testfcn] (only) to  (root);
\end{tikzpicture}
\;\;=\;\;
\begin{tikzpicture}[baseline=0.2cm]
	\node at (0,-0.8)  [root] (root) {};
	\node at (-.8,0)  [dot] (left) {};
	\node at (0,0)  [dot] (right) {};
	\node at (-.8,1) [var] (leftvar) {};
	\node at (0,1) [var] (rightvar) {};
	
	\draw[testfcn] (right) to  (root);
	
	\draw[kernel1] (left) to (right);
	\draw[kernel] (leftvar) to (left);
	\draw[kernel] (rightvar) to (right); 
\end{tikzpicture}
\;-\;
\begin{tikzpicture}[baseline=0.2cm]
	\node at (0,-0.8)  [root] (root) {};
	\node at (-.4,0)  [dot] (left) {};
	\node at (0.4,0)  [dot] (right) {};
	\node[cumu2n]	(top)  at (0,1) {};	
	\draw[cumu2] (top) ellipse (8pt and 4pt);
	
	\draw[testfcn] (right) to  (root); \draw[kernel] (left) to (root);
	
	\draw[kernel] (top.west) node[dot] {}  to (left);
	\draw[kernel] (top.east) node[dot] {}  to (right); 
\end{tikzpicture}
\end{equ}
where we used the definition of $\hat c^{(\eps)}$ (i.e. Lemma~\ref{lem:const-correct}) in the second equality.


The second term on the right hand side of \eref{e:graphs-21a} is deterministic and it is bounded by $\lambda^{-\delta}$
for every $\delta>0$ (for instance, using Lemma~\ref{lem:collapse} and \cite[Lemma~10.14]{Regularity}).  So it remains to bound the first term, which is equal to
\begin{equ}
\int_{\R^8} \varphi^\lambda_0(z)
K'( w-x^{(1)}) K'(z-x^{(2)}) 
	\big( K'( z-w)-K'(-w) \big)
  \Wick{\zeta_\eps(x^{(1)}) \zeta_\eps(x^{(2)})} dx dw dz
\end{equ}
where the $x$-integration is over $x^{(1)}, x^{(2)} $.
This integral can be written as a sum of two terms $ \CI_1' + \CI_2'$,
where $ \CI_i'$  denotes the integration over $\Omega_i$
for $i\in\{1,2\}$, with
\begin{equ}
\Omega_1 \eqdef \{(x^{(1)},x^{(2)},w,z)\in \R^{8}: |w|\ge |z|/2 \} \;,
\end{equ}
and  $\Omega_2 \eqdef \R^{8}\setminus \Omega_1$.
We then bound the $p$th moment of each of these two terms separately.

By the generalised Taylor theorem in \cite[Proposition~11.1]{Regularity},
\begin{equ} [e:pos-ren]
	| K'( z-w)-K'(-w) | \lesssim 
	\begin{cases}
	|z|^{1/2} \, |z-w|^{-5/2}  
			& \mbox{on } \Omega_1  \\
	|z|^{{1/2}-\eta} \, |w|^{-5/2+\eta} 
			& \mbox{on } \Omega_2
	\end{cases}
\end{equ}
for any $\eta \in [0,1/2]$. To see \eqref{e:pos-ren},
by Taylor's theorem one has 
\[
| K'( z-w)-K'(-w) | \lesssim  |z|^{\frac12-\eta} (|z-w|^{-\frac52+\eta} +|w|^{-\frac52+\eta} )
\]
for any $\eta \in [0,1/2]$. If $|w|\ge |z|/2$, one has $|z-w|\lesssim |z|+|w| \lesssim |w|$
and thus the first bound in \eqref{e:pos-ren} by choosing $\eta=0$. 
If $|w| < |z|/2$, then $|w| \lesssim |z-w|$ and one obtains the second bound in \eqref{e:pos-ren}.

On the support of $\phi_0^\lambda(z)$ one has the additional bound $|z| \le \lambda$.
 One then has
\begin{equs}
\,&\E  |\CI_1'|^p
\lesssim
		\lambda^{ {p\over 2}}
  \int_{\Omega_1 \times \cdots \times \Omega_1} \prod_{i=1}^p |\phi_0^\lambda(z_i)| \,
	| K'( w_i-x_i^{(1)}) \,K'(z_i-x_i^{(2)}) |  \label{e:boundI1}\\
	 &\qquad\qquad \times |z_i-w_i|^{-5/2} 
	\Big| \E\Big(\prod_{i=1}^p \Wick{\zeta_\eps(x_i^{(1)}) \zeta_\eps(x_i^{(2)})} \Big) \Big| \, dx\,dw\,dz \\
& \lesssim
	\lambda^{{p\over 2}}
  \!\!   \int_{\R^{6p}} \prod_{i=1}^p |\phi_0^\lambda(z_i)| \,
	 |z_i-x_i^{(2)} |^{-2}  
	 |z_i-x_i^{(1)}|^{-{3\over 2}} 
	\Big| \E\Big(\prod_{i=1}^p 
		\Wick{\zeta_\eps(x_i^{(1)}) \zeta_\eps(x_i^{(2)})} 
	\!\!\Big) \Big| dxdz \;.
\end{equs}
Note that after applying \eref{e:pos-ren}, we bounded the integral 
over the Cartesian product of $p$ copies of $\Omega_1$
by the integration over all of $(\R^2)^{4p}$.
In the last step we then integrated out $w_i$ for all $1\le i\le p$.

\begin{remark}
In the bound \eref{e:pos-ren}, just like in \eref{e:convolve},
we deal with renormalisations ``by hand", no matter whether they are 
``negative renormalisation" (meaning the situations in which we have kernels with too 
small homogeneity such as $Q$ in \eref{e:defQ}) or ``positive renormalisation" (meaning the 
situations in which we have to subtract the kernel at the origin as in \eref{e:pos-ren}), 
rather than directly relying on the general bounds in \cite{KPZJeremy}. We follow this approach 
because, when we apply the bounds on the cumulants (e.g. Lemma~\ref{lem:collapse}), each factor 
in the integrand is bounded by its absolute value, but the renormalisations encode
cancellations that would then be lost. Another reason for following this approach is that Assumption~\ref{ass:graph} below will be simpler to verify than the original assumption in \cite{KPZJeremy}.
\end{remark}

Now we apply Corollary~\ref{cor:Wick-field}
and Lemma~\ref{lem:collapse},
which for $\E  |\CI_1'|^p$  gives the same bound as \eref{e:bnd2-2},
times $\lambda^{p/2}$,
except that some factors $|z_k-x_B |^{-2}$ are replaced by $|z_k-x_B |^{-3/2}$.
This bound is again represented for each $\pi$ 
by a graph
obtained from Wick contracting $p$ copies of $H$, where 
$H$ is given by
\begin{equ}
H\eqdef  \;\;
\begin{tikzpicture}   [baseline=-0.1cm]
\node[root]	(root) 	at (0,0) {};
\node[dot]		(mid)  	at (1,0) {};
	\node[below] at (mid)  {$u$};
\node[dot]		(a1)  	at (2,.8) {};
	\node[right] at (a1)  {$v_1$};
\node[dot]		(a2)  	at (2,-.8) {};
	\node[right] at (a2)  {$v_2$};
\draw[testfunction] (mid) to node[labl] {\scriptsize 0}  (root);
\draw (mid) to node[labl] {\scriptsize 3/2+}  (a1);
\draw (mid) to node[labl] {\scriptsize 2+} (a2);
\end{tikzpicture}
\end{equ}
One can directly check that the conditions of Proposition~\ref{prop:main} 
are all satisfied.
In fact, the choice of exponent $-5/2$ in \eref{e:pos-ren}
was designed so that, on one hand the function $|z_i-w_i|^{-5/2} $ is still integrable
when we integrate out $w_i$ in \eqref{e:boundI1}, and on the other hand
condition \eref{e:cond-B} is satisfied for the subgraph $\bar H = \{v_1\}$. 
This is because the left hand side of \eref{e:cond-B}
is then equal to $3/2+\delta$ for some small $\delta>0$, while the right hand side
is equal to $3/2$.
We conclude that we have the desired bound on $\E  |\CI_1'|^p$ by
Proposition~\ref{prop:main} with $\bar\alpha=-1/2-\kappa$ for arbitrarily small $\kappa>0$.

Regarding the term $\CI_2'$, we note that on the domain $\Omega_2$
and the support of the function $\varphi^\lambda_0$,
one has $|z|^{1/2-\eta} \lesssim \lambda^{1/2} |w-z|^{-\eta}$.
Therefore one has
\begin{equs}
\,&\E  |\CI_2'|^p
\lesssim
		\lambda^{ {p\over 2}}
  \int_{\Omega_2 \times \cdots \times \Omega_2} 
  \prod_{i=1}^p \phi_0^\lambda(z_i) \,
	| K'( w_i-x_i^{(1)}) \,K'(z_i-x_i^{(2)}) |  \label{e:boundI2} \\
	 &\qquad\qquad \times |z_i-w_i|^{-\eta} \,|w_i|^{-5/2+\eta}
	\Big| \E\Big(\prod_{i=1}^p \Wick{\zeta_\eps(x_i^{(1)}) \zeta_\eps(x_i^{(2)})} \Big) \Big| \, dx\,dw\,dz  \;.
\end{equs}
Again, we bound the integral 
by the corresponding integral over all of $(\R^2)^{4p}$.
After applying Corollary~\ref{cor:Wick-field}
and Lemma~\ref{lem:collapse},
we obtain integrals which are again represented by graphs obtained from Wick 
contracting the partial graph
\begin{equ}
H\eqdef  \;\;
\begin{tikzpicture}   [baseline=14,scale=1.2]
\node[root]	(root) 	at (0,0) {};
\node[dot]		(mid)  	at (1,0) {};
	\node[below] at (mid)  {$u$};
\node[dot]		(w)  	at (1,1.2) {};
	\node[above] at (w)  {$w$};
\node[dot]		(a1)  	at (2,1.2) {};
	\node[right] at (a1)  {$v_1$};
\node[dot]		(a2)  	at (2,0) {};
	\node[right] at (a2)  {$v_2$};
\draw[testfunction] (mid) to node[labl] {\scriptsize 0}  (root);
\draw (root) to node[labl] {\scriptsize $5/2-\eta$}  (w);
\draw (w) to node[labl] {\scriptsize 2+}  (a1);
\draw (mid) to node[labl] {\scriptsize 2+} (a2);
\draw[bend right =50] (mid) to node[labl] {\scriptsize $\eta$} (w);
\end{tikzpicture}
\end{equ}
with $H_\star = \{0,u\}$.
It is again straightforward to
verify that the conditions of Proposition~\ref{prop:main} 
are all satisfied, provided that we choose $\eta$ and $\delta$ in a suitable way.
Note that, due to existence of the edge between the vertex $w$
and $0$, we really do need to 
check condition \eref{e:cond-A} for $\bar H \ni 0$.
In fact,
 for the subgraph $\bar H=\{0,w,v_1\}$,
the right hand side of \eref{e:cond-A} is equal to $3$, 
and since $c_{\{w,v_1\}}(\bar H) =3/2$ the left hand side is equal to $(5/2-\eta)+ (2+\delta) -3/2 <3$  
for sufficiently small $\delta>0$ such that $\delta<\eta$.
The fact that the other subgraphs all satisfy condition \eref{e:cond-A}  can be verified straightforwardly, so we omit the details.

Regarding the condition \eref{e:cond-B},
if the subgraph  is given by $\bar H=\{w,v_1\}$, then
the right hand side of \eref{e:cond-B} equals $9/2$
and the left hand side equals $(5/2-\eta )+ (2+\delta)+\eta >9/2$.
For the other subgraphs the condition follows similarly, and we conclude
that the desired bound holds for $\E  |\CI_2'|^p$ by
Proposition~\ref{prop:main} with again $\bar\alpha=-1/2-\kappa$ for arbitrarily small $\kappa>0$.
The proof to the first bound of \eqref{e:tightness} is therefore completed 
for the case $\tau=\<21a>$, noting that although one has
$|\<21a>| = -2\bar \kappa$ by \eqref{e:list-symbols}, the presence of the factor $\lambda^{ {p\over 2}}$
in front of the expressions \eqref{e:boundI1} and \eqref{e:boundI2}
yields the required bound.

\subsubsection*{Bounds on \<4>}

For the object $\<4>$,  it is straightforward to check using \eref{e:defWick} 
and definition of the renormalised model that
\begin{equ}
(\hat\Pi_{0}^{(\eps)} \<4>) (\varphi^\lambda_0)
=
\begin{tikzpicture}[baseline=-0.1cm,scale=0.8]
	\node at (0,-1.2)  [root] (root) {};
	\node at (0,-0.5)  [dot] (mid) {};
	\node at (-1,0)  [dot] (left) {};
	\node at (1,0)  [dot] (right) {};
	\node at (-1.4,1) [var] (var1) {};
	\node at (-0.6,1) [var] (var2) {};
	\node at (0.6,1) [var] (var3) {};
	\node at (1.4,1) [var] (var4) {};
	
	\draw[testfcn] (mid) to  (root);
	
	\draw[kernel] (left) to (mid);\draw[kernel] (right) to (mid);
	\draw[kernel] (var1) to (left);\draw[kernel] (var2) to (left);
	\draw[kernel] (var3) to (right); \draw[kernel] (var4) to (right); 
\end{tikzpicture}
\;+\;4\;
\begin{tikzpicture}[baseline=-0.1cm,scale=0.8]
	\node at (0,-1.2)  [root] (root) {};
	\node at (0,-0.5)  [dot] (mid) {};
	\node at (-1,0)  [dot] (left) {};
	\node at (1,0)  [dot] (right) {};
	\node at (-1.4,1) [var] (var1) {};
	\node at (1.4,1) [var] (var4) {};
	\node[cumu2n]	(top)  		at (0,0.5) {};	
	\draw[cumu2] (top) ellipse (8pt and 4pt);
	
	\draw[testfcn] (mid) to  (root);
	
	\draw[kernel] (left) to (mid);\draw[kernel] (right) to (mid);
	\draw[kernel] (var1) to (left);\draw[kernel] (top.west) node[dot] {} to (left);
	\draw[kernel] (top.east) node[dot] {} to (right); \draw[kernel] (var4) to (right); 
\end{tikzpicture}
\;+\;4\;\;
\begin{tikzpicture}[baseline=-0.1cm,scale=0.8]
	\node at (0,-1.2)  [root] (root) {};
	\node at (0,-0.5)  [dot] (mid) {};
	\node at (-1,0)  [dot] (left) {};
	\node at (1,0)  [dot] (right) {};
\node (top) at (-0, 1) {};
 \node[cumu3,rotate=180] (topcumu) at (-0, 0.9) {};
	\node at (1.4,1) [var] (var4) {};
	
	\draw[testfcn] (mid) to  (root);
	
	\draw[kernel] (left) to (mid);\draw[kernel] (right) to (mid);
	\draw[kernel,bend left=40] (top.south) node[dot] {}  to (left);
	\draw[kernel,bend right=40] (top.west) node[dot] {}  to (left);
	\draw[kernel] (top.east) node[dot] {}  to (right); \draw[kernel] (var4) to (right); 
\end{tikzpicture}
\end{equ}
Note that the terms in which all the four leaves are contracted 
are  canceled by the renormalisation constants $C_{3,1}^{(\eps)}$ and $C_{3,2}^{(\eps)}$. Here, the first two graphs are essentially the same as in \cite{KPZJeremy} (except that our graphs represent non-Gaussian Wick products of random fields), but the last graph is new. It will be shown that the last term vanishes in the limit $\eps\to 0$.

Denote by $\CI_{i}^{\dagger}$ ($i=1,2,3$) the above terms respectively. 
For $ \CI^{\dagger}_1 $  and $ \CI^{\dagger}_2 $ we can apply Corollary~\ref{cor:Wick-field}
and Lemma~\ref{lem:collapse} to obtain 
the bound  $\E| \CI^{\dagger}_i|^p \lesssim \Lambda^p_{\varphi^\lambda_0} (H_i)$
where $i\in\{1,2\}$ and $H_1$, $H_2$ are defined below.
It is straightforward to check the conditions of 
Proposition~\ref{prop:main}. 
Note that for $ \CI^{\dagger}_2 $, we have integrated out the second cumulant $\kappa_2^{(\eps)}$,
which gives the factor $| x-y |^{-1} $ (corresponding to the edge with label ``$1+$" in $H_2$).
\begin{equ}
H_1  =
\begin{tikzpicture}[baseline=-0.1cm,scale=1]
	\node at (0,-1.2)  [root] (root) {};
	\node at (0,-0.4)  [dot] (mid) {};
	\node at (-1,0)  [dot] (left) {};
	\node at (1,0)  [dot] (right) {};
	\node at (-1.4,1) [dot] (var1) {};
	\node at (-0.6,1) [dot] (var2) {};
	\node at (0.6,1) [dot] (var3) {};
	\node at (1.4,1) [dot] (var4) {};
	
	\draw[testfunction] (mid) to node[labl] {\scriptsize 0} (root);
	
	\draw  (left) to node[labl] {\scriptsize 2+} (mid);
	\draw  (right) to node[labl] {\scriptsize 2+} (mid);
	\draw  (var1) to node[labl] {\scriptsize 2+} (left);
	\draw  (var2) to node[labl] {\scriptsize 2+} (left);
	\draw  (var3) to node[labl] {\scriptsize 2+} (right); 
	\draw  (var4) to node[labl] {\scriptsize 2+}  (right); 
\end{tikzpicture}
\qquad H_2 =
\begin{tikzpicture}[baseline=-0.1cm,scale=1]
	\node at (0,-1.2)  [root] (root) {};
	\node at (0,-0.4)  [dot] (mid) {};
	\node at (-1,0)  [dot] (left) {};
	\node at (1,0)  [dot] (right) {};
	\node at (-1.4,1) [dot] (var1) {};
	\node at (1.4,1) [dot] (var4) {};
	
	\draw[testfunction] (mid) to node[labl] {\scriptsize 0} (root);
	
	\draw  (left) to node[labl] {\scriptsize 2+} (mid);
	\draw  (right) to node[labl] {\scriptsize 2+} (mid);
	\draw  (var1) to node[labl] {\scriptsize 2+} (left);
	\draw[bend right=60] (right)  to node[labl] {\scriptsize 1+} (left);
	\draw  (var4) to node[labl] {\scriptsize 2+} (right); 
\end{tikzpicture}
\qquad H_3 =  \;\;
\begin{tikzpicture}[baseline=-0.1cm,scale=1]
	\node at (0,-1.2)  [root] (root) {};
	\node at (0,-0.4)  [dot] (mid) {};
	\node at (0,1) [dot] (var4) {};
	
	\draw[testfunction] (mid) to node[labl] {\scriptsize 0} (root);
	 \draw  (var4) to node[labl] {\scriptsize 2-} (mid); 
\end{tikzpicture}
\end{equ}

 
Regarding the term  $ \CI^{\dagger}_3 $,
when we apply Lemma~\ref{lem:collapse},
 we gain a factor $\eps^{3/2}$
from the third cumulant $\kappa_3^{(\eps)}$.
It is clear from our graphical representation that there exist ``double edges"
connecting to the third cumulant function,
which stand for a function $(|x-w|+\eps)^{-4}$.
 We can use a factor $\eps^{1+\delta}$ with $0<\delta<1/2$
  to improve the homogeneity of this function, namely, we have bound
  $\eps^{1+\delta} (|x-w|+\eps)^{-4} \lesssim (|x-w|+\eps)^{-3+\delta}$;
 and we still have a factor $\eps^{{1\over 2} -\delta}$ left.
Integrating out all the variables represented by dots $\tikz \node[dot] at (0,0) {};$
(which are simple convolutions) except for the ones where the test functions $\varphi^\lambda_0$ are evaluated at, we  again obtain 
a bound $\Lambda^p_{\varphi_0^\lambda}(H_3)$
where the partial graph  drawn as 
 $H_3$  above (with  the label $2-$ denoting $2-\delta$).
It is easy to verify that the
conditions  of Proposition~\ref{prop:main}
are again satisfied for $H_3$.
With the remaining factor  $\eps^{{1\over 2} -\delta}$, we see that as $\eps\to 0$,
the moments of 
this term converge to zero.
The proof of the first bound of \eqref{e:tightness} is therefore completed 
for the case $\tau=\<4>$.

\subsubsection*{Bounds on \<211>}

For the object $\<211>$, by Definition~\ref{def:Wick-product} for Wick products 
and the definition of the renormalised model $\hat\Pi^{(\eps)}$, one has
\begin{equs}
\bigl( \hat \Pi_0^{(\eps)} \<211>\bigr) & (\phi^\lambda_0) =  \;
\begin{tikzpicture}[scale=0.35,baseline=0.5cm]
	\node at (0,-1)  [root] (root) {};
	\node at (-2,1)  [dot] (left) {};
	\node at (-2,3)  [dot] (left1) {};
	\node at (-2,5)  [dot] (left2) {};
	\node at (0,1) [var] (variable1) {};
	\node at (0,3) [var] (variable2) {};
	\node at (0,4.3) [var] (variable3) {};\node at (0,5.7) [var] (variable4) {};
	
	\draw[testfcn] (left) to  (root);
	
	\draw[kernel1] (left1) to (left);
	\draw[kernel] (left2) to (left1);
	\draw[kernel] (variable3) to (left2); \draw[kernel] (variable4) to (left2); 
	\draw[kernel] (variable2) to (left1); 
	\draw[kernel] (variable1) to (left); 
\end{tikzpicture}
\;+\; \left(
\begin{tikzpicture}[scale=0.35,baseline=0.5cm]
	\node at (0,-1)  [root] (root) {};
	\node at (-2,1)  [dot] (left) {};
	\node at (-2,3)  [dot] (left1) {};
	\node at (-2,5)  [dot] (left2) {};
	\node[cumu2n]	(variable12)  		at (0,2) {};	
	\draw[cumu2] (variable12) ellipse (10pt and 20pt);
	\node at (0,4.3) [var] (variable3) {};\node at (0,5.7) [var] (variable4) {};
	
	\draw[testfcn] (left) to  (root);
	
	\draw[kernel1] (left1) to (left);
	\draw[kernel] (left2) to (left1);
	\draw[kernel] (variable3) to (left2); \draw[kernel] (variable4) to (left2); 
	\draw[kernel] (variable12.north) node[dot]   {} to (left1); 
	\draw[kernel] (variable12.south) node[dot]   {}  to (left); 
\end{tikzpicture}
\;-\; \hat c^{(\eps)} \;
\begin{tikzpicture}[scale=0.35,baseline=0.3cm]
	\node at (-.2,-.8)  [root] (root) {};
	\node at (-2,1)  [dot] (left) {};
	\node at (-2,3)  [dot] (left1) {};
	\node at (0,2.3) [var] (variable3) {};\node at (0,3.7) [var] (variable4) {};
	
	\draw[testfcn] (left) to  (root);	
	\draw[kernel] (left1) to (left);
	\draw[kernel] (variable3) to (left1); \draw[kernel] (variable4) to (left1); 
\end{tikzpicture}
\right)
\;+\;2\; \left(
\begin{tikzpicture}[scale=0.35,baseline=0.5cm]
	\node at (0,-1)  [root] (root) {};
	\node at (-2,1)  [dot] (left) {};
	\node at (-2,3)  [dot] (left1) {};
	\node at (-2,5)  [dot] (left2) {};
	\node at (0,1) [var] (variable1) {};
	\node[cumu2n]	(variable23)  		at (0,4) {};	
	\draw[cumu2] (variable23) ellipse (10pt and 20pt);
	\node at (0,5.7) [var] (variable4) {};
	
	\draw[testfcn] (left) to  (root);
	
	\draw[kernel1] (left1) to (left);
	\draw[kernel] (left2) to (left1);
	\draw[kernel] (variable23.north) node[dot]   {} to (left2); 
	\draw[kernel] (variable4) to (left2); 
	\draw[kernel] (variable23.south)  node[dot]   {} to (left1); 
	\draw[kernel] (variable1) to (left); 
\end{tikzpicture}
\;-\; \hat c^{(\eps)} \;
\begin{tikzpicture}[scale=0.35,baseline=0.3cm]
	\node at (0,-1)  [root] (root) {};
	\node at (-2,1)  [dot] (left) {};
	\node at (-2,3)  [dot] (left1) {};
	\node at (0,1) [var] (variable1) {};
	\node at (0,3.7) [var] (variable4) {};
	
	\draw[testfcn] (left) to  (root);	
	\draw[kernel1] (left1) to (left);
	\draw[kernel] (variable4) to (left1); 
	\draw[kernel] (variable1) to (left); 
\end{tikzpicture} 
\right)
\\&
\;+\;2\;
\begin{tikzpicture}[scale=0.35,baseline=0.5cm]
	\node at (0,-1)  [root] (root) {};
	\node at (-2,1)  [dot] (left) {};
	\node[cumu2n]	(left1)  		at (-2,3) {};	
	\draw[cumu2] (left1) ellipse (10pt and 20pt);
	\node at (-2,5)  [dot] (left2) {};
	\node at (0,3) [dot] (right) {};
	\node at (0,4.6) [var] (variable3) {};
	\node at (-.5, 5.7) [var] (variable4) {};
	
	\draw[testfcn] (left) to  (root);	
	\draw[kernel] (left1.south)  node[dot] {} to (left);
	\draw[kernel] (left1.north) node[dot]   {} to (left2); 
	\draw[kernel] (left2) to (right); 
	\draw[kernel] (variable4) to (left2); \draw[kernel] (variable3) to (right); 
	\draw[kernel1] (right) to (left); 
\end{tikzpicture}
\;+\;2\;
\left(
\begin{tikzpicture}[scale=0.35,baseline=0.5cm]
	\node at (0,-1)  [root] (root) {};
	\node at (-2,1)  [dot] (left) {};
	\node[cumu2n]	(left1)  		at (-2,3) {};	
	\draw[cumu2] (left1) ellipse (10pt and 20pt);
	\node at (-2,5)  [dot] (left2) {};
	\node at (0,3) [dot] (right) {};
	\node[cumu2n]	(top)  		at (0,5.5) {};	
	\draw[cumu2] (top) ellipse (20pt and 10pt);

	\draw[testfcn] (left) to  (root);
	
	\draw[kernel] (left1.south)  node[dot] {} to (left);
	\draw[kernel] (left1.north) node[dot]   {} to (left2); 
	\draw[kernel] (left2) to (right); 
	\draw[kernel] (top.west) node[dot] {} to (left2); \draw[kernel] (top.east) node[dot] {} to (right); 
	\draw[kernel1] (right) to (left); 
\end{tikzpicture}
\;-\hat c^{(\eps)} \;
\begin{tikzpicture}[scale=0.35,baseline=0.2cm]
	\node at (0,-1)  [root] (root) {};
	\node at (0,1)  [dot] (left) {};
	\node[cumu2n]	(c)  		at (2,3) {};	
	\draw[cumu2] (c) ellipse (10pt and 20pt);

	\node at (0,4) [dot] (top) {};

	\draw[testfcn] (left) to  (root);
	
	\draw[kernel] (c.south)  node[dot] {} to (left);
	\draw[kernel] (c.north) node[dot]   {} to (top); 
	\draw[kernel1] (top) to (left); 
\end{tikzpicture}
\;-\;C_{2,1}^{(\eps)}\;
\begin{tikzpicture}[scale=0.35,baseline=-0.2cm]
	\node at (0,-1)  [root] (root) {};
	\node at (0,1)  [dot] (only) {};
	\draw[testfcn] (only) to  (root);
\end{tikzpicture}
\;+\;
\frac{(\hat c^{(\eps)})^2}{2} \;
\begin{tikzpicture}[scale=0.35,baseline=-0.2cm]
	\node at (0,-1)  [root] (root) {};
	\node at (0,1)  [dot] (only) {};
	\draw[testfcn] (only) to  (root);
\end{tikzpicture}
\right)
\label{e:211graphs} \\&
\;+\;
\begin{tikzpicture}[scale=0.35,baseline=0.5cm]
	\node at (0,-1)  [root] (root) {};
	\node at (-2,1)  [dot] (left) {};
	\node	(left1)  		at (-2,3) {};	
	\node[cumu3,rotate=90] (left1-cumu) at (-2.2,3) {};
	\node at (-2,5)  [dot] (left2) {};
	\node at (0,3) [dot] (right) {};
	\node at (0,5) [var] (variable3) {};
	
	\draw[testfcn] (left) to  (root);
	
	\draw[kernel] (left1.south)  node[dot] {} to (left);
	\draw[kernel] (left1.north) node[dot]   {} to (left2); 
	\draw[kernel,bend left=60] (left1.west) node[dot]   {} to (left2); 
	\draw[kernel] (left2) to (right); 
	\draw[kernel] (variable3) to (right); 
	\draw[kernel1] (right) to (left); 
\end{tikzpicture}
\;+\; 2\;\;
\begin{tikzpicture}[scale=0.35,baseline=0.5cm]
	\node at (0,-1)  [root] (root) {};
	\node at (-2,1)  [dot] (left) {};
	\node at (-2,3)  [dot] (left1) {};
	\node at (-2,5)  [dot] (left2) {};
	\node	(right)  		at (0,3) {};	
	\node[cumu3,rotate=90] (right-cumu) at (-0.2,3) {};
	\node at (0,5) [var] (variable4) {};
	
	\draw[testfcn] (left) to  (root);
	
	\draw[kernel1] (left1) to (left);
	\draw[kernel] (left2) to (left1);
	\draw[kernel] (right.north) node[dot] {} to (left2); 
	\draw[kernel] (variable4) to (left2); 
	\draw[kernel] (right.west) node[dot] {} to (left1); 
	\draw[kernel] (right.south) node[dot] {} to (left); 
\end{tikzpicture}
\;+\; \left(
\begin{tikzpicture}  [scale=0.35,baseline=0.5cm]
\node[root]	(root) 	at (2,-1) {};
\node[dot]		(bottom)	at (0,1) {};
\node[dot]		(mid)  	at (0,3) {};
\node[dot]		(top)  	at (0,5) {};
\node		(right)  	at (2,3.4) {};
	\node[cumu4]	(right-cumu) 	at (right) {};
\draw[testfcn] (bottom) to  (root);
\draw[kernel1] (mid) to  (bottom);
\draw[kernel] (top) to  (mid);
\draw[kernel] (right.south west) node[dot] {} to  (mid);
\draw[kernel] (right.south east) node[dot] {} to  (bottom);
\draw[kernel] (right.north west) node[dot] {} to  (top);
\draw[kernel,bend right=40] (right.north east) node[dot] {} to  (top);
\end{tikzpicture}
\;-\;C_{2,2}^{(\eps)}\;
\begin{tikzpicture}[scale=0.35,baseline=-0.2cm]
	\node at (0,-1)  [root] (root) {};
	\node at (0,1)  [dot] (only) {};
	\draw[testfcn] (only) to  (root);
\end{tikzpicture}
\right)
\end{equs}
Here, the graphs in the first two lines are essentially the same 
as those in \cite{KPZJeremy} (except for the renormalisation constant $\hat c^{(\eps)}$), but the graphs in the last line
are new due to the nontrivial higher cumulants of the random field $\zeta_\eps$.
Note that the term involving the joint cumulant of $\zeta_\eps$ at three points
represented by the top three $\tikz \node[var] (0,0) {};$ shaped vertices
vanishes because the kernel $K$ annihilates constants.

Using the notation $\tikz[baseline=-3] \draw[kernelBig] (0,0) to (1,0);$
for the kernel $Q_\eps$ defined in \eref{e:defQ},
and proceeding in the same way as \cite{KPZJeremy} with the definition of constant   $C_{2,1}^{(\eps)}$, we see that
the sum of all the graphs appearing in {\it the first two lines} of \eref{e:211graphs} is equal to
\begin{equ}   [e:graphs-211x]
\begin{tikzpicture}[scale=0.35,baseline=0.5cm]
	\node at (0,-1)  [root] (root) {};
	\node at (-2,1)  [dot] (left) {};
	\node at (-2,3)  [dot] (left1) {};
	\node at (-2,5)  [dot] (left2) {};
	\node at (0,1) [var] (variable1) {};
	\node at (0,3) [var] (variable2) {};
	\node at (0,4.3) [var] (variable3) {};\node at (0,5.7) [var] (variable4) {};
	
	\draw[testfcn] (left) to  (root);
	
	\draw[kernel1] (left1) to (left);
	\draw[kernel] (left2) to (left1);
	\draw[kernel] (variable3) to (left2); \draw[kernel] (variable4) to (left2); 
	\draw[kernel] (variable2) to (left1); 
	\draw[kernel] (variable1) to (left); 
\end{tikzpicture}
\;+\;
\begin{tikzpicture}[scale=0.35,baseline=0.5cm]
	\node at (0,-1)  [root] (root) {};
	\node at (-2,1)  [dot] (left) {};
	\node at (-2,3)  [dot] (left1) {};
	\node at (-2,5)  [dot] (left2) {};
	\node at (0,4.3) [var] (variable3) {};\node at (0,5.7) [var] (variable4) {};
	
	\draw[testfcn] (left) to  (root);
	
	\draw[kernelBig] (left1) to (left);
	\draw[kernel] (left2) to (left1);
	\draw[kernel] (variable3) to (left2); \draw[kernel] (variable4) to (left2); 
\end{tikzpicture}
\;-\;
\begin{tikzpicture}[scale=0.35,baseline=0.5cm]
	\node at (0,-1)  [root] (root) {};
	\node at (-2,1)  [dot] (left) {};
	\node at (0,3)  [dot] (left1) {};
	\node at (0,5)  [dot] (left2) {};
	\node[cumu2n]	(variable12)  		at (-2,2.7) {};	
	\draw[cumu2] (variable12) ellipse (10pt and 20pt);
	\node at (-2,4.3) [var] (variable3) {};\node at (-2,5.7) [var] (variable4) {};
	
	\draw[testfcn] (left) to  (root);
	
	\draw[kernel] (left1) to (root);
	\draw[kernel] (left2) to (left1);
	\draw[kernel] (variable3) to (left2); \draw[kernel] (variable4) to (left2); 
	\draw[kernel] (variable12.north) node[dot]   {} to (left1); 
	\draw[kernel] (variable12.south) node[dot]   {}  to (left); 
\end{tikzpicture}
\;+\;2\;
\begin{tikzpicture}[scale=0.35,baseline=0.5cm]
	\node at (0,-1)  [root] (root) {};
	\node at (-2,1)  [dot] (left) {};
	\node at (-2,3)  [dot] (left1) {};
	\node at (-2,5)  [dot] (left2) {};
	\node at (0,1) [var] (variable1) {};
	\node at (0,5.7) [var] (variable4) {};
	
	\draw[testfcn] (left) to  (root);
	
	\draw[kernel1] (left1) to (left);
	\draw[kernelBig] (left2) to (left1);
	\draw[kernel] (variable4) to (left2); 
	\draw[kernel] (variable1) to (left); 
\end{tikzpicture}
\;+\;2\;
\begin{tikzpicture}[scale=0.35,baseline=0.5cm]
	\node at (0,-1)  [root] (root) {};
	\node at (-2,1)  [dot] (left) {};
	\node[cumu2n]	(left1)  		at (-2,3) {};	
	\draw[cumu2] (left1) ellipse (10pt and 20pt);
	\node at (-2,5)  [dot] (left2) {};
	\node at (0,3) [dot] (right) {};
	\node at (0,4.6) [var] (variable3) {};
	\node at (-.5, 5.7) [var] (variable4) {};
	
	\draw[testfcn] (left) to  (root);
	
	\draw[kernel] (left1.south)  node[dot] {} to (left);
	\draw[kernel] (left1.north) node[dot]   {} to (left2); 
	\draw[kernel] (left2) to (right); 
	\draw[kernel] (variable4) to (left2); \draw[kernel] (variable3) to (right); 
	\draw[kernel1] (right) to (left); 
\end{tikzpicture}
\;-\;2\;
\begin{tikzpicture}[scale=0.35,baseline=0.5cm]
	\node at (0,-1)  [root] (root) {};
	\node at (-2,1)  [dot] (left) {};
	\node[cumu2n]	(left1)  		at (-2,3) {};	
	\draw[cumu2] (left1) ellipse (10pt and 20pt);
	\node at (-2,5)  [dot] (left2) {};
	\node at (0,3.5) [dot] (right) {};

	\draw[testfcn] (left) to  (root);
	
	\draw[kernel] (left1.south)  node[dot] {} to (left);
	\draw[kernel] (left1.north) node[dot]   {} to (left2); 
	\draw[kernelBig] (left2) to (right); 
	\draw[kernel] (right) to (root); 
\end{tikzpicture}
\end{equ}
%
%

We denote by $\CI_k^{\star}$ the $k$-th term in \eref{e:graphs-211x} with $k=1,\ldots,6$. 
Note that $\CI_6^{\star}$ is deterministic
and easily bounded by $\lambda^{-\eta}$ for any $\eta>0$.

For the term $\CI_2^{\star}$, we apply the bound \eref{e:convolve},
then it can be bounded using Corollary~\ref{cor:Wick-field} and
Lemma~\ref{lem:collapse} followed by 
Proposition~\ref{prop:main}, with the partial graph shown
in the left picture below. 
The term $\CI_3^{\star}$ is bounded in the same way, 
with  the partial  graph shown in the right picture:
\begin{equ}
\begin{tikzpicture}   [baseline=0.4]
\node[root]	(root) 	at (0,0) {};
\node[dot]		(mid)  	at (1,0) {};
\node[dot]		(right)  	at (2,0) {};
\node[dot]		(a1)  	at (2,1) {};
\node[dot]		(a2)  	at (3,0) {};
\draw[testfunction] (mid) to node[labl] {\scriptsize 0}  (root);
\draw  (right) to node[labl] {\scriptsize 2+}  (mid);
\draw  (right) to node[labl] {\scriptsize 2+}  (a1);
\draw  (right) to node[labl] {\scriptsize 2+} (a2);
\end{tikzpicture}
\qquad\qquad
\begin{tikzpicture}[baseline=0.4]
	\node at (0,0)  [root] (root) {};
	\node at (1,1)  [dot] (left) {};
	\node at (2,0)  [dot] (left1) {};
	\node at (3,0)  [dot] (left2) {};
	\node at (4,0) [dot] (variable3) {};
	\node at (3,1) [dot] (variable4) {};
	
	\draw[testfunction] (left) to  node[labl] {\scriptsize 0}  (root);
	
	\draw  (left1) to node[labl] {\scriptsize 2+}  (root);
	\draw  (left2) to node[labl] {\scriptsize 2+} (left1);
	\draw  (variable3) to node[labl] {\scriptsize 2+} (left2); 
	\draw  (variable4) to node[labl] {\scriptsize 2+} (left2); 
	\draw    (left) to node[labl] {\scriptsize 1+}  (left1); 
\end{tikzpicture}
\end{equ}
It is again straightforward to check the conditions of 
Proposition~\ref{prop:main}  for these partial graphs.

For the term $\CI_4^{\star}$, we apply \eref{e:convolve},
then it is essentially the same as in the case $\tau = \<21a>$.
Regarding the term $\CI_5^{\star}$, we can bound 
the two kernels in $\tikz \draw[kernel1]  (0,0) -- (1,0); $ separately.
In other words, we write $\CI_5^{\star}$ as sum of two terms,
each containing one of the two kernels. The moments 
of each term  can then be bounded in the same way as above 
with partial graphs defined as follows respectively:
\begin{equ}
\begin{tikzpicture}[baseline=0.5cm]
	\node at (0,0)  [root] (root) {};
	\node at (0,1)  [dot] (left) {};
	\node at (1.5,1)  [dot] (left2) {};
	\node at (1.5,0) [dot] (right) {};
	\node at (3,0) [dot] (variable3) {};
	\node at (3,1) [dot] (variable4) {};
	
	\draw[testfunction] (left) to  node[labl] {\scriptsize 0} (root);
	
	\draw  (left2) to node[labl] {\scriptsize 1+} (left);
	\draw  (left2) to node[labl] {\scriptsize 2+} (right); 
	\draw  (variable4) to node[labl] {\scriptsize 2+} (left2); 
	\draw  (variable3) to node[labl] {\scriptsize 2+} (right); 
	\draw  (right) to node[labl] {\scriptsize 2+} (left); 
\end{tikzpicture}
\qquad\qquad
\begin{tikzpicture}[baseline=0.5cm]
	\node at (0,0)  [root] (root) {};
	\node at (0,1)  [dot] (left) {};
	\node at (1.5,1)  [dot] (left2) {};
	\node at (1.5,0) [dot] (right) {};
	\node at (3,0) [dot] (variable3) {};
	\node at (3,1) [dot] (variable4) {};
	
	\draw[testfunction] (left) to  node[labl] {\scriptsize 0} (root);
	
	\draw  (left2) to node[labl] {\scriptsize 1+} (left);
	\draw  (left2) to node[labl] {\scriptsize 2+} (right); 
	\draw  (variable4) to node[labl] {\scriptsize 2+} (left2); 
	\draw  (variable3) to node[labl] {\scriptsize 2+} (right); 
	\draw  (right) to node[labl] {\scriptsize 2+} (root); 
\end{tikzpicture}
\end{equ}
The conditions of 
Proposition~\ref{prop:main}  are again verified.

We now bound moments of $\CI_1^{\star}$.
Using the bound \eref{e:pos-ren} 
and proceeding 
in the same way as in the case $\<21a>$,
one can write $\CI_1^{\star}$ as a sum of two terms,
so that the moments of these two terms can be bounded separately.
The partial graphs associated with these two terms are
\begin{equ}
\begin{tikzpicture}[scale=0.6,baseline=0cm]
	\node at (-.8,-2)  [root] (root) {};
	\node at (1,-2)  [dot] (left) {};
	\node at (3,-2)  [dot] (left1) {};
	\node at (5,-2)  [dot] (left2) {};
	\node at (1,0) [dot] (variable1) {};
	\node at (3,0) [dot] (variable2) {};
	\node at (4.3,0) [dot] (variable3) {};
	\node at (5.7,0) [dot] (variable4) {};
	
	\draw[testfunction] (left) to  node[labl] {\scriptsize 0} (root);
	
	\draw  (left1) to node[labl] {\scriptsize 5/2+} (left);
	\draw  (left2) to node[labl] {\scriptsize 2+} (left1);
	\draw  (variable3) to node[labl] {\scriptsize 2+} (left2); 
	\draw  (variable4) to node[labl] {\scriptsize 2+} (left2); 
	\draw  (variable2) to node[labl] {\scriptsize 2+} (left1); 
	\draw  (variable1) to node[labl] {\scriptsize 2+} (left); 
\end{tikzpicture}
\qquad\qquad
\begin{tikzpicture}[scale=0.6,baseline=0.05cm]
	\node at (-.3, -2)  [root] (root) {};
	\node at (-.3,0)  [dot] (u) {}; \node[above] at (u) {$u$};
	\node at (3,-2)  [dot] (w1) {}; \node[below] at (w1) {$w_1$};
	\node at (5,-2)  [dot] (w2) {}; \node[below] at (w2) {$w_2$};
	\node at (2,0) [dot] (variable1) {}; \node[above] at (variable1) {$v_1$};
	\node at (3,0) [dot] (variable2) {}; \node[above] at (variable2) {$v_2$};
	\node at (4.3,0) [dot] (variable3) {}; \node[above] at (variable3) {$v_3$};
	\node at (5.7,0) [dot] (variable4) {}; \node[above] at (variable4) {$v_4$};
	
	\draw[testfunction] (u) to  node[labl] {\scriptsize 0} (root);
	
	\draw (w1) to node[labl] {\scriptsize $5/2-\eta$} (root);
	\draw  (w2) to node[labl] {\scriptsize 2+} (w1);
	\draw  (variable3) to node[labl] {\scriptsize 2+} (w2); 
	\draw  (variable4) to node[labl] {\scriptsize 2+} (w2); 
	\draw  (variable2) to node[labl] {\scriptsize 2+} (w1); 
	\draw  (variable1) to node[labl] {\scriptsize 2+} (u); 
	\draw  (u) to node[labl] {\scriptsize $\eta$} (w1); 
\end{tikzpicture}
\end{equ}
These graphs still satisfy the conditions of  Proposition~\ref{prop:main}. 
We only check here the conditions for some subgraphs of the second graph.
For condition \eref{e:cond-A},
 if the subgraph $\bar H=\{0,w_1,w_2,v_2,v_3,v_4\}$,
 then the right hand side of \eref{e:cond-A} is equal to $6$, 
and the left hand side is equal to $(5/2-\eta )+ 4(2+\delta) -3\cdot 3/2 < 6$
for sufficiently small $\delta>0$ such that $4 \delta<\eta$.
Regarding condition \eref{e:cond-B},
if the subgraph $\bar H=\{w_1,w_2,v_2,v_3,v_4\}$, then
the condition reads $(5/2-\eta )+ 4(2+\delta)+\eta > 21/2$ for any $\delta>0$.
Proposition~\ref{prop:main} 
 (with $\bar\alpha=-1/2 -\kappa$ for some arbitrarily small $\kappa>0$) then yields the desired bound on the $p$-th moment, noting the overall factor $\lambda^{ {p\over 2}}$ showing up as we proceed
 as in the case $\<21a>$.

We proceed to consider the first graph in the last line of \eref{e:211graphs}.
Applying Lemma~\ref{lem:collapse} to the third cumulant $\kappa_3^{(\eps)}$
yields a factor $\eps^{3/2}$. As before we combine a factor $\eps^{1+\eta}$ 
together with the ``double-edge" to obtain an edge with label $3-\eta$.
Regarding the two terms in the barred edge, we deal with them separately.
After integrating out some variables which are simple convolutions,
the bounds boil down to straightforward verification of the conditions of 
Proposition~\ref{prop:main} 
for the following two partial graphs:
\begin{equ}
\begin{tikzpicture}   
\node[root]	(root) 	at (0,0) {};
\node[dot]		(mid)  	at (1,0) {};
\node[dot]		(a1)  	at (2.5, 0) {};
\draw[testfunction] (mid) to node[labl] {\scriptsize 0}  (root);
\draw (mid) to node[labl] {\scriptsize 2+}  (a1);
\end{tikzpicture}
\qquad\qquad
\begin{tikzpicture} 
	\node at (0,0)  [root] (root) {};
	\node at (1,1)  [dot] (left) {};
	\node at (2,0)  [dot] (left1) {};
	\node at (3,0)  [dot] (var) {};
	
	\draw[testfunction] (left) to  node[labl] {\scriptsize 0}  (root);
	
	\draw  (left1) to node[labl] {\scriptsize 2+}  (root);
	\draw  (var) to node[labl] {\scriptsize 2+} (left1);
	\draw    (left) to node[labl] {\scriptsize 1+}  (left1); 
\end{tikzpicture}
\end{equ}
Since there is the factor $\eps^{1/2-\eta}$ remaining, the moments of
this graph converge to zero.

With the same argument the second graph in the last line of \eref{e:211graphs} also converges to zero. 
For the quantity in the last bracket in \eref{e:211graphs}, after cancellation with the renormalisation constant it is equal to the same graph with the barred arrow replaced by an arrow pointing to the origin, and this graph, which represents a deterministic number, is bounded by $\eps^{1-\delta}\lambda^{\delta-1}$ for sufficiently small $\delta>0$ using \cite[Lemma~10.14]{Regularity} and therefore converges to zero.
We therefore obtain the desired bounds for all the terms.

\begin{proof}[of Theorem~\ref{thm:tightness}]
Collecting all the results of this section, we obtain the first bound  
of \eref{e:tightness}, so it remains to show the second bound.
Just as in the verification of the second bound 
in \cite[Theorem~10.7]{Regularity}, this follows in essentially the same way as 
the first bound. 
Indeed, as we consider the difference between 
$\hat\Pi_{0}^{(\eps)} \tau$
and $ \hat\Pi_{0}^{(\eps,\bar\eps)} \tau$
for any $\tau \neq \Xi$, we obtain a sum 
of expressions of the type \eref{e:1stObj},
but in each term some of the instances of $K'$
are replaced by $K'_{\bar\eps}$
and exactly one instance is replaced by $K'-K'_{\bar\eps}$.

This is because $\zeta_{\eps,\bar \eps}$ always appears as
part of an expression of the type $K' * \zeta_{\eps,\bar \eps}$,
which can be rewritten as $K'_{\bar \eps} * \zeta_\eps$.
We then use the fact that $K'_{\bar\eps}$
satisfies the same bound as $K'$, while $ K'-K'_{\bar\eps}$
satisfies the improved bound
\eref{e:approxKernel}.

The case $\tau = \Xi$ is slightly different, since in this case 
$\zeta_{\eps,\bar \eps}$  appears on its own. However, the required bound
there follows immediately from the fact that, on any fixed bounded domain,
and for any $\alpha < 0$ and sufficiently small $\delta > 0$, and for
any distribution $\eta$ with $\|\eta\|_{\CC^{\alpha + \delta}} < \infty$, one has
\begin{equ}
\|\eta - \rho_{\bar \eps} * \eta\|_{\CC^\alpha} \lesssim \bar \eps^{\delta}\|\eta\|_{\CC^{\alpha + \delta}}\;.
\end{equ}
The proof of Theorem~\ref{thm:tightness} is therefore complete.
\end{proof}

\section{General bounds} \label{sec:bounds}

In this section we derive some general results on bounding the type of integrals appearing
in the previous section, for instance \eref{e:bnd2-2}. 
These integrals are represented by labelled graphs which, after certain operations, fall into the scope of the general bounds in \cite{KPZJeremy} (see also \cite{WongZakai}). We view these labelled graphs as being obtained from ``Wick contractions" of partial graphs, and the purpose of this section
is to prove criteria on these partial graphs (i.e. Proposition~\ref{prop:main})
which are easy to verify and yield the desired bounds on the integrals.

We start with recalling the settings and results 
of the general a priori bound 
from \cite{KPZJeremy}, which should really be viewed as some version of the BPHZ theorem
\cite{BP,Hepp,Zimmermann} in this context. 
The paper \cite{KPZJeremy} considers labelled graphs $(\CV,\CE)$, in which
each vertex $v\in\CV$  represents an integration variable $x_v$, except for a
distinguished vertex $0\in\CV$ which represents the origin. 
Each edge $e=\{e_-,e_+\} \in\CE$ is labelled by a number $a_e$
and represents a kernel $J_e(x_{e_+} - x_{e_-})$ with homogeneity $-a_e$.
There are $p$ special vertices $v_{\star,1},\ldots,v_{\star,p}$ and
 $p$ distinguished
edges of the type $e = \{v_{\star, i},0\}$ with label $a_e=0$, which represent  factors 
$  J_e(x_v- x_0) = \phi^\lambda_0(x_v-0)$. 
 
\begin{remark} 
 In \cite{KPZJeremy} the edges $e = (v,\bar v)\in\CE$ are oriented and decorated 
with labels $(a_e,r_e) \in \R \times \Z$ where $r_e$ represents certain renormalisation procedure, and the orientation of an edge matters only if $r > 0$. 
In our case, since we always treat the renormalisations ``by hand"
before applying these general bounds, we ignore the label $r_e$ (in other words $r_e$ is always $0$) and the orientation, so we only have a label $a_e$.
(The orientations of the distinguished edges do not matter since we always  just assume that they are associated with function $\varphi^\lambda(x_v-0)$ rather than $\varphi^\lambda(0-x_v)$.)
\end{remark}

For every $a \in \R$, we define a (semi)norm on the space of 
compactly supported functions that are smooth everywhere, except at the origin:
\begin{equ}
\|J\|_{a} = \sup_{0 < |x|_\s \le 1} |x|_\s^{a} |J(x)|\;.
\end{equ}
We use the notation $\CV_0 = \CV \setminus \{0\}$ and
$
\CV_\star = \{0,v_{\star,1},\ldots,v_{\star,p}\}
$.
With all of these notations at hand, a labelled graph as above, together with
the corresponding collection of kernels 
determines a number
\begin{equ}[e:bigsum]
\CI_\lambda(J) \eqdef \int_{(\R^{d})^{\CV_0}} \prod_{e \in \CE}   J_e(x_{e_+}-x_{e_-})\,dx\;,
\end{equ}
where $d$ is the space-time dimension, and we implicitly set $x_0 = 0$.

To bound the quantity $\CI_\lambda(J)$, we will impose the following assumption.
Recall that for a subgraph $\bar\CV$, the sets $\CE_0(\bar\CV)$
and $\CE(\bar\CV)$ are defined in Definition~\ref{def:set-edges}.

\begin{assumption}\label{ass:graph}
The labelled graph $(\CV,\CE)$ satisfies the following properties.
\begin{itemize}\itemsep0em
\item[1.] For every subset
$\bar \CV \subset \CV$ of cardinality at least $2$, one has 
\begin{equ}[e:assEdges]
\sum_{e \in \CE_0(\bar \CV)} a_e < |\s|\,(|\bar \CV| - 1) \;.
\end{equ}
\item[2.] For every non-empty subset $\bar \CV \subset \CV\setminus\CV_\star$,
one has the bounds
\begin{equ}[e:assEdges2]
\sum_{e \in \CE(\bar \CV) 
} a_e  
> |\s|\,|\bar \CV| \;.
\end{equ}
\end{itemize}
\end{assumption}

Note that in \cite{KPZJeremy} there are four assumptions, but they reduce to the two given 
here since  we assume that the labels $r_e$  appearing in \cite{KPZJeremy} are always $0$.
We then have the following result taken from \cite{KPZJeremy}:
\begin{theorem} \label{theo:ultimate}
Provided that Assumption~\ref{ass:graph} holds, there exists a constant 
$C$ depending only on the number of  vertices in $\CV$ 
such that
\begin{equ}
|\CI_\lambda(J)| \le C \lambda^\alpha \prod_{e\in \CE} \|J_e\|_{a_e}\;,\qquad \lambda \in (0,1]\;,
\end{equ}
where $\alpha =|\s|\, |\CV\setminus\CV_\star| - \sum_{e \in \CE} a_e$.
\end{theorem}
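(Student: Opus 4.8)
The statement is quoted from \cite{KPZJeremy} -- it is the specialisation of the main a priori bound of that paper to the case in which all renormalisation labels vanish -- so in the present article the ``proof'' is this citation. Here is nonetheless the route one would follow to obtain it directly. \emph{First reduction.} Each non-distinguished $J_e$ is supported in the unit $\s$-ball and smooth away from $0$, so $|J_e(z)|\le\|J_e\|_{a_e}\,|z|_\s^{-a_e}\,\one_{\{|z|_\s\le1\}}$, while a distinguished kernel obeys $|\varphi^\lambda_0(z)|\le\|\varphi^\lambda_0\|_{0}\,\one_{\{|z|_\s\le\lambda\}}$. Pulling $\prod_{e\in\CE}\|J_e\|_{a_e}$ out of \eref{e:bigsum} and rescaling all integration variables $x_v\mapsto\lambda x_v$ parabolically (with $x_0=0$ fixed) extracts the factor $\lambda^{\alpha}$, up to the harmless extra power $\lambda^{|\s|p}\le1$: the measure gives $\lambda^{|\s|\,|\CV_0|}$, each edge gives $\lambda^{-a_e}$, the $p$ distinguished seminorms carry $\lambda^{-|\s|}$ apiece, and $|\CV_0|-p=|\CV\setminus\CV_\star|$. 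After discarding the upper cut-offs that the rescaling turns into $\one_{\{|x_{e_+}-x_{e_-}|_\s\le\lambda^{-1}\}}$ (legitimate, since the $a_e>0$ make the integrand decay at infinity), one is reduced to showing that the $\lambda$-independent integral
\begin{equ}
I_\star\;\eqdef\;\int_{(\R^d)^{\CV_0}}\ \prod_{\substack{e\in\CE\\ a_e>0}}\,|x_{e_+}-x_{e_-}|_\s^{-a_e}\ \prod_{i=1}^{p}\one_{\{|x_{v_{\star,i}}|_\s\le1\}}\,dx
\end{equ}
is finite, with a bound depending only on $|\CV|$.

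\emph{Main step: multiscale power counting.} The underlying graph is not in general series-parallel, so $I_\star$ cannot be evaluated by successively integrating out degree-two vertices; instead one performs a dyadic (Hepp-sector) analysis. Write $|z|_\s^{-a_e}=\sum_{n\ge0}\chi_e^{(n)}(z)$ with $\chi_e^{(n)}$ supported on $|z|_\s\sim2^{-n}$, $|\chi_e^{(n)}|\lesssim2^{na_e}$ and $\|\chi_e^{(n)}\|_{L^1}\lesssim2^{n(a_e-|\s|)}$. For a fixed scale assignment $\mathbf n=(n_e)_e$ one integrates the vertices out from the finest scales to the coarsest, collapsing a cluster at each step and using its $L^1$-type bound; the result factorises over a nested family of subgraphs (the scale forest of $\mathbf n$) as $\prod_{\bar\CV}2^{-n_{\bar\CV}\theta(\bar\CV)}$, where $\theta(\bar\CV)=|\s|\,|\bar\CV|-\sum_{e\in\CE_0(\bar\CV)}a_e$ governs the ultraviolet regime and is strictly positive by \eref{e:assEdges}, while $\theta(\bar\CV)=\sum_{e\in\CE(\bar\CV)}a_e-|\s|\,|\bar\CV|$ governs the infrared regime for clusters disjoint from $\CV_\star$ and is strictly positive by \eref{e:assEdges2}; clusters meeting $\CV_\star$ are pinned by the cut-offs $\one_{\{|x_{v_{\star,i}}|_\s\le1\}}$ and need no infrared gain. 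Summing the single-scale bounds over all $\mathbf n$ belonging to a fixed Hepp sector gives, thanks to the \emph{strict} inequalities of Assumption~\ref{ass:graph}, a convergent geometric series; since the number of Hepp sectors is bounded in terms of $|\CV|$, this yields $I_\star<\infty$ together with the asserted estimate.

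\emph{Main obstacle.} The delicate part is the combinatorics of the last step: organising the Hepp sectors / interval forests, establishing the single-scale factorisation uniformly in $\mathbf n$, and -- crucially -- verifying that the two families of inequalities in Assumption~\ref{ass:graph} control, with the correct signs, the ultraviolet and infrared superficial degrees of \emph{every} subgraph occurring in \emph{every} sector, while exempting those subgraphs that meet $0$ or some $v_{\star,i}$ (which is precisely why \eref{e:assEdges2} is only imposed for $\bar\CV\subset\CV\setminus\CV_\star$). One must also be mindful that the constant produced by these geometric series depends a priori on the size of the ``gaps'' in those inequalities, so the assertion that $C$ depends only on $|\CV|$ is to be read with the labels $a_e$ ranging over a fixed compact set -- which is the situation in every application in Section~\ref{sec:tightness}. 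For these reasons, the efficient choice in the paper is the one taken, namely to invoke \cite{KPZJeremy} (equivalently, the singular-kernel integration bounds of \cite[Sec.~10]{Regularity}).
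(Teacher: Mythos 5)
You are right that the paper's own ``proof'' of Theorem~\ref{theo:ultimate} is simply the citation of \cite{KPZJeremy}, and your sketch of the underlying argument (rescaling to extract $\lambda^\alpha$, then a Hepp-sector/multiscale power counting in which \eref{e:assEdges} controls the ultraviolet clusters and \eref{e:assEdges2} the infrared ones away from $\CV_\star$) faithfully reflects how that result is actually proved there. The only small imprecision is the claim in your first reduction that positivity of the $a_e$ alone justifies discarding the large-scale cut-offs -- integrability at infinity really rests on \eref{e:assEdges2} -- but you correctly attribute this role to the second assumption in your main step, so the argument as a whole is sound.
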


%
%
%
%

The rest of this section is devoted to verify  that Assumption~\ref{ass:graph} indeed holds for the labelled graphs which represent our integrals,
but after some necessary modifications which we explain now.

First of all, we are actually concerned with labelled graphs $(\CV,\CE')$ 
which are of the same type as the labelled graphs $(\CV,\CE)$ defined above,
except that there may exist more than one edge in $\CE'$ between two vertices. 
More formally, $\CE'$ is a multiset (i.e. it allows 
multiple instances of elements) of unordered pairs of vertices.
(Such a graph $(\CV,\CE')$ is sometimes called a multigraph in the graph theory literature.) 
Indeed, for instance if we Wick contract two copies of the graph $H$
defined in \eref{e:def1stH} (resp. in \eref{e:def2ndH}) by identifying all the four (resp. six) external vertices as one equivalence class, 
then we obtain (multi)graphs on the left hand sides of the arrows below,
 which clearly contain multi-edges:
\begin{equ} [e:multi-edge]
\begin{tikzpicture}[baseline=-5]
\node[dot]	(root1) 	at (0,0) {};
\node[dot]	(root2) 	at (2,0) {};
\node[dot]		(mid)  	at (1,.7) {};
\node[root]	(root) 	at (1,-1) {};
\draw[bend left=40] (mid) to node[labl] {\scriptsize 2+}  (root1);
\draw[ bend right=40] (mid) to node[labl] {\scriptsize 2+}  (root1);
\draw[bend left=40] (mid) to node[labl] {\scriptsize 2+}  (root2);
\draw[bend right=40] (mid) to node[labl] {\scriptsize 2+}  (root2);
\draw[testfunction] (root1) to  (root);  
\draw[testfunction] (root2) to  (root); 
\node at (2.4,-0.2) {$\;\; \Rightarrow$};
\node[dot]	(root1) 	at (3,-0.2) {};
\node[dot]	(root2) 	at (4.4,-0.2) {};
\node[dot]		(mid)  	at (3.7,0.5) {};
\node[root]	(root) 	at (3.7,-0.9) {};
\draw (mid) to node[labl] {\scriptsize 4+}  (root1);
\draw (mid) to node[labl] {\scriptsize 4+}  (root2);
\draw[testfunction] (root1) to  (root);  
\draw[testfunction] (root2) to  (root);
\end{tikzpicture}
		\qquad \qquad
\begin{tikzpicture}[baseline=-5]
\node[dot]	(left1) 	at (-0.2,0.5) {};
\node[dot]	(left2) 	at (0.4,-0.4) {};
\node[dot]	(right1) 	at (2.2,0.5) {};
\node[dot]	(right2) 	at (1.6,-0.4) {};
\draw[generic]	(left1) to node[labl] {\scriptsize 2+} (left2) {};
\draw[generic]	(right1) to node[labl] {\scriptsize 2+} (right2) {};
\node[dot]		(mid)  	at (1,.7) {};
\node[root]	(root) 	at (1,-1) {};
\draw[bend left=40] (mid) to node[labl] {\scriptsize 2+}  (left1);
\draw[bend right=40] (mid) to node[labl] {\scriptsize 2+}  (left1);
\draw[bend left=10] (mid) to node[labl] {\scriptsize 2+}  (left2);
\draw[ bend left=40] (mid) to node[labl] {\scriptsize 2+} (right1);
\draw[bend right=40] (mid) to node[labl] {\scriptsize 2+}  (right1);
\draw[bend right=10] (mid) to node[labl] {\scriptsize 2+}  (right2);
\draw[testfunction] (left2) to  (root);  
\draw[testfunction] (right2) to  (root);
\node at (2.5,-0.2) {$\;\; \Rightarrow$};
\node[dot]	(left1) 	at (3,0.5) {};
\node[dot]	(left2) 	at (3.4,-0.4) {};
\node[dot]	(right1) 	at (5,0.5) {};
\node[dot]	(right2) 	at (4.6,-0.4) {};
\draw[generic]	(left1) to node[labl] {\scriptsize 2+} (left2) {};
\draw[generic]	(right1) to node[labl] {\scriptsize 2+} (right2) {};
\node[dot]		(mid)  	at (4,.7) {};
\node[root]	(root) 	at (4,-1) {};
\draw (mid) to node[labl] {\scriptsize 4+}  (left1);
\draw[bend left=10] (mid) to node[labl] {\scriptsize 2+}  (left2);
\draw (mid) to node[labl] {\scriptsize 4+} (right1);
\draw[bend right=10] (mid) to node[labl] {\scriptsize 2+}  (right2);
\draw[testfunction] (left2) to  (root);  
\draw[testfunction] (right2) to  (root);
\end{tikzpicture}
\end{equ}
However, this issue can be easily resolved because given a  labelled (multi)graph $(\CV,\CE')$,
one can naturally define a  labelled graph $(\CV,\CE)$ by identifying all the multi-edges in $\CE'$ between every pair of two vertices; the label $a_e$ for $e=\{v,\bar v\}\in\CE$ is simply defined as the sum of the labels $a_{e'}$ of all the edges $e' \in\CE'$  between $v$ and $\bar v$, see the right hand sides of the arrows in \eref{e:multi-edge}.
In the sequel the prime in the notation $\CE'$ always indicates that the set $\CE'$ allows multi-edges. 
For an edge $e\in\CE'$, we will sometimes still write $e=\{v,\bar v\}$ or $v,\bar v\in e$,
which simply means that $e$ is an edge between the two vertices $v$ and $\bar v$.

Another necessary modification is due to the following fact. 
One can verify that Assumption~\ref{ass:graph} fails to hold
in the situations illustrated in the above two graphs.
In fact, both examples contain subgraphs 
$\bar\CV$ with $|\bar\CV|=2$ and $\sum_{e \in \CE_0(\bar \CV)} a_e=4$,
and the right example also contains subgraphs $\bar\CV$
with $|\bar\CV|=3$ and $\sum_{e \in \CE_0(\bar \CV)} a_e=8$. In both cases, \eref{e:assEdges}
is violated, and one might worry that the corresponding integrals diverge.
However this will not cause any problem, since we will see that these graphs arise in situations
where one is allowed to modify 
the labels of $\CE'$ (and $\CE$ accordingly)
in such a way that Assumption~\ref{ass:graph} holds for the modified graph.
In fact, by observing the expressions \eref{e:bnd2-2}, 
it is clear that in the left example above, which arises in \eref{e:def1stH}, 
one has an additional factor $ \eps^{(|B|/2-1)|\s|} = \eps^3  $. Similarly, in the right 
example, which arises in \eref{e:def2ndH}, the factor is $\eps^6$. 
In general for each contracted vertex $v$, we have a factor $\eps^{3(\deg(v)/2-1)}$ 
arising from Lemma~\ref{lem:collapse}.
These factors associated with the ex-vertices together with the fact that their neighboring ex-edges correspond to mollified functions (see Remark~\ref{rem:in-ex-edge})
can be exploited to improve the homogeneities of the edges 
attached to $v$ and thus cure these ``fake divergences".
Here and in the sequel, $\deg(v)$ is always understood as the degree of $v$ as a vertex in $(\CV,\CE')$ (rather than $(\CV,\CE)$);
in other words $\deg(v)$ counts the multiple edges rather than regarding them as one edge.

Our main idea to verify that the  Assumption~\ref{ass:graph}
indeed holds for the objects we want to bound
is based on the observation that the graphs $(\CV,\CE')$ are actually
built by contracting simple ``atomic" graphs called partial graphs (see Definition~\ref{def:H-type} and Definition~\ref{def:Big-graph}),
and we have precise knowledge on the structures of these partial  graphs.
We will then describe the rules to ``allocate" the factors $\eps$
to the neighboring edges to cure the ``fake" divergences,
see Definition~\ref{def:eps-alloc} below. Finally we will identify multi-edges
and obtain graphs $(\CV,\CE)$ which will be shown to satisfy  Assumption~\ref{ass:graph}.

We start with the following remark about some obvious properties of our graphs $(\CV,\CE')$. 

\begin{remark} \label{rem:Vfeature}
We will frequently use the following facts. Let $(\CV,\CE')$ be a graph
constructed by Wick contracting $p$ copies of $H$ as in Definition~\ref{def:Big-graph}.
If $v$ is an in-vertex, then $\deg(v)$ is equal to the degree of the corresponding internal vertex in $H$.
If $v$ is an ex-vertex, then $\deg(v)$ is equal to the cardinality of $v$, viewed 
as an equivalence class in $H^p$.
By construction there does not exist any edge connecting two in-vertices belonging to two different copies of $H$. Also, there does not exist any edge connecting two ex-vertices of $\CV$. 
\end{remark}

\begin{definition}  \label{def:eps-alloc}
An {\it $\eps$-allocation  rule} is
a way to  assign, for
every graph $(\CV,\CE')$ constructed according to the procedure in Definition~\ref{def:Big-graph} and
 every $v\in\CV_{ex}$,
 a non-negative real number $b^{(v)}_e \ge 0$ 
to every $e\in\{e\in\CE'\,:\,v\in e\}$.
An $\eps$-allocation  rule is called {\it admissible}
 if the following holds for every such graph $(\CV,\CE')$.
\begin{claim}
\item  For every $v\in\CV_{ex}$, one has
\begin{equ} [e:adm-1]
\sum_{e\ni v} b^{(v)}_e = (\deg(v)/2-1)\,|\s|  \;.
\end{equ}
\item For every $v\in\CV_{ex}$ and every $A \subset \{e\,:\, v \in e\}$, one has
\begin{equ}[e:adm-11]
\sum_{e \in A} b^{(v)}_e \ge (|A|/2-1)\,|\s|  \;.
\end{equ}
\item For any $v_1, v_2 \in\CV_{ex}$, $v_1\neq v_2$,
define a new graph $(\hat \CV, \hat \CE')$ 
by identifying $v_1$ and $v_2$ as one vertex $w$
and apply the $\eps$-allocation rule to $\hat \CV$ to obtain numbers $\hat b_e^{(w)}$.
Then, the following monotonicity condition holds
\begin{equ} [e:adm-22]
\hat b^{(w)}_{e} \ge b^{(v_1)}_{e}  \;\;
	(\forall e\ni v_1 )
\quad \mbox{and} \quad
\hat b^{(w)}_{ e} \ge b^{(v_2)}_{ e} \;\;
	(\forall e\ni  v_2) \;.
\end{equ}
(With the obvious identification of $\CE'$ with $\hat \CE'$.)
\end{claim}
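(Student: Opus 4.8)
Whatever statement follows Definition~\ref{def:eps-alloc} will, judging from the structure of the section and the citations in the proof of Proposition~\ref{prop:main}, either assert the existence of an admissible $\eps$-allocation rule (so that Definition~\ref{def:eps-alloc} is non-vacuous, cf.\ Remark~\ref{rem:be-consistent}) or be the integral bound Corollary~\ref{cor:ulti-bound} that feeds Theorem~\ref{theo:ultimate}. The plan is to do both at once: exhibit one explicit admissible rule, use it to convert the $\eps$-prefactors $\eps^{(\deg(v)/2-1)|\s|}$ sitting at the ex-vertices of $\CV$ into genuine improvements of the edge exponents, collapse the resulting multi-edges to reach a labelled graph of the type considered in \cite{KPZJeremy}, verify Assumption~\ref{ass:graph} for that graph out of the hypotheses \eqref{e:cond-A}--\eqref{e:cond-B} on the partial graph $H$, and finally quote Theorem~\ref{theo:ultimate} together with some bookkeeping on the exponents.

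First I would write down the ``uniform'' rule $b^{(v)}_e = \tfrac{|\s|}{2}\bigl(1-\tfrac{2}{\deg(v)}\bigr)$ for every $v\in\CV_{ex}$ and every $e\ni v$; this is $\ge 0$ because ex-vertices have $\deg(v)\ge 2$. Condition \eqref{e:adm-1} holds since there are exactly $\deg(v)$ such edges. Condition \eqref{e:adm-11} for a set $A$ of $a$ edges at $v$ reduces, after clearing denominators, to $a\le\deg(v)$, which is automatic. Condition \eqref{e:adm-22} follows because $x\mapsto \tfrac12(1-\tfrac2x)$ is increasing and, by Remark~\ref{rem:Vfeature}, merging two ex-vertices $v_1,v_2$ produces a vertex with $\deg(w)=\deg(v_1)+\deg(v_2)$ (there is no edge between two ex-vertices). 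If the paper instead fixes the bespoke rule given by the $c_e(\bar H)$ of Definition~\ref{def:be-KPZ1}, one checks the same three items for it; the only slightly delicate point is \eqref{e:adm-11} for small $|\bar H_{ex}|$, which is exactly why the case distinction (values $0$, $1/2$, $3/4$ versus the general formula \eqref{e:be-KPZ1}) appears there.

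Second, given such a rule I would relabel each ex-edge $e=\{v,\bar v\}$ (with $v$ the ex-endpoint) by $\tilde m_e = m_e - b^{(v)}_e$ and pass to the associated labelled graph $(\CV,\tilde\CE)$ by collapsing multi-edges and summing labels. Using $|x|_\eps^{-m_e}\le |x|^{-(m_e-b)}\eps^{-b}$ for $b\ge 0$, the product of the mollified ex-edge factors at $v$ together with $\eps^{(\deg(v)/2-1)|\s|}$ is, by \eqref{e:adm-1}, bounded by the product of $|x_v-x_{\bar v}|^{-\tilde m_e}$; hence $\Lambda^p_{\varphi^\lambda_0}(\CV)\le \CI_\lambda(J)$ for kernels $J$ with $\|J_e\|_{\tilde m_e}\lesssim 1$. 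Then I would verify Assumption~\ref{ass:graph} for $(\CV,\tilde\CE)$: a subset $\bar\CV\subset\CV$ restricts to subsets $\bar H^{(i)}=\bar\CV\cap H^{(i)}$ of the $p$ copies, and \eqref{e:assEdges} (resp.\ \eqref{e:assEdges2}) is obtained by summing \eqref{e:cond-A} (resp.\ \eqref{e:cond-B}) over the copies, the allocated powers $b^{(v)}_e$ at the shared ex-vertices accounting precisely for the gap between the $c_e(\bar H)$ appearing in \eqref{e:cond-A} and what \eqref{e:assEdges} requires --- this is where \eqref{e:adm-11} and the case analysis behind $c_e$ enter; the factor $\tfrac12$ in front of $|\bar H\cap H_{ex}|$ in \eqref{e:cond-B} is exactly the slack needed for \eqref{e:assEdges2} after relabelling. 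The case $0\in\bar\CV$ uses the convention $c_e(\bar H)=3/2$ and Remark~\ref{rem:origin}. Finally Theorem~\ref{theo:ultimate} gives $\CI_\lambda(J)\lesssim\lambda^{\tilde\alpha}$ with $\tilde\alpha=|\s|\,|\CV\setminus\CV_\star|-\sum_{e\in\tilde\CE}\tilde m_e$, and a routine count --- the total allocated power over all ex-vertices is $\sum_v(\deg(v)/2-1)|\s|$, and $|\CV\setminus\CV_\star|$, $\sum m_e$ relate to $|H_{in}\setminus H_\star|$, $|H_{ex}|$, $\sum_{e\in\CE(H)}m_e$ across the $p$ copies --- shows $\tilde\alpha=p\bar\alpha$; summing over the finitely many $\CV$ in \eqref{e:LambdaH} then yields \eqref{e:main-bound}.

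The main obstacle is the third step: matching, copy by copy and ex-vertex by ex-vertex, the tailored constants $c_e(\bar H)$ of Definition~\ref{def:be-KPZ1} against the two inequalities of Assumption~\ref{ass:graph} for \emph{all} subsets $\bar\CV$ --- in other words, proving that the $c_e$-rule is admissible in the sense of Definition~\ref{def:eps-alloc} and that admissibility is precisely the combinatorial identity that turns \eqref{e:cond-A}--\eqref{e:cond-B} into \eqref{e:assEdges}--\eqref{e:assEdges2}. The bound-at-a-point inequality \eqref{e:assEdges} for subgraphs containing several merged ex-vertices, where one must argue that the $\eps$-power harvested at each such vertex still dominates the combined $c_e$-contribution of every copy of $H$ meeting it, is where the argument is most delicate.
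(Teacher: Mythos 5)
The ``statement'' you were given is the body of Definition~\ref{def:eps-alloc}: it is a definition, not a claim, so there is nothing in the paper to prove at that point. The substantive content nearby is Lemma~\ref{lem:is-admiss}, which verifies that the paper's specific allocation rule (``even allocation'' plus ``divergence priority'') satisfies \eqref{e:adm-1}, \eqref{e:adm-11} and \eqref{e:adm-22}, and your first paragraph is essentially a correct proof of that for the purely uniform rule $b^{(v)}_e = \tfrac{|\s|}{2}(1-\tfrac{2}{\deg(v)})$: the computations for \eqref{e:adm-1} and \eqref{e:adm-11} are right, and the monotonicity \eqref{e:adm-22} does follow from $x\mapsto (x-2)/x$ being increasing together with $\deg(w)=\deg(v_1)+\deg(v_2)$ (Remark~\ref{rem:Vfeature}). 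This matches the even-allocation half of the paper's Lemma~\ref{lem:is-admiss}.

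Two caveats. First, admissibility of the \emph{uniform} rule, while true, would not serve the paper's purposes: at a degree-$3$ ex-vertex carrying a double edge (the $\kappa_3$ contractions), even allocation assigns only $|\s|/6=1/2$ per edge, so the collapsed double edge retains label $2(2+\delta)-1= 3+2\delta \ge |\s|$ and \eqref{e:assEdges} fails for the two-vertex subgraph it spans; this is exactly why the paper introduces the ``divergence priority'' case (assigning $|\s|/4$ to each strand of the double edge) and why Definition~\ref{def:be-KPZ1} distinguishes the value $3/4$. For the mixed rule, \eqref{e:adm-22} requires the additional cross-cases (even vs.\ priority, priority vs.\ priority) that Lemma~\ref{lem:is-admiss} checks and that your sketch elides. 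Second, your remaining two paragraphs reconstruct Propositions~\ref{prop:Cond-A}--\ref{prop:Cond-B} and Corollary~\ref{cor:ulti-bound}; the outline is faithful to the paper's route (allocate $\eps$-powers, collapse multi-edges, verify Assumption~\ref{ass:graph}, invoke Theorem~\ref{theo:ultimate}), but the delicate step you flag at the end --- subgraphs containing several merged ex-vertices --- is resolved in the paper not by a copy-by-copy matching but by successively contracting pairs of ex-vertices and paying $|\s|$ per contraction via Lemma~\ref{lem:admis}, which is the role condition \eqref{e:adm-22} (through \eqref{e:adm-2}) plays; if you pursue this, that reduction is the ingredient to make explicit.
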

Note that by Remark~\ref{rem:Vfeature} one necessarily has $\{e:v_1\in e\} \cap \{e:v_2\in e\} =\emptyset $.
We set a convention that $b^{(v)}_e =0$ if $v\notin e$.
\end{definition}

\begin{remark}
In general the $\eps$-allocation rule could be allowed to depend on the underlying  partial graph $H$,
as long as it is the same rule for all graphs $(\CV,\CE')$ built from a given $H$.
However, in this article, we just fix one $\eps$-allocation rule for all 
the graphs.
\end{remark}

The following result will be used.
\begin{lemma} \label{lem:admis}
Suppose that we are given an admissible $\eps$-allocation  rule. Then
for any graph $\CV$ constructed as in Definition~\ref{def:Big-graph},
 for any $v_1,  v_2 \in\CV_{ex}$, $v_1\neq v_2$ and any
$Q_1 \subset \{e:v_1\in e\}$,
$Q_2 \subset \{e:v_2\in e\}$,
if we define a new graph $\hat\CV$ by identifying $v_1$ and $v_2$ as one vertex $w$ 
as in Definition~\ref{def:eps-alloc} 
(so that $Q_1 \cup Q_2 \subset\{e:w\in e\}$), 
then
\begin{equ} [e:adm-2]
\sum_{e\in Q_1} \hat b^{(w)}_{e} + \sum_{e\in Q_2} \hat b^{(w)}_{e}
\le \sum_{e\in Q_1} b^{(v_1)}_{e} + \sum_{e\in Q_2} b^{(v_2)}_{e} + |\s|  \;,
\end{equ}
where $b^{(v_1)}_{e} $  and $b^{(v_2)}_{e} $ are numbers for $\CV$
and $\hat b^{(w)}_{e}$ are numbers for $\hat \CV$
when the $\eps$-allocation  rule is applied.
\end{lemma}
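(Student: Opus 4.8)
The plan is to reduce everything to the two identities and the monotonicity built into Definition~\ref{def:eps-alloc}, using the structural information in Remark~\ref{rem:Vfeature}. First I would record the preliminary observations: the graph $\hat\CV$ obtained by identifying $v_1$ and $v_2$ is again of the type produced by Definition~\ref{def:Big-graph} (merging two ex-vertex equivalence classes only coarsens the equivalence relation, and the resulting classes are still of the three admissible forms), so the $\eps$-allocation rule applies to it and the numbers $\hat b^{(w)}_e$ are well-defined, with $w\in\hat\CV_{ex}$. Moreover, by Remark~\ref{rem:Vfeature} no edge of $\CV$ joins $v_1$ to $v_2$, so no self-loop is created; consequently $\deg(w)=\deg(v_1)+\deg(v_2)$ and, as multisets, $\{e:w\in e\}=\{e:v_1\in e\}\sqcup\{e:v_2\in e\}$.

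Next I would set $Q=Q_1\sqcup Q_2\subset\{e:w\in e\}$ and write its complement as $Q^c=(\{e:v_1\in e\}\setminus Q_1)\sqcup(\{e:v_2\in e\}\setminus Q_2)$, so that $\sum_{e\in Q}\hat b^{(w)}_e=\sum_{e\ni w}\hat b^{(w)}_e-\sum_{e\in Q^c}\hat b^{(w)}_e$. By \eref{e:adm-1} applied to $\hat\CV$ the first sum equals $(\deg(w)/2-1)|\s|$, and by \eref{e:adm-1} applied to $\CV$ one has $\sum_{e\ni v_i}b^{(v_i)}_e=(\deg(v_i)/2-1)|\s|$ for $i=1,2$, hence $\sum_{e\in\{e:v_i\in e\}\setminus Q_i}b^{(v_i)}_e=(\deg(v_i)/2-1)|\s|-\sum_{e\in Q_i}b^{(v_i)}_e$. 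The monotonicity condition \eref{e:adm-22} gives $\hat b^{(w)}_e\ge b^{(v_1)}_e$ for $e\ni v_1$ and $\hat b^{(w)}_e\ge b^{(v_2)}_e$ for $e\ni v_2$, so $\sum_{e\in Q^c}\hat b^{(w)}_e\ge\sum_{i}\bigl((\deg(v_i)/2-1)|\s|-\sum_{e\in Q_i}b^{(v_i)}_e\bigr)$. Substituting, $\sum_{e\in Q}\hat b^{(w)}_e\le(\deg(w)/2-1)|\s|-\sum_i(\deg(v_i)/2-1)|\s|+\sum_i\sum_{e\in Q_i}b^{(v_i)}_e$, and since $\bigl(\deg(w)/2-1\bigr)-\bigl(\deg(v_1)/2-1\bigr)-\bigl(\deg(v_2)/2-1\bigr)=1$ by the degree identity above, the $|\s|$-terms collapse to exactly $+|\s|$, which is precisely \eref{e:adm-2}.

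There is essentially no analytic obstacle: the whole argument is bookkeeping with the allocation budget \eref{e:adm-1} and the sub-additivity encoded by \eref{e:adm-22}. The one point that deserves a line of care is the verification that $\hat\CV$ really is one of the graphs for which the $\eps$-allocation rule is defined and for which \eref{e:adm-1} and \eref{e:adm-22} are asserted, together with the fact (from Remark~\ref{rem:Vfeature}) that the edge multisets and degrees of $v_1$ and $v_2$ add up upon identification; once those are in place, steps two and three are a one-line computation.
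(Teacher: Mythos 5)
Your proof is correct and follows essentially the same route as the paper's: both pass to the complementary edge sets via the budget identity \eref{e:adm-1}, apply the monotonicity \eref{e:adm-22} on the complement (using that $\{e:w\in e\}\setminus(Q_1\cup Q_2)$ is the disjoint union of the two complements), and conclude from the degree identity $(\deg(w)/2-1)-(\deg(v_1)/2-1)-(\deg(v_2)/2-1)=1$. Your extra remarks that $\hat\CV$ is again a graph of the type in Definition~\ref{def:Big-graph} and that no edge joins $v_1$ to $v_2$ are points the paper leaves implicit, but they do not change the argument.
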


\begin{proof}
Suppose that $Q_1=\{e_1,\ldots,e_{q_1}\}$ and $Q_2=\{f_1,\ldots,f_{q_2}\} $.
Using \eref{e:adm-1}, 
and the convention that  $b^{(v)}_e =0$ if $v\notin e$,
the left hand side  minus the first two terms on the right hand side of \eref{e:adm-2} is equal to
\begin{equ} [e:adm-2pf]
 \Big( D_w- \sum_{e\notin Q_1 \cup Q_2} \hat b_e^{(w)} \Big)
 - \Big(D_{v_1} - \sum_{e\notin Q_1} b_e^{(v_1)} \Big)
 - \Big(D_{v_2} - \sum_{e\notin Q_2} b_e^{(v_2)}\Big)\;,
\end{equ}
where
$D_u \eqdef (\deg(u)/2-1)\,|\s|$ for any vertex $u \in \{w,v_1,v_2\}$
and the degree $\deg(u)$ is understood to be with respect to the graph $\CV$ 
for $u\in \{v_1,v_2\}$ and with respect to the graph $\hat \CV$ for $u=w$.
Since the set $\{e: w\in e\}\setminus (Q_1\cup Q_2)$
is equal to the disjoint union of the set $\{e: v_1\in e\}\setminus Q_1$
and the set $\{e: v_2\in e\}\setminus Q_2$,
applying \eref{e:adm-22} yields 
\begin{equ}
  \sum_{e\notin Q_1} b_e^{(v_1)}
 + \sum_{e\notin Q_2} b_e^{(v_2)} 
 -\sum_{e\notin Q_1\cup Q_2} \hat b_e^{(w)} 
\le 0 \;.
\end{equ}
Therefore \eref{e:adm-2pf} is bounded by $D_w-D_{v_1}-D_{ v_2} =|\s|$.
\end{proof}

With restrictions \eref{e:adm-1} and  \eref{e:adm-22} there is not much freedom for 
an admissible $\eps$-allocation  rule. 
If $\deg(v)=2$, then one must have $b^{(v)}_{e} =0$ by \eref{e:adm-1}.
For the graphs arising in our analysis of the KPZ equation, we define 
an $\eps$-allocation  rule  as follows.
\begin{itemize}
\item If $\deg(v) > 3$, or $\deg(v) = 3$ and there are $3$ distinct
vertices connected to $v$, then $b^{(v)}_{e} = (\deg(v)/2-1)\,|\s|/\deg(v)$
for every $e$ adjacent to $v$ (i.e. the ``even allocation" rule).
\item If $\deg(v) = 3$ and there are only $2$ distinct
vertices connected to $v$ then, calling $e_1,\bar e_1$ the two edges connecting $v$ to the same vertex 
and $e_2$ the remaining edge, we set $b^{(v)}_{e_1} =b^{(v)}_{\bar e_1} =|\s|/4$
and $b^{(v)}_{e_2} =0$ (i.e. the ``divergence priority" rule).
\end{itemize}
For the KPZ equation, it turns out that if $\deg(v) = 3$, then it is never the case 
that all three edges connect $v$ to the same vertex.

\begin{lemma} \label{lem:is-admiss}
The above $\eps$-allocation  rule is admissible. 
\end{lemma}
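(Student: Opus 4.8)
The plan is to verify the three conditions of Definition~\ref{def:eps-alloc} in turn, treating separately the two cases of the $\eps$-allocation rule according to the degree and local structure of each $v\in\CV_{ex}$. First I would dispose of condition \eqref{e:adm-1}: in the ``even allocation'' case it is immediate since the sum of $\deg(v)$ equal terms each equal to $(\deg(v)/2-1)|\s|/\deg(v)$ is exactly $(\deg(v)/2-1)|\s|$; in the ``divergence priority'' case ($\deg(v)=3$, two distinct neighbours) the sum is $2\cdot|\s|/4+0 = |\s|/2 = (3/2-1)|\s|$, as required. The case $\deg(v)=2$ forces all $b^{(v)}_e=0$ and \eqref{e:adm-1} then just reads $0=0$; recall from Remark~\ref{rem:Vfeature} and Definition~\ref{def:Big-graph} that an ex-vertex always has $\deg(v)\ge 2$, so these are the only cases.

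Next I would check condition \eqref{e:adm-11}, the subadditivity-type bound, for every $A\subset\{e:v\in e\}$. In the even allocation case with $n\eqdef\deg(v)$, one has $\sum_{e\in A}b^{(v)}_e = |A|\cdot(n/2-1)|\s|/n = |A|\,|\s|(n-2)/(2n)$, and the inequality to prove is $|A|(n-2)/(2n)\ge (|A|/2-1) = (|A|-2)/2$, i.e. $|A|(n-2)/n\ge |A|-2$, i.e. $-2|A|/n \ge -2$, i.e. $|A|\le n$, which always holds. In the divergence-priority case ($n=3$) one checks the few subsets directly: if $A$ omits $e_2$ then $\sum b^{(v)}_e\ge 0 = $ RHS when $|A|\le 1$, and for $|A|=2$ the sum is $|\s|/2\ge 0$; if $A$ contains $e_2$ the relevant worst case $|A|=3$ gives $|\s|/2\ge (3/2-1)|\s|=|\s|/2$. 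For $\deg(v)=2$ both sides are $\le 0$.

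The main work, and the step I expect to be the real obstacle, is condition \eqref{e:adm-22}: the monotonicity under merging two ex-vertices $v_1,v_2$ into $w$. Here one must show that for each edge $e$ adjacent to $v_1$ (resp. $v_2$) the value $\hat b^{(w)}_e$ computed in the merged graph dominates $b^{(v_1)}_e$ (resp. $b^{(v_2)}_e$). The point is that merging can only \emph{increase} the degree, since by Remark~\ref{rem:Vfeature} there is no edge between two ex-vertices, so no edges are lost; thus $\deg_{\hat\CV}(w)=\deg_{\CV}(v_1)+\deg_{\CV}(v_2)\ge \deg(v_i)$. One then has to argue that the per-edge allocation is monotone in the degree, which requires care because of the two-regime definition: the even-allocation value $(\deg-2)|\s|/(2\deg) = |\s|/2 - |\s|/\deg$ is increasing in $\deg$, so merging into the even regime never decreases an edge that was already in the even regime; one must additionally check the transitions, e.g. a vertex in the divergence-priority regime ($\deg=3$, $b=|\s|/4$ on the two ``parallel'' edges, $0$ on the third) being merged: the merged vertex has $\deg\ge 5$ (since the other ex-vertex has $\deg\ge 2$), it falls in the even regime, and there $\hat b^{(w)}_e = |\s|/2-|\s|/\deg(w) \ge |\s|/2 - |\s|/5 = 3|\s|/10 \ge |\s|/4$ as well as $\ge 0$, covering both the parallel and the singleton edges. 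The last bullet of the rule definition — that for the KPZ graphs $\deg(v)=3$ never comes from a triple edge to a single vertex — is exactly what guarantees we never need a third regime and that the case analysis above is exhaustive; I would invoke it explicitly, tracing it back to the structure of the partial graphs in $\CW_-$. I expect the bookkeeping of all degree transitions under merging to be the part demanding the most care, but each individual check reduces to the elementary monotonicity of $\deg\mapsto |\s|/2-|\s|/\deg$ together with nonnegativity.
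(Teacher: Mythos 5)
Your proposal is correct and follows essentially the same route as the paper's proof: condition \eqref{e:adm-1} holds by construction, and \eqref{e:adm-22} is reduced to the monotonicity of $x\mapsto (x-2)/x$ (equivalently $|\s|/2-|\s|/x$) together with the explicit check $|\s|/2-|\s|/5\ge|\s|/4$ for the divergence-priority-to-even transition, using that merging two ex-vertices adds their degrees since no edge joins them. Your explicit verification of \eqref{e:adm-11}, which the paper leaves implicit, is a welcome extra but does not change the argument.
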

\begin{proof}
The equality in \eref{e:adm-1} always holds by definition.
Regarding \eref{e:adm-22}, let $V=\deg (v)$, $\bar V=\deg (\bar v)$,
and $W=\deg (w)=V+\bar V$ 
(no edge between $v$ and $\bar v$ by Remark~\ref{rem:Vfeature}).
Since $W\ge 4$ the ``even allocation" rule is necessarily applied to $w$.
If both $v$ and $\bar v$ are such that the ``even allocation" rules are applied,
 it is easy to check that monotonicity condition  \eref{e:adm-22} reduces to
\begin{equ}
(V/2-1)|\s|/V \le (W/2-1)|\s|/W\;,
\end{equ}
which holds since $V\le W$ and $x \mapsto {x-2\over x}$ is increasing on $\R_+$.

Now suppose that the ``even allocation" rule is applied to $\bar v$ 
while the ``divergence priority" rule
is applied to $v$ (one then necessarily has $V=3$ and $W=\bar V+3$).
Then
\begin{equ}
{(\bar V/2 -1)|\s|\over \bar V} \vee {|\s|\over 4} 
	\le {((\bar V+3) /2 -1)|\s| \over \bar V+3 } \;,
\end{equ}
namely, the condition  \eref{e:adm-22} holds.
Here $|\s|/4$ is the largest possible value of $b_e^{(v)}$.
Finally if the ``divergence priority" rules are applied to both $v$ and $\bar v$,
so that one necessarily has $V=\bar V=3$,
then one can verify \eref{e:adm-22}  in an analogous way.
\end{proof}

\begin{remark} \label{rem:alloc-subcri}
The above divergence priority  rule is such that when $\CV$ contains a subgraph 
\begin{tikzpicture}[scale=0.7,baseline=-3]
\node[dot] at (1,0) (left) {};
\node[dot] at (2,0) (mid) {};
\node[dot] at (2.8,0) (right) {};
\draw[dashed, bend left =50] (left) to (mid);
\draw[dashed, bend right =50] (left) to (mid);
\draw[dashed] (right) to (mid);
\end{tikzpicture}
where each dashed line represents a function  $|x|_\eps^{1-|\s|}$ (think of it as $|K'_\eps|$) and a factor $\eps^{|\s|/2}$ is associated with the middle vertex,
the function $\eps^{|\s|/2}|x|_\eps^{2(1-|\s|)} \le |x|_\eps^{2-3|\s|/2} $  is integrable as long as $|\s|<4$,
which turns out to be the subcriticality condition for the KPZ equation.
For the dynamical $\Phi^4$ equation, consider the subgraph 
\begin{tikzpicture}[scale=0.7,baseline=-3]
\node[dot] at (1,0) (left) {};
\node[dot] at (2,0) (mid) {};
\node[dot] at (2.8,0) (right) {};
\draw[dashed, bend left =50] (left) to (mid);
\draw[dashed, bend right =50] (left) to (mid);
\draw[dashed] (left) to (mid);
\draw[dashed] (right) to (mid);
\end{tikzpicture}
with a factor $\eps^{|\s|}$. One should again associate all the powers of $\eps$
to the triple-edge, so that
the function $\eps^{|\s|}|x|_\eps^{3(2-|\s|)} \le |x|_\eps^{6-2|\s|}$ is integrable as long as $|\s|<6$, which is again the subcriticality condition.
(In fact for the situation 
\begin{tikzpicture}[scale=0.7,baseline=-3]
\node[dot] at (1,0) (left) {};
\node[dot] at (2,0) (mid) {};
\node[dot] at (2.8,0) (right) {};
\draw[dashed, bend left =50] (left) to (mid);
\draw[dashed, bend right =50] (left) to (mid);
\draw[dashed] (left) to (mid);
\draw[dashed, bend left =40] (right) to (mid);
\draw[dashed, bend right =40] (right) to (mid);
\end{tikzpicture}
one also has to associate more powers of $\eps$ to the triple-edge than that to the double-edge when we are very close to the criticality in a certain sense.) 
See Remark~\ref{rem:Phi4} for discussion on the dynamical $\Phi^4$ equation
in three space dimensions.
\end{remark}

As mentioned earlier, the $\eps$-allocation procedure is aimed to improve 
the homogeneities associated to the edges and thus cure the ``fake" divergences arising
from the contractions.
Given a graph $(\CV,\CE')$ constructed from Wick contracting
$p$ copies of an partial graph $H$, together with an admissible $\eps$-allocation rule, we define for every $e\in \CE'$
\begin{equ}  [e:me2ae]
a_e \eqdef m_e - b_e^{(v)}
\end{equ}
if there exists ex-vertex $v$ 
(which is necessarily unique by Remark~\ref{rem:Vfeature})
 such that $v\in e$,
 and $a_e \eqdef m_e $ otherwise.
In this way, given an $\eps$-allocation rule and a Wick contraction of $H$,
we obtain a labelled graph $(\CV,\CE', \{a_e\}_{e\in \CE'})$.

Propositions~\ref{prop:Cond-A} and \ref{prop:Cond-B} below
are the main results in this section which state that certain conditions on $H$
ensure that $(\CV, \CE', \{a_e\})$ satisfies the conditions required for
applying the results of \cite[Sec.~9]{KPZJeremy}.

Before being able to state Proposition~\ref{prop:Cond-A}, we need to introduce one more notation.
Let $H$ be a partial  graph. Given an $\eps$-allocation rule and
a subgraph $\bar H\subset H$,
we define numbers $b_{e}(\bar H)$ for edges $e$ of $ H$ as follows.
Consider a graph $\CV$ obtained from Wick contracting 
$p$ copies $H^{(1)},\ldots,H^{(p)}$ of $H$ for some $p\ge 2$, such that, 
considering $\bar H$ as a subgraph of $H^{(1)}$, 
{\it all} the elements in $\bar H\cap H_{ex}^{(1)} $ are identified in $\CV$ as one single vertex $v$.
Applying the $\eps$-allocation rule to $v$,
we obtain numbers $b_{e}^{(v)}$. 
We then define
\begin{equ}   [e:def-be-gen]
b_{e}(\bar H) \eqdef \inf_{\CV} \; b_{e}^{(v)}\;,
\end{equ}
where the infimum is over all choices of $\CV$ as above.



%
%

Given this definition, we furthermore define numbers $c_e(\bar H)$ by
\begin{equ}
c_e(\bar H) = 
\left\{\begin{array}{cl}
	b_e(\bar H) & \text{if $0 \not\in  \bar H$ and $e$ touches an ex-vertex,} \\
	|\s|/2 & \text{if $0 \in  \bar H$ and $e$ touches an ex-vertex,}\\
	0 & \text{otherwise.}
\end{array}\right.
\end{equ}
This definition in engineered in such a way that, as a consequence of \eqref{e:adm-11} 
and of \eqref{e:def-be-gen}, 
if $\CV$ is obtained by Wick contracting $p$
copies $H^{(1)}, \ldots, H^{(p)}$ of $H$ for some integer $p\ge 2$,
and $\bar H^{(i)}$ is the preimage of some $\bar \CV \subset \CV$ with $|\bar \CV_{ex}| \le 1$ under the map
$H^{(i)} \to \CV$, then
one has the bound
\begin{equ}[e:propce]
\sum_{i} \sum_{e \in \CE_0(\bar H^{(i)})} c_e(\bar H^{(i)}) \le |\s|\one_{0 \in \bar \CV}\one_{\bar \CV_{ex} \neq \emptyset} + \sum_{e \in \CE_0(\bar \CV)} b_e^{(v)} \;,
\end{equ}
where $v$ in the unique element in $\CV_{ex} $ if $|\bar \CV_{ex}| =1$ (if $|\bar \CV_{ex}| =0$, \eqref{e:propce} reads $0\le 0$.)
Here, we wrote $\bar \CV_{ex}$ as a shorthand for $\bar \CV\cap \CV_{ex}$. (We will similarly also use the notation
$\bar \CV_{in} = \bar \CV \cap \CV_{in}$.)

\begin{remark} \label{rem:be-consistent}
The number $c_{e}(\bar H)$ defined in Definition~\ref{def:be-KPZ1}
is a special case (and thus consistent with) the general definition 
of $c_{e}(\bar H)$ here with the $\eps$-allocation rule specified above Lemma~\ref{lem:is-admiss}. Indeed, with $|\s|=3$, let $n=|\bar H\cap H_{ex}|$. 
One then has $\deg(v)\ge n+1$ by the definition of a Wick contraction. 
For $n>2$, by \eref{e:adm-1} and the even allocation rule, one then has
$b_{e}(\bar H) = \inf_{k\ge 1} 3({n+k \over 2}-1) /(n+k) $,
which is equal to the right hand side of \eref{e:be-KPZ1}.
In the cases $n\in\{1,2\}$, it can be also easily seen that 
Definition~\ref{def:be-KPZ1} follows from \eref{e:def-be-gen}.
\end{remark}

We then have the following criterion. 

\begin{proposition} \label{prop:Cond-A}
Given a partial  graph $H$, fix an admissible $\eps$-allocation  rule.
Suppose that for every subset 
$\bar H \subset H$ with $|\bar H| \ge 2$ and $|\bar H_{in}| \ge 1$ one has 
\begin{equ} [e:cond-AAall]
\sum_{e \in \CE_0(\bar H)} (m_e - c_{e}(\bar H)) < |\s|\,\bigl(|\bar H_{in}| - \one_{\bar H \subset H_{in}} \bigr) \;.
\end{equ}
Then, for every $p>1$ and every graph $\CV$ obtained from Wick contracting $p$ copies of the graph $H$,
the condition 
\begin{equ}[e:wantedCondition]
\sum_{e \in \CE_0(\bar\CV)} a_e < |\s|\,(|\bar\CV| - 1) 
\end{equ}
holds for every subset 
$\bar\CV\subset \CV$ of cardinality at least $2$,
where $a_e$ is defined in \eref{e:me2ae}.
\end{proposition}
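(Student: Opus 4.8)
The plan is to reduce the condition \eqref{e:wantedCondition} on an arbitrary subgraph $\bar\CV$ of the contracted graph $\CV$ to the hypothesis \eqref{e:cond-AAall}, by "pulling back" $\bar\CV$ to the $p$ copies of $H$ it came from and summing the per-copy bounds. First I would fix $\bar\CV\subset\CV$ with $|\bar\CV|\ge 2$, and for each $i$ let $\bar H^{(i)}\subset H^{(i)}$ be the preimage of $\bar\CV$ under the quotient map $H^{(i)}\to\CV$ (so $\bar H^{(i)}$ consists of those vertices of the $i$-th copy whose equivalence class lies in $\bar\CV$). The key bookkeeping identities are: every edge of $\CE_0(\bar\CV)$ comes from exactly one edge in some $\CE_0(\bar H^{(i)})$ (since $\CE'$ is the disjoint union of the edge sets of the copies, and an edge lies inside $\bar\CV$ iff both its endpoints do, which in the copy means both endpoints lie in $\bar H^{(i)}$); and $\sum_i a_e$ summed appropriately relates to $\sum_i(m_e - c_e(\bar H^{(i)}))$ via \eqref{e:me2ae} together with the defining property \eqref{e:propce} of the numbers $c_e(\bar H)$, which was engineered precisely so that $\sum_i\sum_{e\in\CE_0(\bar H^{(i)})}c_e(\bar H^{(i)})$ dominates $\sum_{e\in\CE_0(\bar\CV)}b_e^{(v)}$ up to the $|\s|\one_{0\in\bar\CV}$ term. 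Combining these, $\sum_{e\in\CE_0(\bar\CV)}a_e \le \sum_i\sum_{e\in\CE_0(\bar H^{(i)})}(m_e - c_e(\bar H^{(i)})) + |\s|\one_{0\in\bar\CV}\one_{\bar\CV_{ex}\ne\emptyset}$, so it suffices to control the right-hand side.

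Next I would apply \eqref{e:cond-AAall} to each $\bar H^{(i)}$ that satisfies its hypotheses, i.e. those with $|\bar H^{(i)}|\ge 2$ and $|\bar H^{(i)}_{in}|\ge 1$, obtaining $\sum_{e\in\CE_0(\bar H^{(i)})}(m_e-c_e(\bar H^{(i)})) < |\s|(|\bar H^{(i)}_{in}| - \one_{\bar H^{(i)}\subset H_{in}})$. The copies with $\bar H^{(i)}_{in}=\emptyset$ contribute nothing problematic: if $\bar H^{(i)}$ consists only of external vertices (and possibly $0$), then $\CE_0(\bar H^{(i)})=\emptyset$ because in a partial graph an external vertex has degree $1$ and its unique edge goes to an internal vertex, hence cannot lie inside a set devoid of internal vertices — unless the edge is the distinguished edge $\{0,v_\star\}$, but $v_\star\in H_{in}$ so that case also forces an internal vertex; thus these terms vanish. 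Summing over $i$, the total is strictly less than $|\s|\sum_i(|\bar H^{(i)}_{in}| - \one_{\bar H^{(i)}\subset H_{in}})$, and I must then check that $\sum_i|\bar H^{(i)}_{in}| + \one_{0\in\bar\CV}\one_{\bar\CV_{ex}\ne\emptyset} - \sum_i\one_{\bar H^{(i)}\subset H_{in}} \le |\bar\CV|-1$, after which strictness of the inequality in at least one summand (or a careful margin argument) gives the strict bound \eqref{e:wantedCondition}.

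The counting identity to verify is: $\sum_i|\bar H^{(i)}_{in}| = |\bar\CV_{in}|$, because by Remark~\ref{rem:Vfeature} distinct copies never share an in-vertex, so the in-vertices of $\bar\CV$ partition across the copies; and $|\bar\CV| = |\bar\CV_{in}| + |\bar\CV_{ex}| + \one_{0\in\bar\CV}$. So the required inequality becomes $|\bar\CV_{ex}| + \one_{0\in\bar\CV} - \one_{0\in\bar\CV}\one_{\bar\CV_{ex}\ne\emptyset} + \sum_i\one_{\bar H^{(i)}\subset H_{in}} \le |\bar\CV_{ex}| + \one_{0\in\bar\CV}$, i.e. $\sum_i\one_{\bar H^{(i)}\subset H_{in}} \le \one_{0\in\bar\CV}\one_{\bar\CV_{ex}\ne\emptyset} + (\text{slack from }|\bar\CV|-1)$; since $|\bar\CV|\ge 2$ gives one unit of slack, and a copy with $\bar H^{(i)}\subset H_{in}$ must contain at least one in-vertex that contributes to $|\bar\CV_{in}|$, I expect the accounting to close, possibly needing to treat the edge case $|\bar\CV|=2$, $\bar\CV=\{0,v\}$ or $\bar\CV\subset\CV_{ex}$ separately (where $\CE_0(\bar\CV)$ is empty or consists only of the label-$0$ distinguished edge).

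The main obstacle I anticipate is precisely this last comparison of indicator sums: making sure that the subtracted terms $\one_{\bar H^{(i)}\subset H_{in}}$ on the per-copy side, together with the extra $|\s|\one_{0\in\bar\CV}$ coming from the $c_e$-bound \eqref{e:propce} and the fact that $|\bar\CV|-1$ has a "$-1$" rather than a per-copy $-1$, all balance to yield a \emph{strict} inequality uniformly. In particular one has to be careful when $\bar\CV$ meets a given copy $H^{(i)}$ only in a single vertex (so $\bar H^{(i)}$ is a singleton, contributing no edges but also falling outside the scope of \eqref{e:cond-AAall}) and when $0\in\bar\CV$: the bound \eqref{e:propce} already absorbs the $|\s|\one_{0\in\bar\CV}$ correctly, but one must not double-count it against the copy that "owns" the distinguished edge. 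I would handle this by first discarding all singleton $\bar H^{(i)}$ and all purely-external $\bar H^{(i)}$ (shown above to contribute no edges), then applying \eqref{e:cond-AAall} only to the remaining copies, and finally invoking Remark~\ref{rem:origin}/the hypothesis that $\deg(0)=1$ in the cases it applies to simplify the $0\in\bar\CV$ bookkeeping.
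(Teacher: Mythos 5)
Your proposal reproduces the first half of the paper's argument (pull back $\bar\CV$ to the copies $\bar H^{(i)}$, invoke \eqref{e:propce}, apply \eqref{e:cond-AAall} copy by copy, and close with the counting identity $|\bar\CV|-1 = \one_{0\in\bar\CV} + |\bar\CV_{ex}| - 1 + \sum_i |\bar H^{(i)}_{in}|$), and that part is sound as far as it goes. But there is a genuine gap: you apply \eqref{e:propce} to an \emph{arbitrary} $\bar\CV$, whereas that bound is stated — and is only valid — for subgraphs with $|\bar\CV_{ex}|\le 1$. The reason is structural: $c_e(\bar H^{(i)})$ is built from $b_e(\bar H^{(i)})$, which is an infimum of $b_e^{(v)}$ over contractions in which \emph{all} external vertices of $\bar H^{(i)}$ are glued to a \emph{single} ex-vertex $v$. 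If the external vertices of $\bar H^{(i)}$ are distributed over two or more distinct ex-vertices $v_1,v_2,\dots$ of $\bar\CV$, those vertices have smaller degrees than the fully merged one, and by the monotonicity \eqref{e:adm-22} the actual allocations $b_e^{(v_j)}$ can be strictly smaller than $b_e(\bar H^{(i)})$; the inequality $\sum_i\sum_e c_e(\bar H^{(i)}) \le |\s|\one_{0\in\bar\CV}\one_{\bar\CV_{ex}\neq\emptyset} + \sum_e b_e^{(v)}$ then has no reason to hold, and your first displayed estimate for $\sum_{e\in\CE_0(\bar\CV)}a_e$ breaks down.

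The missing ingredient is the reduction step the paper performs with Lemma~\ref{lem:admis}: when $\bar\CV$ contains two ex-vertices $v_1,v_2$, contract them into a single vertex $w$ to form $\hat\CV$ with subgraph $\hat{\bar\CV}$. Lemma~\ref{lem:admis} gives $\sum_{e\in\CE_0(\bar\CV)} a_e \le \sum_{e\in\CE_0(\hat{\bar\CV})}\hat a_e + |\s|$, while $|\hat{\bar\CV}| = |\bar\CV|-1$, so the right-hand side of \eqref{e:wantedCondition} drops by exactly $|\s|$ as well; iterating, one reduces to the case of at most one ex-vertex, where your argument (and \eqref{e:propce}) applies. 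Without this step the proof covers only a proper subclass of the subgraphs $\bar\CV$ that must be checked. Your treatment of strictness is also left somewhat open ("a careful margin argument"), but the case analysis needed there (singleton $\bar H^{(i)}$, $\bar\CV=\{0,v\}$, $\bar\CV\subset\CV_{ex}$, etc.) is routine once the two-regime structure above is in place.
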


\begin{proof}
We first remark that in the case $|\bar H| = 1$,
\eqref{e:cond-AAall} always reads $0 < 0$, so that it still holds, 
but without the inequality being strict.
We first treat the case when $\bar\CV$ has at most one ex-vertex. 
As a consequence of \eqref{e:propce}, and with the same notations as above for $\bar H^{(i)}$, we have
\begin{equs}
\sum_{e \in \CE_0(\bar\CV)} a_e 
&\le |\s|\one_{0 \in \bar \CV}\one_{\bar \CV \cap \CV_{ex} \neq \emptyset}  + \sum_{i\,:\, \bar H^{(i)}_{in} \neq \emptyset} \sum_{e \in \CE_0(\bar H^{(i)})} (m_e - c_{e}(\bar H^{(i)})) \label{e:firstinequ}\\
&\le |\s|\,\Big(\one_{0 \in \bar \CV}\one_{\bar \CV \cap \CV_{ex} \neq \emptyset}  + \sum_{i\,:\, \bar H^{(i)}_{in} \neq \emptyset} \bigl(|\bar H^{(i)}_{in}| - \one_{\bar H^{(i)} \subset H^{(i)}_{in}} \bigr)\Big)\;.
\end{equs}
On the other hand, we have the identity
\begin{equ}
|\bar \CV| -1  =  \one_{0 \in \bar \CV} + |\bar \CV_{ex}| - 1
+ \sum_{i} |\bar H^{(i)}_{in}|\;.
\end{equ}
We claim that \eqref{e:wantedCondition} follows, but with an inequality that is not 
necessarily strict. Indeed, using the brutal bound $\one_{\bar H^{(i)} \subset H^{(i)}_{in}} \ge 0$ in
\eqref{e:firstinequ}, it could only fail if $0 \not\in \bar \CV$ and $\bar \CV_{ex} = \emptyset$. 
In this case however one has necessarily $\bar H^{(i)} \subset H^{(i)}_{in}$ for every $i$, so that
the inequality is restored by the fact that at least one of the $\bar H^{(i)}$ must be non-empty.
To see that the inequality is indeed strict, we note that the second inequality
in \eqref{e:firstinequ} is actually strict, unless $|\bar H^{(i)}| = 1$ for every
non-empty $\bar H^{(i)}_{in}$. 
In this case however, since $|\bar \CV| > 1$ by assumption,
it must be the case that either 1) $\bar H^{(i)} \subset H^{(i)}_{in}$ for every $i$ and therefore that at
least two of the $\bar H^{(i)}$ are non-empty, which again restores the strict inequality;
or 2)  $\bar{\CV}$ has one of the following simple
forms: $\{0,v\}$ for some  $v\neq 0$, $\{0,u, v\}$ for some in-vertex
$u$ and some ex-vertex $v$,  $\{u,v\}$  for some in-vertex
$u$ and some ex-vertex $v$, and in all these cases it is straightforward to check the desired bound.

We now turn to the case where $\bar\CV$ contains at least two ex-vertices, call them $v_1$ and $v_2$.
The idea is then to compare this to the graph $\hat \CV$ obtained by contracting
$v_1$ and $v_2$, with $\hat {\bar \CV} \subset \hat \CV$ the corresponding subgraph.
We also write $\hat a_e$ for the edge-weights of $\hat{\bar \CV}$. It then follows from
Lemma~\ref{lem:admis} that
\begin{equ}
\sum_{e \in \CE_0(\bar \CV)} a_e \le \sum_{e \in \CE_0(\hat{\bar \CV})} \hat a_e + |\s|\;.
\end{equ}
Since furthermore $|\hat {\bar \CV}| = |\bar \CV| - 1$, the required bound for $\bar \CV$ follows from
the corresponding bound for $\hat{\bar \CV}$. Successively contracting external vertices, we can therefore
reduce ourselves to the case where $\bar \CV$ contains only one ex-vertex. If the resulting graph still 
has at least two vertices, the claim follows. If not, then we have $\bar \CV \subset \CV_{ex}$, so that
$\CE_0(\bar \CV)$ is empty and the bound \eqref{e:wantedCondition} is trivially satisfied.
\end{proof}

\begin{remark} \label{rem:origin}
If $H$ is such that $\deg(0)=1$
(which is often the case in practice), 
then \eref{e:cond-AAall} only needs to be verified for graphs $\bar H$ with $0 \not\in\bar H$.
Indeed, let $\bar H$ be such that $0\in \bar H$, set $\bar H_0 = \bar H \setminus \{0\}$,
and assume that $\bar H_0$ satisfies \eref{e:cond-AAall}.
If $\bar H_{ex} = \emptyset$, then it is clear that the 
condition \eqref{e:cond-AAall} 
for $\bar H_0$ implies that 
for $\bar H$, so we only consider the case $\bar H_{ex} \neq \emptyset$.

Since $b_{e}(\bar H)$ is defined by \eref{e:def-be-gen} as an infimum over all choices of $\CV$,
it is in particular bounded by the case where $\CV$ is such that all vertices of $\bar H_{ex}$ are identified
to one single vertex $v$ which contains only one additional edge not belonging
to $\bar H$. It thus follows from 
\eref{e:adm-1} that
\begin{equ}
\sum_{e}  c_{e}(\bar H_0) = \sum_{e}  b_{e}(\bar H_0) \le 
\Big({ |\bar H_{ex}|+1 \over 2}-1\Big)\,|\s| 
\le {|\bar H_{ex}| \,|\s| \over 2}  = \sum_{e}  c_{e}(\bar H) \;,
\end{equ}
so that condition \eref{e:cond-AAall} indeed holds for $\bar H$ whenever it
holds for $\bar H_0$.
\end{remark}

\begin{proposition} \label{prop:Cond-B}
Given a partial  graph $H$,  fix an admissible $\eps$-allocation  rule such that
the numbers $b^{(v)}_e $ always satisfy $b^{(v)}_e \le |\s|/2$.
Assume that for every non-empty subgraph $\bar H\subset H\setminus H_\star$ one has 
\begin{equ}[e:cond-Bz]
\sum_{e \in \CE(\bar H)} m_e 
	> \Big( |\bar H_{in}| +{1\over 2} |\bar H_{ex}| \Big) \,|\s| \;.
\end{equ}
Then, for every graph $\CV$ obtained by Wick contracting several copies of $H$, 
and for every non-empty subgraph $\bar\CV\subset \CV\setminus \CV_\star$, one has 
\begin{equ}[e:cond-BBz]
\sum_{e \in \CE(\bar \CV)} a_e > |\s|\,|\bar \CV|  \;,
\end{equ}
where $a_e$ is defined in \eref{e:me2ae}.
\end{proposition}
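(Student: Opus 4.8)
The plan is to reduce the assertion about $\CV$ to the hypothesis \eqref{e:cond-Bz} on the partial graph $H$ by pulling everything back to the $p$ copies $H^{(1)},\dots,H^{(p)}$ out of which $\CV$ is built, the key structural input being that every external vertex of $H$ has degree one. Fix a non-empty $\bar\CV\subset\CV\setminus\CV_\star$ and let $\bar H^{(i)}\subset H^{(i)}$ be the preimage of $\bar\CV$ under $H^{(i)}\to\CV$. By construction $\CE'(\CV)=\bigsqcup_i\CE'(H^{(i)})$, and an edge of $H^{(i)}$ touches $\bar\CV$ precisely when it touches $\bar H^{(i)}$, so that $\CE(\bar\CV)=\bigsqcup_i\CE(\bar H^{(i)})$; in particular $\sum_{e\in\CE(\bar\CV)}m_e=\sum_i\sum_{e\in\CE(\bar H^{(i)})}m_e$. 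One also checks directly that $\bar H^{(i)}\cap H_\star=\emptyset$ (since $0$ and the $v_\star^{(j)}$ lie in $\CV_\star$), that $\sum_i|\bar H^{(i)}_{in}|=|\bar\CV_{in}|$, and that $\sum_i|\bar H^{(i)}_{ex}|=\sum_{v\in\bar\CV_{ex}}\deg(v)$, because each ex-vertex of $\bar\CV$ is an equivalence class of exactly $\deg(v)$ external vertices distributed among the copies.

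The first half of the estimate is to bound the loss $\sum_{e\in\CE(\bar\CV)}(m_e-a_e)$. By Remark~\ref{rem:Vfeature} there is no edge between two ex-vertices, so the edges of $\CE(\bar\CV)$ with $m_e\neq a_e$ split cleanly into those touching $\bar\CV_{ex}$ and the set $B$ of edges $\{u,v\}$ with $u$ an in-vertex of $\bar\CV$ and $v$ an ex-vertex \emph{not} in $\bar\CV$. Using the allocation identity \eqref{e:adm-1} for the former (all edges at an ex-vertex of $\bar\CV$ lie in $\CE(\bar\CV)$) and the standing bound $b^{(v)}_e\le|\s|/2$ for the latter, one obtains
\begin{equ}
\sum_{e\in\CE(\bar\CV)}(m_e-a_e)\;\le\;\frac{|\s|}2\sum_{v\in\bar\CV_{ex}}\deg(v)\;-\;|\s|\,|\bar\CV_{ex}|\;+\;\frac{|\s|}2\,|B|\;.
\end{equ}
The crucial point is then to recover a matching surplus $\tfrac{|\s|}2|B|$ on the side of $\sum_{e\in\CE(\bar\CV)}m_e$, and for this one must not pull $\bar\CV$ back verbatim: instead I would enlarge each $\bar H^{(i)}$ to $\bar H^{(i)}_+$ by adjoining every external vertex of $H^{(i)}$ whose (unique) edge connects it to $\bar H^{(i)}_{in}$. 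Since external vertices have degree one and those edges already touch $\bar H^{(i)}$, this enlargement leaves the edge set untouched, $\CE(\bar H^{(i)}_+)=\CE(\bar H^{(i)})$, while it increases the external count by exactly the number $k_i$ of edges of $B$ lying in copy $i$, with $\sum_i k_i=|B|$. As each $\bar H^{(i)}_+$ is a non-empty subgraph of $H^{(i)}\setminus H_\star^{(i)}$ whenever $\bar H^{(i)}\neq\emptyset$ (and at least one such $i$ exists), applying \eqref{e:cond-Bz} to the $\bar H^{(i)}_+$ and summing yields
\begin{equ}
\sum_{e\in\CE(\bar\CV)}m_e\;>\;\Bigl(|\bar\CV_{in}|+\frac12\sum_{v\in\bar\CV_{ex}}\deg(v)+\frac12\,|B|\Bigr)|\s|\;.
\end{equ}
Subtracting the previous display, the terms in $\sum_v\deg(v)$ and in $|B|$ cancel exactly, leaving $\sum_{e\in\CE(\bar\CV)}a_e>|\s|\bigl(|\bar\CV_{in}|+|\bar\CV_{ex}|\bigr)=|\s|\,|\bar\CV|$, which is \eqref{e:cond-BBz}.

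I expect the one genuinely delicate step to be the combinatorial bookkeeping: checking the two pullback identities $\CE(\bar\CV)=\bigsqcup_i\CE(\bar H^{(i)})$ and $\sum_i|\bar H^{(i)}_{ex}|=\sum_{v\in\bar\CV_{ex}}\deg(v)$, correctly isolating the edges on which $m_e\neq a_e$, and — most importantly — recognising that one has to fatten the pulled-back subgraphs by the ``dangling'' external vertices pointing into $\bar\CV$, and that this fattening costs nothing at the level of edges. Note that, unlike in Proposition~\ref{prop:Cond-A}, no use of the monotonicity property \eqref{e:adm-22} is needed; only \eqref{e:cond-Bz}, the allocation identity \eqref{e:adm-1} and the hypothesis $b^{(v)}_e\le|\s|/2$ enter. (A successive-contraction device as in Proposition~\ref{prop:Cond-A} is also available here — contracting two ex-vertices drops both $\sum_{e\in\CE(\bar\CV)}a_e$ and $|\s||\bar\CV|$ by exactly $|\s|$ by \eqref{e:adm-1} — but it only reduces matters to $|\bar\CV_{ex}|\le1$, which still contains the $B$-edge issue, so the enlargement argument above seems the cleaner route.)
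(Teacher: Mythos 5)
Your proof is correct and is essentially the paper's own argument: the paper likewise applies \eqref{e:cond-Bz} to the preimage of $\bar\CV$ in each copy $H^{(i)}$ \emph{enlarged} by the $\tilde n_i$ dangling external vertices whose unique edges point into $\bar\CV_{in}$ (your $\bar H^{(i)}_+$, noting as you do that this costs nothing at the level of edges since external vertices have degree one), and then cancels the resulting surplus against the allocation identity \eqref{e:adm-1} at the ex-vertices of $\bar\CV$ together with the bound $b^{(v)}_e\le|\s|/2$ on the $\tilde n=|B|$ boundary edges. The bookkeeping and the final cancellation are identical to the paper's.
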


\begin{remark}
It is easy to see that the KPZ allocation rule satisfies $b^{(v)}_e \le |\s|/2$.
\end{remark}

\begin{proof}
Given $\bar\CV\subset \CV\setminus \CV_\star$,
let $m=|\bar\CV_{in}|$ and $q=|\bar\CV_{ex}|$,
so that $|\bar\CV| =m+q$. Denote by $H^{(i)}$ the $i$th copy of $H$ in $\CV$ as before,
and  denote by $m_i$  the number of in-vertices of $\bar \CV$ belonging to $H^{(i)}$,
so that $m = \sum_i m_i$. We also denote by $n_i$  the number of external vertices in $H^{(i)}$
whose images under the quotient map are ex-vertices of $\bar\CV$.
With this notation, \eqref{e:cond-Bz} applied to $\bar\CV \cap H^{(i)}$ reads
\begin{equ}
\sum_{e\in \CE(\bar\CV) \cap \CE(H^{(i)})} m_e 
>
(m_i + n_i /2)\,|\s|\;,
\end{equ}
and the intersection here is well-defined because by construction
$\CE(\CV)=\cup_i \CE(H^{(i)})$. 
In fact, we actually have an even stronger bound. Indeed there may exist
an external edge $e=(u,v)$ of $H^{(i)}$ (say $u$ is an in-vertex and $v$ is an ex-vertex of $H^{(i)}$), such that $u\in\bar\CV$ but $v\notin \bar\CV$.
Suppose that there are $\tilde n_i$ such edges in $H^{(i)}$. We apply  \eqref{e:cond-Bz}  to the {\it union} of $\bar\CV \cap H^{(i)}$ {\it and} these $\tilde n_i$ ex-vertices:
\begin{equ}
\sum_{e\in \CE(\bar\CV) \cap \CE(H^{(i)})} m_e 
>
(m_i + n_i /2+\tilde n_i/2)\,|\s|\;.
\end{equ}

Summing over $i$,
one has 
\begin{equ} [e:bigger-cond]
\sum_{e\in \CE(\bar\CV) } m_e 
> (m+n/2+\tilde n/2)\,|\s|
\end{equ}
where $n \eqdef \sum_{i} n_i$ and $\tilde n \eqdef \sum_{i} \tilde n_i$.
Let now $d_1,\ldots ,d_q$ be the degrees of the $q$ ex-vertices (understood as $\CV$-degrees, namely the number of edges in $\CV$, not $\bar\CV$, attached to the vertex). 
By the procedure used to construct $\CV$ from $H$
and the fact that the degree of every external vertex
of $H$ is $1$, one has
$\sum_{i=1}^q d_i = n$, so that \eref{e:bigger-cond} is equivalent to
\begin{equ}
\sum_{e\in \CE(\bar\CV) } m_e  -\sum_{j=1}^q (d_j /2 -1 )\,|\s| - \tilde n |\s|/2
> (m+q)\,|\s| \;.
\end{equ}
By \eref{e:adm-1}, we know that for each of the $q$ ex-vertices $v$ we have 
$(d_j /2 -1 )\,|\s| = \sum_{e\ni v} b^{(v)}_e$, and every $e\ni v$ here 
belongs to $\CE(\bar\CV)$. All the other edges in $\CE(\bar\CV)$ that acquire non-zero
$ b^{(v)}_e$  are of the form $(u,v)$ described above, and there are $\tilde n$ of them, each satisfying $ b^{(v)}_e \le |\s|/2$.
Therefore, the left hand side of the above inequality is smaller or equal to $\sum_{e\in \CE(\bar\CV) } a_e $.
We then obtain
the desired bound \eref{e:cond-BBz}.
\end{proof}

\begin{corollary} \label{cor:ulti-bound}
Given a partial graph $H$ with labels $\{m_e\}_{e\in\CE(H)}$ and $p\ge 2$,  let $(\CV,\CE')$ be a graph obtained by Wick contracting $p$ copies of $H$.
%
If the conditions of Proposition~\ref{prop:Cond-A} and Proposition~\ref{prop:Cond-B}
are satisfied for $H$,
then 
the quantity defined in \eref{e:LambdaCV}
is bounded as
\begin{equ}
\Lambda^p_{\varphi_0^\lambda} (\CV) \lesssim \lambda^{\bar\alpha\, p} \;,
\end{equ}
where
\begin{equ}
\bar\alpha = |\s|\, \Big( |H_{ex}|/2 + |H_{in} \setminus H_\star| \Big)
		- \sum_{e \in \CE(H)} m_e  \;.
\end{equ}
\end{corollary}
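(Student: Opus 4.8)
The plan is to reduce the claim to Theorem~\ref{theo:ultimate}, the general a priori bound of \cite{KPZJeremy}, after using the admissible $\eps$-allocation rule to absorb every power of $\eps$ appearing in $\Lambda^p_{\varphi^\lambda_0}(\CV)$ into the kernels sitting on the ex-edges. First I would fix the $\eps$-allocation rule described just above Lemma~\ref{lem:is-admiss} (admissible by Lemma~\ref{lem:is-admiss} and satisfying $b^{(v)}_e\le|\s|/2$) and form the labels $a_e$ as in \eref{e:me2ae}. By \eref{e:adm-1} one may write $\eps^{(\deg(v)/2-1)|\s|}=\prod_{e\ni v}\eps^{b^{(v)}_e}$ for each $v\in\CV_{ex}$; since by Remark~\ref{rem:Vfeature} every ex-edge is incident to exactly one ex-vertex, each factor $\eps^{b^{(v)}_e}$ can be attached to its own ex-edge $e$, and using $b^{(v)}_e\ge0$ together with $|x|_\eps\ge\eps$ one has $\eps^{b^{(v)}_e}|x|_\eps^{-m_e}\le|x|_\eps^{-a_e}$. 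Substituting into \eref{e:LambdaCV} yields $\Lambda^p_{\varphi^\lambda_0}(\CV)\le\CI_\lambda(J)$, where $J$ assigns $|x|^{-a_e}$ to in-edges, $|x|_\eps^{-a_e}$ to ex-edges, and $\varphi^\lambda_0$ to the $p$ distinguished edges.

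Next I would pass from the multigraph $(\CV,\CE')$ to a genuine labelled graph $(\CV,\CE)$ by collapsing each bundle of parallel edges between two vertices into one edge with label the sum of the individual $a_e$ and kernel the product of the individual kernels; sub-multiplicativity of the seminorms $\|\cdot\|_a$ then shows that $\CI_\lambda(J)$ and the relevant kernel-norm products are unchanged, while $\CE_0(\bar\CV)$ and $\CE(\bar\CV)$ carry the same edge-label totals as their $\CE'$-counterparts for every $\bar\CV\subset\CV$. It then remains to verify Assumption~\ref{ass:graph} for $(\CV,\CE,\{a_e\})$, and this is exactly what Propositions~\ref{prop:Cond-A} and \ref{prop:Cond-B} deliver: the first bound \eref{e:assEdges} is the conclusion \eref{e:wantedCondition} of Proposition~\ref{prop:Cond-A}, whose hypothesis is the assumed condition on $H$; the second bound \eref{e:assEdges2} is the conclusion \eref{e:cond-BBz} of Proposition~\ref{prop:Cond-B}, whose hypotheses (in particular $b^{(v)}_e\le|\s|/2$ for the chosen rule) hold.

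Granting Assumption~\ref{ass:graph}, Theorem~\ref{theo:ultimate} gives $\Lambda^p_{\varphi^\lambda_0}(\CV)\le\CI_\lambda(J)\lesssim\lambda^\alpha$ with $\alpha=|\s|\,|\CV\setminus\CV_\star|-\sum_{e\in\CE}a_e$, uniformly in $\eps\in(0,1]$. To see that the kernel norms are uniformly bounded I would apply the hypothesis of Proposition~\ref{prop:Cond-B} to a single external vertex of $H$, which forces $m_e>|\s|/2$ for every external edge $e$, hence $a_e=m_e-b^{(v)}_e>0$; consequently $\|J_e\|_{a_e}\le1$ uniformly in $\eps$ for each singular kernel (equal to $1$ for in-edges, $\le1$ for ex-edges since $|x|_\eps\ge|x|_\s$), while the distinguished edges carrying $\varphi\in\CB$ are handled as in \cite{KPZJeremy}. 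The last task is the bookkeeping $\alpha=\bar\alpha\,p$: since distinguished edges have label $0$ and the $p$ copies of $H$ contribute identical labels, $\sum_{e\in\CE}a_e=p\sum_{e\in\CE(H)}m_e-\sum_{v\in\CV_{ex}}(\deg(v)/2-1)|\s|$, and since $\sum_{v\in\CV_{ex}}\deg(v)$ is the total number of external edges $p\,|H_{ex}|$ this equals $p\sum_{e\in\CE(H)}m_e-\tfrac12|\s|\,p\,|H_{ex}|+|\s|\,|\CV_{ex}|$; moreover internal vertices of distinct copies are never identified, so $|\CV\setminus\CV_\star|=p(|H_{in}|-1)+|\CV_{ex}|=p\,|H_{in}\setminus H_\star|+|\CV_{ex}|$, and substituting these the $|\CV_{ex}|$-terms cancel to leave $\alpha=p\bigl(|\s|(|H_{ex}|/2+|H_{in}\setminus H_\star|)-\sum_{e\in\CE(H)}m_e\bigr)=\bar\alpha\,p$.

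I do not expect any single step to be genuinely difficult, since Propositions~\ref{prop:Cond-A} and \ref{prop:Cond-B} carry the combinatorial weight. The two points that require care are: verifying that the $\eps$-allocation rule really absorbs \emph{all} the powers of $\eps$ so that the resulting kernel norms stay bounded uniformly in $\eps$ --- which is precisely why the identity \eref{e:adm-1} was built into the notion of admissibility --- and the exponent computation, where one must check that the contribution of $|\CV_{ex}|$, which itself varies with the particular Wick contraction, cancels so that $\alpha$ reduces to $p$ times the $H$-intrinsic quantity $\bar\alpha$.
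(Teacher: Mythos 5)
Your proposal is correct and follows essentially the same route as the paper's proof: absorb the $\eps$-factors into the ex-edge kernels via the admissible allocation rule and \eref{e:adm-1}, collapse multi-edges to land in the setting of Theorem~\ref{theo:ultimate}, verify Assumption~\ref{ass:graph} through Propositions~\ref{prop:Cond-A} and \ref{prop:Cond-B}, and then match exponents. Your explicit bookkeeping showing that the $|\CV_{ex}|$-dependent contributions cancel to give $\alpha=\bar\alpha\,p$, and your observation that $a_e>0$ (via \eref{e:cond-Bz} applied to singletons) keeps the kernel norms uniformly bounded in $\eps$, are details the paper leaves implicit.
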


\begin{proof}
As described in Definition~\ref{def:Big-graph},
the graph $(\CV,\CE')$  naturally inherits labels $\{m_e\}_{e\in\CE'}$
from $H$, and by definition \eref{e:me2ae} of $a_e$, one has labelled graph $(\CV,\CE',\{a_e\})$.
Let $(\CV,\CE)$ be the graph obtained from $(\CV,\CE')$ by
simply identifying each multi-edge as one single edge whose label is the sum of the labels $a_e$ of the multi-edge
as described above. 
With slight abuse of notation we still denote by $a_e$ the labels of $(\CV,\CE)$.
Then we are precisely in the setting of Theorem~\ref{theo:ultimate},
where each function $J_e(x)$ is simply $|x|^{-a_e}$ which has finite norm
$\|J_e\|_{a_e}$, except $J_e(x)=\varphi^\lambda_0(x)$
for the distinguished edges $e$.
Since the way we define $a_e$ in \eref{e:me2ae}
simply encodes the bound 
$\eps^{b_e^{(v)}} |x_v - x_{\bar v}|_\eps^{-m_e} 
	\lesssim |x_v - x_{\bar v}|^{-a_e}$
for each $e=\{v,\bar v\}$, and we precisely use up 
all the factors 
$\eps^{(\deg(v)/2-1)|\s|} $ in \eref{e:LambdaCV} due to the condition \eref{e:adm-1},
one has
\begin{equ}
\Lambda^p_{\varphi_0^\lambda} (\CV) \lesssim |\CI_\lambda(J)| \;.
\end{equ}
So it is enough to show that Assumption~\ref{ass:graph} is satisfied for $(\CV,\CE)$. 
In fact,
the conclusions of Proposition~\ref{prop:Cond-A} 
and Proposition~\ref{prop:Cond-B}
are obviously the same as the conditions \eref{e:assEdges} and \eref{e:assEdges2} of Assumption~\ref{ass:graph} respectively (note that the sum 
of labels $a_e$ on the left side of each of these conditions is the same after
we identify the multi-edges). So Assumption~\ref{ass:graph} is satisfied.

Finally the power $\bar\alpha$ is obtained from
Definition~\ref{def:Big-graph} of the procedure of constructing the graph $\CV$,
the condition \eref{e:adm-1} of the admissible $\eps$-allocation rule,
the relation \eref{e:me2ae} between $a_e$ and $m_e$,
and 
 the power $\alpha$ defined in Theorem~\ref{theo:ultimate}.
\end{proof}

%
%

\section{Identification of the limit} \label{sec:identify}

It was shown in \cite{KPZJeremy} (but see also \cite{KPZ} and \cite{FrizHairer} for 
very similar results) that if we replace
$\zeta_\eps$ by $\xi_\eps$ with
$\xi_\eps=\rho_\eps * \xi$ where $\xi$
is the Gaussian space-time white noise,
the renormalised models built from $\xi_\eps$ converge to a limit $\hat Z=(\hat \Pi, \hat\Gamma)$, which we call the KPZ model\footnote{This is a slight misnomer, since the constant parts
of $\hat \Pi_z \<4s>$ and $\hat \Pi_z\<211s>$ depend on the choice of cutoff function $K$. However,
the model does not depend on $\rho$ and, when comparing it to the model built from $\zeta_\eps$,
we will always implicitly assume that we make the same choice for $K$.}.
The goal of this section is to show that our renormalised models 
$(\hat \Pi^{(\eps)}, \hat\Gamma^{(\eps)})$
built from $\zeta_\eps$ defined above converge to the same limit.
We prove this by applying a ``diagonal argument'' along the lines of the one 
used in \cite{MourratWeber} in a similar context.
First of all, we have a central limit theorem for the random field $\zeta_\eps$.

\begin{proposition}\label{prop:conv}
Under Assumption~\ref{ass:approxField}, 
for every $\alpha < -{3\over 2}$, $\zeta_\eps$ converges in law to space-time 
white noise $\xi$ in $\CC^\alpha(\R \times S^1)$.
\end{proposition}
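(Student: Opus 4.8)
The plan is to combine a tightness argument in $\CC^\alpha(\R\times S^1)$ with a convergence-of-finite-dimensional-distributions argument via cumulants. For tightness, I would invoke the same Kolmogorov-type criterion already alluded to in the remark after Theorem~\ref{thm:tightness}: it suffices to show that for every compact set and every $p$ large enough, $\E|\zeta_\eps(\varphi_z^\lambda)|^p \lesssim \lambda^{p(-3/2-\kappa')}$ uniformly in $\eps$, $\lambda\in(0,1]$, $z$, and $\varphi\in\CB$, together with a comparable bound on increments $\E|(\zeta_\eps-\zeta_{\eps'})(\varphi_z^\lambda)|^p$ — but for pure tightness the first bound alone suffices. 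This is exactly the $\tau=\Xi$ case of Theorem~\ref{thm:tightness} (the bound $\lambda^{|\s|p/2}\E|\zeta_\eps(\varphi_0^\lambda)|^p\lesssim1$), which was established in the subsection ``First object and some generalities'' using Corollary~\ref{cor:Wick-field} and the volume/size argument for cumulants. Since Besov-H\"older spaces $\CC^\alpha$ with $\alpha<-3/2$ embed compactly into $\CC^{\alpha'}$ for $\alpha<\alpha'<-3/2$ on bounded domains, the uniform moment bound at regularity $-3/2-\kappa'$ for small $\kappa'$ yields tightness of the laws of $\zeta_\eps$ in $\CC^\alpha(\R\times S^1)$.

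For identification of the limit, it is enough to show that for any finite collection of test functions $\psi_1,\dots,\psi_k\in\CC^\infty_c(\R\times S^1)$, the random vector $(\zeta_\eps(\psi_1),\dots,\zeta_\eps(\psi_k))$ converges in law to a centred Gaussian vector with covariance $\big(\int \psi_i(z)\psi_j(z)\,dz\big)_{i,j}$, which is precisely the law of $\big(\xi(\psi_1),\dots,\xi(\psi_k)\big)$ since $\E(\zeta(0)\zeta(z))$ integrates to $1$ by Assumption~\ref{assump-0} (and the periodisation of Assumption~\ref{ass:approxField} preserves this in the limit via \eqref{e:wanted}). By linearity it suffices to show $\zeta_\eps(\psi)\to N(0,\|\psi\|_{L^2}^2)$ for a single $\psi$, and here I would use the method of moments / cumulants: the $n$-th cumulant of $\zeta_\eps(\psi)$ is $\Cum_n\big(\zeta_\eps(\psi)\big)=\int \kappa_n^{(\eps)}(z_1,\dots,z_n)\prod_i\psi(z_i)\,dz$. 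For $n=2$ this is $\int \kappa_2^{(\eps)}(z_1,z_2)\psi(z_1)\psi(z_2)\,dz_1dz_2$, which converges to $\|\psi\|_{L^2}^2$ because $\kappa_2^{(\eps)}$ is an approximate identity times $\|\kappa_2\|_{L^1}=1$ (using the rescaling \eqref{e:def-zeta-eps}, the shift by $v_h$ which is harmless as noted after \eqref{e:def-kappa-hat}, and dominated convergence against the continuous $\psi$). For $n\ge3$ I would use Lemma~\ref{lem:collapse}: applying it with $d=2$, $|\s|=3$, $y_i$ replaced by the arguments of $\psi$ (formally thinking of the $\psi(z_i)$ as bounded kernels with $\alpha_i$ close to $0$), one ``collapses'' the $n$ points and gains a factor $\eps^{(n/2-1)|\s|}$, which for $n\ge3$ is $\eps^{(n/2-1)\cdot3}\to0$. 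More directly: $\kappa_n^{(\eps)}$ is supported in a parabolic ball of radius $Cn\eps$ and bounded by $\eps^{-n|\s|/2}$, so integrating against $\prod\psi(z_i)$ — which pins one variable in a set of volume $O(1)$ and confines the rest to a ball of volume $O(\eps^{(n-1)|\s|})$ — gives $|\Cum_n(\zeta_\eps(\psi))|\lesssim \eps^{-n|\s|/2}\eps^{(n-1)|\s|}=\eps^{(n/2-1)|\s|}\to0$ for $n\ge3$. Since all cumulants of order $\ge3$ vanish in the limit, the first two converge to $(0,\|\psi\|_{L^2}^2)$, and all moments are uniformly bounded (again by the $\tau=\Xi$ estimate of Theorem~\ref{thm:tightness}), the method of moments gives $\zeta_\eps(\psi)\Rightarrow N(0,\|\psi\|_{L^2}^2)$.

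Combining tightness in $\CC^\alpha(\R\times S^1)$ with convergence of all finite-dimensional distributions to those of $\xi$ (which is a well-defined random element of $\CC^\alpha$ for $\alpha<-3/2$, so its law is characterised by these distributions) yields $\zeta_\eps\to\xi$ in law in $\CC^\alpha(\R\times S^1)$, as claimed. The main obstacle is a bookkeeping one: one must be careful that the periodisation procedure of Assumption~\ref{ass:approxField} and the horizontal shift by $v_h$ in \eqref{e:def-zeta-eps} do not alter the limiting covariance. For the covariance this follows from \eqref{e:wanted}, which controls $\eps^{-3}\E|\zeta(t,x)-\zeta^{(\eps)}(t,x)|^2$ uniformly on the relevant space-time region and hence shows $\int\kappa_2^{(\eps)}\psi\otimes\psi\to\int\psi^2$; for the higher cumulants no such care is needed since they vanish regardless. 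Everything else is a routine application of results already proved in the excerpt (the $\tau=\Xi$ bound, Lemma~\ref{lem:collapse}, Lemma~\ref{lem:cumu-close}, and the Kolmogorov criterion of \cite[Theorem~10.7]{Regularity}).
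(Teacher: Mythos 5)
The overall architecture (tightness from the $\tau=\Xi$ moment bound, identification of finite-dimensional distributions via cumulants and the method of moments) is the same as the paper's, and the cumulant computation itself — higher cumulants vanish at rate $\eps^{(n/2-1)|\s|}$ by the support/size argument — is correct and is essentially the content of the paper's Lemma~\ref{lem:BerryEssen}. The genuine gap is in your treatment of the periodisation. You claim that \eqref{e:wanted} ``shows $\int\kappa_2^{(\eps)}\psi\otimes\psi\to\int\psi^2$'' for arbitrary test functions $\psi$ on $\R\times S^1$, but \eqref{e:wanted} only controls $\eps^{-3}\E|\zeta(t,x)-\zeta^{(\eps)}(t,x)|^2$ on the region $|x|\le(1-\delta)/(2\eps)$ for each fixed $\delta>0$; after rescaling this excludes, for every $\delta$, a neighbourhood of the ``seam'' $x=v_h t\pm\tfrac12$ of the periodisation, and the seam itself is never covered. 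Moreover, Assumption~\ref{ass:approxField} does not assert that the covariance of $\zeta^{(\eps)}$ integrates to $1$ — the normalisation in Assumption~\ref{assump-0} concerns $\zeta$ only — so for $\psi$ supported near the seam your identification of the limiting variance does not follow from anything you have invoked. (Note in this connection that Example~\ref{ex:Poisson} only guarantees $\zeta=\zeta^{(\eps)}$ on $\R\times[K-1/(2\eps),\,1/(2\eps)-K]$; near the seam the two fields genuinely differ.)

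This is exactly the point the paper's proof is built around: it first identifies the limit of $\zeta_\eps^{(0)}$ (built from the \emph{unperiodised} $\zeta$), uses the coupling to transfer this to $\zeta_\eps$ only for test functions supported in $D_\delta$, i.e.\ away from the hyperplane $x=v_h t\pm\tfrac12$, and then closes the gap with Lemma~\ref{lem:whitenoise}: a \emph{stationary} $\CC^\alpha$-valued accumulation point whose finite-dimensional distributions agree with those of white noise on test functions avoiding a single hyperplane must be white noise, by translating a sequence of approximating test functions across the seam. Your proposal is missing this step (or some substitute for it, e.g.\ an argument that the translation invariance of the covariance of the stationary field $\zeta^{(\eps)}$, combined with the coupling on the good region and the finite range of dependence, forces the correct covariance everywhere). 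Without it, the convergence of finite-dimensional distributions is only established for test functions supported away from the seam, which does not characterise the law of the limit unless one adds the stationarity argument.
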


The proof of this proposition relies on the following result.

\begin{lemma}\label{lem:whitenoise}
Let $\zeta$ be a random variable in $\CC^\alpha(\R\times\T^d)$ 
for some $\alpha < 0$ 
which is stationary and such that, for any finite collection of test functions $\eta_1,\ldots,\eta_N$ 
with $\supp \eta_i$ not intersecting the hyperplane $\{x=vt\}$ 
(viewed as a subset of $\R\times\T^d$ in the natural way) for a constant vector $v\in\R^d$, the joint law of 
$\{\zeta(\eta_i)\}$ is Gaussian with covariance $\scal{\eta_i, \eta_j}$. Then $\zeta$ is 
space-time white noise.
\end{lemma}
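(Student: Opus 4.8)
The goal is to show that a stationary random distribution $\zeta\in\CC^\alpha(\R\times\T^d)$ whose finite-dimensional distributions against test functions avoiding a single hyperplane $\{x=vt\}$ are Gaussian with the white-noise covariance is in fact space-time white noise. The plan is to first upgrade the Gaussianity statement from ``test functions avoiding the hyperplane'' to ``all test functions'', and then conclude by the standard characterisation of white noise (a centred Gaussian field indexed by $L^2$ with covariance given by the $L^2$ inner product).

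\textbf{Step 1: extend Gaussianity to arbitrary test functions by an approximation argument.} Fix test functions $\eta_1,\ldots,\eta_N$, possibly with supports meeting the hyperplane $\Sigma\eqdef\{x=vt\}$. The key observation is that $\Sigma$ has Lebesgue measure zero in $\R\times\T^d$, and more importantly it is a smooth submanifold of codimension $d\ge 1$, so one can find a sequence of smooth cutoff functions $\chi_n$ which vanish in a neighbourhood of $\Sigma$, equal $1$ outside an $1/n$-neighbourhood of $\Sigma$, and such that $\chi_n\eta_i\to\eta_i$ in $\CC^r$ for $r$ large enough (here one uses that $\Sigma$ is a hyperplane, so that a tube around it can be smoothly parametrised; the $\CC^r$-norms of $\chi_n$ blow up, but only polynomially in $n$, while the ``mass'' on which they differ from $1$ shrinks). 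Actually, since $\zeta\in\CC^\alpha$ only, we cannot test against $\chi_n\eta_i$ directly hoping for $\CC^r$ convergence of the test functions alone; instead we observe that $\zeta(\chi_n\eta_i)$ is well-defined (each $\chi_n\eta_i$ avoids $\Sigma$) and we must show $\zeta(\chi_n\eta_i)\to\zeta(\eta_i)$ in probability. For this it is cleanest to use the second-moment bound: since along the subsequence the variables $\zeta(\chi_n\eta_i-\chi_m\eta_i)$ are Gaussian (their supports avoid $\Sigma$) with variance $\|\chi_n\eta_i-\chi_m\eta_i\|_{L^2}^2\to 0$, the sequence $\zeta(\chi_n\eta_i)$ is Cauchy in $L^2(\Omega)$, hence converges to some limit; and because $\zeta\in\CC^\alpha$ and $\chi_n\eta_i\to\eta_i$ in $\CC^{r}$ for suitable $r>|\alpha|$, that limit must be $\zeta(\eta_i)$. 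Therefore the joint law of $\{\zeta(\eta_i)\}$ is a limit in distribution of Gaussians with covariances $\scal{\chi_n\eta_i,\chi_n\eta_j}\to\scal{\eta_i,\eta_j}$, hence is Gaussian with covariance $\scal{\eta_i,\eta_j}$.

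\textbf{Step 2: conclude.} Once we know that for every finite collection of smooth test functions the vector $(\zeta(\eta_1),\ldots,\zeta(\eta_N))$ is centred Gaussian with covariance matrix $(\scal{\eta_i,\eta_j})_{i,j}$, it follows by density of smooth functions in $L^2(\R\times\T^d)$ and an $L^2(\Omega)$-isometry extension that $\eta\mapsto\zeta(\eta)$ extends to a linear isometry from $L^2(\R\times\T^d)$ into $L^2(\Omega)$ whose image consists of centred Gaussians; this is exactly the definition of space-time white noise. Stationarity of $\zeta$ is automatic and not even needed for the conclusion, though it is consistent.

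\textbf{Main obstacle.} The delicate point is Step 1: justifying that one may approximate an arbitrary test function by ones avoiding the hyperplane \emph{and} pass to the limit inside $\zeta$, despite $\zeta$ being only a distribution of negative regularity. The naive estimate $|\zeta(\eta-\chi_n\eta)|\lesssim\|\zeta\|_{\CC^\alpha}\|\eta-\chi_n\eta\|_{\CC^r}$ fails because $\|\chi_n\|_{\CC^r}$ diverges. The resolution is precisely that we have two handles: the \emph{a priori} $\CC^\alpha$ bound controls $\zeta(\eta-\chi_n\eta)$ at the level of realisations (giving tightness / almost sure finiteness), while the Gaussianity-on-the-complement-of-$\Sigma$ gives a clean second-moment Cauchy estimate on the subsequence $\chi_n\eta$. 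Combining a pathwise bound with a probabilistic $L^2$ bound to identify the limit is the crux; one has to be slightly careful that the pathwise statement $\zeta\in\CC^\alpha$ a.s. together with $\chi_n\eta\to\eta$ testing against a \emph{fixed} realisation forces the $L^2(\Omega)$-limit of $\zeta(\chi_n\eta)$ to coincide with $\zeta(\eta)$, which follows since a.s.\ convergence along a subsequence plus $L^2(\Omega)$-convergence pin down the same limit. Everything else is routine.
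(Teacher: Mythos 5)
There is a genuine gap, and it is located exactly where you flag the ``main obstacle'': the identification of the $L^2(\Omega)$-limit of $\zeta(\chi_n\eta_i)$ with $\zeta(\eta_i)$. First, the claim that $\chi_n\eta_i\to\eta_i$ in $\CC^r$ is simply false: $(1-\chi_n)\eta_i$ coincides with $\eta_i$ on a neighbourhood of $\Sigma$, so its sup norm does not even tend to zero unless $\eta_i$ vanishes on $\Sigma$. Second, the fallback ``pathwise'' argument does not close the gap either. The best one can extract from $\zeta\in\CC^\alpha$ is obtained by decomposing $(1-\chi_n)\eta$ into $O(n^2)$ rescaled test functions at scale $1/n$ covering the tube around the line $\{x=vt\}$, giving $|\zeta((1-\chi_n)\eta)|\lesssim \|\zeta\|_{\CC^\alpha}\, n^{2-|\s|-\alpha}=\|\zeta\|_{\CC^\alpha}\, n^{-d-\alpha}$, which diverges whenever $\alpha<-d$ --- in particular in the case the lemma is actually used ($d=1$, $\alpha<-3/2$). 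So neither handle identifies the limit, and ``a.s.\ convergence along a subsequence'' of $\zeta(\chi_n\eta)$ to $\zeta(\eta)$ is never established.

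The deeper problem is your parenthetical claim that stationarity is not needed: the statement is \emph{false} without it, so no repair of your argument can avoid using stationarity. Indeed (this is the paper's own remark following the lemma), take $\zeta=\xi+\mu$ with $\xi$ a space-time white noise and $\mu$ a distribution supported on $\{x=vt\}$, e.g.\ a Dirac mass at the origin, or for $d=1$ the measure $\eta\mapsto\int\eta(t,vt)\,dt$, which lies in $\CC^{-d}\subset\CC^\alpha$ for $\alpha<-3/2$. Then $\zeta(\eta)=\xi(\eta)$ for every test function avoiding the hyperplane, so all your hypotheses except stationarity hold, yet $\zeta\neq\xi$; and indeed $\zeta(\chi_n\eta)\to\xi(\eta)\neq\zeta(\eta)$. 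The paper's proof gets around this precisely by exploiting stationarity: it builds $\xi=\sum_n\zeta(e_n)e_n$ from an orthonormal basis supported off the hyperplane, notes that $\zeta=\xi$ when tested against such functions, then for a general $\phi$ translates the approximating sequence $\phi_n$ \emph{away} from the hyperplane (where the a.s.\ convergence $\zeta(T\phi_n)\to\zeta(T\phi)$ holds because there $\zeta$ agrees with the genuine white noise $\xi$), and transports the convergence back to $\phi$ by equality in law of $\{\zeta(T\phi),\zeta(T\phi_n)\}$ and $\{\zeta(\phi),\zeta(\phi_n)\}$. Your Step~2 and the $L^2(\Omega)$-Cauchy observation are fine, but without this translation step the proof does not go through.
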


\begin{proof}
Choose an orthonormal basis $\{e_n\}$ of $L^2(\R\times\T^d)$ such that each
$e_n$ is sufficiently smooth ($\CC^{|\alpha|}$ will do) and supported away from 
$\{x=vt\}$. Then, as a consequence of the assumption, $\zeta(e_n)$ is an i.i.d.\ sequence
of normal random variables. As a consequence, we can set $\xi = \sum_n \zeta(e_n)\,e_n$, 
which converges almost surely in the sense of distributions to a space-time white noise $\xi$.
It remains to show that $\xi = \zeta$ almost surely, as random variables in $\CC^\alpha$, 
which follows if we can show  that $\xi(\phi) = \zeta(\phi)$ almost surely, for any smooth
test function $\phi$ with support in some ball of radius $1/5$ (say). 

If $\phi$ is supported away from $\{x=vt\}$, this is immediate. Otherwise, write
$\phi = \lim_{n \to \infty} \phi_n$, where each $\phi_n$ is compactly supported away from $\{x=vt\}$
and the convergence is sufficiently fast so that $\xi(\phi_n) \to \xi(\phi)$ almost surely.
(It is easy to see that such an approximation always exists.) 
Let now $(T\phi)(t,x) = \phi(t,x-1/2)$ and note that one also has 
$\zeta(T\phi_n) \to \zeta(T\phi)$ almost surely since the supports of $T\phi_n$ avoid
$\{x=vt\}$ and $\zeta$ coincides with $\xi$ for such test functions. 
By stationarity, the collection of random variables
$\{\zeta(T\phi),\zeta(T\phi_n)\,:\,n \ge 0\}$ is equal in law to the collection 
$\{\zeta(\phi),\zeta(\phi_n)\,:\,n \ge 0\}$, so that $\zeta(\phi_n) \to \zeta(\phi)$ almost surely.
Since we already know that $\zeta(\phi_n) = \xi(\phi_n)$ almost surely, 
we conclude that $\zeta(\phi) = \xi(\phi)$ as required.
\end{proof}

\begin{remark}
The assumption that $\zeta$ is stationary is crucial here, otherwise the conclusion
doesn't hold in general. (Just add to $\zeta$ a Dirac measure located at the origin.)
\end{remark}

\begin{proof}[of Proposition~\ref{prop:conv}]
Setting $\zeta_\eps^{(0)}(t,x) = \eps^{-3/2}\zeta(t/\eps^2, (x-v_h t)/\eps)$, it is shown in
Lemma~\ref{lem:BerryEssen} below 
that, when testing  $\zeta_\eps^{(0)}$ against a finite number of test functions
$\eta_1,\ldots,\eta_N$, the law of the resulting $\R^n$-valued random variable converges 
to a Gaussian distribution with covariance $C_{ij} = \scal{\eta_i,\eta_j}$.

Following the same argument as in Lemma~\ref{lem:BerryEssen}, one shows that
 the sequence $\zeta_\eps$ is tight, so that it has some accumulation point
$\xi$. Since each of the $\zeta_\eps$ is stationary, $\xi$ is also stationary.
By Lemma~\ref{lem:whitenoise}, it therefore suffices to verify that it has the correct
finite-dimensional distributions when tested against test functions avoiding the
hyperplane $x = v_h t$. By stationarity, we can replace this by 
the hyperplane $x = v_h t\pm {1\over 2}$
(which is one single plane if we view the spatial variable as taking values on the circle).

For any $\delta > 0$, define the space-time domain
\begin{equ}
D_{\delta} = \{(t,x)\,:\, |t| \le T\;,\quad |x -v_h t| \le (1-\delta)/ 2\}\;,
\end{equ}
and consider a finite collection of test functions as above, but with supports 
contained in $D_\delta$.
Choosing the coupling mentioned in the assumption, we then have the bound
\begin{equs}
\E |(\zeta_\eps - \zeta_\eps^{(0)})(\eta_i)|^2 &\lesssim
\sup_{(t,x) \in \supp \eta_i} \E |\zeta_\eps(t,x) - \zeta_\eps^{(0)}(t,x)|^2 \\
&\lesssim
\eps^{-3} \sup_{|t| \le T\eps^{-2}}\sup_{|x| \le (1-\delta)/(2\eps)} 
	\E |\zeta(t,x) - \zeta^{(\eps)}(t,x)|^2\;,
\end{equs}
which converges to $0$ as $\eps \to 0$ by assumption. 
\end{proof}

 The only missing piece in the proof of Proposition~\ref{prop:conv} is the following simple lemma. Note that Assumptions~\ref{assump-0} and \ref{assump-mix}
can be also easily formulated on $\R\times \R^d$.

\begin{lemma} \label{lem:BerryEssen}
Let   $\zeta$ be a random field  on $\R\times\R^d$ equipped with scaling $\s=(2,1,\ldots,1)$ satisfying Assumptions~\ref{assump-0} and \ref{assump-mix}. 
Let 
\begin{equ}
\zeta_\eps^{(0)}(z) \eqdef \eps^{-(d+2)/2}
	\zeta\Big({t \over \eps^{2}}, {x-v^{(\eps)}t \over \eps}\Big) 
\end{equ}
where $v^{(\eps)}$ converges to a constant vector $v\in\R^d$ and $z=(t,x)\in\R\times\R^d$. 
The sequence of random fields $\zeta_\eps^{(0)}$ converges in distribution to  Gaussian white noise $\xi$ with scaling $\s$ as $\eps\to 0$
in the space $\CC^{\gamma}$ for every $\gamma<-{d+2\over 2}$.
\end{lemma}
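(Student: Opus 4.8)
The plan is to prove Lemma~\ref{lem:BerryEssen} in two stages: first establish convergence of finite-dimensional distributions (i.e.\ that $(\zeta_\eps^{(0)}(\eta_1),\ldots,\zeta_\eps^{(0)}(\eta_N))$ converges to a centred Gaussian vector with covariance $C_{ij}=\scal{\eta_i,\eta_j}$), and then establish tightness in $\CC^\gamma$ for $\gamma<-\tfrac{d+2}{2}$; together these give convergence in law in $\CC^\gamma$. Throughout we work with a fixed finite collection of smooth compactly supported test functions $\eta_1,\ldots,\eta_N$.

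For the finite-dimensional distributions, the key observation is that a single linear combination $Y_\eps \eqdef \sum_{j} c_j \zeta_\eps^{(0)}(\eta_j)$ can be written, after undoing the rescaling, as $Y_\eps = \int g_\eps(z)\,\zeta(z)\,dz$ where $g_\eps(z) = \eps^{(d+2)/2}\sum_j c_j\,\eta_j\bigl(\eps^2 t, \eps x + v^{(\eps)}\eps^2 t\bigr)$ (up to the obvious change of variables), so that $g_\eps$ is supported in a region of diameter $O(\eps^{-1})$ in space and $O(\eps^{-2})$ in time and satisfies $\|g_\eps\|_{L^2}^2 \to \sum_{i,j} c_i c_j \scal{\eta_i,\eta_j}$. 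One then partitions the support of $g_\eps$ into $O(\eps^{-(d+2)})$ boxes of unit size; by Assumption~\ref{assump-mix} the contributions of boxes at mutual distance $\ge 1$ are independent, so a standard Bernstein-type blocking argument (group the boxes into finitely many sublattices, each consisting of mutually independent cells) reduces matters to a sum of i.i.d.-like independent centred summands each of variance $O(\eps^{d+2})$ and with all moments controlled uniformly by Assumption~\ref{assump-0} (after rescaling, the $p$th moment of each cell contribution is $O(\eps^{(d+2)p/2})$). A Lyapunov-type central limit theorem then yields asymptotic normality of $Y_\eps$; since $(c_j)$ was arbitrary, the Cram\'er--Wold device gives joint Gaussian convergence with the stated covariance. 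The normalisation constant $1$ in Assumption~\ref{assump-0} (that $\E(\zeta(0)\zeta(z))$ integrates to $1$) is exactly what makes the limiting covariance $\scal{\eta_i,\eta_j}$ rather than a multiple of it.

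For tightness, I would use a Kolmogorov-type criterion in the parabolic H\"older--Besov scale, as in \cite[Thm.~10.7]{Regularity}: it suffices to show that for every $p$ large enough, every $x$, and every $\lambda\in(0,1]$,
\begin{equ}
\E\,\bigl|\zeta_\eps^{(0)}(\varphi_x^\lambda)\bigr|^p \lesssim \lambda^{-p(d+2)/2 - p\theta}
\end{equ}
for some $\theta>0$ (with $(d+2)/2$ the scaling dimension divided by two), uniformly in $\eps$. But this is precisely the $\tau=\Xi$ bound established in Section~\ref{sec:tightness}: expanding $\E|\zeta_\eps^{(0)}(\varphi_x^\lambda)|^p$ via Corollary~\ref{cor:Wick-field} into a sum over partitions of products of cumulants $\kappa_n^{(\eps)}$ integrated against rescaled test functions, and using the bound $|\kappa_n^{(\eps)}|\lesssim \eps^{-n|\s|/2}$ together with the support constraint from Lemma~\ref{lem:cumu-close}, one gets (exactly as in the computation right after \eqref{e:bnd2-2} for $\tau=\Xi$, now with general $d$) that each partition contributes $\lesssim \lambda^{-p|\s|/2}$; one gains an arbitrarily small power $\lambda^\theta$ by using that $\bar\kappa$-room in the homogeneity. (One can also simply argue $\gamma<-\tfrac{d+2}{2}$ directly by reducing to $\gamma$ strictly below the scaling dimension.) Combined with the first stage this gives convergence in law in $\CC^\gamma$.

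The main obstacle I expect is the finite-dimensional CLT under only the strict $1$-dependence of Assumption~\ref{assump-mix} and polynomial-moment control of Assumption~\ref{assump-0}: one must carefully handle the geometry of the supports after the anisotropic parabolic rescaling (the shift by $v^{(\eps)}t$ tilts the support but does not change its volume or the independence structure at scale $1$), verify the Lyapunov condition with the correct powers of $\eps$, and make sure the blocking into independent sublattices introduces only a bounded number of blocks so that the errors from the non-i.i.d.\ structure of the block sums are negligible. Everything else — the change of variables, the $L^2$ convergence of $\|g_\eps\|^2$, and the tightness bound — is routine given the tools already developed in Sections~\ref{sec:renorm}--\ref{sec:tightness}.
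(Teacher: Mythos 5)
Your proposal is correct in outline but takes a genuinely different route from the paper for the identification of the limit. The paper does not use a blocking argument at all: it first establishes the uniform moment bound $\lambda^{|\s|p/2}\,\E|\zeta_\eps^{(0)}(\varphi_0^\lambda)|^p\lesssim 1$ exactly as in the $\tau=\Xi$ case of Theorem~\ref{thm:tightness} (this part coincides with your tightness step), and then identifies the limit by the \emph{method of moments}: joint moments $\E(\zeta_\eps^{(0)}(\varphi_1)\cdots\zeta_\eps^{(0)}(\varphi_n))$ are expanded via Corollary~\ref{cor:Wick-field} into sums over partitions of products of cumulants, every block $B$ with $|B|>2$ contributes a positive power of $\eps$ and hence vanishes, and the surviving pair partitions reproduce the Wick/Isserlis formula for white noise thanks to the normalisation $\int\kappa_2=1$; Carleman's theorem (multidimensional version) then upgrades moment convergence to convergence in distribution. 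What the paper's route buys is that all the needed machinery (cumulant expansion, the $\eps$-power counting, the moment bounds required for Carleman) is already in place, so the proof is a few lines. What your route buys is independence from Carleman's condition and a more elementary probabilistic flavour; a Lindeberg/Lyapunov CLT for $1$-dependent fields plus Cram\'er--Wold is a perfectly legitimate substitute, and your Lyapunov power counting ($O(\eps^{-(d+2)})$ cells, each with variance $O(\eps^{d+2})$ and third moment $O(\eps^{3(d+2)/2})$) is correct.

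One soft spot in your sketch: decomposing the unit boxes into \emph{finitely many} interleaved sublattices of mutually independent cells does not by itself yield the CLT, since each sublattice sum then carries a constant fraction of the total variance and the sublattice sums are not independent of one another, so you cannot dismiss the cross terms as ``negligible''. You should instead either invoke an off-the-shelf CLT for $m$-dependent random fields (Hoeffding--Robbins type, or Stein's method), or run the classical big-block/small-block scheme in which the block side length tends to infinity so that the separating corridors carry a vanishing fraction of the variance. With that repair the argument goes through.
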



\begin{proof}
We first show that $\E \| \zeta_\eps^{(0)} \|^p_{\CC^{-|\s|/2}}$ (where $|\s|=d+2$)
is bounded uniformly in $\eps>0$ for all $p>1$. For this it suffices  to show that
$ \lambda^{|\s|p/2} \E |\zeta_\eps^{(0)}(\varphi^\lambda_0)|^p \lesssim 1$ 
uniformly in $\eps,\lambda>0$ for all $p>1$, which follows in exactly the same way
as the proof of the case $\tau = \Xi$ in Theorem~\ref{thm:tightness}.

To identify the limit, it follows from the multidimensional version of Carleman's theorem
\cite{MR1574927,MR0184042} that it is enough to show that for any given finite  number of 
compactly supported smooth functions $\varphi_1,\ldots,\varphi_n$,
one has convergence of joint moments $\E(\zeta_\eps^{(0)}(\varphi_1)\cdots \zeta_\eps^{(0)}(\varphi_n))
 \to \E(\xi(\varphi_1)\cdots \xi(\varphi_n)) $ as $\eps \to 0$. 
This implies convergence $\zeta_\eps^{(0)} \to\xi$ in distribution, and the first part 
of the proof gives convergence in the desired topology.
 The calculation
 for the joint moment of $\zeta_\eps^{(0)}(\varphi_i)$ is the same as above,
except that $\varphi_0^\lambda$ is replaced by $\varphi_i$.
For $B\in\pi$, $|B|>2$, the integral is bounded by a positive power of $\eps$
and thus converges to zero. The non-vanishing terms are the partitions $\pi$
such that $|B|=2$ for every $B\in\pi$. Since $\kappa_2$ is normalised in
Assumption~\ref{assump-0}
such that the second cumulant of   $\zeta_\eps^{(0)}$ 
 converges to the Dirac distribution,
the joint moment converges to that of the Gaussian white noise
by standard Wick theorem.
\end{proof}

\begin{theorem} \label{theo:diagonal}
Let $\hat Z_\eps=(\hat \Pi^{(\eps)},\hat \Gamma^{(\eps)})$ be the renormalised model
built from $\zeta_\eps$ defined in the previous sections (with the choice of renormalisation
constants given by \eqref{e:choiceell}). 
Let $\hat Z=(\hat \Pi, \hat \Gamma)$ be the KPZ
random model.  Then, as $\eps\to 0$, one has
$
   \hat Z_\eps \to \hat Z
$
in distribution in the space $\MM$ of admissible models for $\TT$. 
\end{theorem}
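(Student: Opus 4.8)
The plan is to combine the uniform moment bounds from Theorem~\ref{thm:tightness} with the central limit theorem from Proposition~\ref{prop:conv} via a diagonal argument, exactly analogous to the one used in \cite{MourratWeber}. First I would fix an arbitrary $\bar\eps>0$ and consider the smoothened field $\zeta_{\eps,\bar\eps} = \zeta_\eps * \rho_{\bar\eps}$ together with the associated renormalised model $\hat Z_{\eps,\bar\eps}$, whose renormalisation constants are built from the cumulants of $\zeta_{\eps,\bar\eps}$. The key observation is that, by Proposition~\ref{prop:conv}, as $\eps\to 0$ with $\bar\eps$ fixed we have $\zeta_\eps \to \xi$ in law in $\CC^\alpha$, hence $\zeta_{\eps,\bar\eps} = \zeta_\eps * \rho_{\bar\eps} \to \xi * \rho_{\bar\eps} = \xi_{\bar\eps}$ in law in any $\CC^r$, i.e.\ the smoothened fields converge to a \emph{smooth} Gaussian approximation to white noise. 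Since the canonical lift of a continuous field is a continuous function of the field (in the topology of $\CC^r$ for $r$ large enough, say on a suitable bounded domain), and since the renormalisation maps $\hat M_\eps$ depend continuously on the finitely many renormalisation constants, which in turn depend continuously on the cumulants of $\zeta_{\eps,\bar\eps}$ (themselves continuous functionals of the law), one needs to check that the constants $\hat c^{(\eps,\bar\eps)}$, $C_i^{(\eps,\bar\eps)}$ converge as $\eps\to 0$ to the corresponding constants $\hat c^{(\bar\eps)}$, $C_i^{(\bar\eps)}$ built from $\xi_{\bar\eps}$. This is where one uses that $\kappa_2^{(\eps)}$ converges (after rescaling) to the delta distribution in the appropriate sense, while the higher cumulants $\kappa_p^{(\eps)}$ for $p\ge 3$ are $O(\eps^{(p/2-1)|\s|})$ after collapsing and hence their contributions to $C_{2,2}$, $C_{3,2}$, and to the fourth-cumulant parts vanish; thus $\hat Z_{\eps,\bar\eps} \to \hat Z_{\bar\eps}$ in law as $\eps\to 0$, where $\hat Z_{\bar\eps}$ is precisely the Gaussian renormalised model built from $\xi_{\bar\eps}$, which by \cite{KPZJeremy,KPZ,FrizHairer} converges to $\hat Z$ as $\bar\eps\to 0$.

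The diagonal argument then proceeds as follows. For any bounded continuous functional $F$ on $\MM$, one writes
\begin{equ}
|\E F(\hat Z_\eps) - \E F(\hat Z)| \le |\E F(\hat Z_\eps) - \E F(\hat Z_{\eps,\bar\eps})| + |\E F(\hat Z_{\eps,\bar\eps}) - \E F(\hat Z_{\bar\eps})| + |\E F(\hat Z_{\bar\eps}) - \E F(\hat Z)|\;.
\end{equ}
The last term is handled by the known Gaussian result and tends to $0$ as $\bar\eps\to 0$. The middle term tends to $0$ as $\eps\to 0$ for each fixed $\bar\eps$ by the convergence $\hat Z_{\eps,\bar\eps}\to\hat Z_{\bar\eps}$ just discussed (applying the continuous mapping theorem to the weak convergence $\zeta_{\eps,\bar\eps}\to\xi_{\bar\eps}$ of smooth fields). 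The first term is controlled \emph{uniformly in $\eps$} by the second bound of \eqref{e:tightness} in Theorem~\ref{thm:tightness}: for any $p$ and any $\tau\in\CW_-$,
\begin{equ}
\E|(\hat\Pi^{(\eps)}_x\tau - \hat\Pi^{(\eps,\bar\eps)}_x\tau)(\varphi^\lambda_x)|^p \lesssim \bar\eps^\kappa \lambda^{p(|\tau|+\eta)}\;,
\end{equ}
which, by a Kolmogorov-type argument (as in \cite[Thm.~10.7]{Regularity}), translates into a bound on $\E\,d(\hat Z_\eps,\hat Z_{\eps,\bar\eps})^p \lesssim \bar\eps^{\kappa'}$ in the metric of $\MM$, uniformly in $\eps$; one may assume $F$ is Lipschitz (Lipschitz functions are measure-determining), so this term is $\lesssim \bar\eps^{\kappa''}$ uniformly in $\eps$. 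Letting first $\eps\to 0$ and then $\bar\eps\to 0$ yields $\E F(\hat Z_\eps)\to \E F(\hat Z)$, which is the claimed convergence in law.

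The first bound of \eqref{e:tightness} also gives tightness of $\{\hat Z_\eps\}$ in $\MM$, so it remains only to identify accumulation points, and the above shows they all equal $\hat Z$; alternatively, the three-term estimate directly gives convergence without a separate tightness/identification split. The main obstacle — and the step requiring the most care — is the verification that the renormalisation constants $\hat c^{(\eps,\bar\eps)}$, $C_i^{(\eps,\bar\eps)}$ converge, as $\eps\to 0$ with $\bar\eps$ fixed, to the Gaussian ones $\hat c^{(\bar\eps)}$, $C_i^{(\bar\eps)}$. This is delicate because each $C_i^{(\eps)}$ is itself divergent as $\eps\to 0$, so one cannot simply pass to the limit termwise; instead one must use the decomposition of the diagrams into pieces according to the partition structure (the cumulants of order $\ge 3$, which carry the non-Gaussianity, contribute collapsed factors $\eps^{(p/2-1)|\s|}$ by Lemma~\ref{lem:collapse} and thus vanish in the limit, as already exploited in the bounds on $\<4>$ and $\<211>$ in Section~\ref{sec:tightness}), together with the fact that for the smoothened field $\zeta_{\eps,\bar\eps}$ the extra mollification by $\rho_{\bar\eps}$ keeps the integrals finite and continuous. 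Concretely, one checks that the second-cumulant pieces $C_0^{(\eps,\bar\eps)}$, $C_{2,1}^{(\eps,\bar\eps)}$, $C_{3,1}^{(\eps,\bar\eps)}$, $\hat c^{(\eps,\bar\eps)}$ and $C_1^{(\eps,\bar\eps)}$ match the corresponding Gaussian constants built from $\xi_{\bar\eps}$ exactly once one identifies $\kappa_2^{(\eps)}$ with $\rho*\rho$ and uses that the rescaled covariance of $\zeta_\eps$ converges weakly to $\delta$, while the fourth-cumulant pieces $C_{2,2}^{(\eps,\bar\eps)}$, $C_{3,2}^{(\eps,\bar\eps)}$ tend to $0$; the logarithmically divergent prefactors coincide with those in \cite{KPZ}. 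Everything else in the argument is standard soft analysis.
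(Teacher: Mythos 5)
Your proposal is correct and follows the same skeleton as the paper's proof: the same three-term triangle inequality through the intermediate mollified models $\hat Z_{\eps,\bar\eps}$ and $\hat Z_{0,\bar\eps}$, the same order of limits ($\eps\to0$ first, then $\bar\eps\to0$), the second bound of Theorem~\ref{thm:tightness} for the term comparing $\hat Z_\eps$ with $\hat Z_{\eps,\bar\eps}$, and the Gaussian convergence result of \cite{KPZJeremy} for the term comparing $\hat Z_{0,\bar\eps}$ with $\hat Z$. The one place where you genuinely diverge from the paper is the middle term. You handle it by a generalised continuous mapping theorem: weak convergence $\zeta_{\eps,\bar\eps}\to\xi*\rho_{\bar\eps}$ of smooth fields, continuity of the canonical lift, plus a separate verification that each renormalisation constant $\hat c^{(\eps,\bar\eps)}$, $C_i^{(\eps,\bar\eps)}$ converges as $\eps\to0$ to its Gaussian counterpart built from $\xi*\rho_{\bar\eps}$ --- a step you rightly flag as the delicate one, and your sketch of it (only the second-cumulant pieces survive, since the pieces involving cumulants of order $\ge 3$ acquire positive powers of $\eps$ once smeared at the fixed scale $\bar\eps$) is correct. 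The paper instead upgrades Proposition~\ref{prop:conv} to convergence in a Wasserstein metric on $\CC^{-2}$, which produces a coupling with $\E\|\xi-\zeta_\eps\|^p_{-2;\KK}\to0$, hence $\E\|\zeta_{0,\bar\eps}-\zeta_{\eps,\bar\eps}\|^p_{1;\KK}\to0$ after mollification, and then invokes a local Lipschitz estimate for the map $\hat\Psi\colon\zeta\mapsto\hat Z$ viewed on the space $\CY$ of stationary processes equipped with the seminorm $\E\|\cdot\|^p_{1;\KK}$. The payoff of the coupling route is that the renormalisation constants need not be tracked separately: they are functionals of the law of the process that are automatically controlled by the $\CY$-seminorm, so one estimate handles the stochastic integrals and the counterterms simultaneously. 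Your route is more explicit but requires the case-by-case analysis of the constants that you describe; both are valid.
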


\begin{proof}
For any fixed bounded domain $D \subset \R^2$, recall that we have an
associated ``norm'' $\$\cdot\$_D$ and pseudometric $\$\cdot,\cdot \$_D$ 
on the space $\MM$ of admissible models,
defined as in \cite[Eqs.~2.16--2.17]{Regularity}. Since the topology on $\MM$ is
generated by these pseudometrics, it is sufficient to show that 
\begin{equ}[e:wantedConv]
\lim_{\eps \to 0} \E \$\hat Z_\eps, \hat Z\$_D = 0\;,
\end{equ}
for every bounded domain $D$. 
Since $D$ does not matter much in the sequel, we henceforth omit it from our notations. 

In order to prove \eqref{e:wantedConv}, we proceed by going through the 
``intermediate'' model $\hat Z_{\eps,\bar\eps}$ previously defined in Section~\ref{sec:tightness}.
Recall that this is built in the same way as $\hat Z_\eps$, except that 
the field  $\zeta_\eps$  used to build the model is replaced by
$ \zeta_{\eps,\bar\eps} \eqdef \zeta_\eps*\rho_{\bar\eps}$,
where $\rho$ is a compactly supported  smooth function on $\R^2$ integrating to $1$ that is 
even in the space variable, and with
$\rho_{\bar\eps} = \bar\eps^{-3} \rho (\bar\eps^{-2} t, \bar\eps^{-1} x)$.
We furthermore define a model $\hat Z_{0,\bar\eps}$ obtained again in the same
way, but this time replacing
$\zeta_\eps$ by $ \zeta_{0,\bar\eps} \eqdef \xi*\rho_{\bar\eps}$, where $\xi$
is a realisation of space-time white noise.

With these notations at hand, we then have
\begin{equ}
\E \$\hat Z_\eps, \hat Z\$
\le \E \$\hat Z_\eps, \hat Z_{\eps,\bar \eps}\$ + \E \$\hat Z_{\eps,\bar \eps}, \hat Z_{0,\bar \eps}\$ + \E \$\hat Z_{0,\bar \eps}, \hat Z\$\;,
\end{equ}
which is valid for any fixed $\bar \eps > 0$ and for any coupling between $\xi$ and $\zeta_\eps$, so that 
\begin{equ}[e:threeTerms]
\lim_{\eps \to 0} \E \$\hat Z_\eps, \hat Z\$ \le \lim_{\bar \eps \to 0} \lim_{\eps \to 0} \bigl(\E \$\hat Z_\eps, \hat Z_{\eps,\bar \eps}\$ + \E \$\hat Z_{\eps,\bar \eps}, \hat Z_{0,\bar \eps}\$ + \E \$\hat Z_{0,\bar \eps}, \hat Z\$\bigr)\;.
\end{equ}
Note the order of limits: we \textit{first} take $\eps \to 0$ and \textit{then} only take $\bar \eps \to 0$.

The convergence of $\hat Z_{0,\bar \eps}$ to the KPZ model in the sense that 
\begin{equ}
\lim_{\bar \eps \to 0} \E \$\hat Z_{0,\bar \eps}, \hat Z\$^p = 0\;,
\end{equ}
 for every $p$
was 
already shown in \cite[Thm~6.1]{KPZJeremy} (see also \cite{KPZ,FrizHairer} 
in a slightly different setting).
Regarding the first term, it suffices to note that, by \cite[Thm~10.7]{Regularity}
and the second bound in Theorem~\ref{thm:tightness},
we obtain the bound
\begin{equ}
\E \$\hat Z_{\eps}, \hat Z_{\eps,\bar \eps}\$ \lesssim \bar \eps^{\kappa}\;,
\end{equ}
uniformly over $\eps$ sufficiently small, so that this term also vanishes.

It therefore remains to bound the second term. This term involves both $\zeta_\eps$
and $\xi$, so we need to specify a coupling between them. For this, we recall that, given any sequence
$X_n$ of random variables on a complete separable
metric space $\CX$, weak convergence to a limit $X$
and uniform boundedness of some moment of order strictly greater than $p$ of $d(0,X_n)$ implies
convergence in the $p$-Wasserstein metric, which in turn implies that, for every $n$, there exists 
a coupling between $X_n$ and $X$ such that 
\begin{equ}
\lim_{n \to \infty} \E d(X_n, X)^p = 0\;,
\end{equ}
see for example \cite{Cedric}. Taking for $\CX$ the space $\CC^{-2}$ (say) on any
bounded domain $\KK$ with the metric 
given by its norm, it follows from Proposition~\ref{prop:conv} and Theorem~\ref{thm:tightness}
(with $\tau = \Xi$)
that we can find couplings between $\zeta_\eps$ and $\xi$ such that 
\begin{equ}
\lim_{\eps \to 0} \E \|\xi - \zeta_\eps\|_{-2;\KK}^p = 0\;.
\end{equ}
At this point we use the fact that, for any fixed $\bar \eps > 0$, convolution with $\rho_{\bar \eps}$
maps $\CC^{-2}$ into $\CC^1$ (actually $\CC^\infty$), so that 
\begin{equ}[e:convC1]
\lim_{\eps \to 0} \E \|\zeta_{0,\bar \eps} - \zeta_{\eps,\bar \eps}\|_{1;\KK}^p = 0\;.
\end{equ}

Consider now the space $\CY$ of all stationary  
random processes $\eta$ on some given probability space with the following 
additional properties:
\begin{claim}
\item The process $\eta$ is almost surely periodic in space with period $1$.
\item One has $\E \|\eta\|_{1;\KK}^p < \infty$ for every $p \ge 1$. 
\end{claim}
We endow the space $\CY$ with the seminorm
\begin{equ}
\|\eta\|_\CY^p = \E \|\eta\|_{1;\KK}^p\;,
\end{equ}
and we denote by $\hat \Psi$ the map $\zeta_\eps \mapsto \hat Z_\eps$, viewed as a map
from $\CY$ into the space of
$\MM$-valued random variables.
Here, the requirement that the argument be a stationary stochastic process is needed
for the renormalisation constants to be well-defined by the formulae in Section~\ref{sec:values}.

It is then immediate from the definitions that, for some sufficiently large $p$, 
$\hat \Psi$ satisfies
\begin{equ}
\E \$\hat \Psi(\zeta),\hat \Psi(\eta)\$ \lesssim \|\zeta - \eta\|_{\CY}\bigl(1 + \|\zeta\|_{\CY}+ \|\eta\|_{\CY}\bigr)^p\;,
\end{equ}
Combining this with \eqref{e:convC1} and the fact that 
$\hat Z_{\eps,\bar\eps} = \hat\Psi(\zeta_{\eps,\bar \eps})$, 
it immediately follows that 
\begin{equ}
\lim_{\eps \to 0} \E \$\hat Z_{\eps,\bar \eps}, \hat Z_{0,\bar \eps}\$ = 0\;,
\end{equ}
for every fixed (sufficiently small) $\bar \eps > 0$, so that 
the second term in \eqref{e:threeTerms} also vanishes, thus concluding the proof.
\end{proof}

%

\begin{remark} \label{rem:Phi4}
Our proof also extends to the dynamical $\Phi^4_3$ model 
driven by a general noise $\zeta$ under the same assumptions as here (except that 
the spatial variable takes values in $\T^3$ instead of $S^1$). 
Besides the  constant $C_1^{(\eps)}\approx \eps^{-1}$
and the logarithmically divergent constant $C_2^{(\eps)}$ as in the Gaussian case, we then have
the following new renormalisation constants appearing:
\begin{equ}
\bar C_{1}^{(\eps)} =
\begin{tikzpicture}  [baseline=10] 
\node[root]	(root) 	at (0,0) {};
\node at (0,1) (a) {};  
	\node[cumu3,rotate=180] at (a) {};
\draw[kernel] (a.east) node[dot]   {} to [bend left = 60] (root);
\draw[kernel] (a.west) node[dot]   {} to [bend right = 60] (root);
\draw[kernel] (a.south) node[dot]   {} to (root);
\end{tikzpicture}
\approx \eps^{-3/2} \;,
\quad
\bar C_{2}^{(\eps)} =
\begin{tikzpicture}  [baseline=15]
\node[root]	(root) 	at (0,0) {};
\node[dot]		(top)  	at (0,1.3) {};
\node		(right)  	at (1,1.15) {};
	\node[cumu5,rotate=36]	(right-cumu) 	at (right) {};
\draw[->,kernel] (top) to  (root);
\draw[->,kernel,bend left=40] (right.south east) node[dot] {} to  (root);
\draw[->,kernel] (right.south) node[dot] {} to  (root);
\draw[->,kernel] (right.north west) node[dot] {} to  (top);
\draw[->,kernel,bend right=60] (right.north east) node[dot] {} to  (top);
\draw[->,kernel,bend left=30] (right.south west) node[dot] {} to  (top);
\end{tikzpicture}
\approx \eps^{-1/2}
\;,
\quad
\bar C_{3}^{(\eps)} =
\begin{tikzpicture}  [baseline=15]
\node[root]	(root) 	at (0,0) {};
\node[dot]		(top)  	at (0,1.3) {};
\node		(rt)  	at (0.5,0.7) {};
	\draw[cumu2] (rt) ellipse (4pt and 8pt);
\node		(right)  	at (1,1.15) {};
	\node[cumu3,rotate=180]	(right-cumu) 	at (1,1.2) {};
\draw[->,kernel] (top) to  (root);
\draw[->,kernel,bend left=40] (right.south) node[dot] {} to  (root);
\draw[->,kernel] (rt.south) node[dot] {} to  (root);
\draw[->,kernel] (right.north west) node[dot] {} to  (top);
\draw[->,kernel,bend right=60] (right.north east) node[dot] {} to  (top);
\draw[->,kernel] (rt.north) node[dot] {} to  (top);
\end{tikzpicture}
\approx \eps^{-1/2}
\;,
\end{equ}
\begin{equ}
\bar C_{4}^{(\eps)} =
\begin{tikzpicture}  [baseline=15]
\node[root]	(root) 	at (0,0) {};
\node[dot]		(left)  	at (0,1.3) {};
\node		(right)  	at (1,1.15) {};		
	\node[cumu4]	(right-cumu) 	at (right) {};
\draw[kernel] (left) to  (root);
\draw[kernel]   (right.south east) node[dot]   {}  to (root) ;
\draw[kernel]  (right.north east) node[dot]  {}  [bend right=60] to (left);
\draw[kernel]   (right.north west) node[dot]  {} to (left);
\draw[kernel]  (right.south west) node[dot]  {}  [bend left=20] to (left);
\end{tikzpicture} 
\; \stackrel{\eps\to 0}{\longrightarrow} c_4 \;,
\qquad
\bar C_{5}^{(\eps)} =
\begin{tikzpicture}  [baseline=15]
\node[root]	(root) 	at (0,0) {};
\node[dot]		(top)  	at (0,1.3) {};
\node		(right)  	at (1,1.15) {};
	\node[cumu4]	(right-cumu) 	at (right) {};
\draw[->,kernel] (top) to  (root);
\draw[->,kernel,bend left=40] (right.south east) node[dot] {} to  (root);
\draw[->,kernel] (right.south west) node[dot] {} to  (root);
\draw[->,kernel] (right.north west) node[dot] {} to  (top);
\draw[->,kernel,bend right=60] (right.north east) node[dot] {} to  (top);
\end{tikzpicture}
\; \stackrel{\eps\to 0}{\longrightarrow} c_5
\;,
\end{equ}
where $c_4,c_5$ are some finite constants.
The renormalised equation is then given by
\begin{equs}
\partial_t  \Phi_\eps =  \Delta  \Phi_\eps  & - \lambda \Phi_\eps^3 
 + \Big( 3 \lambda C_1^{(\eps)}-9 \lambda^2 C_2^{(\eps)} 
	- 6 \lambda^2 c_4 - 9 \lambda^2 c_5 \Big) \Phi_\eps   \\
 & + \Big(\lambda \bar C_{1}^{(\eps)} -3\lambda^2 \bar C_{2}^{(\eps)}
 	-18\lambda^2 \bar C_{3}^{(\eps)} \Big)	+ \zeta_\eps \;.
\end{equs}
Using the method developed in this article, one can show that $\Phi_\eps$ converges to a limit which 
coincides with the solution to the dynamical $\Phi^4_3$ equation driven by Gaussian space-time white noise
as constructed in \cite{Regularity,CC,Kupiainen}.
\end{remark}

\bibliographystyle{./Martin}
\bibliography{./refs}

\def\cprime{$'$} \def\polhk#1{\setbox0=\hbox{#1}{\ooalign{\hidewidth
  \lower1.5ex\hbox{`}\hidewidth\crcr\unhbox0}}}
\begin{thebibliography}{AKQ10}
\expandafter\ifx\csname url\endcsname\relax
  \def\url#1{\texttt{#1}}\fi
\expandafter\ifx\csname urlprefix\endcsname\relax\def\urlprefix{URL }\fi

\bibitem[ACQ11]{MR2796514}
\textsc{G.~Amir}, \textsc{I.~Corwin}, and \textsc{J.~Quastel}.
\newblock Probability distribution of the free energy of the continuum directed
  random polymer in {$1+1$} dimensions.
\newblock \emph{Comm. Pure Appl. Math.} \textbf{64}, no.~4, (2011), 466--537.
\newblock \ifx\href\undefined
  \texttt{doi:10.1002/cpa.20347}\else\href{http://dx.doi.org/10.1002/cpa.20347}{\texttt{doi:10.1002/cpa.20347}}\fi.

\bibitem[Akh65]{MR0184042}
\textsc{N.~I. Akhiezer}.
\newblock \emph{The classical moment problem and some related questions in
  analysis}.
\newblock Translated by N. Kemmer. Hafner Publishing Co., New York, 1965.

\bibitem[AKQ10]{AKQ}
\textsc{T.~{Alberts}}, \textsc{K.~{Khanin}}, and \textsc{J.~{Quastel}}.
\newblock {Intermediate Disorder Regime for Directed Polymers in Dimension
  1+1}.
\newblock \emph{Physical Review Letters} \textbf{105}, no.~9, (2010), 090603.
\newblock \ifx\href\undefined
  \texttt{arXiv:1003.1885}\else\href{http://arxiv.org/abs/1003.1885}{\texttt{arXiv:1003.1885}}\fi.
\newblock \ifx\href\undefined
  \texttt{doi:10.1103/PhysRevLett.105.090603}\else\href{http://dx.doi.org/10.1103/PhysRevLett.105.090603}{\texttt{doi:10.1103/PhysRevLett.105.090603}}\fi.

\bibitem[AT87]{AvramTaqqu1987}
\textsc{F.~Avram} and \textsc{M.~S. Taqqu}.
\newblock Noncentral limit theorems and {A}ppell polynomials.
\newblock \emph{Ann. Probab.} \textbf{15}, no.~2, (1987), 767--775.
\newblock \ifx\href\undefined
  \texttt{doi:10.1214/aop/1176992170}\else\href{http://dx.doi.org/10.1214/aop/1176992170}{\texttt{doi:10.1214/aop/1176992170}}\fi.

\bibitem[AT06]{AvramTaqqu2005}
\textsc{F.~Avram} and \textsc{M.~S. Taqqu}.
\newblock On a {S}zeg\"o type limit theorem and the asymptotic theory of random
  sums, integrals and quadratic forms.
\newblock In \emph{Dependence in probability and statistics}, vol. 187 of
  \emph{Lecture Notes in Statist.},  259--286. Springer, New York, 2006.

\bibitem[BC14]{MR3152785}
\textsc{A.~Borodin} and \textsc{I.~Corwin}.
\newblock Macdonald processes.
\newblock \emph{Probab. Theory Related Fields} \textbf{158}, no. 1-2, (2014),
  225--400.
\newblock \ifx\href\undefined
  \texttt{doi:10.1007/s00440-013-0482-3}\else\href{http://dx.doi.org/10.1007/s00440-013-0482-3}{\texttt{doi:10.1007/s00440-013-0482-3}}\fi.

\bibitem[BDJ99]{MR1682248}
\textsc{J.~Baik}, \textsc{P.~Deift}, and \textsc{K.~Johansson}.
\newblock On the distribution of the length of the longest increasing
  subsequence of random permutations.
\newblock \emph{J. Amer. Math. Soc.} \textbf{12}, no.~4, (1999), 1119--1178.
\newblock \ifx\href\undefined
  \texttt{doi:10.1090/S0894-0347-99-00307-0}\else\href{http://dx.doi.org/10.1090/S0894-0347-99-00307-0}{\texttt{doi:10.1090/S0894-0347-99-00307-0}}\fi.

\bibitem[BG97]{MR1462228}
\textsc{L.~Bertini} and \textsc{G.~Giacomin}.
\newblock Stochastic {B}urgers and {KPZ} equations from particle systems.
\newblock \emph{Comm. Math. Phys.} \textbf{183}, no.~3, (1997), 571--607.
\newblock \ifx\href\undefined
  \texttt{doi:10.1007/s002200050044}\else\href{http://dx.doi.org/10.1007/s002200050044}{\texttt{doi:10.1007/s002200050044}}\fi.

\bibitem[BP57]{BP}
\textsc{N.~N. Bogoliubow} and \textsc{O.~S. Parasiuk}.
\newblock \"{U}ber die {M}ultiplikation der {K}ausalfunktionen in der
  {Q}uantentheorie der {F}elder.
\newblock \emph{Acta Math.} \textbf{97}, (1957), 227--266.
\newblock \ifx\href\undefined
  \texttt{doi:10.1007/BF02392399}\else\href{http://dx.doi.org/10.1007/BF02392399}{\texttt{doi:10.1007/BF02392399}}\fi.

\bibitem[BQS11]{MR2784327}
\textsc{M.~Bal{\'a}zs}, \textsc{J.~Quastel}, and
  \textsc{T.~Sepp{\"a}l{\"a}inen}.
\newblock Fluctuation exponent of the {KPZ}/stochastic {B}urgers equation.
\newblock \emph{J. Amer. Math. Soc.} \textbf{24}, no.~3, (2011), 683--708.
\newblock \ifx\href\undefined
  \texttt{doi:10.1090/S0894-0347-2011-00692-9}\else\href{http://dx.doi.org/10.1090/S0894-0347-2011-00692-9}{\texttt{doi:10.1090/S0894-0347-2011-00692-9}}\fi.

\bibitem[CC13]{CC}
\textsc{R.~Catellier} and \textsc{K.~Chouk}.
\newblock Paracontrolled distributions and the 3-dimensional stochastic
  quantization equation.
\newblock \emph{ArXiv e-prints} (2013).
\newblock \ifx\href\undefined
  \texttt{arXiv:1310.6869}\else\href{http://arxiv.org/abs/1310.6869}{\texttt{arXiv:1310.6869}}\fi.

\bibitem[CT15]{2015arXiv1505}
\textsc{I.~{Corwin}} and \textsc{L.-C. {Tsai}}.
\newblock {KPZ equation limit of higher-spin exclusion processes}.
\newblock \emph{ArXiv e-prints} (2015).
\newblock \ifx\href\undefined
  \texttt{arXiv:1505.04158}\else\href{http://arxiv.org/abs/1505.04158}{\texttt{arXiv:1505.04158}}\fi.

\bibitem[CW36]{MR1574927}
\textsc{H.~Cram{\'e}r} and \textsc{H.~Wold}.
\newblock Some {T}heorems on {D}istribution {F}unctions.
\newblock \emph{J. London Math. Soc.} \textbf{S1-11}, no.~4, (1936), 290.
\newblock \ifx\href\undefined
  \texttt{doi:10.1112/jlms/s1-11.4.290}\else\href{http://dx.doi.org/10.1112/jlms/s1-11.4.290}{\texttt{doi:10.1112/jlms/s1-11.4.290}}\fi.

\bibitem[DPZ14]{DPZ}
\textsc{G.~Da~Prato} and \textsc{J.~Zabczyk}.
\newblock \emph{Stochastic equations in infinite dimensions}, vol. 152 of
  \emph{Encyclopedia of Mathematics and its Applications}.
\newblock Cambridge University Press, Cambridge, second ed., 2014.

\bibitem[DT16]{2013arXiv1302}
\textsc{A.~Dembo} and \textsc{L.-C. Tsai}.
\newblock Weakly asymmetric non-simple exclusion process and the
  {K}ardar-{P}arisi-{Z}hang equation.
\newblock \emph{Comm. Math. Phys.} \textbf{341}, no.~1, (2016), 219--261.
\newblock \ifx\href\undefined
  \texttt{doi:10.1007/s00220-015-2527-1}\else\href{http://dx.doi.org/10.1007/s00220-015-2527-1}{\texttt{doi:10.1007/s00220-015-2527-1}}\fi.

\bibitem[EW82]{EW82}
\textsc{S.~F. Edwards} and \textsc{D.~R. Wilkinson}.
\newblock The surface statistics of a granular aggregate.
\newblock \emph{Proc. Roy. Soc. London Ser. A} \textbf{381}, no. 1780, (1982),
  17--31.
\newblock \ifx\href\undefined
  \texttt{doi:10.1098/rspa.1982.0056}\else\href{http://dx.doi.org/10.1098/rspa.1982.0056}{\texttt{doi:10.1098/rspa.1982.0056}}\fi.

\bibitem[FH14]{FrizHairer}
\textsc{P.~K. Friz} and \textsc{M.~Hairer}.
\newblock \emph{A course on rough paths}.
\newblock Universitext. Springer, Cham, 2014.
\newblock With an introduction to regularity structures.

\bibitem[FV06]{MR2257130}
\textsc{P.~Friz} and \textsc{N.~Victoir}.
\newblock A note on the notion of geometric rough paths.
\newblock \emph{Probab. Theory Related Fields} \textbf{136}, no.~3, (2006),
  395--416.
\newblock \ifx\href\undefined
  \texttt{doi:10.1007/s00440-005-0487-7}\else\href{http://dx.doi.org/10.1007/s00440-005-0487-7}{\texttt{doi:10.1007/s00440-005-0487-7}}\fi.

\bibitem[GJ87]{MR887102}
\textsc{J.~Glimm} and \textsc{A.~Jaffe}.
\newblock \emph{Quantum physics: A functional integral point of view}.
\newblock Springer-Verlag, New York, second ed., 1987.

\bibitem[GJ14]{MR3176353}
\textsc{P.~Gon{\c{c}}alves} and \textsc{M.~Jara}.
\newblock Nonlinear fluctuations of weakly asymmetric interacting particle
  systems.
\newblock \emph{Arch. Ration. Mech. Anal.} \textbf{212}, no.~2, (2014),
  597--644.
\newblock \ifx\href\undefined
  \texttt{doi:10.1007/s00205-013-0693-x}\else\href{http://dx.doi.org/10.1007/s00205-013-0693-x}{\texttt{doi:10.1007/s00205-013-0693-x}}\fi.

\bibitem[GS86]{GiraitisSurgailis1986}
\textsc{L.~Giraitis} and \textsc{D.~Surgailis}.
\newblock Multivariate {A}ppell polynomials and the central limit theorem.
\newblock In \emph{Dependence in probability and statistics ({O}berwolfach,
  1985)}, vol.~11 of \emph{Progr. Probab. Statist.},  21--71. Birkh\"auser
  Boston, Boston, MA, 1986.

\bibitem[Hai13]{KPZ}
\textsc{M.~Hairer}.
\newblock Solving the {KPZ} equation.
\newblock \emph{Ann. of Math. (2)} \textbf{178}, no.~2, (2013), 559--664.
\newblock \ifx\href\undefined
  \texttt{arXiv:1109.6811}\else\href{http://arxiv.org/abs/1109.6811}{\texttt{arXiv:1109.6811}}\fi.
\newblock \ifx\href\undefined
  \texttt{doi:10.4007/annals.2013.178.2.4}\else\href{http://dx.doi.org/10.4007/annals.2013.178.2.4}{\texttt{doi:10.4007/annals.2013.178.2.4}}\fi.

\bibitem[Hai14]{Regularity}
\textsc{M.~Hairer}.
\newblock A theory of regularity structures.
\newblock \emph{Invent. Math.} \textbf{198}, no.~2, (2014), 269--504.
\newblock \ifx\href\undefined
  \texttt{arXiv:1303.5113}\else\href{http://arxiv.org/abs/1303.5113}{\texttt{arXiv:1303.5113}}\fi.
\newblock \ifx\href\undefined
  \texttt{doi:10.1007/s00222-014-0505-4}\else\href{http://dx.doi.org/10.1007/s00222-014-0505-4}{\texttt{doi:10.1007/s00222-014-0505-4}}\fi.

\bibitem[Hai15]{IntroRegularity}
\textsc{M.~Hairer}.
\newblock Introduction to regularity structures.
\newblock \emph{Braz. J. Probab. Stat.} \textbf{29}, no.~2, (2015), 175--210.
\newblock \ifx\href\undefined
  \texttt{arXiv:1401.3014}\else\href{http://arxiv.org/abs/1401.3014}{\texttt{arXiv:1401.3014}}\fi.
\newblock \ifx\href\undefined
  \texttt{doi:10.1214/14-BJPS241}\else\href{http://dx.doi.org/10.1214/14-BJPS241}{\texttt{doi:10.1214/14-BJPS241}}\fi.

\bibitem[Hep69]{Hepp}
\textsc{K.~Hepp}.
\newblock On the equivalence of additive and analytic renormalization.
\newblock \emph{Comm. Math. Phys.} \textbf{14}, (1969), 67--69.
\newblock \ifx\href\undefined
  \texttt{doi:10.1007/BF01645456}\else\href{http://dx.doi.org/10.1007/BF01645456}{\texttt{doi:10.1007/BF01645456}}\fi.

\bibitem[HP15]{WongZakai}
\textsc{M.~Hairer} and \textsc{{\'E}.~Pardoux}.
\newblock A {W}ong-{Z}akai theorem for stochastic {PDE}s.
\newblock \emph{J. Math. Soc. Japan} \textbf{67}, no.~4, (2015), 1551--1604.
\newblock \ifx\href\undefined
  \texttt{arXiv:1409.3138}\else\href{http://arxiv.org/abs/1409.3138}{\texttt{arXiv:1409.3138}}\fi.
\newblock \ifx\href\undefined
  \texttt{doi:10.2969/jmsj/06741551}\else\href{http://dx.doi.org/10.2969/jmsj/06741551}{\texttt{doi:10.2969/jmsj/06741551}}\fi.

\bibitem[HQ15]{KPZJeremy}
\textsc{M.~Hairer} and \textsc{J.~Quastel}.
\newblock A class of growth models rescaling to {KPZ}.
\newblock \emph{ArXiv e-prints} (2015).
\newblock \ifx\href\undefined
  \texttt{arXiv:1512.07845}\else\href{http://arxiv.org/abs/1512.07845}{\texttt{arXiv:1512.07845}}\fi.

\bibitem[KPZ86]{KPZOrig}
\textsc{M.~Kardar}, \textsc{G.~Parisi}, and \textsc{Y.-C. Zhang}.
\newblock Dynamic scaling of growing interfaces.
\newblock \emph{Phys. Rev. Lett.} \textbf{56}, no.~9, (1986), 889--892.
\newblock \ifx\href\undefined
  \texttt{doi:10.1103/PhysRevLett.56.889}\else\href{http://dx.doi.org/10.1103/PhysRevLett.56.889}{\texttt{doi:10.1103/PhysRevLett.56.889}}\fi.

\bibitem[Kup16]{Kupiainen}
\textsc{A.~Kupiainen}.
\newblock Renormalization {G}roup and {S}tochastic {PDE}s.
\newblock \emph{Ann. Henri Poincar\'e} \textbf{17}, no.~3, (2016), 497--535.
\newblock \ifx\href\undefined
  \texttt{arXiv:1410.3094}\else\href{http://arxiv.org/abs/1410.3094}{\texttt{arXiv:1410.3094}}\fi.
\newblock \ifx\href\undefined
  \texttt{doi:10.1007/s00023-015-0408-y}\else\href{http://dx.doi.org/10.1007/s00023-015-0408-y}{\texttt{doi:10.1007/s00023-015-0408-y}}\fi.

\bibitem[LM16]{LukkarinenMarcozzi}
\textsc{J.~Lukkarinen} and \textsc{M.~Marcozzi}.
\newblock Wick polynomials and time-evolution of cumulants.
\newblock \emph{J. Math. Phys.} \textbf{57}, no.~8, (2016), 083301, 27.
\newblock \ifx\href\undefined
  \texttt{doi:10.1063/1.4960556}\else\href{http://dx.doi.org/10.1063/1.4960556}{\texttt{doi:10.1063/1.4960556}}\fi.

\bibitem[MW14]{MourratWeber}
\textsc{J.-C. Mourrat} and \textsc{H.~Weber}.
\newblock Convergence of the two-dimensional dynamic {I}sing-{K}ac model to
  $\phi^4_2$.
\newblock \emph{ArXiv e-prints} (2014).
\newblock \ifx\href\undefined
  \texttt{arXiv:1410.1179}\else\href{http://arxiv.org/abs/1410.1179}{\texttt{arXiv:1410.1179}}\fi.

\bibitem[OY02]{MR1887169}
\textsc{N.~O'Connell} and \textsc{M.~Yor}.
\newblock A representation for non-colliding random walks.
\newblock \emph{Electron. Comm. Probab.} \textbf{7}, (2002), 1--12
  (electronic).
\newblock \ifx\href\undefined
  \texttt{doi:10.1214/ECP.v7-1042}\else\href{http://dx.doi.org/10.1214/ECP.v7-1042}{\texttt{doi:10.1214/ECP.v7-1042}}\fi.

\bibitem[PT11]{PeccatiTaqqu}
\textsc{G.~Peccati} and \textsc{M.~S. Taqqu}.
\newblock \emph{Wiener chaos: moments, cumulants and diagrams}, vol.~1 of
  \emph{Bocconi \& Springer Series}.
\newblock Springer, Milan; Bocconi University Press, Milan, 2011.
\newblock A survey with computer implementation, Supplementary material
  available online.

\bibitem[SS10]{MR2628936}
\textsc{T.~Sasamoto} and \textsc{H.~Spohn}.
\newblock Exact height distributions for the {KPZ} equation with narrow wedge
  initial condition.
\newblock \emph{Nuclear Phys. B} \textbf{834}, no.~3, (2010), 523--542.
\newblock \ifx\href\undefined
  \texttt{doi:10.1016/j.nuclphysb.2010.03.026}\else\href{http://dx.doi.org/10.1016/j.nuclphysb.2010.03.026}{\texttt{doi:10.1016/j.nuclphysb.2010.03.026}}\fi.

\bibitem[Sur83]{Surgailis1983}
\textsc{D.~Surgailis}.
\newblock On {P}oisson multiple stochastic integrals and associated equilibrium
  markov processes.
\newblock In \emph{Theory and Application of Random Fields}, vol.~49 of
  \emph{Lecture Notes in Control and Information Sciences},  233--248.
  Springer, 1983.

\bibitem[Vil09]{Cedric}
\textsc{C.~Villani}.
\newblock \emph{Optimal transport}, vol. 338 of \emph{Grundlehren der
  Mathematischen Wissenschaften [Fundamental Principles of Mathematical
  Sciences]}.
\newblock Springer-Verlag, Berlin, 2009.
\newblock Old and new.

\bibitem[Zim69]{Zimmermann}
\textsc{W.~Zimmermann}.
\newblock Convergence of {B}ogoliubov's method of renormalization in momentum
  space.
\newblock \emph{Comm. Math. Phys.} \textbf{15}, (1969), 208--234.
\newblock \ifx\href\undefined
  \texttt{doi:10.1007/BF01645676}\else\href{http://dx.doi.org/10.1007/BF01645676}{\texttt{doi:10.1007/BF01645676}}\fi.

\end{thebibliography}

\end{document}